\newcommand{\red}{\textcolor{red}}
      \newtheorem{theorem}{Theorem}[section]
       \newtheorem{lemma}[theorem]{Lemma}
          \newtheorem{example}[theorem]{Example}
      \newtheorem{coro}[theorem]{Corollary}
      \newtheorem{prop}[theorem]{Proposition}
      \newtheorem{defi}[theorem]{Definition}
       \numberwithin{equation}{section}
            \newtheorem{remark}[theorem]{Remark}
      \def\@setcopyright{}
      \def\serieslogo@{}
\begin{document}

%



   \author{Adrien Boyer and Jean-Claude Picaud}
   \address{Universit\'e de Paris - Paris - France }
   \email{adrien.boyer@imj-prg.fr}
	\address{Universit\'e de Tours and Z\"urich University}
	\email{jean-claude.picaud@lmpt.univ-tours.fr}

  
   \title{Riesz operators and some spherical representations for hyperbolic groups}


   \begin{abstract}
   We introduce the Riesz operator in the context of Gromov hyperbolic groups in order to investigate a one parameter family of non unitary and non tempered boundary Hilbertian representations of hyperbolic groups deforming the trivial representation to the standard boundary representation. We prove that asymptotic Schur's relations occur for the representations we build. Moreover, up to normalization, the Riesz operator plays the role in the context of hyperbolic groups of the Knapp-Stein intertwiner for complementary series of Lie groups.
   \end{abstract}

   \subjclass{Primary 43; Secondary 22}

   \keywords{dual system of boundary representations, irreducibility, Bader-Muchnik ergodic theorems, complementary series.
   }

 
   \dedicatory{To L. and N. }

   \date{\today}


   \maketitle


\section{Introduction}
\subsection{Representations theory basics}\label{basics}
Let $G$ be a  group,  $\mathcal{V}$ be a complex vector space and  $GL(\mathcal{V})$ be the group of complex invertible linear maps acting on $\mathcal{V}$.  \emph{A linear (resp. Hilbertian - unitary) representation} of $G$ is a couple $(\pi,\mathcal{V})$ where $\pi: G \rightarrow GL(\mathcal{V})$  is a homomorphism with values  in  invertible (resp.  bounded, unitary) operators acting on $\mathcal{V}$.  

Given  $\langle \cdot,\cdot \rangle$ a non degenerate $\mathbb{C}$-pairing for $(\mathcal{V},\mathcal{W})$ two $\mathbb{C}$-vector spaces  and a representation $(\pi,\mathcal{V})$  of $G$, the contragredient representation of $(\pi,\mathcal{V})$ with respect to  $\langle \cdot,\cdot \rangle$ is defined as $(\tilde{\pi},\mathcal{W})$  where for all $(v,w)\in \mathcal{V}\times \mathcal{W}$ and  for all $g\in G$: 
\begin{align}
 \langle\pi(g)v,w\rangle=  \langle v,  \tilde{\pi}(g^{-1})w  \rangle.
\end{align}

   Given two \emph{linear representations} (\emph{Hilbertian,  resp. unitary})  of $G$ : $(\pi_{1},\mathcal{V}_{1})$ and $(\pi_{2},\mathcal{V}_{2})$, an intertwining  operator is  (\emph{bounded, resp. isometric}) operator  $\mathcal{I}:\mathcal{V}_{1} \rightarrow \mathcal{V}_{2}$, satisfying for all $g\in G$ :
 \begin{equation}
  \mathcal{I} \pi_{1}(g) = \pi_{2}(g) \mathcal{I}.
 \end{equation}

We say that two \emph{representations} are equivalent if there exists an intertwining operator that is an isomorphism in the category concerned.
Given a representation $(\pi,\mathcal{V})$, a subspace $\mathcal{W}\subset \mathcal{V}$ is invariant by $\pi$ if $\pi(\gamma)\mathcal{W}\subset \mathcal{ W}$ for all $\gamma \in G$. A Hilbertian representation $(\pi,\mathcal{V})$ is \emph{irreducible} if $\pi$ has no closed proper invariant subspace.  

\subsection{General discussion about the dual of a group}  A basic problem in harmonic analysis on a locally compact group $G$ is to describe the set $\widehat{G}$ of all irreducible unitary representations, up to equivalence, called the unitary dual of $G$. 
For example, in the setting of reductive Lie groups this problem can be solved and we refer to the introduction of \cite{Ta} for more details in this context. 
In the setting of countable discrete groups the situation is drastically different. Except for virtually abelian groups, the dual space of a discrete group $\Gamma$ is  not even a Hausdorff space for the standard topology structure (see for instance Glimm \cite{Gl} and Thoma \cite{Th}). Quoting the authors of \cite{BD} (see also \cite{FiPi}) : ``\textit{In all other cases there is no natural Borel coding of $\widehat{\Gamma}$, i.e. $\widehat{\Gamma}$ is not
countably separated; for lack of a systematic procedure of constructing all irreducible
representations, a natural problem is to construct large classes of irreducible
representations.''} \\
 According to \cite{Fe}, this is of interest to understand the \emph{non unitary-dual} of a group. 
 In this paper we will construct a large class of irreducible Hilbertian representations for non-elementary \emph{Gromov hyperbolic groups} \cite{Gr} and exhibit a subclass of those groups for which those irreducible non-unitary Hilbertian representations  can be unitarized.\\

\subsection{Construction of representations for Gromov hyperbolic groups: the spherical boundary representations} Let $\Gamma$ be a non-elementary discrete Gromov hyperbolic group. We will consider $\delta$-hyperbolic proper metric spaces $(X,d)$, \emph{roughly geodesic and $\epsilon$-good} (see Section \ref{sh} for the definitions  and Section \ref{sec14} for the motivation) on which $\Gamma$  acts properly and cocompactly by isometries. Fix such $(X,d)$, pick $o\in X$ and set $B(o,R)=\{ x\in X|d(o,x)<R\}$. 
Recall that the Gromov boundary $\partial X$ of $(X,d)$ is endowed with a family of conformal visual metrics $(d_{x,\epsilon})_{x\in X}$ associated with a parameter $\epsilon$. 
The compact metric space $(\partial X,d_{o,\epsilon})$  admits a Hausdorff measure of dimension  \begin{equation}\label{DQ} D:={Q\over \epsilon} \end{equation} 
where 
\begin{equation}\label{volumegrowth}
Q=Q_{\Gamma,d}:=\limsup_{R\to +\infty}\frac{1}{R}\log |\Gamma.o\cap B(o,R)|,
\end{equation}
 is the critical exponent of $\Gamma$ (w.r.t. its action on $(X,d)$).
This $D$-Hausdorff measure is nonzero, finite, unique up to a constant, and denoted by  \(\nu_{o}\) when we normalize it to be a probability. In the context of compact quotients of Hadamard manifolds $\nu_o$ is known as the \emph{Patterson-Sullivan} measure.  
The \emph{class} of measures $[\nu_o]$ is invariant under the action of $\Gamma$ and independent of the choice of \(\epsilon\). Hence, it induces a one parameter family of representations which we call \emph{spherical boundary representations,} already studied in \cite{Boy} and defined by :

  \begin{equation}\label{repre}
\pi_{t}(\gamma) v(\xi)=\bigg(\frac{d\gamma_{*}\nu}{d\nu}(\xi)\bigg)^{\frac{1}{2}+t}v(\gamma^{-1}\xi),\;\;\;\;\;  (t\in\mathbb R,\; v\in L^{2}(\partial X,\nu_{o}))
\end{equation}
whose class  $[\pi_t]$ depends \emph{a priori} on $(X,d)$.\\

The adjoint operator  $\pi^{*}_{t}(\gamma)$  of $\pi_{t}(\gamma)$ is given for any $\gamma \in \Gamma$ by:
\begin{equation}
\pi^{*}_{t}(\gamma)=\pi_{-t}(\gamma^{-1}).
\end{equation}
 In the following, we will denote $\langle\cdot, \cdot\rangle$  the inner product on $L^{2}(\partial X,\nu_o)$, and if $
\textbf{1}_{\partial X}$ is the characteristic function of $\partial X$ we will consider  \emph{the spherical function :}
\begin{equation}
\phi_{t}:\gamma \in \Gamma \mapsto \langle \pi_{t}(\gamma)\textbf{1}_{\partial X},\textbf{1}_{\partial X}\rangle.
\end{equation}

 The representation $\pi_{0}$ is  unitary  and might be thought as an analog of the endpoint of principal series for $SL(2,\mathbb{R})$. It appears in different contexts as a boundary representation and it is also called quasi-regular representation (or Koopman representation) - see \cite{BM}, \cite{BM2}, \cite{G}, \cite{Boy}, \cite{BoyMa},
\cite{BoyP}, \cite{BG}, \cite{BLP}, \cite{Fink}, \cite{KS} and \cite{KS2} for boundary representations and see \cite{Du} and \cite{Du2} for other quasi-regular representations. More recently in \cite{Ca}, boundary representations of hyperbolic groups have been used to obtain results in the general setting of locally compact groups.  \\

\subsection{Intertwiner and Riesz operator}\label{sec14}

 Define the operator for $t>0$: 
 \begin{equation}\label{intertwiner}
\mathcal{I}_{t}(v)(\xi):=\int_{\partial X} \frac{v(\eta)}{d^{(1-2t)D}_{o,\epsilon}(\xi,\eta)}d\nu_{o}(\eta).
\end{equation}
(see the end of Section \ref{HSG} for the visual metrics $d_{o,\epsilon}$).  We call this operator the Knapp-Stein operator associated with the metric $d$ in reference to \cite{KSt}. It is a self-adjoint compact operator on $L^{2}(\partial X,\nu_{o})$. The fundamental observation is that, when the metric space $(X,d)$ is an \emph{$\epsilon$-good hyperbolic space}, (see Nica and \v{S}pakula  \cite{NS} and Section \ref{sh} below), the operator  $\mathcal{I}_{t}$ intertwines $\pi_{t}$ and $\pi_{-t}$ on $L^{2}(\partial X,\nu_{o})$, namely for all $\gamma \in \Gamma$ and for all $v\in L^{2}(\partial X,\nu_{o}):$

\begin{equation}
\mathcal{I}_{t}\pi_{t}(\gamma)v=\pi_{-t}(\gamma)\mathcal{I}_{t}v.
\end{equation}
 In the following, it will be convenient to introduce the function 
\begin{align}\label{lafonction}
\sigma_{t}:\xi\in \partial X \mapsto \mathcal{I}_{t}(\textbf{1}_{\partial X})(\xi)\in \mathbb{R}^{+}\;\;\;\; (t>0).
\end{align}
 as well the (spherical) Riesz operator of order $t>0$ defined by  :
\begin{equation}\label{Rieszpot}
\mathcal{R}_{t}(v)(\xi):=\frac{1}{\sigma_{t}(\xi)}\int_{\partial X} \frac{v(\eta)}{d^{(1-2t)D}_{o,\epsilon}(\xi,\eta)}d\nu_{o}(\eta).
\end{equation}

\subsection{Several representations}

Let $\mathbb C[\Gamma]$ the group algebra and for each $t\in \mathbb R_+$ set :

\begin{equation}\label{algrepre}
\mathcal{F}_{t,\textbf{1}_{\partial X}}:= \pi_{t}(\mathbb C[\Gamma])(\textbf{1}_{\partial X})  \subset L^{2}(\partial X ,\nu_{o}).
\end{equation} 

 The mapping $(v,w)\mapsto \langle v,\mathcal{I}^{2}_{t}(w)\rangle$ is a positive symmetric sesquilinear form on $\mathcal{F}_{t,\textbf{1}_{\partial X}}$. Set $N_{t}:=\lbrace v\in\mathcal{F}_{t,\textbf{1}_{ \partial X}}| \mathcal{I}_{t}(v)=0\rbrace$. Endow the quotient space $\mathcal{F}_{t,\textbf{1}_{\partial X}}/N_{t} $ with the inner product $\langle \cdot,\mathcal{I}^{2}_{t}(\cdot)\rangle$.
 Let $(\mathcal{K}_t, \langle \cdot,\cdot\rangle_{\mathcal{K}_{t}})$ be the Hilbert space completion of $\mathcal{F}_{t,\textbf{1}_{\partial X}}/N_{t}$ with respect to $\langle \cdot,\mathcal{I}^{2}_{t}(\cdot)\rangle$. In an analogous way, with the hypothesis that  the Riesz operator is positive for some $0<t\leq \frac{1}{2}$ (and we will show that this is equivalent for ${\mathcal I}_t$ to be positive), the bilinear form 
 $\langle \cdot,\mathcal{I}_{t}(\cdot)\rangle $ induces another Hilbert space which will be denoted by $(\mathcal{H}_t,\langle \cdot,\cdot\rangle_{\mathcal{H}_{t}})$. We have:
 \begin{prop}\label{posintro}
 If for some $t_0>0$ the operator $\mathcal{R}_{t_0}$ is positive, then $(\pi_{t_0},\mathcal{H}_{t_0})$ is a unitary representation.
 \end{prop}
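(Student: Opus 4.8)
The plan is to check that the (non-unitary) representation $\pi_t$ preserves the cyclic subspace $\mathcal{F}_{t,\mathbf{1}_{\partial X}}$, descends to the quotient by $N_t$, acts isometrically for the form $\langle\cdot,\mathcal{I}_t(\cdot)\rangle$, and therefore extends by continuity to a unitary representation of $\Gamma$ on the completion $\mathcal{H}_t$. I will use the hypothesis only through the equivalent statement recorded in the excerpt, namely that $\mathcal{I}_t$ is a positive self-adjoint operator on $L^{2}(\partial X,\nu_o)$. Writing $\mathcal{I}_t=(\mathcal{I}_t^{1/2})^{2}$ gives $\langle v,\mathcal{I}_t(v)\rangle=\|\mathcal{I}_t^{1/2}v\|^{2}\ge 0$, so the radical of the form $\langle\cdot,\mathcal{I}_t(\cdot)\rangle$ on $\mathcal{F}_{t,\mathbf{1}_{\partial X}}$ is exactly $\{v:\mathcal{I}_t^{1/2}v=0\}=\{v:\mathcal{I}_t(v)=0\}=N_t$; this is what makes $\langle\cdot,\mathcal{I}_t(\cdot)\rangle$ a genuine inner product on $\mathcal{F}_{t,\mathbf{1}_{\partial X}}/N_t$ and $\mathcal{H}_t$ an honest Hilbert space in the first place.

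I would first dispose of the two invariance statements. Since $\mathcal{F}_{t,\mathbf{1}_{\partial X}}$ is the $\pi_t(\mathbb{C}[\Gamma])$-cyclic subspace of $\mathbf{1}_{\partial X}$, it is $\pi_t(\Gamma)$-invariant by construction. For $N_t$: if $\mathcal{I}_t(v)=0$, then the intertwining identity $\mathcal{I}_t\pi_t(\gamma)=\pi_{-t}(\gamma)\mathcal{I}_t$ (valid because $(X,d)$ is $\epsilon$-good) yields $\mathcal{I}_t(\pi_t(\gamma)v)=\pi_{-t}(\gamma)\mathcal{I}_t(v)=0$. Hence $\pi_t(\gamma)$ passes to a well-defined linear operator $\bar\pi_t(\gamma)$ on $\mathcal{F}_{t,\mathbf{1}_{\partial X}}/N_t$, and $\gamma\mapsto\bar\pi_t(\gamma)$ is still a homomorphism.

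The main computation is that $\bar\pi_t(\gamma)$ is isometric. Combining the intertwining identity with the adjoint formula $\pi_s^{*}(\gamma)=\pi_{-s}(\gamma^{-1})$ (applied with $s=-t$, i.e. $\bigl(\pi_{-t}(\gamma)\bigr)^{*}=\pi_t(\gamma^{-1})$), for all $v,w\in\mathcal{F}_{t,\mathbf{1}_{\partial X}}$ one gets
\[
\langle\pi_t(\gamma)v,\,\mathcal{I}_t(\pi_t(\gamma)w)\rangle
=\langle\pi_t(\gamma)v,\,\pi_{-t}(\gamma)\mathcal{I}_t(w)\rangle
=\langle\pi_t(\gamma^{-1})\pi_t(\gamma)v,\,\mathcal{I}_t(w)\rangle
=\langle v,\,\mathcal{I}_t(w)\rangle .
\]
Thus $\bar\pi_t(\gamma)$ preserves $\langle\cdot,\cdot\rangle_{\mathcal{H}_t}$ on the dense subspace $\mathcal{F}_{t,\mathbf{1}_{\partial X}}/N_t$, so it extends uniquely to an isometry $U_t(\gamma)$ of $\mathcal{H}_t$; by density $\gamma\mapsto U_t(\gamma)$ is a homomorphism, and since $U_t(\gamma)U_t(\gamma^{-1})=U_t(\gamma^{-1})U_t(\gamma)=\mathrm{Id}$ each $U_t(\gamma)$ is a surjective isometry, hence unitary. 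This produces the asserted unitary representation $(\pi_t,\mathcal{H}_t)$.

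I do not expect a genuine obstacle here: the argument is entirely formal once one has (i) the intertwining property of $\mathcal{I}_t$, (ii) the adjoint formula for $\pi_t$, and (iii) the positivity hypothesis. The one point deserving care is the role of (iii), namely the identification of the radical of $\langle\cdot,\mathcal{I}_t(\cdot)\rangle$ with $N_t$ via $\mathcal{I}_t^{1/2}$ — without positivity the quotient need not separate vectors — together with the routine fact that an isometry defined on a dense subspace of a Hilbert space extends to a unitary operator, using here the invertibility of $\pi_t(\gamma)$ to get surjectivity.
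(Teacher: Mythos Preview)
Your proof is correct and follows essentially the same route as the paper: both use the intertwining relation $\mathcal{I}_t\pi_t(\gamma)=\pi_{-t}(\gamma)\mathcal{I}_t$ together with the adjoint identity $\pi_t^{*}(\gamma)=\pi_{-t}(\gamma^{-1})$ to show that $\langle\pi_t(\gamma)v,\mathcal{I}_t(\pi_t(\gamma)w)\rangle=\langle v,\mathcal{I}_t(w)\rangle$. Your write-up is in fact more careful than the paper's on the well-definedness side --- you explicitly identify the radical of $\langle\cdot,\mathcal{I}_t(\cdot)\rangle$ with $\ker\mathcal{I}_t$ via the square root (the paper silently works with $L_t=\{v:\langle v,\mathcal{I}_t v\rangle=0\}$, which indeed coincides with $N_t$ under positivity), check $\pi_t$-invariance of $N_t$, and justify the extension from the dense subspace to the completion.
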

 
We shall study in the following the families of \emph{Hilbertian representations} $(\pi_{t},\mathcal{K}_{t})_{t>0}$ and \emph{unitary representations} $(\pi_{t},\mathcal{H}_{t})$ for values of $t\in ]0,{1\over 2}]$ such that  the Riesz operator is  positive (and we will see that a necessary condition for positivity is $t\leq 1/2$). The latter are the analogous of complementary series.\\

\subsection{Results}

Let $\Gamma$ be a non elementary Gromov hyperbolic group acting cocompactly and properly on  \emph{a roughly geodesic,  $\epsilon$-good, $\delta$-hyperbolic space} $(X,d)$.  Let $\partial X$ be the Gromov boundary of $X$ and set $\overline{X}:=X\cup \partial X$.  Define for any $R>0$ and for any non-negative integers $n\geq 1$, the spheres $S^{\Gamma}_{n,R}:=\{ \gamma \in \Gamma|(n-1)R\leq d(o,\gamma o)<nR\}$. The cardinal of $S^{\Gamma}_{n,R}$ is denoted by $|S^{\Gamma}_{n,R}|$.  \\
To write the next statements in a synthetic form, we adopt the following notations : for $t>0$, we set $\mathcal{E}_{t,0}=L^{2}(\partial X ,\nu_{o})$ and $\mathcal{E}_{t,2}=\mathcal{K}_{t}$. Then we can state our main result, the following asymptotic Schur's  orthogonality relations, that involve weight functions $\beta_{n,R}$ for which we refer to \cite[Theorem 3.5]{BG}:

\begin{theorem}\label{theo1} 
For any  $R>0$ large enough, there exists a sequence of  measures $\beta_{n,R}:\Gamma \rightarrow \mathbb{R}^{+}$, supported on $S^{\Gamma}_{n,R}\subset \Gamma$, satisfying $\beta_{n,R}(\gamma)\leq C /|S^{\Gamma}_{n,R}|$ for some $C>0$ independent of $n$
such that   for all $t,s > 0  $, for $i,j=0,2$, for all $f,g\in C(\overline{X})$,  for all $v,w\in \mathcal{E}_{t,i}$, $v',w'\in \mathcal{E}_{s,j}$
\begin{align*}
\lim_{n\to \infty}\sum_{\gamma \in S^{\Gamma}_{n,R}}\beta_{n,R}(\gamma) f(\gamma   o) g(\gamma^{-1}   o)\frac{\langle \pi_{t}(\gamma)v,w\rangle_{\mathcal{E}_{t,i}} }{\phi_{t}(\gamma)} &\frac{ \overline{\langle \pi_{s}(\gamma)v',w'\rangle}_{\mathcal{E}_{s,j}}}{\phi_{s}(\gamma)}\\ 
&=\langle g_{|_{\partial X}} \mathcal{R}_{t}(v),\mathcal{R}_{s}(v')\rangle \overline{\langle \mathcal{I}^{i}_{t}(w),f_{|\partial X}\mathcal{I}^{j}_{s}(w')\rangle}.
\end{align*}
 
 \end{theorem}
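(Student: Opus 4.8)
The plan is to reduce the identity to an $L^{2}$ statement on a dense set of continuous vectors, to prove a uniform pointwise asymptotic for the normalised matrix coefficients from which the Riesz operator emerges, and to conclude by a double equidistribution theorem of Bader--Muchnik type. For the reduction, I would first record that $\langle\pi_t(\gamma)v,w\rangle_{\mathcal{E}_{t,i}}=\langle\pi_t(\gamma)v,\mathcal{I}^{i}_{t}(w)\rangle$ in $L^{2}(\partial X,\nu_o)$ for $i=0,1,2$ (with $\mathcal{I}^{0}_{t}=\mathrm{Id}$), that $\mathcal{I}_t$ — hence $\mathcal{R}_t=\tfrac{1}{\sigma_t}\mathcal{I}_t$ — kills $N_t$ and extends to a bounded operator $\mathcal{K}_t\to L^{2}(\partial X,\nu_o)$, and that $\mathcal{I}_t$ sends $L^{\infty}(\partial X)$ into $C(\partial X)$. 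It then suffices to prove the formula when $v,w$ (resp. $v',w'$) run over the images in $\mathcal{E}_{t,i}$ (resp. $\mathcal{E}_{t',j}$) of $\pi_t(\mathbb{C}[\Gamma])\textbf{1}_{\partial X}$ (resp. $\pi_{t'}(\mathbb{C}[\Gamma])\textbf{1}_{\partial X}$), which are dense and consist of continuous bounded functions on $\partial X$, provided one also has a bound $|T_n(v,w,v',w')|\le C\|v\|_{\mathcal{E}_{t,i}}\|w\|_{\mathcal{E}_{t,i}}\|v'\|_{\mathcal{E}_{t',j}}\|w'\|_{\mathcal{E}_{t',j}}$, uniform in $n$, for the multilinear forms $T_n$ given by the finite left-hand sums; the right-hand side is continuous in each variable because $\mathcal{R}_t,\mathcal{R}_{t'},\mathcal{I}^{i}_{t},\mathcal{I}^{j}_{t'}$ are bounded.

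The heart of the proof is a pointwise asymptotic (I write $o_n(1)$ for quantities that tend to $0$ as $n\to\infty$, uniformly over $\gamma\in S^{\Gamma}_{n,R}$). Changing variables $\xi=\gamma\eta$ gives, for continuous $v,w$, $\langle\pi_t(\gamma)v,w\rangle=\int_{\partial X}\bigl(\tfrac{d\gamma^{-1}_{*}\nu}{d\nu}(\eta)\bigr)^{\frac12-t}v(\eta)\,\overline{w(\gamma\eta)}\,d\nu_o(\eta)$. Let $\eta_\gamma,\xi_\gamma\in\partial X$ lie in the shadows, seen from $o$, of $\gamma^{-1}o$ and $\gamma o$ respectively. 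Using that $(X,d)$ is $\epsilon$-good together with the shadow lemma, the normalised density $\phi_t(\gamma)^{-1}\bigl(\tfrac{d\gamma^{-1}_{*}\nu}{d\nu}\bigr)^{\frac12-t}$ is, off the shadow of $\gamma^{-1}o$, within $o_n(1)$ of the kernel $\eta\mapsto\sigma_t(\eta_\gamma)^{-1}d_{o,\epsilon}(\eta,\eta_\gamma)^{-(1-2t)D}$ of $\mathcal{R}_t$ at $\eta_\gamma$, while the shadow of $\gamma^{-1}o$ carries only an $o_n(1)$-fraction of its mass — and it is exactly here that $t>0$ enters. Off that shadow $\gamma$ contracts $\partial X$ into an $o_n(1)$-neighbourhood of $\xi_\gamma$ (thin-triangle inequality), so $\overline{w(\gamma\eta)}=\overline{w(\xi_\gamma)}+o_n(1)$ there; inserting this and recalling \eqref{lafonction} and \eqref{Rieszpot} yields, uniformly over $\gamma\in S^{\Gamma}_{n,R}$,
\[
\frac{\langle\pi_t(\gamma)v,w\rangle}{\phi_t(\gamma)}=\mathcal{R}_t(v)(\eta_\gamma)\,\overline{w(\xi_\gamma)}+o_n(1),
\]
and in particular this quotient is bounded uniformly in $\gamma$. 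Replacing $w$ by the continuous function $\mathcal{I}^{i}_{t}(w)$, one obtains $\langle\pi_t(\gamma)v,w\rangle_{\mathcal{E}_{t,i}}/\phi_t(\gamma)=\mathcal{R}_t(v)(\eta_\gamma)\,\overline{\mathcal{I}^{i}_{t}(w)(\xi_\gamma)}+o_n(1)$, and similarly with $(t,i)$ replaced by $(t',j)$.

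Substituting the two asymptotics into $T_n$, the product of the errors is $o_n(1)$ (both factors being uniformly bounded and the total $\beta_{n,R}$-mass being $\le C$), and since $\gamma o\to\xi_\gamma$ and $\gamma^{-1}o\to\eta_\gamma$ in $\overline X$ we may replace $f(\gamma o),g(\gamma^{-1}o)$ by $f(\xi_\gamma),g(\eta_\gamma)$ up to $o_n(1)$. Hence $T_n(v,w,v',w')=\sum_{\gamma\in S^{\Gamma}_{n,R}}\beta_{n,R}(\gamma)\,A(\xi_\gamma)\,B(\eta_\gamma)+o_n(1)$ with $A=f_{|\partial X}\,\overline{\mathcal{I}^{i}_{t}(w)}\,\mathcal{I}^{j}_{t'}(w')$ and $B=g_{|\partial X}\,\mathcal{R}_t(v)\,\overline{\mathcal{R}_{t'}(v')}$ continuous on $\partial X$. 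It remains to invoke a Bader--Muchnik ergodic theorem producing, for $R$ large enough, measures $\beta_{n,R}$ with the stated support and size bound for which the pair of endpoints $(\xi_\gamma,\eta_\gamma)$ equidistributes toward $\nu_o\otimes\nu_o$, i.e. $\sum_{\gamma\in S^{\Gamma}_{n,R}}\beta_{n,R}(\gamma)A(\xi_\gamma)B(\eta_\gamma)\to\bigl(\int_{\partial X}A\,d\nu_o\bigr)\bigl(\int_{\partial X}B\,d\nu_o\bigr)$ for all $A,B\in C(\partial X)$. That product is precisely $\overline{\langle\mathcal{I}^{i}_{t}(w),f_{|\partial X}\mathcal{I}^{j}_{t'}(w')\rangle}\,\langle g_{|\partial X}\mathcal{R}_t(v),\mathcal{R}_{t'}(v')\rangle$, the claimed limit; combined with the density reduction this handles all $v,w,v',w'$.

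I expect the main obstacle to be the \emph{uniformity} in the pointwise asymptotic: one must control at once, as $d(o,\gamma o)\to\infty$, the tall narrow bump of $\bigl(\tfrac{d\gamma^{-1}_{*}\nu}{d\nu}\bigr)^{\frac12-t}$ supported near $\eta_\gamma$ (shown asymptotically negligible, which is why $t>0$ is needed) and its spread-out part, which has to reproduce \emph{exactly} the Riesz kernel $d_{o,\epsilon}^{-(1-2t)D}/\sigma_t$ with errors that are $o(1)$ uniformly over $S^{\Gamma}_{n,R}$; this depends on the fine metric properties of $\epsilon$-good spaces. A secondary, more technical difficulty is the uniform multilinear bound needed for the density reduction: since $\|\pi_t(\gamma)\|/\phi_t(\gamma)$ grows exponentially in $d(o,\gamma o)$, it cannot be obtained term by term and must exploit the averaging by $\beta_{n,R}$ together with the $L^{2}(\partial X,\nu_o)$-boundedness of $\mathcal{R}_t$ — presumably via an estimate already available in the paper. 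Finally, the equidistribution input, while not the conceptual novelty here, is substantial in its own right (it reflects ergodicity of the $\Gamma$-action on $(\partial X\times\partial X,\nu_o\otimes\nu_o)$) and is what forces the hypothesis that $R$ be large.
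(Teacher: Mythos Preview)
Your proposal is correct and follows essentially the same three-step strategy as the paper: a uniform multilinear bound (obtained in the paper via the Property-RD-type spectral inequality \eqref{RDgeneral} and its analogues on $\mathcal{K}_t,\mathcal{H}_t$), a pointwise asymptotic $\langle\pi_t(\gamma)v,w\rangle/\phi_t(\gamma)\to\mathcal{R}_t(v)\cdot\overline{w}$ for Lipschitz data, and the equidistribution theorem of Roblin--Margulis type. The only cosmetic difference is that the paper packages your ``off-the-shadow'' analysis through the continuous extensions $\mathcal{P}_t,\mathcal{R}_t:C(\partial X)\to C(\overline X)$ evaluated at $\gamma o,\gamma^{-1}o$ rather than at boundary shadow points $\xi_\gamma,\eta_\gamma$, and makes your heuristic error estimate precise in Lemma~\ref{decay2}.
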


  Assume that there exists $t\in ]0, {1\over 2}]$ so that $\mathcal{R}_{t}$ is positive. Under this assumption and for this value of $t$, we set $(\pi_{t},\mathcal{H}_{t})$ is a unitary representation. The convergence for the unitary representations is indeed slightly different. We denote by $\mathcal{H}'_{t}$ its dual space that can be viewed as Hilbert space embedded in $L^{2}(\partial X,\nu_{o})$.

  \begin{theorem}\label{theo1'} 
Assume there exist $0<s,t< 1/2$ such that $\mathcal{R}_{t}$ and $\mathcal{R}_{s}$ are positive.
For any  $R>0$ large enough, there exists a sequence of  measures $\beta_{n,R}:\Gamma \rightarrow \mathbb{R}^{+}$, supported on $S^{\Gamma}_{n,R}\subset \Gamma$, satisfying $\beta_{n,R}(\gamma)\leq C /|S^{\Gamma}_{n,R}|$ for some $C>0$ independent of $n$
such that   for all $t,s > 0  $, for all $f,g\in C(\overline{X})$, we have for all $(v_1,w_1)\in \mathcal{H}_{t}\times \mathcal{H}_{t}'$, $(v_2,w_2)\in \mathcal{H}_{s}\times \mathcal{H}'_{s}$
\begin{align*}
\lim_{n\to \infty}\sum_{\gamma \in S^{\Gamma}_{n,R}}\beta_{n,R}(\gamma) f(\gamma   o) g(\gamma^{-1}   o)\frac{\langle \pi_{t}(\gamma)v_1,w_1\rangle_{\mathcal{H}_{t}} }{\phi_{t}(\gamma)} &\frac{ \overline{\langle \pi_{s}(\gamma)v_2,w_2\rangle}_{\mathcal{H}_{s}}}{\phi_{s}(\gamma)}\\ 
&=\langle g_{|_{\partial X}} \mathcal{R}_{t}(v_1),\mathcal{R}_{s}(v_2)\rangle \overline{\langle \mathcal{I}_{t}(w_1),f_{|\partial X}\mathcal{I}_{s}(w_2)\rangle}.
\end{align*}
 
 \end{theorem}

\begin{remark} 

In the following cases,  $\beta_{n,R}$ is the uniform probability measure on $S^{\Gamma}_{n,R}$ (namely $\beta_{n,R}=\dfrac{1}{|S^{\Gamma}_{n,R}|} \sum_{\gamma \in S^{\Gamma}_{n,R}}D_{\gamma o }$ where $D_{\gamma o}$ stands for the unit Dirac mass  at $\gamma o$):\\
\begin{enumerate}
\item $\Gamma=\mathbb F_r$ is the (non abelian) free group  on $r\geq 2$  generators acting on  its standard Cayley graph with the standard left invariant word metric.
\item $\Gamma$ is the fundamental group of a compact negatively curved manifold acting on $(X,d)$ its universal cover.
\item $\Gamma$ is a non elementary discrete group of isometries acting cocompactly on a CAT(-1) space with  non arithmetic length spectrum.
\end{enumerate}

\end{remark}

\begin{remark} The asymptotic Schur relations have been studied as such for tempered representations of Lie groups very recently in \cite{EKY}. It is worth noting that the case $(\mathcal{H}_{t},\mathcal{H}_{s})$ with $t,s< \frac{1}{2}$ of Theorem \ref{theo1'} provides an example of  Schur relations for unitary representations that are \emph{not tempered}, namely the complementary series.
\end{remark}

We deduce three theorems {\it  \`a la Bader-Muchnik} from the above theorem. The main point is that the convergence (and the irreducibility) of  $\pi_{t}$ does not really depend on the Hilbert spaces on which $\Gamma$ acts {\it via} $\pi_{t}$.
 We set $\mathcal{E}_{1,t}:=\mathcal{H}_{t}$ when $\mathcal{R}_{t}$ is assumed to be positive.
 \begin{coro}\label{BML2}
For any  $R>0$ large enough, there exists  $\beta_{n,R}:\Gamma \rightarrow \mathbb{R}^{+}$ as above  (satisfying $\beta_{n,R}(\gamma)\leq C /|S^{\Gamma}_{n,R}|$ for some $C>0$ independent of $n$)  such that
for all $t> 0  $, for $i=0,2$,  for all $f,g\in C(\overline{X})$, for all $v,w\in \mathcal{E}_{t,i}$ and for $i=1$ for all $(v,w)\in \mathcal{E}_{t,1} \times \mathcal{E}'_{t,1}$:
$$\sum_{\gamma \in S^{\Gamma}_{n,R}}\beta_{n,R}(\gamma) f(\gamma   o) g(\gamma^{-1} o) \frac{\langle \pi_{t}(\gamma)v,w\rangle_{\mathcal{E}_{t,i}} }{\phi_{t}(\gamma)}\to \langle  g_{|_{\partial X}}\mathcal{R}_{t}(v),\textbf{1}_{\partial X}\rangle \langle f_{|_{\partial X}}  ,w\rangle_{\mathcal{E}_{t,i}}, $$
as $n\to +\infty$.
\end{coro}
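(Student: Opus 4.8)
\emph{Proof proposal.} The plan is to obtain Corollary \ref{BML2} as a direct specialization of Theorem \ref{theo1}, arranged so that one of the two matrix-coefficient factors collapses. Concretely, I would apply Theorem \ref{theo1} with $t'=t$ (any fixed $t'>0$ works equally well), with $j=0$ so that $\mathcal{E}_{t',j}=L^{2}(\partial X,\nu_{o})$, and with the particular test vectors $v'=w'=\textbf{1}_{\partial X}\in L^{2}(\partial X,\nu_{o})$, reusing verbatim the sequence of measures $\beta_{n,R}$ supplied by the theorem, which already satisfies the required support and size constraints.

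First I would verify that the ``$t'$-factor'' is identically $1$. By definition of the spherical function, $\langle\pi_{t'}(\gamma)\textbf{1}_{\partial X},\textbf{1}_{\partial X}\rangle=\phi_{t'}(\gamma)$, and $\phi_{t'}(\gamma)>0$ because, by \eqref{repre}, $\pi_{t'}(\gamma)\textbf{1}_{\partial X}$ is the strictly positive function $\xi\mapsto\big(\tfrac{d\gamma_{*}\nu}{d\nu}(\xi)\big)^{1/2+t'}$; hence the factor $\overline{\langle\pi_{t'}(\gamma)v',w'\rangle}_{\mathcal{E}_{t',j}}/\phi_{t'}(\gamma)=\overline{\phi_{t'}(\gamma)}/\phi_{t'}(\gamma)$ equals $1$ for every $\gamma$. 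With this, the left-hand side of Theorem \ref{theo1} reduces to exactly the sum $\sum_{\gamma\in S^{\Gamma}_{n,R}}\beta_{n,R}(\gamma)f(\gamma o)g(\gamma^{-1}o)\langle\pi_{t}(\gamma)v,w\rangle_{\mathcal{E}_{t,i}}/\phi_{t}(\gamma)$ appearing in the corollary. On the right-hand side, the normalization built into the Riesz operator gives $\mathcal{R}_{t'}(\textbf{1}_{\partial X})=\textbf{1}_{\partial X}$, since $\mathcal{I}_{t'}(\textbf{1}_{\partial X})=\sigma_{t'}$ by \eqref{lafonction} while $\mathcal{R}_{t'}$ divides by $\sigma_{t'}$ by \eqref{Rieszpot}; and $j=0$ forces $\mathcal{I}^{j}_{t'}(w')=\textbf{1}_{\partial X}$. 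Thus the limiting expression becomes $\langle g_{|_{\partial X}}\mathcal{R}_{t}(v),\textbf{1}_{\partial X}\rangle\,\overline{\langle\mathcal{I}^{i}_{t}(w),f_{|_{\partial X}}\rangle}$.

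The remaining step is to recognize the second factor $\overline{\langle\mathcal{I}^{i}_{t}(w),f_{|_{\partial X}}\rangle}$ as $\overline{\langle w,f_{|_{\partial X}}\rangle}_{\mathcal{E}_{t,i}}$. Here I would use that, by construction, the inner product on $\mathcal{E}_{t,i}$ is $\langle\cdot,\mathcal{I}^{i}_{t}(\cdot)\rangle$ at the $L^{2}$ level (with $\mathcal{I}^{0}_{t}=\mathrm{Id}$ and $\mathcal{E}_{t,0}=L^{2}(\partial X,\nu_{o})$, $\mathcal{E}_{t,1}=\mathcal{H}_{t}$, $\mathcal{E}_{t,2}=\mathcal{K}_{t}$, cf.\ \eqref{algrepre}), together with the self-adjointness of $\mathcal{I}_{t}$, hence of $\mathcal{I}^{i}_{t}$, on $L^{2}(\partial X,\nu_{o})$: for $w$ in the dense subspace $\mathcal{F}_{t,\textbf{1}_{\partial X}}/N_{t}$ one has $\langle w,f_{|_{\partial X}}\rangle_{\mathcal{E}_{t,i}}=\langle w,\mathcal{I}^{i}_{t}f_{|_{\partial X}}\rangle=\langle\mathcal{I}^{i}_{t}w,f_{|_{\partial X}}\rangle$, and one extends to arbitrary $w\in\mathcal{E}_{t,i}$ by continuity. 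Comparison then gives precisely the right-hand side of the corollary, and the convergence as $n\to+\infty$ is the one asserted by Theorem \ref{theo1}.

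I do not anticipate a genuine obstacle beyond Theorem \ref{theo1} itself; the only delicate points are bookkeeping. The main one is the meaning of the pairing $\langle w,f_{|_{\partial X}}\rangle_{\mathcal{E}_{t,i}}$ when $i=1,2$, since $f_{|_{\partial X}}$ need not literally lie in $\mathcal{H}_{t}$ or $\mathcal{K}_{t}$: it must be read as $\langle w,\mathcal{I}^{i}_{t}f_{|_{\partial X}}\rangle$, equivalently $\langle\mathcal{I}^{i}_{t}w,f_{|_{\partial X}}\rangle$, and one should check that this is well defined and continuous in $w$. A secondary point is that, for $i=1$, the statement tacitly carries the standing hypotheses $0<t\le\tfrac{1}{2}$ and $\mathcal{R}_{t}$ positive — exactly those already required for $\mathcal{E}_{t,1}=\mathcal{H}_{t}$ to make sense — so no genuinely new assumption is introduced.
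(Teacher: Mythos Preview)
Your proposal is correct and follows essentially the same route as the paper: apply Theorem~\ref{theo1} with $j=0$, $\mathcal{E}_{t',j}=L^{2}(\partial X,\nu_{o})$ and $v'=w'=\textbf{1}_{\partial X}$ so that the second matrix-coefficient factor collapses to $1$, then use self-adjointness of $\mathcal{I}^{i}_{t}$ to rewrite $\overline{\langle \mathcal{I}^{i}_{t}(w),f_{|_{\partial X}}\rangle}$ as $\overline{\langle w,f_{|_{\partial X}}\rangle}_{\mathcal{E}_{t,i}}$. Your added justifications (that $\mathcal{R}_{t'}(\textbf{1}_{\partial X})=\textbf{1}_{\partial X}$ and that $\phi_{t'}(\gamma)>0$) and your remark on how to read the pairing when $i=1,2$ are welcome clarifications but do not depart from the paper's argument.
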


 From which we obtain the following results of irreducibility of Hilbertian representations:
\begin{coro}\label{coro1} Let $t>0$. 
\begin{enumerate}
 \item The Riesz operator is injective if and only if the Hilbertian representation $(\pi_{t},L^{2}(\partial X,\nu_{o}))$ is irreducible.
 \item If the Riesz operator is not injective, then the Hilbert space $L^{2}(\partial X,\nu_{o})$ splits as: $$L^{2}(\partial X,\nu_{o})=\mathcal{V}_{t} \oplus  \mathcal{W}_{-t},$$ 
 where  $(\pi_{t},\mathcal{V}_{t})$ and $(\pi_{-t},\mathcal{W}_{-t})$ are Hilbertian representations. Moreover:
 \begin{itemize}
 
 \item If $0<t<\frac{1}{2}$, the  representations are  $(\pi_{t},\mathcal{V}_{t})$ and $(\pi_{-t},\mathcal{W}_{-t})$ are infinite dimensional. 
 \item Let $t>0$. The Hilbertian representation $(\pi_{-t},\mathcal{W}_{-t})$ is irreducible  and thus the quotient representation $(\pi_{t},L^{2}(\partial X,\nu_{o})/\mathcal{V}_{t} )$ is irreducible as the contragredient representation of $(\pi_{-t},\mathcal{W}_{-t})$ with respect to the $L^{2}$-pairing.  Moreover $(\pi_{-t},\mathcal{W}_{-t})$ and $(\pi_{-t'},\mathcal{W}_{-t'})$ are inequivalent for positive real numbers $t\neq t'$.

 \item If $t=\frac{1}{2}$, then $(\pi_{-\frac{1}{2}},\mathcal{W}_{-\frac{1}{2}})$ and $(\pi_{\frac{1}{2}},L^{2}(\partial X,\nu_{o})/\mathcal{V}_{\frac{1}{2}} )$ are the trivial representation.
  \end{itemize}
 \end{enumerate}
\end{coro}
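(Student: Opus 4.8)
The plan is to derive Corollary \ref{coro1} as a consequence of the asymptotic Schur relations in Corollary \ref{BML2} (specialized to $\mathcal{E}_{t,0}=L^2(\partial X,\nu_o)$), exploiting the fact that the limit functional on the right-hand side encodes exactly the failure of irreducibility. First I would record the standard reformulation: a Hilbertian representation $(\pi_t,\mathcal{H})$ is irreducible if and only if its commutant contains no nontrivial projection, equivalently (since we are in the Hilbertian, not unitary, category) if and only if there is no proper nonzero closed $\pi_t$-invariant subspace; and by duality a closed subspace $\mathcal{V}\subset L^2$ is $\pi_t$-invariant precisely when its annihilator $\mathcal{V}^\perp$ (with respect to the $L^2$-pairing) is $\pi_{-t}$-invariant via the contragredient $\pi_t^*(\gamma)=\pi_{-t}(\gamma^{-1})$. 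The candidate subspaces are $\mathcal{V}_t:=\overline{\ker \mathcal{R}_t}=\overline{\ker\mathcal{I}_t}$ (the two kernels agree since $\sigma_t>0$ is bounded below) and $\mathcal{W}_{-t}:=\overline{\mathrm{ran}\,\mathcal{I}_t}$; because $\mathcal{I}_t$ is self-adjoint and compact, $L^2(\partial X,\nu_o)=\ker\mathcal{I}_t\oplus\overline{\mathrm{ran}\,\mathcal{I}_t}$ orthogonally, which gives the claimed decomposition once we check invariance.

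For part (1): since $\mathcal{I}_t$ intertwines $\pi_t$ and $\pi_{-t}$, the kernel of $\mathcal{I}_t$ is $\pi_t$-invariant and the closure of its range is $\pi_{-t}$-invariant, so if $\mathcal{R}_t$ (equivalently $\mathcal{I}_t$) is not injective then $\mathcal{V}_t$ is a proper nonzero closed invariant subspace, hence $(\pi_t,L^2)$ is reducible. Conversely, suppose $\mathcal{R}_t$ is injective; I would argue that any closed $\pi_t$-invariant subspace $\mathcal{V}\neq 0$ must be all of $L^2$. Pick $0\neq v\in\mathcal{V}$; by injectivity $\mathcal{R}_t(v)\neq 0$, so there is some $g\in C(\overline X)$ with $\langle g_{|\partial X}\mathcal{R}_t(v),\textbf{1}_{\partial X}\rangle\neq 0$. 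Applying Corollary \ref{BML2} with this $v$ and with $w$ ranging over $L^2$ (and $f$ chosen suitably, e.g. $f\equiv 1$), the left-hand side is an average of matrix coefficients $\langle\pi_t(\gamma)v,w\rangle/\phi_t(\gamma)$, all of which lie in the closure of the span of $\{\langle\pi_t(\gamma)v,\cdot\rangle\}$; so the limiting functional $w\mapsto \overline{\langle w,f_{|\partial X}\rangle}$ times a nonzero constant lies in the weak-$*$ closure of functionals associated to vectors of $\pi_t(\mathbb{C}[\Gamma])v\subset\mathcal{V}$. Varying $f$ over $C(\overline X)$ recovers enough functionals $w\mapsto\langle w,h\rangle$ to conclude $\mathcal{V}^\perp=0$, i.e. $\mathcal{V}=L^2$; this is the cyclicity/density argument that is the heart of the Bader--Muchnik method, and it is the step I expect to require the most care, because one has to pass from convergence of the scalar averages to a genuine statement about the closed span, and handle the role of the continuous functions $f$ and $g$ on $\overline X$.

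For part (2): assuming $\mathcal{R}_t$ is not injective, the orthogonal splitting $L^2=\mathcal{V}_t\oplus\mathcal{W}_{-t}$ with $\mathcal{V}_t=\overline{\ker\mathcal{I}_t}$ and $\mathcal{W}_{-t}=\overline{\mathrm{ran}\,\mathcal{I}_t}$ is invariant in the stated sense (the first under $\pi_t$, the second under $\pi_{-t}$) directly from the intertwining identity and self-adjointness. For infinite-dimensionality when $0<t<\tfrac12$: the operator $\mathcal{I}_t$ is a positive power of a kernel operator that is not of finite rank (its kernel $d_{o,\epsilon}^{-(1-2t)D}(\xi,\eta)$ is a nondegenerate symmetric kernel on the perfect set $\partial X$), so its range is infinite-dimensional; and $\ker\mathcal{I}_t$ cannot be finite-dimensional either, which I would see by noting that a finite-dimensional $\pi_t$-invariant subspace would force the representation $\pi_t$ restricted there to be finite-dimensional and unitarizable, contradicting the exponential growth of $\phi_t$ (equivalently the non-amenability/exponential volume growth of $\Gamma$) — the same kind of obstruction that rules out finite-dimensional pieces in this setting. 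For the irreducibility of $(\pi_{-t},\mathcal{W}_{-t})$: I would run the argument of part (1) again, now inside $\mathcal{W}_{-t}=\overline{\mathrm{ran}\,\mathcal{I}_t}$, using that $\mathcal{I}_t$ restricts to an injective operator $\mathcal{V}_t^\perp\to\mathcal{W}_{-t}$ with dense range and that Corollary \ref{BML2} with $i=2$ (i.e. on $\mathcal{K}_t$, whose matrix coefficients are controlled by $\mathcal{I}_t^2$) provides the corresponding Bader--Muchnik limit identifying a cyclic functional; irreducibility of $\mathcal{W}_{-t}$ then yields irreducibility of the contragredient $(\pi_t,L^2/\mathcal{V}_t)$ via the $L^2$-pairing duality recalled in Section \ref{basics}. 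Inequivalence of $(\pi_{-t},\mathcal{W}_{-t})$ and $(\pi_{-t'},\mathcal{W}_{-t'})$ for $t\neq t'$ I would obtain by comparing spherical functions: an intertwiner would match $\phi_t$ and $\phi_{t'}$ up to the relevant normalization, but $\phi_t(\gamma)\asymp$ a $t$-dependent power of $e^{-d(o,\gamma o)}$ (these are the estimates underlying the definition of $\pi_t$), so distinct $t$ give distinct growth rates; this is a clean separation-of-parameters argument. Finally, the $t=\tfrac12$ case is degenerate: $(1-2t)D=0$, so the kernel is identically $1$, $\mathcal{I}_{1/2}$ is the rank-one projection onto $\mathbb{C}\textbf{1}_{\partial X}$, hence $\mathcal{W}_{-1/2}=\mathbb{C}\textbf{1}_{\partial X}$ on which $\pi_{-1/2}$ acts by $(d\gamma_*\nu/d\nu)^{0}=1$, i.e. trivially, and correspondingly $L^2/\mathcal{V}_{1/2}$ is one-dimensional and trivial — a direct computation. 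The main obstacle throughout is the cyclicity/density extraction in part (1) and its reprise for $\mathcal{W}_{-t}$: converting the asymptotic scalar identities of Corollary \ref{BML2} into the statement that a single vector generates the whole Hilbert space under $\pi_t(\mathbb{C}[\Gamma])$.
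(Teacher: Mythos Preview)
Your overall strategy is correct and matches the paper's: both deduce everything from Corollary~\ref{BML2} via a Bader--Muchnik cyclicity argument. Your argument for part (1) is in fact slightly more direct than the paper's. The paper first proves that $(\pi_{-t},\mathcal{W}_{-t})$ is irreducible (using that $\sigma_t=\mathcal{I}_t(\mathbf{1}_{\partial X})$ is cyclic for $\pi_{-t}$ there, and that the weighted averages $\sum_\gamma \beta_{n,R}(\gamma)\sigma_t(\gamma^{-1}o)\pi_{-t}(\gamma^{-1})w/\phi_t(\gamma)$ converge weakly to $\langle\mathbf{1}_{\partial X},w\rangle\sigma_t$), and then reads off part (1) and the irreducibility of the quotient by duality. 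You instead argue directly on $(\pi_t,L^2)$: for $w\in\mathcal{V}^\perp$ the averaged matrix coefficients vanish identically, and Corollary~\ref{BML2} forces $\langle w,f_{|\partial X}\rangle=0$ for all $f$, hence $w=0$. This is valid and arguably cleaner; it also makes transparent why injectivity of $\mathcal{R}_t$ is exactly the hypothesis needed. Your suggestion to handle $(\pi_{-t},\mathcal{W}_{-t})$ via $i=2$ is equivalent to the paper's route through Corollary~\ref{coro2}, since $\mathcal{I}_t:\mathcal{K}_t\to\mathcal{W}_{-t}$ is an isometric intertwiner, but the paper carries out the argument intrinsically in $\mathcal{W}_{-t}$ with the explicit cyclic vector $\sigma_t$.

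There is, however, a genuine gap in your treatment of infinite-dimensionality for $0<t<\tfrac12$. Your claim that a finite-dimensional $\pi_t$-invariant subspace would be ``unitarizable, contradicting the exponential growth of $\phi_t$'' does not work: $\phi_t$ \emph{decays} for $t<\tfrac12$ (Proposition~\ref{estimatesspherical} gives $\phi_t(\gamma)\asymp e^{(t-\tfrac12)Q|\gamma|}$), and a finite-dimensional Hilbertian representation of a non-amenable $\Gamma$ need not be unitarizable in any case. The paper's argument is different and uses the spectral inequality (\ref{RDgeneral}): one shows (Proposition~\ref{weakmixing}) that the dual system $(\pi_t,\pi_{-t})$ is weak mixing, i.e.\ $\sum_{\gamma\in S^\Gamma_{n,R}}|\langle\pi_t(\gamma)v,w\rangle\langle\pi_{-t}(\gamma)v',w'\rangle|\le C e^{2tQnR}$, which for $|t|<\tfrac12$ grows strictly slower than $|S^\Gamma_{n,R}|\asymp e^{QnR}$. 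A finite-dimensional $\pi_t$-invariant $\mathcal{V}$ would force, via $\langle\pi_t(\gamma)v,\pi_{-t}(\gamma)v\rangle=\|v\|^2=1$ expanded in an orthonormal basis, a uniform lower bound on such products, contradicting weak mixing (Proposition~\ref{infinitedim}). You should replace your heuristic with this argument.
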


\begin{coro}\label{coro2}
 Let $t>0$.
The Hilbertian representation $(\pi_{t},\mathcal{K}_{t})$ is irreducible as well as its contragredient representation with respect to the $L^{2}$-pairing. Moreover $(\pi_{t},\mathcal{K}_{t})$ and $(\pi_{t'},\mathcal{K}_{t'})$ are inequivalent for positive real numbers $t\neq t'$.
\end{coro}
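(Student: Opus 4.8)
\noindent The argument follows the Bader--Muchnik scheme, now fed by Corollary~\ref{BML2} together with the ``second moment'' instance of Theorem~\ref{theo1} --- namely the case $t'=t$, $i=j=2$, $v'=v$, $w'=w$, $f=g\equiv 1$, which gives that $\sum_{\gamma}\beta_{n,R}(\gamma)\,\phi_t(\gamma)^{-2}\,|\langle\pi_t(\gamma)v,w\rangle_{\mathcal K_t}|^{2}$ converges to $\|\mathcal R_t(v)\|_{L^{2}}^{2}\|\mathcal I_t^{2}w\|_{L^{2}}^{2}$. The one point to watch is that $(\pi_t,\mathcal K_t)$ is only \emph{Hilbertian}, so one cannot argue with commutants and must instead work with closed invariant subspaces and their weak closures.

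\smallskip
\noindent\textbf{Irreducibility.} Let $\overline{\mathbf 1}$ be the class of $\mathbf 1_{\partial X}$ in $\mathcal K_t$; it is nonzero (since $\mathcal I_t(\mathbf 1_{\partial X})=\sigma_t>0$) and cyclic (by construction $\mathcal K_t$ is the completion of $\operatorname{span}\{\pi_t(\gamma)\mathbf 1_{\partial X}:\gamma\in\Gamma\}/N_t$), so it suffices to prove that every nonzero closed $\pi_t$-invariant subspace $\mathcal W\subset\mathcal K_t$ contains $\overline{\mathbf 1}$. Fix $0\neq v\in\mathcal W$. There is a $g\in C(\overline X)$ with $c:=\langle g_{|\partial X}\,\mathcal R_t(v),\mathbf 1_{\partial X}\rangle\neq 0$: otherwise $\mathcal R_t(v)$ is orthogonal to all of $C(\partial X)$, so $\mathcal R_t(v)=0$, hence $\mathcal I_t(v)=0$, hence $v=0$ (the norm of $\mathcal K_t$ being $\|\mathcal I_t(\cdot)\|_{L^{2}}$). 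Now put
\[
x_n:=\sum_{\gamma\in S^{\Gamma}_{n,R}}\beta_{n,R}(\gamma)\,g(\gamma^{-1}o)\,\frac{\pi_t(\gamma)v}{\phi_t(\gamma)}\ \in\ \mathcal W .
\]
By Corollary~\ref{BML2} (with $f\equiv 1$) we have, for every $w\in\mathcal K_t$, that $\langle x_n,w\rangle_{\mathcal K_t}$ converges to $c$ times $\overline{\langle w,\mathbf 1_{\partial X}\rangle}_{\mathcal E_{t,2}}=\langle\overline{\mathbf 1},w\rangle_{\mathcal K_t}$. Moreover $(x_n)$ is uniformly bounded in $\mathcal K_t$: Cauchy--Schwarz in the variable $\gamma$ (with the weights $\beta_{n,R}$, whose total mass is $\le C$) and $|g|\le\|g\|_{\infty}$ give
\[
|\langle x_n,w\rangle_{\mathcal K_t}|^{2}\ \le\ \|g\|_{\infty}^{2}\,C\sum_{\gamma\in S^{\Gamma}_{n,R}}\beta_{n,R}(\gamma)\,\frac{|\langle\pi_t(\gamma)v,w\rangle_{\mathcal K_t}|^{2}}{\phi_t(\gamma)^{2}},
\]
and the last sum converges by the second-moment case of Theorem~\ref{theo1}, so $\|x_n\|_{\mathcal K_t}=\sup_{\|w\|_{\mathcal K_t}\le 1}|\langle x_n,w\rangle_{\mathcal K_t}|$ stays bounded. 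A bounded sequence whose pairings against every vector converge is weakly convergent, hence $x_n\rightharpoonup c\,\overline{\mathbf 1}$; a closed subspace is weakly closed, so $c\,\overline{\mathbf 1}\in\mathcal W$ and thus $\overline{\mathbf 1}\in\mathcal W$. Cyclicity of $\overline{\mathbf 1}$ then forces $\mathcal W=\mathcal K_t$. As in Corollary~\ref{coro1}, the contragredient of an irreducible Hilbertian representation (with respect to this pairing) is again irreducible, via the order-reversing correspondence $\mathcal W\mapsto\mathcal W^{\circ}$ between closed invariant subspaces on the two sides.

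\smallskip
\noindent\textbf{Inequivalence.} Fix $t\neq t'$ and, relabelling if necessary, assume $t'<t$. If $\mathcal J\colon\mathcal K_t\to\mathcal K_{t'}$ is a bounded invertible intertwiner, then $\pi_t(\gamma)=\mathcal J^{-1}\pi_{t'}(\gamma)\mathcal J$ gives, with $v=w=\overline{\mathbf 1}$,
\[
\frac{\langle\pi_t(\gamma)\overline{\mathbf 1},\overline{\mathbf 1}\rangle_{\mathcal K_t}}{\phi_t(\gamma)}
=\frac{\phi_{t'}(\gamma)}{\phi_t(\gamma)}\cdot
\frac{\langle\pi_{t'}(\gamma)(\mathcal J\overline{\mathbf 1}),(\mathcal J^{-1})^{*}\overline{\mathbf 1}\rangle_{\mathcal K_{t'}}}{\phi_{t'}(\gamma)} .
\]
Apply $\sum_{\gamma\in S^{\Gamma}_{n,R}}\beta_{n,R}(\gamma)(\cdot)$ and let $n\to\infty$. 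By Corollary~\ref{BML2} the left side converges to $\|\sigma_t\|_{L^{2}}^{2}\neq 0$ (using $\mathcal R_t(\overline{\mathbf 1})=\mathbf 1_{\partial X}$ and $\langle\mathcal I_t^{2}\overline{\mathbf 1},\mathbf 1_{\partial X}\rangle=\langle\sigma_t,\sigma_t\rangle=\|\sigma_t\|_{L^{2}}^{2}$). On the right side, the spherical functions behave, up to subexponential multiplicative factors, like $\phi_s(\gamma)\asymp e^{-(1/2-s)Q\,d(o,\gamma o)}$ (a consequence of the shadow lemma, cf.~\cite{Boy}); since $s\mapsto(1/2-s)Q$ is strictly decreasing and $t'<t$, this forces $\sup_{\gamma\in S^{\Gamma}_{n,R}}\phi_{t'}(\gamma)/\phi_t(\gamma)\to 0$, while $\sum_{\gamma}\beta_{n,R}(\gamma)\,\phi_{t'}(\gamma)^{-2}|\langle\pi_{t'}(\gamma)(\mathcal J\overline{\mathbf 1}),(\mathcal J^{-1})^{*}\overline{\mathbf 1}\rangle_{\mathcal K_{t'}}|^{2}$ converges by the second-moment case of Theorem~\ref{theo1} applied inside $\mathcal K_{t'}$. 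By Cauchy--Schwarz the right side therefore tends to $0$, a contradiction. Hence $(\pi_t,\mathcal K_t)\not\cong(\pi_{t'},\mathcal K_{t'})$.

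\smallskip
\noindent\textbf{Main obstacle.} Since Theorem~\ref{theo1} is available, the only genuinely delicate step for irreducibility is the passage to weak limits inside an invariant subspace, which is legitimate precisely because the averaging vectors $x_n$ are uniformly bounded in $\mathcal K_t$ --- and this uniform boundedness is exactly what the second-moment form of Theorem~\ref{theo1} supplies, compensating for the non-unitarity of $\pi_t$. For the inequivalence, the crux is the strict monotonicity of $s\mapsto(1/2-s)Q$, which makes $\phi_t$ and $\phi_{t'}$ of genuinely different size whenever $t\neq t'$.
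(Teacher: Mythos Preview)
Your proof is correct and follows essentially the same Bader--Muchnik scheme as the paper. For irreducibility, both you and the paper use Corollary~\ref{BML2} to show that the weak limit of the averaged vectors in a nonzero invariant subspace is a nonzero multiple of $\overline{\mathbf 1}$; the paper makes the specific choice $g=\sigma_t$ (so that the coefficient becomes $\langle \mathcal I_t(v),\mathbf 1_{\partial X}\rangle$) and then argues nonvanishing via cyclicity of $\mathbf 1_{\partial X}$ in $L^2$, while you keep $g$ generic and argue nonvanishing via $\mathcal R_t(v)\neq 0$ --- these are equivalent. Your explicit second-moment bound for $\|x_n\|_{\mathcal K_t}$ is not strictly needed (convergence of $\langle x_n,w\rangle_{\mathcal K_t}$ for \emph{all} $w\in\mathcal K_t$ already forces $\sup_n\|x_n\|_{\mathcal K_t}<\infty$ by the uniform boundedness principle), but it does no harm.

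For inequivalence the paper takes a slightly shorter route: instead of your Cauchy--Schwarz splitting combined with the second-moment case of Theorem~\ref{theo1}, it invokes the spectral inequality of Theorem~\ref{Hspectral} directly to bound
\[
\Big|\sum_{\gamma\in S^\Gamma_{n,R}}\beta_{n,R}(\gamma)\,\frac{\langle \pi_{t'}(\gamma)U\mathbf 1,U\mathbf 1\rangle_{\mathcal K_{t'}}}{\phi_t(\gamma)}\Big|
\ \le\ C\,\frac{\phi_{t'}(nR)}{\phi_t(nR)}\ \longrightarrow\ 0
\]
(using Proposition~\ref{estimatesspherical} for the ratio), which contradicts the nonzero limit on the left coming from Corollary~\ref{BML2}. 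Your argument via the second moment is equally valid, just one step longer; and you are right to phrase equivalence with a bounded invertible intertwiner $\mathcal J$ (rather than a unitary), which is the correct notion for Hilbertian representations and is what the argument actually needs.
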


\begin{coro}\label{coro3}
We have:
\begin{enumerate}
\item It $t=\frac{1}{2}$, then $(\pi_{t},\mathcal{H}_{t})$ is the trivial representation.
\item If $t>\frac{1}{2}$, then $\mathcal{R}_{t}$ cannot be positive.
 \item \label{3ofcoro} If for some $t\in ]0,\frac{1}{2}[$, the operator $\mathcal{R}_{t}$ is positive, then $(\pi_{t},\mathcal{H}_{t})$  is an infinite dimensional irreducible unitary representation and its contragredient representation with respect to the $L^{2}$-pairing is irreducible as well. Moreover $(\pi_{t},\mathcal{H}_{t})$ and $(\pi_{t'},\mathcal{H}_{t'})$ are inequivalent for positive real numbers $t\neq t'$ in $[0,\frac{1}{2}]$.
\end{enumerate}
 
\end{coro}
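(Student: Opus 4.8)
\emph{For items (1) and (2), short direct arguments suffice.} When $t=\frac12$ the exponent $(1-2t)D$ vanishes, so $\mathcal I_{1/2}(v)=\langle v,\mathbf 1_{\partial X}\rangle\,\mathbf 1_{\partial X}$; as $\nu_o$ is a probability measure this is the rank-one orthogonal projection onto $\mathbb C\mathbf 1_{\partial X}$, so $\sigma_{1/2}\equiv\mathbf 1_{\partial X}$, $\mathcal R_{1/2}=\mathcal I_{1/2}$ is positive, $N_{1/2}$ has codimension one in $\mathcal F_{1/2,\mathbf 1_{\partial X}}$, and $\mathcal H_{1/2}$ is the line $\mathbb C[\mathbf 1_{\partial X}]$ on which $\pi_{1/2}(\gamma)$ acts by $\phi_{1/2}(\gamma)=\int_{\partial X}\frac{d\gamma_*\nu}{d\nu}\,d\nu=1$, i.e. trivially. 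For (2), if $t>\frac12$ then $(1-2t)D<0$, so the kernel of $\mathcal I_t$ equals $d^{(2t-1)D}_{o,\epsilon}(\xi,\eta)$, a continuous function on the compact space $\partial X\times\partial X$ that vanishes on the diagonal. If $\mathcal R_t$, equivalently $\mathcal I_t$, were positive, Mercer's theorem applied to this continuous positive kernel would give $\mathrm{Tr}\,\mathcal I_t=\int_{\partial X}d^{(2t-1)D}_{o,\epsilon}(\xi,\xi)\,d\nu_o(\xi)=0$, whence $\mathcal I_t=0$; this contradicts $\mathcal I_t(\mathbf 1_{\partial X})=\sigma_t>0$.

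\emph{For item (3), unitarity and irreducibility.} Fix $t\in\,]0,\frac12[$ with $\mathcal R_t$ positive; unitarity of $(\pi_t,\mathcal H_t)$ is Proposition~\ref{posintro}. I would first record that $\mathcal H_t$ embeds isometrically into $L^2(\partial X,\nu_o)$ via $[v]\mapsto\mathcal I_t^{1/2}v$, with image dense in $(\ker\mathcal I_t)^{\perp}$, and that consequently $\mathcal R_t\colon\mathcal H_t\to L^2(\partial X,\nu_o)$ is bounded and injective. Given a $\pi_t$-invariant orthogonal splitting $\mathcal H_t=\mathcal H'\oplus\mathcal H''$ (legitimate since $\pi_t$ is unitary here), take non-zero $v\in\mathcal H'$ and $w\in\mathcal H''$; then $\langle\pi_t(\gamma)v,w\rangle_{\mathcal H_t}=0$ for all $\gamma\in\Gamma$, so Corollary~\ref{BML2} (case $i=1$) forces $0=\langle g_{|\partial X}\mathcal R_t(v),\mathbf 1_{\partial X}\rangle\cdot\overline{\langle w,f_{|\partial X}\rangle_{\mathcal H_t}}$ for all $f,g\in C(\overline X)$, where $\langle w,f_{|\partial X}\rangle_{\mathcal H_t}=\langle\mathcal I_t(w),f_{|\partial X}\rangle_{L^2}$. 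Since $\mathcal R_t(v)\neq 0$, some $g$ makes the first factor non-zero, so $\mathcal I_t(w)$ is $L^2$-orthogonal to $C(\partial X)$, hence $\mathcal I_t(w)=0$ and $w=0$ by injectivity — a contradiction. Thus $(\pi_t,\mathcal H_t)$ is irreducible, and its $L^2$-contragredient (a copy of $\pi_{-t}$) is irreducible as well, the two being interchanged by the order-reversing bijection between their closed invariant subspaces induced by the non-degenerate $\Gamma$-invariant pairing.

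\emph{For item (3), infinite dimension and inequivalence.} Since $\mathbf 1_{\partial X}$ is cyclic for $(\pi_t,L^2(\partial X,\nu_o))$ (see \cite{Boy}), $\mathcal F_{t,\mathbf 1_{\partial X}}$ is dense, hence infinite-dimensional; were $\mathcal H_t$ finite-dimensional, $\ker\mathcal I_t$ would have finite codimension in $L^2$ and $\mathcal I_t$ would be of finite rank, contradicting that $\overline{\mathrm{range}\,\mathcal I_t}$ is infinite-dimensional for $t<\frac12$ (the Riesz-type input, already used in the proof of Corollary~\ref{coro1}(2)). For inequivalence, Corollary~\ref{BML2} with $f=g=\mathbf 1_{\overline X}$ shows that for suitable $v,w$ the averages $\sum_{\gamma\in S^\Gamma_{n,R}}\beta_{n,R}(\gamma)\,\langle\pi_t(\gamma)v,w\rangle_{\mathcal H_t}/\phi_t(\gamma)$ tend to a non-zero limit, so some matrix coefficient of $(\pi_t,\mathcal H_t)$ decays exactly at the rate of $\phi_t$; since the spherical functions $(\phi_{t})_{0<t\le 1/2}$ have pairwise distinct exponential decay rates in $d(o,\gamma o)$ — a standard Patterson–Sullivan asymptotic — an equivariant unitary isomorphism $\mathcal H_t\to\mathcal H_{t'}$ would equate these matrix coefficients, forcing $\phi_t\asymp\phi_{t'}$: impossible.

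\emph{Main obstacle.} The delicate point is this last inequivalence argument: to convert ``$\phi_t$ and $\phi_{t'}$ have distinct decay rates'' into a contradiction one needs the averages above to be \emph{uniformly bounded in $n$}, not merely convergent, so that the ratio $\phi_{t'}/\phi_t$ — which degenerates along the spheres $S^\Gamma_{n,R}$ — can be pushed through the limit. Securing that uniform bound is precisely the quantitative control built into the proof of Theorem~\ref{theo1}, and this is where the real work lies. A secondary technical point, underpinning the irreducibility step, is the careful identification of $\mathcal H_t$ inside $L^2(\partial X,\nu_o)$ and the resulting injectivity of $\mathcal R_t$ on $\mathcal H_t$.
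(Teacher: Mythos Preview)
Your argument is correct and runs parallel to the paper's, with two genuine methodological differences. For item~(2) the paper does not use Mercer: it shows (Proposition~\ref{nonpos}) that $\varphi_t(\gamma)=\|\sigma_t\|_1^{-1}\langle\pi_t(\gamma)\mathbf 1_{\partial X},\mathcal I_t\mathbf 1_{\partial X}\rangle$ is of positive type iff $\mathcal I_t$ is positive, and then observes that $\varphi_t\asymp\phi_t$ is unbounded for $t>\tfrac12$, which a positive-type function cannot be. Your Mercer argument is shorter and fully self-contained, trading the representation-theoretic detour for a classical fact about continuous positive kernels on a compact probability space. For item~(3), the paper proves irreducibility via the Bader--Muchnik template: from Corollary~\ref{BML2} it deduces that $\mathbf 1_{\partial X}$ is cyclic in $\mathcal H_t$ and that the rank-one projection onto $\mathbb C\mathbf 1_{\partial X}$ lies in the von Neumann algebra of $\pi_t$, then invokes \cite[Lemma~6.1]{Ga}. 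Your direct orthogonal-complement argument (available here precisely because $\pi_t$ is unitary on $\mathcal H_t$) reaches the same conclusion from the same limit formula without the von Neumann step; the extra care you flag---extending $\mathcal R_t$ boundedly and injectively to $\mathcal H_t$ via the isometry $[v]\mapsto\mathcal I_t^{1/2}v$---is exactly what is needed. For inequivalence your sketch coincides with the paper's (carried out in the proof of Corollary~\ref{coro2}, with Theorem~\ref{Uspectral} replacing Theorem~\ref{Hspectral}): the non-zero limit from Corollary~\ref{BML2} at $v=w=\mathbf 1_{\partial X}$ collides with the bound $\big|\sum_{\gamma}\beta_{n,R}(\gamma)\langle\pi_{t'}(\gamma)U\mathbf 1,U\mathbf 1\rangle_{\mathcal H_{t'}}/\phi_t(\gamma)\big|\le C\,\phi_{t'}(nR)/\phi_t(nR)\to 0$ for $t'<t$, and you have correctly located the crux in the uniform spectral estimate.
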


\begin{remark}
In this paper, the families $(\pi_{t})_{t\in [0,1/2]}$ defined in (\ref{repre}) (for different Hilbert spaces) can be thought as  one parameter non-unitary deformations from $\pi_{o}$, the standard boundary representation to the trivial representation. The Hilbertian representations $(\pi_{t},\mathcal{K}_{t})_{t\in ]0,1/2[}$ might be thought as an Hilbertian analog of complementary series for general hyperbolic groups.

\end{remark}

\subsection{Comments on conditionally negative hyperbolic groups}
An hyperbolic group $\Gamma$ may satisfy property $(T)$ or in an opposite direction Haagerup property. \\
In the first case, the trivial representation (which is realized by $(\pi_{\frac{1}{2}},\mathcal{H}_{\frac{1}{2}})$) is isolated and this is an obstruction for the Riesz operator ${\mathcal R}_t$ to be positive in a neighborhood of $1/2$. We suspect the existence of $\varepsilon>0$ depending on $(X,d)$ such that $\mathcal{R}_{t}$ is positive on $]0,\frac{1}{2}-\varepsilon]$ generalizing  the positivity of ${\mathcal R}_t$ obtained for uniform lattices of $Sp(n,1)$  in \cite{CJV}  (see Theorem 3.4.3 in \cite{CJV} and also \cite{Ko}).\\
In the second case, amongst several definitions, Haagerup property is characterized by the existence of a proper and conditionally negative function on $\Gamma$, that is a map $\psi\,:\, \Gamma\to \mathbb R_+$ satisfying for all $n\geq 1$, for all $\gamma_1,\cdots,\gamma_n\in \Gamma$ and for all real numbers $\lambda_1,\cdots,\lambda_n$ 
such that $\sum\lambda_i=0$,
$$ \sum_{i,j}\lambda_i\lambda_j\psi(\gamma_j^{-1}\gamma_i)\leq 0.$$
An example of such function is given by $\psi (\gamma)=d(x,\gamma x)$ when $\Gamma$ acts by isometries on an hyperbolic space $(X,d)$, with the additional hypothesis that $d$ is of conditionally negative type, that is $\sum_{i,j}\lambda_i\lambda_jd(x_i,x_j)\leq 0$ for all $x_1,\cdots, x_n\in X$ and  $\sum\lambda_i=0$. Then, the argument appearing in 
 \cite{Bou} for CAT(-1) spaces can be extended to the hyperbolic framework to deduce the positivity of ${\mathcal R}_t$ for all $t\in]0,{1\over 2}]$. Namely :\\


\begin{prop}\label{positivcondneg}
If $\Gamma$ acts on a proper $\delta$-hyperbolic, roughly geodesic and $\epsilon$-good space $(X,d)$ with compact quotient and $d$ of conditionally negative type, then $\mathcal{R}_{t}$ is positive for all $0<t \leq \frac{1}{2}$.
\end{prop}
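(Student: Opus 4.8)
The plan is to prove that $\mathcal{I}_{t}$ is a positive operator on $L^{2}(\partial X,\nu_{o})$ for $0<t\le\tfrac{1}{2}$, and to deduce the positivity of $\mathcal{R}_{t}$ from it. For the reduction, let $M$ denote multiplication by $\sqrt{\sigma_{t}}$: since $\sigma_{t}$ is bounded and bounded below by a positive constant on the compact space $\partial X$, $M$ is bounded with bounded inverse, and from $\mathcal{R}_{t}=\sigma_{t}^{-1}\mathcal{I}_{t}=M^{-2}\mathcal{I}_{t}$ one gets $M\mathcal{R}_{t}M^{-1}=M^{-1}\mathcal{I}_{t}M^{-1}$, which is self-adjoint and positive (for the standard inner product) exactly when $\mathcal{I}_{t}$ is; this is the equivalence, announced in the introduction, between the positivity of $\mathcal{R}_{t}$ and that of $\mathcal{I}_{t}$. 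The strategy for $\mathcal{I}_{t}$ is then to show that its integral kernel is a positive-definite kernel on $\partial X$, following Bourdon's argument for CAT$(-1)$ spaces in \cite{Bou}, the new ingredient being that the $\epsilon$-good hypothesis of \cite{NS} lets one run that argument with the Gromov product.

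First I would identify the kernel. Because $(X,d)$ is $\epsilon$-good, the visual metric is exactly $d_{o,\epsilon}(\xi,\eta)=e^{-\epsilon(\xi|\eta)_{o}}$, where $(\xi|\eta)_{o}$ denotes the boundary extension of the Gromov product $(x|y)_{o}=\tfrac{1}{2}\big(d(o,x)+d(o,y)-d(x,y)\big)$; using $\epsilon D=Q$ (see \eqref{DQ}), the integral kernel of $\mathcal{I}_{t}$ is therefore $d_{o,\epsilon}(\xi,\eta)^{-(1-2t)D}=e^{(1-2t)Q\,(\xi|\eta)_{o}}$.

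The heart of the matter will be the claim that the kernel $(\xi,\eta)\mapsto-(\xi|\eta)_{o}$ is conditionally negative definite on $\partial X$. To prove it I would fix distinct points $\xi_{1},\dots,\xi_{n}\in\partial X$ and real numbers $\lambda_{1},\dots,\lambda_{n}$ with $\sum_{i}\lambda_{i}=0$, pick sequences $x_{i}^{(k)}\in X$ with $x_{i}^{(k)}\to\xi_{i}$, and expand $\sum_{i,j}\lambda_{i}\lambda_{j}\,(x_{i}^{(k)}|x_{j}^{(k)})_{o}$: the two ``radial'' sums $\sum_{i,j}\lambda_{i}\lambda_{j}\,d(o,x_{i}^{(k)})$ and $\sum_{i,j}\lambda_{i}\lambda_{j}\,d(o,x_{j}^{(k)})$ vanish because $\sum_{i}\lambda_{i}=0$, leaving $-\tfrac{1}{2}\sum_{i,j}\lambda_{i}\lambda_{j}\,d(x_{i}^{(k)},x_{j}^{(k)})$, which is $\ge 0$ because $d$ is of conditionally negative type. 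Letting $k\to\infty$ gives $\sum_{i,j}\lambda_{i}\lambda_{j}\,(\xi_{i}|\xi_{j})_{o}\ge 0$, as claimed. Schoenberg's theorem then shows that $e^{s(\xi|\eta)_{o}}$ is a positive-definite kernel on $\partial X$ for every $s\ge 0$; taking $s=(1-2t)Q$, which is $>0$ for $0<t<\tfrac{1}{2}$ and equals $0$ (the constant kernel $1$) when $t=\tfrac{1}{2}$, shows that the kernel of $\mathcal{I}_{t}$ is positive definite. Approximating an arbitrary $v\in L^{2}(\partial X,\nu_{o})$ by simple functions and using that a positive-definite kernel integrated against a finite measure induces a positive-semidefinite form — the integrable singularity of the kernel on the diagonal being harmless, as $\nu_{o}\otimes\nu_{o}$ does not charge it — one concludes $\langle\mathcal{I}_{t}v,v\rangle\ge 0$ for all $v$, hence $\mathcal{I}_{t}\ge 0$, and therefore $\mathcal{R}_{t}\ge 0$.

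The step I expect to be the genuine obstacle is the one implicit in the identification of the kernel: one needs to know that in an $\epsilon$-good space $(\xi|\eta)_{o}$ is the \emph{honest} limit $\lim_{x\to\xi,\,y\to\eta}(x|y)_{o}$, not merely a quantity that agrees with it up to a bounded additive error — otherwise the computation above would yield conditional negativity only up to an error term, and Schoenberg's theorem could not be applied. In the CAT$(-1)$ setting of \cite{Bou} this limit formula is classical; in the present generality it is exactly the kind of statement that the $\epsilon$-good condition of Nica--\v{S}pakula \cite{NS} is designed to supply, and making the argument rigorous amounts to verifying that their estimates give this convergence uniformly over the finitely many boundary points involved. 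The remaining ingredients — the reduction to $\mathcal{I}_{t}$, the kernel computation, the appeal to Schoenberg, and the passage from finite configurations to $L^{2}$ — are routine.
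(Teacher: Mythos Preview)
Your reduction from $\mathcal{R}_t$ to $\mathcal{I}_t$ is fine (it is the paper's Lemma~\ref{positiv}), and you correctly identify that conditional negativity of $d$ plus Schoenberg is the engine. But the execution on the boundary has a real gap, and it is not the one you flag.

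The issue is the diagonal. On $\partial X$ the Gromov product satisfies $(\xi|\xi)_o=+\infty$, so the kernel $(\xi,\eta)\mapsto -(\xi|\eta)_o$ is not a finite-valued kernel and Schoenberg's theorem does not apply to it. Your limiting computation does not save this: in
\[
\sum_{i,j}\lambda_i\lambda_j\,(x_i^{(k)}|x_j^{(k)})_o
\;=\;-\tfrac12\sum_{i,j}\lambda_i\lambda_j\,d(x_i^{(k)},x_j^{(k)})\;\ge 0,
\]
the diagonal terms on the left are $\lambda_i^2\,d(o,x_i^{(k)})\to+\infty$, so the left side diverges to $+\infty$ rather than converging to $\sum_{i,j}\lambda_i\lambda_j(\xi_i|\xi_j)_o$. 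Redefining $(\xi|\xi)_o$ to be some finite value does not help either: the approximating identity then acquires an extra term $-\sum_i\lambda_i^2\,d(o,x_i^{(k)})\to-\infty$, and no inequality survives the limit. Consequently ``$e^{s(\xi|\eta)_o}$ is positive definite on $\partial X$'' is either vacuous (diagonal $=+\infty$) or unproved, and the passage to $\langle \mathcal{I}_t v,v\rangle\ge 0$ via simple functions cannot be justified from it.

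The paper's proof avoids this by running Schoenberg on $\Gamma$, where $(go|ho)_o$ is finite: from conditional negativity of $d$ one gets that $(g,h)\mapsto e^{(1-2t)Q(go|ho)_o}$ is positive definite on $\Gamma$. The transfer to the boundary is then done not by a pointwise limit but by an equidistribution theorem (Proposition~\ref{equidistriboucher}): the measures $m_{o,t,N}$ built from orbit points converge weak$^*$ to $m_{o,t}=k_t\,d\nu_o\otimes d\nu_o$, and this convergence is proved precisely by a truncation/uniform-tail argument (Lemmas~\ref{pos1}--\ref{pos3}) tailored to the singular kernel. In other words, the step you call ``routine'' (passing from finite configurations to the $L^2$ statement across a diagonal singularity) is exactly where the work lies, and the paper's equidistribution machinery is the device that does it. By contrast, the step you worry about---that $(\xi|\eta)_o$ is the honest limit of $(x|y)_o$---is literally the definition of an $\epsilon$-good space and needs no further justification.
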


 We will see (Proposition \ref{Haagerup}) that if $\Gamma$  acts with compact quotient on $(X,d)$ roughly geodesic, $\delta$-hyperbolic and $\epsilon$-good, positivity of   $({\mathcal R}_t)$ for all ${t\in ]0,{1\over 2}]}$ implies Haagerup property for $\Gamma$. Conversely, it would be interesting to know if Haagerup property for $\Gamma$ implies the existence of a $\Gamma$-hyperbolic space  $(X,d)$ such that the corresponding Riesz operators are positive.\\

\subsection{Structure of the paper}
In Section \ref{section2}, we introduce all the geometrical concepts concerning  hyperbolic spaces and groups. The notions of strongly hyperbolic  and $\epsilon$-good spaces are discussed. We also recall equidistribution results in the spirit of Margulis-Roblin. Then all the basics of Patterson-Sullivan measure theory are developed. We start  Section \ref{section3} by studying  properties of the function $\sigma_{t}$ introduced in (\ref{lafonction}), we prove elementary properties of the Riesz operator as well as for the intertwiner $\mathcal{I}_{t}$, such as its compacity. We discuss the existence of a left inverse operator of the intertwiner.  
In Section \ref{section4}, we define and study the Hilbert spaces on which the group acts. In particular we study their pairing structure. In Section \ref{section5}, we study and define the notion of spherical functions and make connections with positive definite functions. We also study the positivity of $\mathcal{I}_{t}$ assuming the metric on the space $X$ defines a negative definite kernel. In Section \ref{section6} we prove spectral inequalities for $\pi_{t}$ and we give several consequences of those inequalities as mixing properties of the boundary representations. We also introduce the generalized Poisson transform and the Riesz transform. Those operators are fundamental to establish all  convergence theorems. Section \ref{section7} is devoted to the proofs. Last, we add an appendix providing a way of unitarization based on $\ell^{2}$-cohomology of discrete groups.

\subsection*{Acknowledgement}
We are grateful to Nigel Higson to have strengthened our interest in intertwining operators and would also like to thank Jean-Martin Paoli for having suggested to study the square of the intertwining operator. We wish to thank Vincent Millot for useful discussion and for the reference \cite{Sam}. Eventually, we would like to thank Antoine Pinochet-Lobos for helpful comments on a first version of this manuscript. Finally, we are grateful to Marc Bourdon for discussions on Besov spaces and $\ell^{2}$-Betti numbers. \\
The first author thanks warmly Christophe Pittet for his support.
The second author is grateful  to Institut f\"ur Mathematik Universit\"at Z\"urich -  specially to Viktor Schr\"oder and also for the stimulating ambiance in Dynamical Systems seminar -   for their kind hospitality  during the past two years when this work was partially completed. \\
 The authors thank the
MFO  for RIP program providing excellent research environment where this work has been initiating.

\section{Preliminaries}\label{section2}

In the sequel we adopt a universal notation $C$ for constants independent of other variables than the global geometry. 
 \subsection{Hyperbolic metric spaces and boundaries}\label{HSG} (A good reference for this section is \cite{BH}).\\ 
A metric space \((X,d)\) is said to be \emph{hyperbolic} if there exists $\delta\geq 0$  and a\footnote{if the condition holds for some \(o\) and \(\delta\), then it holds for any \(o\) and \(2\delta\)} basepoint \(o\in X\) such that for any \(x,y,z\in X\)  one has
\begin{equation}\label{hyp}
  (x,y)_{o}\geq \min\{ (x,z)_{o},(z,y)_{o}\}-\delta,
\end{equation}
where \((x,y)_{o}\) stands for the \emph{Gromov product} of \(x\) and \(y\) from \(o\), that is
\begin{equation}
  (x,y)_{o}=\frac{1}{2}(d(x,o)+d(y,o)-d(x,y)).
\end{equation}
We will call indifferently $(X,d)$ a $\delta$-hyperbolic space when the constant $\delta$ needs to be precised and we will only consider in the sequel \emph{proper} hyperbolic metric spaces (metric spaces for which closed balls are compact).\\
A map  $\phi:(X,d_{X}) \to (Y,d_{Y})$ between metric spaces is a \emph{quasi-isometry} if there exist positive constants $L,\lambda>0$ so that 
\begin{equation}
\frac{1}{L}d_{X}(x,y)-\lambda \leq d_{Y}(\phi(x),\phi(y))\leq L d_{X}(x,y)+\lambda.
\end{equation}

Within the class of \emph{geodesic metric spaces},  hyperbolicity is invariant under quasi-isometries, which is not the case for arbitrary metric spaces.\\

A sequence $(a_{n})_{n\in  \mathbb{N}}$ in $X$ converges at infinity if $(a_{i},a_{j})_{o}\rightarrow +\infty$ as $i,j$ goes to $+\infty$. Set  $(a_n)\sim(b_n)\Leftrightarrow (a_i,b_j)_o\to \infty$ as $i,j\to \infty$ : It defines an equivalence relation and the set of equivalence classes  (independant on the base point) is denoted by $\partial X$. The topology on $X$ naturally extends to  $\overline{X}:=X\cup \partial X$ so that $\overline{X}$ and $\partial X$ are compact sets. The formula 
\begin{equation}\label{gromovextended}
(\xi,\eta)_{o}:= \sup \liminf_{i,j}(a_{i},b_{j})_{o} 
\end{equation}
(where the supremum is taken over all sequences $(a_n), (b_n)$ which represent $\xi$ and $\eta$ respectively)
allows to extend the Gromov product on $\overline{ X}\times \overline{ X}$ but in a \emph{non continuous way} in general (it is obviously continuous on $X\times X$ as the distance function to a point is).\\
We refer to \cite[3.17 Remarks, p. 433]{BH} for the following.\\
 Let $(X,d)$ be an hyperbolic metric space and fix a base point $o$ in $ X$.
 \begin{enumerate}
\item For all $\xi,\eta$ in $\overline{X}$ there exist sequences $(a_n)$ and $(b_n)$ such that $\xi=\lim a_{n}$, $\eta=\lim b_{n}$ and $(\xi,\eta)_o=\lim_{n \rightarrow +\infty} (a_n,b_n)_o$.

\item For all $\xi,\eta$ and $u$ in $\overline{X}$ by taking limits we still have $$(\xi,\eta)_o \geq \min{\{(\xi,u)_{o},(u,\eta)_{o} \} }-2\delta .$$
\item For all $\xi,\eta$ in $\partial X$ and all sequences $(a_i)$ and $(b_j)$ in X with $\xi= \lim a_ i$ and $\eta= \lim b_j$, we have:

$$(\xi,\eta)_{o} -2\delta \leq  \liminf_{i,j}(a_i ,b_j)_{o} \leq (\xi,\eta)_{o}.$$ 

\end{enumerate}

We refer to \cite[8.- Remarque, Chapitre 7, p. 122]{G} for a proof of the statement (3).  The boundary $\partial X$ carries a family of \emph{visual metrics}, depending on \(d\) and a real parameter \(\epsilon > 0\) denoted from now by $d_{o,\epsilon}$. For $1<\epsilon \leq \frac{\log(2)}{4\delta} $ then \(d_{o,\epsilon}\) is a metric, relative to a base point o on \(\partial X\) satisfying
\begin{equation}\label{eq:visual-metric-def}
 e^{-\epsilon (\xi,\eta)_o-\epsilon c_{v}}\leq   d_{o,\epsilon}(\xi,\eta)\leq  e^{-\epsilon (\xi,\eta)_o},
\end{equation}
for some $c_{v}>0$ such that $e^{-\epsilon c_{v}}=(3-2e^{\delta \epsilon}).$

We refer to \cite[Chapter III-H pp. 435-437]{BH} for those visual metrics.\\

\subsection{Hyperbolic groups}
\subsubsection{Definition}

A group $\Gamma$ acts properly discontinuously on a proper metric space if for every compact sets $K,L\subset X$, the set $\{\gamma\in \Gamma\;;\; \gamma K\cap L\neq \emptyset\}$ is finite. The action is said to be geometric if furthermore $\Gamma$ acts by isometries and  $X/\Gamma$ is compact. A {\it group $\Gamma$ is said to be hyperbolic} if it acts  geometrically on a proper hyperbolic space. A hyperbolic group is necessarily finitely generated (by \v{S}varc-Milnor's lemma). For such $\Gamma$,  any  finite set of generators $\Sigma$  gives rise to a so called metric Cayley graph  $({\mathcal G}(\Gamma, \Sigma),d_{\Sigma})$  whose set of vertices are the elements of $\Gamma$, linked by length-one edges if and only if they differ by an element of $\Sigma$.  Every {\it geodesic} hyperbolic metric space $(X,d)$ on which $\Gamma$ acts geometrically is quasi-isometric to a (any of the) Cayley graph(s) of a hyperbolic group. All constructions below will depend {\it a priori} on hyperbolic metric spaces $(X,d)$ on which $\Gamma$ acts geometrically  and we will consider as well {\it non geodesic} hyperbolic spaces. \\
\subsubsection{Notations}
As far as the geometry of $(X,d)$ or that of $\Gamma$ for its action  as above is concerned, we will fix the following notations :\\
Endow $\Gamma$ with the length function  $|\cdot|: \Gamma \rightarrow \mathbb{R}^{+}$ defined as $|\gamma |=d(o,\gamma o)$ for a basepoint $o$ and for any $\gamma \in \Gamma.$\\
The open (resp. closed) balls in $X$ are denoted by $B_X(o,R)$ (resp. $\bar B_X(o,R))$ and define the (thickened) spheres of $X$ associated with $n\in \mathbb N$ and fixed $R>0$  as $S_{n,R}(o)=B_X(o,(n+1)R)\setminus B_X(o,nR)$. We will need as well spheres of a $\Gamma$-orbit :
$S^{\Gamma}_{n,R}(o)=\{\gamma\in \Gamma| \Gamma o\cap S_{n,R}(o)\neq\varnothing\}$ and will use the notation $S_{n,R}$ and $S^{\Gamma}_{n,R}$ rather than $S_{n,R}(o)$ and $S^{\Gamma}_{n,R}(o)$ (as $o$ will be fixed).\\

  \subsection{Roughly geodesic spaces,  good and strong hyperbolic spaces and Gromov boundary}\label{sh}
  
The classical theory of $\delta$-hyperbolic spaces works under the assumption that the spaces are geodesic. In general, the Gromov product associated with a word metric on a Cayley graph of a Gromov hyperbolic group does not extend continuously to the bordification. In the following we will need  metrics $d$ on $\Gamma$ such that the Gromov product extends continuously to the bordification : the price to pay is that $(\Gamma, d)$ won't be geodesic but only   \emph{roughly geodesic}  :

  \begin{defi}\label{roughiso}
A metric space $(X,d)$ is roughly geodesic  if there exists $C=C_X>0$ so that for all $x,y\in X$ there exists a rough geodesic joining $x$ and $y$,  that is map $r:[a,b]\subset \mathbb{R}\rightarrow X$ with $r(a)=x$ and $r(b)=y$ such that $ |t-s|-C_X \leq d(r(t),r(s))\leq  |t-s|+C_X$ for all $t,s\in [a,b]$.
\end{defi}
 
We say that two rough geodesic rays 
$r,r':[0,+\infty)\rightarrow X$ are equivalent if \\ $\sup_{t}d(r(t),r'(t))<+\infty$. We  write $\partial_{r} X$ for the set of equivalence classes of rough geodesic rays. When $(X,d)$ is a proper roughly geodesic space,  $\partial X$ and $\partial_{r} X$ coincide. If $\Gamma$ is a hyperbolic group
 endowed with a left invariant hyperbolic metric quasi-isometric to a word metric, it turns out that the metric space $(\Gamma,d)$ is a proper roughly geodesic $\delta$-hyperbolic metric space, see for example \cite[Section 3.1]{Ga}. Hence a hyperbolic group acts geometrically on a proper, roughly geodesic $\delta$-hyperbolic metric space.\\

\begin{defi}(Nica-\v{S}pakula)
 We say that a hyperbolic space $X$ is $\epsilon$-good, where $\epsilon>0$, if the following
two properties hold for each base point $o\in X$:
\begin{itemize}
\item The Gromov product $(\cdot,\cdot)_{o}$ on $X$ extends continuously to the bordification $X\cup \partial X$.

\item  The map $d_{o,\epsilon}:(\xi,\eta)\in \partial X \mapsto  e^{-\epsilon(\xi,\eta)_{o}}$ is a metric on  $\partial X$.
\end{itemize}
\end{defi}

 The metric space  $(\partial X,d_{o,\epsilon})$ is a compact subspace of the bordification $\overline{X}:=\partial X \cup X$ (also compact) and the open ball  centered at $\xi$ of radius $r$ with respect to $d_{o,\epsilon}$ will be denoted by $B(\xi,r)$.\\

In \cite{NS} the authors introduce the notion of strong hyperbolicity that we recall here:
\begin{defi}
A metric space $(X,d)$ is \emph{$\epsilon$-strongly hyperbolic} if for all $x,y,z,o \in X$ we have 
$$
e^{-\epsilon (x,y)_{o}}\leq  e^{-\epsilon (x,z)_{o}}+e^{-\epsilon (z,y)_{o}}.
$$
\end{defi}
Then, the authors prove the following
\begin{theorem}(Nica-\v{S}pakula)
An $\epsilon$-strongly hyperbolic space is  
 $\epsilon$-good and $\frac{\log(2)}{\epsilon}$-hyperbolic.
\end{theorem}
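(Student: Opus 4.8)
The plan is to verify the two bullet points in the definition of $\epsilon$-good directly from the strong ultrametric-type inequality, and then to extract the hyperbolicity constant by taking logarithms. I would organize the argument around three claims: (i) the map $d_{o,\epsilon}$ is a metric on $\partial X$; (ii) the Gromov product extends continuously to $\overline{X}=X\cup\partial X$; (iii) strong hyperbolicity with parameter $\epsilon$ forces the standard $\delta$-inequality with $\delta=\frac{\log 2}{\epsilon}$.

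For (iii), which is the cleanest, I would start from $e^{-\epsilon(x,y)_o}\leq e^{-\epsilon(x,z)_o}+e^{-\epsilon(z,y)_o}\leq 2\max\{e^{-\epsilon(x,z)_o},e^{-\epsilon(z,y)_o}\}$, then apply $-\frac1\epsilon\log(\cdot)$ (which is order-reversing) to obtain $(x,y)_o\geq \min\{(x,z)_o,(z,y)_o\}-\frac{\log 2}{\epsilon}$, i.e. \eqref{hyp} with $\delta=\frac{\log 2}{\epsilon}$. For (i): symmetry and the value $d_{o,\epsilon}(\xi,\xi)=e^{-\epsilon(\xi,\xi)_o}=e^{-\epsilon\cdot\infty}=0$ are immediate once one checks $(\xi,\eta)_o<\infty$ for $\xi\neq\eta$ (this is exactly the statement that distinct boundary points are not equivalent, which is built into the definition of $\partial X$), and positivity follows. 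For the triangle inequality one wants $e^{-\epsilon(\xi,\eta)_o}\leq e^{-\epsilon(\xi,\zeta)_o}+e^{-\epsilon(\zeta,\eta)_o}$ for boundary points; I would obtain this by choosing, via Proposition~\ref{propGromov}(1), sequences $a_i\to\xi$, $b_i\to\zeta$, $c_i\to\eta$ realizing the extended Gromov products as genuine limits, applying the strong hyperbolicity inequality to the points $a_i,b_i,c_i$ (and $o$), and passing to the limit — continuity of $t\mapsto e^{-\epsilon t}$ makes the passage to the limit harmless. The subtlety here is that \eqref{gromovextended} defines $(\xi,\eta)_o$ as a $\sup\liminf$, so one must be a little careful that the sequences chosen for the three pairs are mutually compatible; Proposition~\ref{propGromov}(1) (which gives sequences simultaneously realizing a given pair) plus the $2\delta$-comparison in part (3) handles this, at worst up to harmless constants that one checks actually vanish in the strongly hyperbolic case.

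The step I expect to be the real obstacle is (ii), the continuous extension of the Gromov product to the full bordification $\overline{X}$ — including "mixed" pairs $(x,\eta)$ with $x\in X$, $\eta\in\partial X$, and sequences $x_n\to\xi\in\partial X$ where one must show $(x_n,\eta)_o\to(\xi,\eta)_o$. The natural route is: from the strong inequality applied to $x_n, x_m, \eta$ (using limit-realizing sequences for the boundary arguments as above) one gets $|e^{-\epsilon(x_n,\eta)_o}-e^{-\epsilon(x_m,\eta)_o}|\leq e^{-\epsilon(x_n,x_m)_o}\to 0$, so $(e^{-\epsilon(x_n,\eta)_o})_n$ is Cauchy in $\mathbb{R}$ and hence converges; identifying the limit with $e^{-\epsilon(\xi,\eta)_o}$ then requires comparing $\liminf$ and $\sup\liminf$, again via Proposition~\ref{propGromov}(3). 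The key quantitative input throughout is that the strong inequality is a genuine (exponentiated) ultrametric-flavoured triangle inequality with \emph{no additive error}, which is precisely what upgrades the usual "rough" continuity statements to honest continuity. I would assemble these pieces, cite Proposition~\ref{propGromov} for the sequence-selection and $2\delta$-comparison lemmas, and remark that all of this is carried out in \cite{NS}; the write-up can therefore be kept to the essential inequalities above rather than re-deriving the boundary theory from scratch.
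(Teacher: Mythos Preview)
The paper does not prove this theorem; it is quoted as a result of Nica--\v{S}pakula and simply attributed to \cite{NS}, so there is no in-paper argument to compare against. Your outline is essentially the argument in \cite{NS} and is sound: part (iii) is immediate as you say, and the substantive work is indeed (ii), where the absence of additive error in the strong inequality is exactly what forces $e^{-\epsilon(a_n,b_m)_o}$ to be a genuine Cauchy double net, upgrading the $\sup\liminf$ in \eqref{gromovextended} to an honest limit independent of the representing sequences.

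One organizational remark: your step (i) implicitly uses that the extended Gromov product on $\partial X$ is already a genuine limit (so that the strong inequality passes to the boundary), which is precisely what you establish in (ii). It is cleaner to do (ii) first---show that for any sequences $a_n\to\xi$, $b_n\to\eta$ the limit $\lim_{n,m}(a_n,b_m)_o$ exists and is independent of the sequences (via $|e^{-\epsilon(a_n,b_m)_o}-e^{-\epsilon(a_{n'},b_{m'})_o}|\le e^{-\epsilon(a_n,a_{n'})_o}+e^{-\epsilon(b_m,b_{m'})_o}\to 0$)---and then (i) follows in one line by passing the strong inequality to the limit. With that reordering the ``mutually compatible sequences'' issue you flag disappears, since any sequences give the same value.
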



To sum up, we state the  following theorem, that is a combination of results due to Blach\`ere, Ha\" issinsky and Matthieu \cite{BHM} and of Nica and \v{S}pakula \cite{NS}. 

\begin{theorem}
A hyperbolic group acts by isometries, properly discontinuously and cocompactly on a roughly geodesic  $\epsilon$-good $\delta$-hyperbolic space.
\end{theorem}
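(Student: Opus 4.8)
\subsection*{Proof proposal}

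The plan is to exhibit, for an arbitrary hyperbolic group $\Gamma$, a single metric space $(X,d)$ carrying all of the required properties and on which $\Gamma$ acts geometrically, and the natural candidate is $X=\Gamma$ itself equipped with a well-chosen left-invariant metric. Fix a finite generating set $\Sigma$ and write $d_{\Sigma}$ for the associated word metric on $\mathcal G(\Gamma,\Sigma)$. The key external input is that $\Gamma$ carries a left-invariant metric $d$, quasi-isometric to $d_{\Sigma}$, which is $\epsilon$-strongly hyperbolic for some $\epsilon>0$; this is precisely what the construction of Nica--\v Spakula \cite{NS} provides (the Green metric of Blach\`ere--Ha\"issinsky--Mathieu \cite{BHM} being a related source of such a metric once one has their hyperbolicity and comparison estimates). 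I would take the existence of $d$ as the starting point.

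Next I would upgrade strong hyperbolicity to the properties named in the statement. By the Nica--\v Spakula theorem quoted just above, $\epsilon$-strong hyperbolicity of $(\Gamma,d)$ implies that $(\Gamma,d)$ is $\epsilon$-good and $\tfrac{\log 2}{\epsilon}$-hyperbolic, so one may take $\delta=\tfrac{\log 2}{\epsilon}$. For rough geodesicity I would argue as in \cite[Section 3.1]{Ga}: since $d$ and $d_{\Sigma}$ are quasi-isometric, closed $d$-balls in $\Gamma$ are bounded for $d_{\Sigma}$, hence finite (local finiteness of the Cayley graph), so $(\Gamma,d)$ is proper; given $x,y\in\Gamma$ one then takes a combinatorial geodesic in $\mathcal G(\Gamma,\Sigma)$ from $x$ to $y$, views its vertex sequence inside $\Gamma$, and reparametrizes it by $d$-arc length. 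The quasi-isometry inequalities turn this into a rough geodesic joining $x$ and $y$, with a constant $C_X$ depending only on $\Sigma$ and on the quasi-isometry constants of $d$ versus $d_{\Sigma}$.

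Finally I would check the action: $\Gamma$ acts on $(\Gamma,d)$ by left translations $\gamma\cdot x=\gamma x$, and these are isometries exactly because $d$ is left-invariant; the action is free, hence properly discontinuous, since for finite (equivalently compact) $K,L\subset\Gamma$ the set $\{\gamma\in\Gamma\mid \gamma K\cap L\neq\emptyset\}$ is finite; and $\Gamma\backslash\Gamma$ is a single point, so the action is cocompact. The only genuinely nontrivial ingredient — and the main obstacle — is the first one, namely the existence of a left-invariant, strongly hyperbolic metric quasi-isometric to the word metric; this is exactly why the statement is attributed to the combination of \cite{BHM} and \cite{NS}, while everything afterwards is bookkeeping with quasi-isometry constants and with the definitions of the action.
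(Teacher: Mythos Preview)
Your overall strategy matches the paper: it does not prove this theorem but records it as a combination of \cite{BHM} and \cite{NS}, the concrete witness being $\Gamma$ itself with the Green metric $d_\mu$ (or the Mineyev metric). \cite{BHM} supplies rough geodesicity of $(\Gamma,d_\mu)$, \cite{NS} shows $(\Gamma,d_\mu)$ is $\epsilon$-strongly hyperbolic, hence $\epsilon$-good and $\tfrac{\log 2}{\epsilon}$-hyperbolic by the theorem quoted just above, and the left-translation action is geometric for the reasons you give. A small attribution point: \cite{NS} does not construct a new metric; it proves strong hyperbolicity of the metrics already supplied by \cite{BHM} and \cite{Min}.

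One step in your sketch does not work as written. Reparametrizing a $d_\Sigma$-geodesic $x_0,\dots,x_n$ by cumulative $d$-length and invoking only the $(L,\lambda)$-quasi-isometry inequalities produces a quasi-geodesic, not a rough geodesic: the cumulative $d$-length $t_j-t_i=\sum_{k=i}^{j-1}d(x_k,x_{k+1})$ can be of order $L(j-i)$ while $d(x_i,x_j)$ is only guaranteed to be at least $L^{-1}(j-i)-\lambda$, so the lower bound $d(r(t_i),r(t_j))\ge (t_j-t_i)-C$ fails for $L>1$ and large $j-i$. In particular $C_X$ cannot depend only on the quasi-isometry constants; hyperbolicity of $d$ (or the specific structure of the Green/Mineyev metric) must enter. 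Your reference to \cite[Section~3.1]{Ga} is correct, and the paper itself simply defers to \cite{BHM} for this step in the Green-metric case, but the reparametrization shortcut should be dropped from your write-up.
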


As indicated in the introduction, we will consider in the following geometric actions of $\Gamma$ (hyperbolic) on roughly geodesic, $\epsilon$-good hyperbolic spaces $(X,d)$. Two  examples of such spaces are given 1) by the group $\Gamma$ itself endowed with the Mineyev metric \cite{Min} and 2) again by $\Gamma$ endowed with Green metrics $d_{\mu}$   associated with symmetric finitely supported random walks $\mu$. Roughly geodesic property of the second (family of) example is proved by Blach\`ere, Ha\" issinsky and Matthieu in \cite{BHM} and Nica and  \v{S}pakula in \cite{NS} prove that $(\Gamma,d_{\mu})$ is a $\epsilon$-strongly hyperbolic space.\\

 
 \textbf{From now on, except if indicated, we consider $(X,d)$ a proper roughly geodesic $\epsilon$-good $\delta$-hyperbolic space}.

\subsection{Busemann functions}

In the context of $\epsilon$-good spaces, the Busemann function is defined as :
\begin{equation}\label{buseman}
\beta_{\cdot}(\cdot,\cdot):(\xi,x,y)\in \partial X \times X \times X \mapsto \beta_{\xi}(x,y)= \lim_{z\to \xi}d(x,z)-d(z,y)\in \mathbb{R}.
\end{equation}

Note that:
\begin{equation}\label{buseman}
 \beta_{\xi}(x,y)= 2(\xi,y)_{x}-d(x,y).
\end{equation}


We have for all $\xi \in \partial X$ and for all $x,y\in X$:
\begin{equation}\label{buseman'}
(\xi,y)_{x}\leq d(x,y),
\end{equation}
and thus 
\begin{equation}\label{buseman''}
\beta_{\xi}(x,y)\leq d(x,y).
\end{equation}
The visual metrics $(d_{x,\epsilon})_{x\in X}$ of parameter $\epsilon>0$ on the boundary satisfy the following conformal relation, that is for all $x,y\in X$ and for all $\xi,\eta \in \partial X$ we have:
\begin{equation}\label{visualrough}
d_{y,\epsilon}(\xi,\eta)= \exp \big(\frac{\epsilon}{2} (\beta_{\xi}(x,y)+\beta_{\eta}(x,y))\big)d_{x,\epsilon}(\xi,\eta).
\end{equation}

\subsection{The Patterson-Sullivan measure}
\label{sec:patt-sull-meas}
Recall that the volume growth of $\Gamma$-orbits  on $(X,d)$ is controlled as
\begin{equation}\label{volumegrowth}
C^{-1} e^{Q  R}\leq |\Gamma.o\cap B(o,R)|\leq C e^{Q  R},
\end{equation}
for some constant $C$ independent of $R$.\\ 
The limit set of $\Gamma$ denoted by $\Lambda_{\Gamma}$ is the set of  accumulation points in $\partial X$ of an (actually any) orbit. Namely $\Lambda_{\Gamma}:=\overline{\Gamma . o}\cap \partial X$, with the closure in $\overline{X}$. \\
We say that $\Gamma$ is non-elementary if $\sharp\Lambda_{\Gamma}>2$ (and in this case, $\sharp\Lambda_{\Gamma}=\infty$). In the following $\Gamma$ acts geometrically so that $\Lambda_{\Gamma}=\partial X$.\\


The Patterson-Sullivan measure \(\nu_{o}\) is quasi-invariant under the action of  \(\Gamma\)  on $\partial X$ and since we are working in the setting of hyperbolic groups actings on $\epsilon$-good $\delta$-hyperbolic space we have for $\nu_o$-almost every $\xi \in \partial X$:

 \begin{equation}
\frac{d\gamma_{*}\nu_{o}}{d\nu_{o}}(\xi)= e^{\large Q \beta_{\xi}(o,\gamma o)}.
\end{equation}

It turns out that the support of $\nu_{o}$ is  $\partial X$ and moreover $\nu_{o}$ is Ahlfors regular of dimension \(D\), which means that we have the following estimate for  volume of balls: there exists $C\geq 1$ so that for all $\xi \in \Lambda_{\Gamma}$ for all \(r \leq Diam (\partial X)\):
\begin{equation}
 C^{-1} r^{D}\leq  \nu_{o}(B(\xi,r)) \leq C r^{D}.
\end{equation}

Finally, the Patterson-Sullivan measure is ergodic for the action of \(\Gamma\), and thus unique up to a constant.
The foundations of Patterson-Sullivan measures theory are in the important papers \cite{Pa} and \cite{Su} (see also \cite{Bou},\cite{BM} and \cite{Ro} for more general results in the context of CAT(-1) spaces). These measures are also called \textit{conformal densities}. When considering hyperbolic spaces  (and discrete groups acting on it), analogous constructions give rise to  \emph{quasi-conformal densities}.  For a non-elementary hyperbolic group  $\Gamma$ acting on a proper hyperbolic space, Coornaert   \cite[Th\'eor\`eme 8.3]{Co}  proves the existence of $\Gamma$-invariant quasi-conformal densities of dimension $Q $. This construction has been extended to the case of roughly geodesic metric spaces in \cite{BHM}.

\subsection{Some properties on the boundary}
\subsubsection{Upper Gromov bounded by above}
We say that a space $X$ is \emph{upper Gromov bounded by above} with respect to $o$, if there exists a constant $M_{X}>0$ such that for all $x\in X$,
 \begin{equation} \sup_{\xi \in \partial X}(\xi,x)_{o}\geq d(o,x)-M_{X}.
 \end{equation}
 This definition appears  in \cite{CM} and \cite{Ga} and is relevant when $(X,d)$ is not geodesic. In the setting of proper roughly geodesic $\epsilon$-good $\delta$-hyperbolic space on can take $M_{X}=\delta$: Let $\xi \in \partial X$ and pick $\eta \in B(\xi,r)=\{\eta'| d_{o,\epsilon}(\xi,\eta')\leq r \}$  with $r>0$ such that $\log(1/r)>\epsilon d(o,x)$. Thus, for all $\eta\in \partial X$ we have $(\xi,x)_{o}\geq \min \{(\xi,\eta)_{o},(\eta,x)_{o} \}-\delta\geq d(o,x)-\delta$ since $(\eta,x)_{o}\leq d(o,x)$.

  Hence, whenever $x\in X$ is given, we denote by $\hat{x}_{o}$ a point in the boundary satisfying 
 
 \begin{equation} \label{hat}
 (\hat{x}_{o},x)_{o}\geq d(o,x)-M_{X}.
 \end{equation}
  If $\gamma$ is an element of $\Gamma$, we denote by $\widehat{\gamma}_{o}$ the point in the boundary defined as  $\widehat{\gamma o}_{o}$ and $\check{\gamma}_{o}$ the point $\widehat{\gamma^{-1} o}_{o}$.

\subsubsection{Partitions of the boundary}
Let $(X,d)$ be a roughly geodesic, $\epsilon$-good, $\delta$-hyperbolic space
and pick $o \in X$ a base point. We  introduce a partition of the boundary as in \cite[Section 3.2]{Boy}: \\

 For $n\geq 2$, for $R>0$, for all $y\in S_{n,R}\subset X$  and for all $k\in \{0,\dots,n\}$,  define the Borel sets $A_{k,R}(o,y)$ of $\partial X$ as :
 
 \begin{equation}
 A_{0,R}(o,y):=\{  \xi\in\partial X |    (\xi,y)_{o}< R \},\;\;\; A_{n,R}(o,y):=\{  \xi\in\partial X |    nR\leq(\xi,y)_{o} \}
 \end{equation}
  and for $k\in \{1,\dots, n-1\}: $
\begin{equation}
A_{k,R}(o,y):=\{  \xi\in\partial X |    kR \leq(\xi,y)_{o}<(k+1)R \}.
\end{equation}

 In the following, we set $A_{k,R}(y )=A_{k,R}(o,y)$. For all $y\in X$,  the sets $(A_{k,R}(y))_{k =0,\dots,n}$ provide a partition of $\partial X$.
 
 If $y=\eta\in \partial X$, a similar partition  $(A_{k,R}(\eta))_{k\in \mathbb{N}}$ of $\partial  X\setminus \{\eta\}$ is defined by :
 \begin{equation}
 A_{0,R}(\eta):=\{  \xi\in\partial X |    (\xi,\eta)_{o}< R \}
 \end{equation}
  and for $k\in \mathbb{N}^{*}: $
\begin{equation}
A_{k,R}(\eta):=\{  \xi\in\partial X |    kR \leq(\xi,\eta)_{o}<(k+1)R \}.
\end{equation}

 \begin{lemma}\label{Ahlfors} There exist $R>0$ and $C\geq 1$ large enough such that  for all $y$ in $\overline{X}$  for all $k \in \mathbb{N}$: 
$$C^{-1}e^{-Q k R}\leq  \nu_{o}(A_{k,R}(y))\leq C e^{-Q k R}.$$
\end{lemma}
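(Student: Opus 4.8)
The plan is to prove the two-sided estimate $C^{-1}e^{-QkR}\le \nu_o(A_{k,R}(y))\le C e^{-QkR}$ by identifying the set $A_{k,R}(y)$, up to bounded multiplicative error, with a ball in the visual metric $d_{o,\epsilon}$ and then invoking Ahlfors $D$-regularity of the Patterson--Sullivan measure together with the relation $D=Q/\epsilon$ from \eqref{DQ}. The point is that the condition $kR\le (\xi,y)_o<(k+1)R$ in the visual metric reads, via \eqref{eq:visual-metric-def}, roughly as $e^{-\epsilon(k+1)R}\lesssim d_{o,\epsilon}(\xi,\hat y_o)\lesssim e^{-\epsilon k R}$, where $\hat y_o$ is a boundary point realizing the Gromov product from $o$ to $y$ (using that $X$ is upper Gromov bounded by above, so such a point exists with \eqref{hat}).

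First I would treat the case $y\in\partial X$, which is the cleanest: here $A_{k,R}(y)$ is literally $\{\xi : kR\le (\xi,y)_o < (k+1)R\}$, and by \eqref{eq:visual-metric-def} this is contained in the annulus $\{e^{-\epsilon(k+1)R}\le d_{o,\epsilon}(\xi,y)\le e^{-\epsilon kR}\cdot e^{\epsilon c_v}\}$ and contains the annulus $\{e^{-\epsilon(k+1)R}e^{-\epsilon c_v}\le d_{o,\epsilon}(\xi,y) \le e^{-\epsilon kR}\}$ (one has to be slightly careful near the endpoints, but the two-sided visual estimate gives exactly annuli up to the fixed factor $e^{\pm\epsilon c_v}$). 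Then $\nu_o(A_{k,R}(y))\le \nu_o(B(y,e^{\epsilon c_v}e^{-\epsilon kR}))\le C_\nu (e^{\epsilon c_v})^D e^{-\epsilon kR D} = C'e^{-QkR}$, using $\epsilon D = Q$; for the lower bound, if $R$ is chosen large enough that $e^{-\epsilon R}(1+e^{-\epsilon c_v}) < 1$ say, then the inner annulus has $\nu_o$-mass at least $\nu_o(B(y,e^{-\epsilon kR}))-\nu_o(B(y,e^{-\epsilon(k+1)R}e^{-\epsilon c_v}))\ge C_\nu^{-1}e^{-QkR}-C_\nu e^{-Q(k+1)R}e^{-Qc_v}$, which is $\ge (C')^{-1}e^{-QkR}$ once $R$ is large. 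Here I am also using $\mathrm{diam}(\partial X,d_{o,\epsilon})<\infty$ so that the Ahlfors bound applies for the relevant radii; for $k=0$ (and for small $k$) one handles $A_{0,R}$ separately, noting $\nu_o(A_{0,R}(y))\le \nu_o(\partial X)=1$ and $\ge 1 - \sum_{k\ge 1}\nu_o(A_{k,R}(y)) \ge 1 - C\sum_{k\ge1}e^{-QkR}$, which is bounded below uniformly once $R$ is large; alternatively just note every $\nu_o(A_{k,R})\le 1=\nu_o(\partial X)$ absorbs the $k=0$ upper bound and the geometric-series tail argument gives the $k=0$ lower bound.

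For $y\in X$ finite, I would replace $y$ by $\hat y_o\in\partial X$ using \eqref{hat}: the inequality $(\hat y_o, y)_o\ge d(o,y)-M_X$ together with the hyperbolic inequality (Proposition~\ref{propGromov}(2)) and \eqref{buseman'} gives $|(\xi,y)_o - (\xi,\hat y_o)_o|\le$ (some constant depending only on $M_X$ and $\delta$) \emph{provided} $(\xi,y)_o$ is not too large compared with $d(o,y)$; since $y\in S_{n,R}$ and $k\le n$, the relevant range $(\xi,y)_o\le (n+1)R \approx d(o,y)$ is exactly the range where this comparison is valid. Consequently $A_{k,R}(y)$ differs from $A_{k,R}(\hat y_o)$ by shifting $k$ by a bounded amount, i.e. $A_{k,R}(y)\subset \bigcup_{|j-k|\le m} A_{j,R}(\hat y_o)$ for a fixed $m$ (and symmetrically), and summing the already-established boundary estimate over this bounded range of indices changes the constant by at most a factor $\sum_{|j|\le m}e^{QmR}$, which is an acceptable constant. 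This reduces the finite case to the boundary case.

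The main obstacle I anticipate is the bookkeeping at the two ends of the annuli and near $k=0$: the visual metric estimate \eqref{eq:visual-metric-def} is only two-sided up to the multiplicative constant $e^{\epsilon c_v}$, so the inclusions between $A_{k,R}(y)$ and metric annuli are not exact and one must check that the error introduced is genuinely swallowed by choosing $R$ large (this is where the hypothesis ``$R>0$ large enough'' is used, and also where one must verify that the relevant radii $e^{-\epsilon kR}$, possibly multiplied by $e^{\epsilon c_v}$, stay $\le \mathrm{Diam}(\partial X)$ so Ahlfors regularity applies — true for $k\ge $ some fixed $k_0$, and the finitely many small $k$ are handled by the crude bound $\nu_o(A_{k,R})\le 1$ from above and the geometric tail from below). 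A secondary point requiring care is making the constant $m$ in the finite-$y$ reduction genuinely uniform in $n$ and $y$; this follows because the comparison constant between $(\xi,y)_o$ and $(\xi,\hat y_o)_o$ depends only on $M_X$ and $\delta$, not on $y$. Once these are pinned down, the two bounds follow by direct substitution of $D=Q/\epsilon$.
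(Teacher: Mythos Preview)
The paper does not give its own proof of this lemma; it simply refers to \cite[Lemma~3.5]{Boy}. Your outline is the standard argument and is essentially what that reference carries out: identify $A_{k,R}(y)$ with a visual-metric annulus of radii comparable to $e^{-\epsilon(k+1)R}$ and $e^{-\epsilon kR}$, apply Ahlfors $D$-regularity, and use $\epsilon D=Q$.

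One simplification worth noting: since the paper works in an $\epsilon$-good space, for $y\in\partial X$ one has $d_{o,\epsilon}(\xi,y)=e^{-\epsilon(\xi,y)_o}$ \emph{exactly}, so $A_{k,R}(y)$ is a genuine metric annulus and no $c_v$-correction from \eqref{eq:visual-metric-def} is needed; the only slack comes from the ball-subtraction, handled by taking $R$ large as you say. Your reduction of $y\in X$ to $\hat y_o\in\partial X$ via \eqref{hat} and Proposition~\ref{propGromov}(2) is correct; the edge case you flag (the top index $k=n$, where $(\xi,\hat y_o)_o$ can exceed $(\xi,y)_o$ unboundedly) is harmless because $A_{n,R}(y)$ is still contained in a ball around $\hat y_o$ of radius $\asymp e^{-\epsilon nR}$, and the lower bound there follows directly from Ahlfors regularity of that ball.
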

We refer to \cite[Lemma 3.5]{Boy} for a proof of the above fact.

\subsection{Equidistribution \`a la Roblin-Margulis} We refer to \cite[Theorem 3.2]{BG} for a proof of the following theorem inspired by Roblin's Theorem \cite[Th\'eor\`eme 4.1.1 ]{Ro}.
The  Dirac mass  at  $x\in X$ is denoted by $D_{x}$. 

\begin{theorem}\label{equi}
For any  $R>0$ large enough, there exists a sequence of  measures $\beta_{n,R}:\Gamma \rightarrow \mathbb{R}^{+}$ such that:
\begin{enumerate}
 \item There exists $C>0$ satisfying  for all $n \in \mathbb{N}$ and all $\gamma \in S^{\Gamma}_{n,R}$ that $$\beta_{n,R}(\gamma)\leq C /  |S^{\Gamma}_{n,R}|.$$
 \item 
  We have the following convergence: $$\sum_{\gamma \in S^{\Gamma}_{n,R}}\beta_{n,R}(\gamma) D_{\gamma o} \otimes D_{\gamma^{-1} o} \rightharpoonup  \nu_{o}\otimes \nu_{o},$$ 
as $n\to +\infty$, for the weak* topology in $C(\overline{X}\times \overline{X})$.
\end{enumerate}

\end{theorem}
\begin{coro}\label{equi'}
We deduce immediately from Theorem \ref{equi} the following equidistribution theorem:
$$\sum_{\gamma \in S^{\Gamma}_{n,R}}\beta_{n,R}(\gamma) D_{\gamma o}  \rightharpoonup  \nu_{o},$$ 
as $n\to +\infty$, for the weak* topology in $C(\overline{X})$.

\end{coro}
We also need the ``boundary version" of Theorem \ref{equi} (see \cite[Lemma 3.1]{BG}):
\begin{theorem}\label{equib}
For any $R>0$ large enough, there exists a sequence of measures $\beta_{n,R}:\Gamma \rightarrow \mathbb{R}^{+}$  such that :

\begin{enumerate}
 \item There exists $C>0$ satisfying  for all $n \in \mathbb{N}$ and all $\gamma \in S^{\Gamma}_{n,R}$ that $$\beta_{n,R}(\gamma)\leq C /  |S^{\Gamma}_{n,R}|.$$
 \item 
  We have the following convergence :$$\sum_{\gamma \in S^{\Gamma}_{n,R}}\beta_{n,R}(\gamma) D_{\hat{\gamma }_{o}} \otimes D_{\check{\gamma }_{o} } \rightharpoonup \nu_{o}\otimes \nu_{o},$$ 
as $n\to +\infty$, for the weak* convergence in $C(\partial X\times \partial X)$.
\end{enumerate}

\end{theorem}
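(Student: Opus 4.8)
The plan is to use the very measures $\beta_{n,R}$ produced by Theorem \ref{equi} and to show that, for $\gamma$ of large word length, the boundary points $\hat{\gamma}_{o}$ and $\check{\gamma}_{o}$ are so close to $\gamma o$ and $\gamma^{-1}o$ respectively in the compact space $\overline{X}$ that replacing one by the other does not affect the limit. Property (1) is then inherited verbatim from Theorem \ref{equi}, so only the convergence (2) requires an argument.

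First I would fix a metric $\rho$ on $\overline{X}$ inducing its (compact, metrizable) topology and record the key uniform estimate: for every $\varepsilon>0$ there is $N$ such that $\rho(\gamma o,\hat{\gamma}_{o})<\varepsilon$ and $\rho(\gamma^{-1}o,\check{\gamma}_{o})<\varepsilon$ for all $\gamma$ with $|\gamma|\geq NR$, in particular for all $\gamma\in S^{\Gamma}_{n,R}$ with $n\geq N+1$. This is where the hypotheses enter. Suppose it failed: then there would be $\varepsilon_{0}>0$ and elements $\gamma_{k}$ with $|\gamma_{k}|\to\infty$ while, say, $\rho(\gamma_{k} o,\widehat{(\gamma_{k})}_{o})\geq\varepsilon_{0}$. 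Since $|\gamma_{k}|\to\infty$ the points $\gamma_{k} o$ leave every compact subset of $X$, while the $\widehat{(\gamma_{k})}_{o}$ lie in $\partial X$; passing to a subsequence we obtain $\gamma_{k} o\to\xi$ and $\widehat{(\gamma_{k})}_{o}\to\eta$ with $\xi,\eta\in\partial X$ and $\xi\neq\eta$. By \eqref{hat} we have $(\gamma_{k} o,\widehat{(\gamma_{k})}_{o})_{o}\geq |\gamma_{k}|-M_{X}\to\infty$; since $X$ is $\epsilon$-good, the Gromov product extends continuously to $\overline{X}\times\overline{X}$, so $(\xi,\eta)_{o}=+\infty$, which forces $d_{o,\epsilon}(\xi,\eta)=e^{-\epsilon(\xi,\eta)_{o}}=0$, contradicting $\xi\neq\eta$. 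The same argument applies to $\check{\gamma}_{o}$ and $\gamma^{-1}o$, using $|\gamma^{-1}|=|\gamma|$.

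With this in hand I would conclude as follows. Let $f\in C(\partial X\times\partial X)$ and extend it by the Tietze theorem to $\tilde f\in C(\overline{X}\times\overline{X})$, which is uniformly continuous. By the uniform estimate above combined with uniform continuity of $\tilde f$, we get $\sup_{\gamma\in S^{\Gamma}_{n,R}}\big|\tilde f(\hat{\gamma}_{o},\check{\gamma}_{o})-\tilde f(\gamma o,\gamma^{-1}o)\big|\to 0$ as $n\to\infty$. Since $\sum_{\gamma\in S^{\Gamma}_{n,R}}\beta_{n,R}(\gamma)\leq C$ by property (1), the two sums $\sum_{\gamma}\beta_{n,R}(\gamma)\tilde f(\hat{\gamma}_{o},\check{\gamma}_{o})$ and $\sum_{\gamma}\beta_{n,R}(\gamma)\tilde f(\gamma o,\gamma^{-1}o)$ have the same limit. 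By Theorem \ref{equi} the latter converges to $\int_{\overline{X}\times\overline{X}}\tilde f\,d(\nu_{o}\otimes\nu_{o})=\int_{\partial X\times\partial X}f\,d(\nu_{o}\otimes\nu_{o})$, since $\nu_{o}$ is supported on $\partial X$; and because $\tilde f(\hat{\gamma}_{o},\check{\gamma}_{o})=f(\hat{\gamma}_{o},\check{\gamma}_{o})$, this is exactly the asserted weak* convergence in $C(\partial X\times\partial X)$.

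The main obstacle is the uniform closeness statement of the second paragraph; everything else is a soft consequence of Theorem \ref{equi}. Its proof is short here precisely because $\epsilon$-goodness was built to guarantee continuity of the extended Gromov product on the bordification — absent that hypothesis one would instead have to work with the quasi-continuity of $(\cdot,\cdot)_{o}$ and the $2\delta$-error terms of Proposition \ref{propGromov} (2)--(3), which still works but is less transparent.
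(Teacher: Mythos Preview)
The paper does not give its own proof of this statement; it simply refers to \cite[Lemma~3.1]{BG}. Your argument---taking the very measures $\beta_{n,R}$ from Theorem~\ref{equi} and showing that $\hat{\gamma}_{o}$ (resp.\ $\check{\gamma}_{o}$) becomes uniformly close to $\gamma o$ (resp.\ $\gamma^{-1}o$) in $\overline{X}$ as $|\gamma|\to\infty$, via the defining inequality~\eqref{hat} and the continuity of the extended Gromov product granted by $\epsilon$-goodness---is correct and is the natural deduction once Theorem~\ref{equi} is in hand. One minor remark: in \cite{BG} the numbering places Lemma~3.1 before Theorem~3.2, so there the boundary version is likely established first (as an ingredient) rather than as a corollary; your route reverses the logical dependence but is perfectly valid given how the present paper quotes the two results.
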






\section{ The Riesz operators on the boundary of hyperbolic groups}\label{section3}
 
\subsection{The kernel $k_{t}$, the function $\sigma_{t}$.}
  
 To define the Riesz operators,  we consider for any $t\in \mathbb{R}$ the symmetric positive  (singular for $t<1/2$) kernel with values in 
 $\mathbb{R}^{+}\cup\{ +\infty\}$:
\begin{equation}\label{kernel}
k_{t}:(x,y)\in \overline{X} \times \overline{X}  \mapsto e^{(1-2t)Q(x,y)_{o} }.
\end{equation}

Note that for $(\xi,\eta)\in \partial X\times \partial X$ with $\xi\neq \eta$ we have

$$k_{t}(\xi,\eta)=\frac{1}{d^{(1-2t)D}_{o,\epsilon}(\xi,\eta)}.$$

For $A>0$, consider the open (resp. compact) subsets : $$U^{A}:={\{(x,y)\in \overline{X}\times \overline{X}|(x,y)_{o}<A \} },\;\;\; U_{A}:={\{(x,y)\in \overline{X}\times \overline{X}|(x,y)_{o}\geq A \} }$$ 
and the corresponding  truncatures of $k_{t}$  : $k^{A}_{t}=\textbf{1}_{U^{A}}.k_t$ and $k_{t,A}=\textbf{1}_{U_{A}}.k_t$.\\
Define $\sigma_{t}, \;  \sigma^{A}_{t},\; \sigma_{t,A}$ on the whole space $\overline{X}$ integrating $k_t$ (resp. $k_t^A$, $k_{t,A}$) :
\begin{equation}\label{sigma}
\sigma_{t}(x)=\int_{\partial X}k_t(x,\xi)d\nu_{o}(\xi),\;\; \sigma_{t}^A(x)=\int_{\partial X}k_t^A(x,\xi)d\nu_{o}(\xi),\;\; \sigma_{t,A}(x)=\int_{\partial X}k_{t,A}(x,\xi)d\nu_{o}(\xi)
\end{equation}
We will see that $\sigma_{t_{|_{X}}}$ is continuous on $X$ for any $t\in \mathbb{R}$. If $t>0$, we shall see in the following that $\sigma_{t}$ extends to a continuous function on $\partial X$ and thus extend to a continuous function on the whole space $\overline{X}$. \\


First, we start by an elementary lemma.

\begin{lemma}\label{cont1}
Let $t\in\mathbb{R}$. For all $A>0$, the function $\sigma^{A}_{t}$ is continuous on $\overline{X}$. \end{lemma}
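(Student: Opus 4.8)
The statement to prove is that for fixed $t\in\mathbb{R}$ and fixed $A>0$, the truncated function
$$
\sigma^{A}_{t}(y)=\int_{\partial X}\mathbf{1}_{\{(\xi,y)_{o}<A\}}\,e^{(1-2t)Q(\xi,y)_{o}}\,d\nu_{o}(\xi)
$$
is continuous on $\overline{X}$. The plan is to view this as an integral depending on the parameter $y\in\overline{X}$ and to apply a dominated-convergence argument along sequences $y_{k}\to y$ in $\overline{X}$, using the $\epsilon$-good hypothesis (which makes $(\cdot,\cdot)_{o}$ continuous on $\overline{X}\times\overline{X}$) together with Ahlfors regularity of $\nu_{o}$.

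First I would record the two inputs. Since $(X,d)$ is $\epsilon$-good, the Gromov product $(\cdot,\cdot)_{o}$ extends continuously to $\overline{X}\times\overline{X}$; hence for a fixed $\xi\in\partial X$ the integrand $y\mapsto \mathbf{1}_{\{(\xi,y)_{o}<A\}}e^{(1-2t)Q(\xi,y)_{o}}$ is continuous at every $y$ for which $(\xi,y)_{o}\neq A$, i.e.\ off the "bad" set $\{\xi:(\xi,y)_{o}=A\}$. Next, on the region of integration we have $(\xi,y)_{o}<A$, so the integrand is bounded by $e^{(1-2t)QA}$ if $1-2t\geq 0$; if $1-2t<0$ it is bounded by $1$ (the exponent is then a decreasing function of $(\xi,y)_{o}\geq 0$). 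In all cases the integrand is uniformly bounded by the constant $M:=\max\{1,e^{(1-2t)QA}\}$, which is $\nu_{o}$-integrable since $\nu_{o}$ is a probability measure. So the only obstacle to dominated convergence is the possible discontinuity of the indicator on the level set $\{(\xi,y)_{o}=A\}$.

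I would handle that level set as follows. Fix $y\in\overline{X}$ and a sequence $y_{k}\to y$. For each $\xi\notin L_{y}:=\{\xi\in\partial X:(\xi,y)_{o}=A\}$, continuity of $(\cdot,\cdot)_{o}$ gives $(\xi,y_{k})_{o}\to(\xi,y)_{o}$, and since $(\xi,y)_{o}\neq A$ the strict inequality $(\xi,y_{k})_{o}<A$ or $(\xi,y_{k})_{o}\geq A$ stabilizes for large $k$; hence the integrand converges pointwise to its value at $y$ for every $\xi\notin L_{y}$. It therefore suffices to show $\nu_{o}(L_{y})=0$. This is where I expect the main (though minor) technical point to lie. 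One clean way: use that $\partial X$ is, up to the constant $c_v$ in \eqref{eq:visual-metric-def}, a metric space with $d_{o,\epsilon}(\xi,y)\asymp e^{-\epsilon(\xi,y)_{o}}$ for $y\in\partial X$ (and the analogous comparison for $y\in X$ via the Busemann relation / the estimate $(\xi,y)_{o}\le d(o,y)$); so $L_{y}$ is contained in a "shell" $\{\xi: e^{-\epsilon A}/c\le d_{o,\epsilon}(\xi,y)\le e^{-\epsilon A}\}$ of a single radius-scale, and in fact $L_{y}=\bigcap_{\delta>0}\{\xi:|(\xi,y)_o-A|<\delta\}$. By Ahlfors $D$-regularity, $\nu_{o}(\{\xi:(\xi,y)_{o}\ge s\})=\nu_o(\bar B(y,e^{-\epsilon s})\cap\partial X)\le C e^{-Ds}$ (cf.\ Lemma~\ref{Ahlfors}), and the measure of the exact level $\{(\xi,y)_o=A\}$ is
$$
\nu_o(L_y)\le \nu_o\big(\{(\xi,y)_o\ge A\}\big)-\nu_o\big(\{(\xi,y)_o> A\}\big)=\lim_{\delta\downarrow 0}\nu_o\big(\{A\le(\xi,y)_o<A+\delta\}\big).
$$
The right-hand side is $\le \lim_{\delta\downarrow0}\big(\nu_o\{(\xi,y)_o\ge A\}-\nu_o\{(\xi,y)_o\ge A+\delta\}\big)=0$ because $s\mapsto\nu_o\{(\xi,y)_o\ge s\}$ is right-continuous (continuity of the Gromov product, monotone limits) — equivalently, the spheres $\{(\xi,y)_o=s\}$ can have positive measure for at most countably many $s$, and one only needs that a \emph{single} such level is null, which follows from Ahlfors regularity since a sphere in a $D$-regular metric space with $D>0$ has measure zero.

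With $\nu_{o}(L_{y})=0$ established, dominated convergence (domination by the constant $M$) yields $\sigma^{A}_{t}(y_{k})\to\sigma^{A}_{t}(y)$ for every sequence $y_{k}\to y$, and since $\overline{X}$ is metrizable this gives continuity of $\sigma^{A}_{t}$ on all of $\overline{X}$. I expect the routine part to be the boundedness bookkeeping (splitting on the sign of $1-2t$) and the only genuinely substantive step to be the nullity of the level set $L_{y}$, which is where Ahlfors regularity of the Patterson--Sullivan measure (Lemma~\ref{Ahlfors}) is used; everything else is the standard "continuity of an integral with a continuous, uniformly bounded integrand" argument.
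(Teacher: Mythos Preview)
Your approach parallels the paper's---dominated convergence using the uniform bound on $k^A_t$ and continuity of the Gromov product from the $\epsilon$-good hypothesis---and you are in fact more careful on one point: you notice that the indicator $\mathbf{1}_{\{(\xi,y)_o<A\}}$ makes $y\mapsto k^A_t(\xi,y)$ discontinuous on the level set $L_y=\{\xi:(\xi,y)_o=A\}$, whereas the paper's proof simply asserts continuity for every $\xi$ and moves on.

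That said, your justification of $\nu_o(L_y)=0$ does not go through. The right-continuity claim is circular (right-continuity of $s\mapsto\nu_o\{(\xi,y)_o\ge s\}$ at $A$ is \emph{equivalent} to $\nu_o(L_y)=0$, not a proof of it), and the fallback---that spheres in an Ahlfors $D$-regular space with $D>0$ have measure zero---is false in general. For $\Gamma=\mathbb{F}_2$ the visual metric on $\partial\mathbb{F}_2$ is an ultrametric, the Patterson--Sullivan measure is Ahlfors regular, and $(\xi,\eta)_o$ is integer-valued; hence for $A\in\mathbb{N}$ and $\eta\in\partial\mathbb{F}_2$ the set $L_\eta$ is the cylinder of infinite words agreeing with $\eta$ on exactly $A$ letters, which has positive measure. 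So the a.e.\ pointwise-convergence step genuinely fails in this example. A clean repair, sufficient for the downstream uses in Lemma~\ref{Cauchy} and Proposition~\ref{cont2}, is to replace the sharp cutoff by a continuous function of $(\xi,y)_o$ supported in $[0,A+1]$ and equal to $1$ on $[0,A]$; then the integrand is honestly continuous in $y$ for every $\xi$ and dominated convergence applies with no caveat.
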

\begin{proof}
For $A>0$ and $y\in  \overline{X}$, the map $\xi \in \partial X \mapsto k^{A}_{t}(\xi,y)$ is measurable  and integrable: for all $(\xi,y)\in \partial X \times \overline{X}$ we have $|k^{A}_{t}(\xi,y)|\leq e^{(1-2t)Q A}.$ Since we are working in the class of $\epsilon$-good spaces, the map $y\in \overline{X}\mapsto k^{A}_{t}(\xi,y)$ is continuous for all $\xi \in \partial X$. By continuous dependance on integral's Theorem, the function $y \in \overline{X}\mapsto \sigma^{A}_{t}(y)\in \mathbb{R}^{+}$ is continuous on $\overline{X}$.
\end{proof}

\begin{lemma}\label{Cauchy}
Let $t>0$. The function $\sigma_{t_{|_{ \partial X}}}$ is continuous on $\partial X$.\end{lemma}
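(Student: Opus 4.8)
The plan is to split $\sigma_t$ as $\sigma_t = \sigma_t^A + \sigma_{t,A}$ for a parameter $A>0$, show that the singular tail $\sigma_{t,A}$ is uniformly small (in sup-norm over $\partial X$) as $A\to\infty$, and then invoke Lemma \ref{cont1}, which already gives continuity of the truncated piece $\sigma_t^A$ on all of $\overline X$. Since a uniform limit of continuous functions is continuous, this will yield continuity of $\sigma_t|_{\partial X}$. So the whole game reduces to the estimate $\sup_{\eta\in\partial X}\sigma_{t,A}(\eta)\to 0$ as $A\to\infty$, uniformly in $\eta$.

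To carry out that estimate, first I would rewrite, for $\eta\in\partial X$,
\[
\sigma_{t,A}(\eta)=\int_{(\xi,\eta)_o\ge A} e^{(1-2t)Q(\xi,\eta)_o}\,d\nu_o(\xi),
\]
and slice the region $\{(\xi,\eta)_o\ge A\}$ along the level sets of the Gromov product. Fixing $R>0$ large enough for Lemma \ref{Ahlfors} to apply, I would choose an integer $m$ with $mR\le A<(m+1)R$ and decompose the integration domain into the annuli $A_{k,R}(\eta)=\{\xi: kR\le (\xi,\eta)_o<(k+1)R\}$ for $k\ge m$ (these partition $\partial X\setminus\{\eta\}$ by the construction recalled before Lemma \ref{Ahlfors}, and $\{\eta\}$ is $\nu_o$-null by Ahlfors regularity). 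On $A_{k,R}(\eta)$ one has $(\xi,\eta)_o<(k+1)R$, so $e^{(1-2t)Q(\xi,\eta)_o}\le e^{(1-2t)Q(k+1)R}$ when $t<1/2$ (and it is bounded trivially when $t\ge 1/2$, where the kernel is bounded and the estimate is even easier); combined with $\nu_o(A_{k,R}(\eta))\le C e^{-QkR}$ from Lemma \ref{Ahlfors}, this gives
\[
\sigma_{t,A}(\eta)\le \sum_{k\ge m} C\,e^{(1-2t)Q(k+1)R} e^{-QkR}
= C e^{(1-2t)QR}\sum_{k\ge m} e^{-2tQkR}.
\]
Because $t>0$, the geometric series $\sum_{k\ge m} e^{-2tQkR}$ converges and its tail tends to $0$ as $m\to\infty$, i.e.\ as $A\to\infty$, with a bound depending only on $t,Q,R,C$ and not on $\eta$. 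This is exactly the desired uniform smallness.

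The main obstacle — really the only delicate point — is making the annular decomposition and the bound $\nu_o(A_{k,R}(\eta))\le Ce^{-QkR}$ legitimate when the "center" $\eta$ lies on the boundary rather than in $X$; but Lemma \ref{Ahlfors} is stated precisely for $y\in\overline X$, so this is already handled. A minor point to be careful about is the case $t\ge 1/2$, where $1-2t\le 0$ and the exponent factor does not blow up: there the kernel $k_t$ is bounded on $\partial X\times\partial X$, hence $\sigma_{t,A}(\eta)\le \|k_t\|_\infty\,\nu_o(\{(\xi,\eta)_o\ge A\})$, and $\nu_o(\{(\xi,\eta)_o\ge A\})=\sum_{k\ge m}\nu_o(A_{k,R}(\eta))\le C\sum_{k\ge m}e^{-QkR}\to 0$; so the conclusion holds for all $t>0$. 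Finally I would assemble the pieces: $\sigma_t^A\to\sigma_t$ uniformly on $\partial X$ as $A\to\infty$, each $\sigma_t^A$ is continuous on $\overline X\supset\partial X$ by Lemma \ref{cont1}, hence $\sigma_t|_{\partial X}$ is continuous. $\hfill\square$
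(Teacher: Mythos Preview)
Your proof is correct and follows essentially the same route as the paper: split $\sigma_t=\sigma_t^A+\sigma_{t,A}$, bound the tail $\sigma_{t,A}$ uniformly in $\eta$ via the annular decomposition and Lemma~\ref{Ahlfors}, and conclude by Lemma~\ref{cont1} and uniform convergence. The paper first shows $(\sigma_t^{A_N})_N$ is Cauchy in $C(\partial X)$ before identifying the limit as $\sigma_t$, whereas you go straight to the uniform convergence; your explicit treatment of the case $t\ge 1/2$ is a nice touch but not strictly needed, since the geometric-series estimate already handles all $t>0$ up to a constant.
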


\begin{proof}

Let $A_{N}=NR$ with $N\in\mathbb N^*$ and $R$ large enough so that Lemma \ref{Ahlfors} is valid, namely there exists $C\geq 1$ such that for all $\eta\in \partial X $ and for all $k\in \mathbb{N}$: $C^{-1}e^{-kQR}\leq \nu_{o}(A_{k,R}(\eta))\leq Ce^{-kQR}$. Consider the sequence $(\sigma^{A_N}_{t})_{N\in \mathbb{N}}$  in $C(\partial X)$. Use the partition  of $\partial X$ defined above to write for all $\eta\in \partial X$ and for $M >N:$

\begin{align*}
\sigma^{A_M}_{t}(\eta)-\sigma^{A_N}_{t}(\eta)&=\int_{ NR<(\xi,\eta)_{o}\leq  MR}e^{(1-2t)Q(\xi,\eta)_{o}} d\nu_{o}(\xi)\\ 
&\leq\sum_{k=N}^{M-1}\int_{\xi \in A_{k,R}(\eta)}e^{(1-2t)Q(\xi,\eta)_{o}} d\nu_{o}(\xi)\leq \sum^{M-1}_{k=N}\nu_{o}(A_{k,R}(\eta))e^{(1-2t)Q kR} \;\;(t>0)\\
&\leq C\sum^{M-1}_{k=N}e^{-2tQ kR}\leq \dfrac{Ce^{-2tQNR}}{1-e^{-2tQ R}}.
\end{align*}
 
Therefore $(\sigma^{A_N}_{t})_{N\in \mathbb{N}}$ is a Cauchy sequence in $(C(\partial X),\|\cdot\|_{\infty})$ complete. Since $\sigma^{A_N}_{t}\to \sigma_{t_{|_{\partial X}}}$, the latter is continuous (and  $k_{t}(.,\eta)$ is measurable and in $L^{1}(\partial X,\nu_{o})$ for every $t>0$).
\end{proof}

\begin{lemma}\label{limsup}
 Let $R>0$ large enough. There exists $C>0$, such that for all $t>0$, for all $\eta\in \partial X$ and for all $N\in\mathbb N^*$ and $A=NR$, we have: $$\limsup_{y\to \eta,\; y\in X}\sigma_{t,A}(y)\leq C\frac{e^{-2tQA}}{1-e^{-2tQR}}.$$ 
\end{lemma}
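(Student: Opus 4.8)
The plan is to prove the stronger, \emph{uniform} statement that $\sigma_{t,A}(y)\le C\,\dfrac{e^{-2tQA}}{1-e^{-2tQR}}$ for \emph{every} $y\in\overline X$, with $C$ depending only on the global geometry (and on the fixed $R$); the asserted $\limsup$ bound is then immediate. The argument is a truncated version of the computation already carried out in the proof of Lemma \ref{Cauchy}: slice the domain of integration $\{\xi\in\partial X:(\xi,y)_o\ge A\}$ into Gromov-product shells of width $R$, bound the kernel on each shell, and sum a geometric series.

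Concretely, I would first upgrade Lemma \ref{Ahlfors} to a uniform tail estimate: for $R$ large enough there is $C_1>0$ such that $\nu_o(\{\xi\in\partial X:(\xi,y)_o\ge s\})\le C_1 e^{-Qs}$ for all $y\in\overline X$ and all $s\ge 0$. This follows from $\{(\xi,y)_o\ge s\}\subseteq\{(\xi,y)_o\ge\lfloor s/R\rfloor R\}=\bigsqcup_{l\ge\lfloor s/R\rfloor}A_{l,R}(y)$ together with $\sum_{l\ge k}C_0 e^{-QlR}=\frac{C_0}{1-e^{-QR}}e^{-QkR}$ and $\lfloor s/R\rfloor R\ge s-R$. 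Then, with $t>0$, $\eta\in\partial X$, $A>0$ fixed and $y\in\overline X$ arbitrary, decompose $\{(\xi,y)_o\ge A\}=\bigsqcup_{j\ge 0}B_j(y)$ with $B_j(y):=\{\xi:A+jR\le(\xi,y)_o<A+(j+1)R\}$, bound the kernel on each shell by $e^{(1-2t)Q(\xi,y)_o}\le e^{QR}e^{(1-2t)Q(A+jR)}$ (when $1-2t\ge 0$ the supremum is at the outer radius and the extra factor is $e^{(1-2t)QR}\le e^{QR}$ since $1-2t\le 1$; when $1-2t<0$ the supremum is at the inner radius, so the extra factor is $1$), multiply by $\nu_o(B_j(y))\le\nu_o(\{(\xi,y)_o\ge A+jR\})\le C_1 e^{-Q(A+jR)}$, and sum over $j$, giving $\sigma_{t,A}(y)\le C_1 e^{QR}\sum_{j\ge 0}e^{-2tQ(A+jR)}=C_1 e^{QR}\,\dfrac{e^{-2tQA}}{1-e^{-2tQR}}$.

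The one delicate point — the main obstacle — is the uniformity of $C$ in the parameter $t$. The tempting shortcut, namely replacing $(\xi,y)_o$ by $(\xi,\eta)_o$ up to $2\delta$ via the hyperbolic inequality once $(y,\eta)_o$ is large (which is what a literal ``as $y\to\eta$'' argument invites), costs a multiplicative factor $e^{2\delta Q|1-2t|}$ that is unbounded as $t\to\infty$; this is exactly why I would keep $\eta$ out of the estimate entirely and work only with the shell radii, where the kernel grows at rate $(1-2t)Q$, always strictly below the decay rate $Q$ of $\nu_o$, so that the product decays at the net rate $2tQ>0$ and the geometric series in $j$ converges for every $t>0$. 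The remaining verifications are routine: the sets $\{(\xi,y)_o\ge s\}$ are Borel because the Gromov product is continuous on the $\epsilon$-good bordification; for $y\in X$ the sum over $j$ is in fact finite since $(\xi,y)_o\le d(o,y)$; and when $y\in\partial X$ one must omit the single point $y$ from $\partial X$ in the shell decomposition, which is harmless since $\nu_o(\{y\})=0$ by Ahlfors regularity of $\nu_o$.
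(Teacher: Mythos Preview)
Your proof is correct and follows essentially the same approach as the paper: a shell decomposition of $\{\xi:(\xi,y)_o\ge A\}$ by Gromov-product annuli of width $R$, Ahlfors regularity of $\nu_o$ on each shell, and summation of the resulting geometric series. The paper sets $A=NR$ and uses the built-in partition $(A_{k,R}(y))_{k\ge N}$, while you re-index the shells to start at an arbitrary $A$; the paper also works with $y\in S_{M,R}$ (so the sum is finite) and then passes to the $\limsup$, whereas you prove the uniform pointwise bound directly --- but these are cosmetic differences, and your explicit tracking of the $t$-independence of the constant (via the single $e^{QR}$ overshoot) makes the ``for all $t>0$'' clause cleaner than in the paper's version.
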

\begin{proof}
The proof is similar to that of the previous lemma :
set $A=NR$ (with $R$ as above), $M>N$ and $y\in S_{M,R}$ :

\begin{align*}
\sigma_{t,A}(y)&=\int_{ NR\leq (\xi,y)_{o}}e^{(1-2t)Q(\xi,y)_{o}} d\nu_{o}(\xi)\\ 
&=\sum_{k=N}^{M}\int_{\xi \in A_{k,R}(y)}e^{(1-2t)Q(\xi,y)_{o}} d\nu_{o}(\xi)\leq \sum^{M}_{k=N}\nu_{o}(A_{k,R}(y))e^{(1-2t)Q kR}\; (t>0)\\
&\leq C\sum^{M}_{k=N}e^{-2tQ kR}\leq C
\bigg(\dfrac{e^{-2tQNR}}{1-e^{-2tQ R}}\bigg),
\end{align*}
  
so that $\limsup_{y\to \eta}\sigma_{t,A}(y) \leq C
\bigg(\dfrac{e^{-2tQNR}}{1-e^{-2tQ R}}\bigg),$ and the conclusion follows.
\end{proof}

\begin{prop}\label{cont2}
Let $t>0$ ; the function $\sigma_{t}$ is continuous on $\overline{X}$, and moreover there exists $R>0$ and $C\geq 1$ such that for all $t>0$ and for all $\eta \in \partial X$ :

$$ C^{-1} \dfrac{1}{1-e^{-2tQ R}} \leq  \sigma_{t}(\eta)\leq C  \dfrac{1}{1-e^{-2tQ R}} \cdot$$

\end{prop}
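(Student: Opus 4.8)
The plan is to prove continuity on $\overline{X}$ first, then the two-sided bound on $\partial X$.

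\medskip

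\textbf{Continuity.} First I would recall that $\sigma_t = \sigma_t^A + \sigma_{t,A}$ for every $A>0$. On $X$ the kernel $k_t(\cdot,x)$ is bounded for $x$ in any compact subset of $X$ (since $(\xi,x)_o \le d(o,x)$ by \eqref{buseman'}), so $\sigma_{t|_X}$ is continuous by dominated convergence exactly as in Lemma \ref{cont1}. The delicate point is continuity across $\partial X$. Fix $\eta \in \partial X$; I want $\lim_{y\to\eta}\sigma_t(y)=\sigma_t(\eta)$. By Lemma \ref{cont1} each $\sigma_t^A$ is continuous on $\overline{X}$, so $\lim_{y\to\eta}\sigma_t^A(y)=\sigma_t^A(\eta)$. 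It therefore suffices to control the tails $\sigma_{t,A}$ uniformly: by Lemma \ref{limsup}, $\limsup_{y\to\eta}\sigma_{t,A}(y)\le C e^{-2tQA}/(1-e^{-2tQR})$, and by the computation in Lemma \ref{Cauchy} (the estimate \eqref{unif}), $\sigma_{t,A}(\eta)=\sigma_t(\eta)-\sigma_t^A(\eta)\le C e^{-2tQA}/(1-e^{-2tQR})$ as well. Hence
\begin{align*}
\bigl|\sigma_t(y)-\sigma_t(\eta)\bigr| &\le |\sigma_t^A(y)-\sigma_t^A(\eta)| + \sigma_{t,A}(y) + \sigma_{t,A}(\eta).
\end{align*}
Given $\varepsilon>0$, choose $A$ large so the last two terms have $\limsup_{y\to\eta}$ below $\varepsilon/2$, then use continuity of $\sigma_t^A$ to kill the first term near $\eta$; this gives $\limsup_{y\to\eta}|\sigma_t(y)-\sigma_t(\eta)|\le\varepsilon$ for all $\varepsilon$, hence continuity at $\eta$. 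Since $\overline{X}$ is compact and $\sigma_t$ is continuous on $X$ and at each boundary point, $\sigma_t\in C(\overline{X})$.

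\medskip

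\textbf{Upper bound on $\partial X$.} This is already contained in the proof of Lemma \ref{Cauchy}: for $R$ large enough that Lemma \ref{Ahlfors} applies, the partition computation gives $\sigma_t(\eta)\le C\sum_{k\ge 0}e^{-2tQkR}=C/(1-e^{-2tQR})$ for every $\eta\in\partial X$, uniformly in $t>0$.

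\medskip

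\textbf{Lower bound on $\partial X$.} Here I restrict the integral defining $\sigma_t(\eta)$ to the single piece $A_{0,R}(\eta)=\{\xi : (\xi,\eta)_o<R\}$ of the partition. On that set the integrand $e^{(1-2t)Q(\xi,\eta)_o}$ is $\ge 1$ when $0<t\le 1/2$ (since the exponent is nonnegative), and more generally $\ge e^{(1-2t)QR}$ or $\ge 1$ depending on the sign of $1-2t$ — in any case bounded below by a positive constant times $\min(1,e^{(1-2t)QR})$. Thus $\sigma_t(\eta)\ge c\,\nu_o(A_{0,R}(\eta))$, and by Lemma \ref{Ahlfors} (the $k=0$ case), $\nu_o(A_{0,R}(\eta))\ge C^{-1}$, a constant independent of $\eta$. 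To match the claimed form $C^{-1}/(1-e^{-2tQR})$, I then note $1/(1-e^{-2tQR})\le 1/(1-e^{-2Q R})$ is bounded above (for $t\ge$ any fixed positive value — or one absorbs the small-$t$ behavior by also using more pieces $A_{k,R}(\eta)$ with $k\le N_0$), so a constant lower bound already implies a lower bound of the stated shape after adjusting $C$. The main obstacle is ensuring the constants are genuinely uniform in $t>0$ near $t=0$, where $1/(1-e^{-2tQR})\to\infty$; the cleanest fix is to keep more terms of the geometric sum from below, i.e. estimate $\sigma_t(\eta)\ge c\sum_{k=0}^{N}\nu_o(A_{k,R}(\eta))e^{(1-2t)QkR}\ge c'\sum_{k=0}^{N}e^{-2tQkR}$ using the lower Ahlfors bound, and let $N\to\infty$ to recover $c'/(1-e^{-2tQR})$ exactly, completing the proof.
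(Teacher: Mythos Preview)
Your continuity argument is essentially identical to the paper's: the same three-term splitting via $\sigma_t^{A}$ and $\sigma_{t,A}$, appealing to Lemma~\ref{cont1}, Lemma~\ref{limsup}, and the uniform tail estimate \eqref{unif} from Lemma~\ref{Cauchy}. In fact the paper's own proof stops after establishing continuity and simply declares ``the proof is done,'' leaving the two-sided bound to be read off from the computations in Lemmas~\ref{Cauchy} and~\ref{ineqsigma}; your explicit treatment of the bounds is therefore more complete than what the paper writes.

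One small remark on your lower-bound argument: the inequality $e^{(1-2t)Q(\xi,\eta)_o}\ge e^{(1-2t)QkR}$ on $A_{k,R}(\eta)$ requires $1-2t\ge 0$. For $t>\tfrac12$ you must instead use the upper endpoint $(k+1)R$, which introduces an extra factor $e^{(1-2t)QR}$ in front of the geometric sum. This is harmless for $t$ in any bounded range (and in particular for the important regime $t\to 0^+$ that the paper actually uses, e.g.\ in the proof that $\mathcal{R}_t\to 0$ weakly), but it is not uniform as $t\to\infty$; that limitation appears to be a slight imprecision in the proposition's quantifiers rather than a defect in your method.
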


\begin{proof} 

By Lemma \ref{Cauchy} $\sigma_{t_{|_{ \partial X}}}$ is continuous and
we also know that  $\sigma_{t_{|_{ X}}}$ is continuous.
So, it is only  to prove that $\sigma_{t}(y)\to \sigma_{t}(\eta)$ when $y\in X\to \eta\in \partial X$.
For $A_N=NR$ as above :
\begin{align*}
|\sigma_{t}(y)-\sigma_{t}(\eta)|&\leq |\sigma_{t}(y)-\sigma^{A_{N}}_{t}(y)|+|\sigma^{A_{N}}_{t}(y)-\sigma^{A_{N}}_{t}(\eta)|+|\sigma^{A_{N}}_{t}(\eta)-\sigma_{t}(\eta)|
\end{align*}
The first term on the right hand side is $ |\sigma_{t,A_{N}}(y)|$ and Lemma  \ref{limsup} implies $\limsup_{y\to \eta, \, y\in X} |\sigma_{t,A_{N}}(y)|\leq Ce^{-2tQNR}$ for some constant $C=C(R,t,Q)$. Continuity of $\sigma_t^{A_{N}}$ on $\overline{X}$ (Lemma  \ref{cont1}) implies $\limsup_{y\to \eta, \, y\in X} |\sigma^{A_{N}}_{t}(y)-\sigma^{A_{N}}_{t}(\eta)|=0$ and  uniform convergence of $\sigma^{A_N}$ on $\partial X$ together with the two previous estimates gives the desired conclusion when taking first  limsup and $N\to \infty$. Lastly estimates for $\sigma_t$ follow from the proof of Lemma \ref{Cauchy}.
\end{proof}


\begin{coro}\label{sigmat}
$\| \sigma_t\|_{\infty}.\| \sigma_t^{-1}\|_{\infty}\leq C^2$ where $C$ as above is independant of $t>0$.
\end{coro}

Analogous computations as in Lemma \ref{Cauchy} give :
\begin{lemma}\label{ineqsigma}

For $R$ large enough as above and $A_N=NR,\;  (N\in\mathbb N$), there exists $C\geq 1$ such that we have for all $t\in \mathbb{R}^*$, all $N\in\mathbb N^*$ and for all $\eta \in \partial X$ :

\begin{enumerate}
\item $$C^{-1} \frac{1-e^{-2tQNR}}{1-e^{-2tQR}} \leq \sigma^{A_N}_{t}(\eta) \leq C \frac{1-e^{-2tQNR}}{1-e^{-2tQR}},$$
\item $$C^{-1} \frac{e^{-2tQNR}}{1-e^{-2tQR}}\leq \sigma_{t,A_N}(\eta)\leq C  \frac{e^{-2tQNR}}{1-e^{-2tQR}}.$$\\
\end{enumerate}


\end{lemma}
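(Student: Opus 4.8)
The plan is to rerun the dyadic decomposition of $\partial X$ used in the proofs of Lemmas~\ref{Cauchy} and~\ref{limsup}, but now extracting \emph{two-sided} bounds on each piece instead of only an upper bound. Fix $R>0$ large enough that Lemma~\ref{Ahlfors} applies, so that $C^{-1}e^{-QkR}\le\nu_o(A_{k,R}(\eta))\le Ce^{-QkR}$ for every $\eta\in\partial X$ and every $k\in\mathbb N$. The first step is to treat the case $A=NR$, $N\in\mathbb N^\ast$. Writing $\{(\xi,\eta)_o<NR\}=\bigsqcup_{k=0}^{N-1}A_{k,R}(\eta)$ and using that on $A_{k,R}(\eta)$ one has $kR\le(\xi,\eta)_o<(k+1)R$, hence $e^{(1-2t)Q(\xi,\eta)_o}$ differs from $e^{(1-2t)QkR}$ by a factor in $[e^{-|1-2t|QR},e^{|1-2t|QR}]$, together with Lemma~\ref{Ahlfors} one gets
\begin{equation*}
C^{-1}\sum_{k=0}^{N-1}e^{-2tQkR}\ \le\ \sigma^{NR}_t(\eta)=\sum_{k=0}^{N-1}\int_{A_{k,R}(\eta)}e^{(1-2t)Q(\xi,\eta)_o}\,d\nu_o(\xi)\ \le\ C\sum_{k=0}^{N-1}e^{-2tQkR},
\end{equation*}
and likewise, decomposing $\{(\xi,\eta)_o\ge NR\}=\bigsqcup_{k\ge N}A_{k,R}(\eta)$ and using that for $t>0$ the tail series converges,
\begin{equation*}
C^{-1}\sum_{k\ge N}e^{-2tQkR}\ \le\ \sigma_{t,NR}(\eta)\ \le\ C\sum_{k\ge N}e^{-2tQkR}.
\end{equation*}
Since $\sum_{k=0}^{N-1}e^{-2tQkR}=\frac{1-e^{-2tQNR}}{1-e^{-2tQR}}$ and $\sum_{k\ge N}e^{-2tQkR}=\frac{e^{-2tQNR}}{1-e^{-2tQR}}$, and since for $t>0$ the quantity $1-e^{-2tQR}$ is comparable to $1-e^{-2tQ}$ with constants depending only on $R$, this is exactly (1) and (2) when $A=NR$.

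Next I would pass to an arbitrary $A>0$. As $A\mapsto\sigma^A_t(\eta)$ is nondecreasing and $A\mapsto\sigma_{t,A}(\eta)$ nonincreasing, for $A\in[NR,(N+1)R)$ one has $\sigma^{NR}_t(\eta)\le\sigma^A_t(\eta)\le\sigma^{(N+1)R}_t(\eta)$ and $\sigma_{t,(N+1)R}(\eta)\le\sigma_{t,A}(\eta)\le\sigma_{t,NR}(\eta)$; since the right-hand sides of (1) and (2) evaluated at $NR$, at $A$, and at $(N+1)R$ are mutually comparable up to multiplicative factors controlled by $R$, the case already proved carries over. (Alternatively, decompose $\{(\xi,\eta)_o<A\}$ into the annuli $A_{k,R}(\eta)$ with $(k+1)R\le A$ together with one truncated annulus, whose contribution has the same order as that of the last full annulus.) As a consistency check, $\sigma^A_t+\sigma_{t,A}=\sigma_t$ on $\partial X$ while $\frac{1-e^{-2tQA}}{1-e^{-2tQR}}+\frac{e^{-2tQA}}{1-e^{-2tQR}}=\frac1{1-e^{-2tQR}}$, so adding (1) and (2) recovers the estimate of Proposition~\ref{cont2}.

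There is no real difficulty beyond bookkeeping. The two new points, relative to Lemmas~\ref{Cauchy} and~\ref{limsup}, are that one now also uses the \emph{lower} estimate $\nu_o(A_{k,R}(\eta))\ge C^{-1}e^{-QkR}$ from Lemma~\ref{Ahlfors}, and that the factor $e^{|1-2t|QR}$ comparing $e^{(1-2t)Q(\xi,\eta)_o}$ to $e^{(1-2t)QkR}$ is absorbed into $C$ only as long as $t$ is confined to a fixed bounded set — in particular to $]0,\tfrac12]$, the range used in the rest of the paper — so the implied constant then depends on that range; for part~(2) one moreover restricts to $t>0$, where both sides are genuinely positive and the series in play converges. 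The rest is the same computation as in the preceding lemmas.
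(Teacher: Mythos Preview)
Your argument is correct and is exactly what the paper intends: it does not give a separate proof but merely says the lemma follows ``following the same method of computation used in the above lemmas,'' i.e.\ the annulus decomposition of Lemmas~\ref{Cauchy} and~\ref{limsup} combined with the two-sided Ahlfors estimate of Lemma~\ref{Ahlfors}, which is precisely what you carry out (including the passage from $A=NR$ to general $A$ by monotonicity). Your caveat on the $t$-dependence is well observed and consistent with the paper's later use of the lemma: the \emph{upper} bounds are actually uniform over all $t>0$ (the stray factor is at most $e^{QR}$), the lower bounds need $t$ confined to a bounded interval, and part~(2) only makes sense for $t>0$ --- these are the only regimes invoked downstream.
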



  \begin{remark}
If $(X,d)$ is a rank one Riemmanian symmetric space, $\sigma_{t}$ is constant as it is $K$-invariant, where $K$  the maximal compact subgroup (i.e. the stabilizer of $o$) acts transitively on $\partial X$. \\
\end{remark}
We state a counting estimate related to $k_{t}$ that already appears in the context of CAT(-1) space in \cite{Bou}, that is the discrete version of Lemma \ref{limsup}. 
\begin{lemma}\label{ineqsigma2}
Let $R>0$ large enough and let  $(\beta_{n,R})_{n\in \mathbb{N}}$ be the sequence of weights provided by Theorem \ref{equi}.
 There exist a constant $C>0$ and $R>0$ large enough such that for all $A>0$
  and for any $h\in \Gamma$ we have for all $n$ large enough:
 $$\limsup_{n\to \infty}\sum_{\{g\in S_{n,R}|(go,ho)\geq A\}}\beta_{n,R}(g)k_{t}(ho,go)\leq C \frac{e^{-2tQA}}{1-e^{-2tQR}}.$$
 \end{lemma}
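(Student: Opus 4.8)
The plan is to mirror, at the level of the equidistribution weights, the dyadic decomposition of $\partial X$ that was used in the proof of Lemma \ref{limsup}, replacing the integral against $\nu_o$ by the sum against $\beta_{n,R}$ and using Theorem \ref{equi} to pass to the limit. Fix $h\in\Gamma$, write $A=NR$ with $R$ large enough that Lemma \ref{Ahlfors} holds, and group the elements $g\in S_{n,R}$ with $(go,ho)_o\ge A$ according to the value of $\lfloor (go,ho)_o/R\rfloor$. On the block where $kR\le (go,ho)_o<(k+1)R$ (for $k\ge N$) the kernel is controlled by $k_t(ho,go)=e^{(1-2t)Q(go,ho)_o}\le C e^{(1-2t)QkR}$, so that
\[
\sum_{\{g\in S_{n,R}\,|\,(go,ho)_o\ge A\}}\beta_{n,R}(g)k_t(ho,go)\le C\sum_{k\ge N} e^{(1-2t)QkR}\sum_{\{g\in S_{n,R}\,|\,kR\le (go,ho)_o<(k+1)R\}}\beta_{n,R}(g).
\]
It therefore suffices to estimate, in the limit $n\to\infty$, the total $\beta_{n,R}$-mass of those $g$ for which $ho$ and $go$ are Gromov-close from $o$ at scale $\ge kR$.

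For the latter, the idea is to translate the condition $(go,ho)_o\ge kR$ into a condition on the boundary point $\hat g_o$ (the forward endpoint attached to $g$ via upper Gromov boundedness, \eqref{hat}) lying in a ball of radius comparable to $e^{-\epsilon kR}$ around $\hat h_o$, up to the bounded defects coming from $\delta$-hyperbolicity and the constant $M_X$. More precisely, $(go,ho)_o$ large forces $(\hat g_o,\hat h_o)_o$ to be large by Proposition \ref{propGromov}(2) together with \eqref{hat}, so that $\{g\in S_{n,R}:(go,ho)_o\ge kR\}$ is contained in $\{g\in S_{n,R}:\hat g_o\in B(\hat h_o,Ce^{-\epsilon kR})\}$ for a uniform $C$. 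Applying the boundary equidistribution statement (Remark \ref{equi'}, or directly Theorem \ref{equib}) to a continuous function approximating $\mathbf 1_{B(\hat h_o,Ce^{-\epsilon kR})}$, the $\beta_{n,R}$-mass of this set converges to at most $C'\nu_o\big(B(\hat h_o,C e^{-\epsilon kR})\big)$, which by Ahlfors regularity of $\nu_o$ (or directly by Lemma \ref{Ahlfors}) is $\le C'' e^{-QkR}$. Summing the resulting geometric series,
\[
\limsup_{n\to\infty}\sum_{\{g\in S_{n,R}\,|\,(go,ho)_o\ge A\}}\beta_{n,R}(g)k_t(ho,go)\le C\sum_{k\ge N}e^{(1-2t)QkR}e^{-QkR}=C\sum_{k\ge N}e^{-2tQkR}=C\frac{e^{-2tQNR}}{1-e^{-2tQR}},
\]
which is the claimed bound with $A=NR$.

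The main obstacle is the interchange of the limit in $n$ with the infinite sum over $k$: for a fixed $n$ the sum over $k$ is finite (it stops around $k\sim n$, since $(go,ho)_o\le d(o,go)<nR$), but to take $\limsup_n$ termwise and then sum one needs a dominating tail estimate uniform in $n$. This is supplied by the same block bound: for every $n$ one has $\sum_{\{g\in S_{n,R}:(go,ho)_o\ge kR\}}\beta_{n,R}(g)\le C/|S^{\Gamma}_{n,R}|\cdot |\{g\in S_{n,R}:(go,ho)_o\ge kR\}|$, and a crude counting of lattice points whose projection to the boundary falls in a ball of radius $e^{-\epsilon kR}$ gives $\le Ce^{-QkR}|S^{\Gamma}_{n,R}|$ uniformly in $n$ (this is exactly the discrete analogue of Lemma \ref{Ahlfors}, and is where the hypothesis that $R$ be large is used). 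With this uniform domination in hand, dominated convergence for series justifies the termwise passage to the limit, and the geometric summation completes the proof. A secondary technical point is the approximation of the indicator of the boundary ball by continuous functions, handled as usual by sandwiching between $\mathbf 1_{B(\hat h_o,(1-\epsilon')r)}$ and $\mathbf 1_{B(\hat h_o,(1+\epsilon')r)}$ and using that $\nu_o$ of a metric sphere is zero (Ahlfors regularity), so the extra constant is absorbed into $C$.
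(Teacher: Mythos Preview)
Your argument is correct in outline, but it takes an unnecessary detour. The ``crude counting'' you invoke at the end to justify the interchange of limit and sum --- namely that $|\{g\in S_{n,R}^\Gamma:(go,ho)_o\ge kR\}|\le Ce^{-QkR}|S_{n,R}^\Gamma|$ uniformly in $n$ --- is already the whole proof: once you have it, combine it with $\beta_{n,R}(g)\le C/|S_{n,R}^\Gamma|$ and sum the geometric series directly, without ever passing through the boundary or invoking Theorem~\ref{equib}. This is exactly what the paper does: it cites this counting estimate as \cite[Lemma~4.3]{Ga} in the form $|U_{k,R}(h)|\le Ce^{(n-k)QR}$ for the dyadic shell $U_{k,R}(h)=\{g\in S_{n,R}^\Gamma:(k-1)R<(go,ho)_o\le kR\}$, and then the computation is three lines. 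In particular the paper obtains a bound valid for \emph{every} $n$ large enough, not merely for the $\limsup$.

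Your route through equidistribution and Ahlfors regularity is not wrong --- the translation $(go,ho)_o\ge kR\Rightarrow (\hat g_o,\hat h_o)_o\ge kR-O(1)$ does hold for $n$ large via \eqref{hat} and Proposition~\ref{propGromov}(2) --- but it reproves, in the limit, a weaker version of the counting input you already need. If you keep the equidistribution approach, you should either prove the uniform shell count (it is not quite Lemma~\ref{Ahlfors}, which concerns $\nu_o$, but its orbital analogue) or cite \cite[Lemma~4.3]{Ga} as the paper does; once you do, drop steps 2--3.
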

 \begin{proof}
 Set $R>0$ large enough.  Fix $h \in \Gamma$. For all $n\in \mathbb{N}$
and for all $k\in \mathbb{N}^*$, define $U_{k,R}(h) \subset S^{\Gamma}_{n,R}$ as : $$U_{k,R}(h):=\{ g\in S^{\Gamma}_{n,R}|(k-1)R<(go,ho)\leq kR  \}. $$ 
Set $A=NR$ with $R$ large enough. By Lemma 4.3 in \cite{Ga},  there exists $C>0$ satisfying for all $k\leq n$ : 
\begin{equation}\label{countestim}
|U_{k,R}(h)|\leq Ce^{(n-k)RQ},
\end{equation}
 and for $k\geq n$ : 
 
$$
|U_{k,R}(h)|\leq C.$$ 
Hence, we have for $n$ large enough :
\begin{align*}
\sum_{\{g\in S_{n}|(go,ho)\geq A\}}\beta_{n,R}(g)k_{t}(ho,go)&=\sum^{n}_{k=N}\sum_{g\in U_{k,R}(h) }\beta_{n,R}(g) e^{(1-2t)Q(go,ho)}\\
&\leq C\sum^{n}_{k=N}\big(\beta_{n,R}(g)\cdot |U_{k,R}(h)|\cdot e^{Q(1-2t)k}\big)\\
&\leq C\sum^{n}_{k=N}e^{-2tQRk},
\end{align*}
where the last inequality follows from (\ref{countestim}) together with Item 1 of Theorem \ref{equi}.
Thus, we have for all $A=NR>0$:
$$ \limsup_{n\to +\infty} \sum_{\{g\in S_{n}|(go,ho)\geq A\}}\beta_{n,R}(g)k_{t}(ho,go) \leq  C \frac{e^{-2tQA}}{1-e^{-2tQR}},$$
and the proof is done.
 \end{proof}
 

 \subsection{Definition and elementary properties of the  Riesz operator $\mathcal{R}_{t} $ for $t>0$.}
 Riesz potentials are  classical in standard geometric framework where  spectral analysis can be performed (see for instance \cite[Section 3.10]{Hei}, \cite[Chapter 6,  Section 2.1, Definition 6.13]{Sam} and  \cite[Ch. V, Section IV]{SU}).
 

In the present context, recall that the spherical Riesz potential of order $t>0$ is defined as :
\begin{equation}\label{Rieszpot}
\mathcal{R}_{t}(v)(\xi):=\frac{1}{\sigma_{t}(\xi)}\mathcal{I}_{t}(v)(\xi),\;\; {\rm with}\;\; \mathcal{I}_{t}(v)(\xi)=\int_{\partial X} \frac{v(\eta)}{d^{(1-2t)D}_{o,\epsilon}(\xi,\eta)}d\nu_{o}(\eta).
\end{equation}
We call  $\mathcal{I}_{t}$ the Knapp-Stein operator (associated with a metric $d_{o,\epsilon}$) in reference to \cite{KSt} together with Proposition \ref{intertwin}.

Multiplication by $\sigma_{t}$ ($t>0$) defines a bounded self-adjoint and invertible operator $M_{\sigma_{t}}$ on $L^{2}(\partial X,\nu_{o})$ with $\|M_{\sigma_{t}}\|_{L^{2}\to L^{2}}=\|\sigma_{t}\|_{\infty}$ and
 $M^{-1}_{\sigma_{t}}=M_{\sigma^{-1}_{t}}$.\\



The first properties of the intertwiner $\mathcal{I}_{t}$ and $\mathcal{R}_{t}=M_{\sigma^{-1}_{t}}\mathcal{I}_{t}$ for $t>0$  are in the next propositions.
  \begin{prop}\label{bounded}
 For $t>0$, $\mathcal{I}_{t}$ is bounded  self adjoint and $\mathcal{R}_{t}$ is bounded  : $$\|\mathcal{R}_{t}\|_{L^{2}\to L^{2}}\leq  \|\sigma_{t}\|_{\infty}\|\sigma^{-1}_{t}\|_{\infty} \mbox{ and }\|\mathcal{I}_{t}\|_{L^{2}\to L^{2}}\leq \|\sigma_{t}\|_{\infty}.$$ 
  \end{prop}
 \begin{proof}
 This is based on the so-called Schur's test. By Cauchy-Schwarz we obtain for almost every $\xi \in \partial X:$
\begin{align*}
|\mathcal{I}_{t}(v)(\xi)|&\leq \int_{\partial X}\dfrac{|v(\eta)|}{d_{o,\epsilon}^{(1-2t) D}(\xi,\eta)}d\nu_{o}(\eta) \\
&\leq \bigg(\int_{\partial X}\dfrac{1}{d_{o,\epsilon}^{(1-2t) D}(\xi,\eta)}d\nu_{o}(\eta) \bigg)^{\frac{1}{2}}\bigg(\int_{\partial X}\dfrac{|v^{2}(\eta)|}{d_{o,\epsilon}^{(1-2t) D}(\xi,\eta)}d\nu_{o}(\eta)\bigg)^{\frac{1}{2}}.
\end{align*}
Then, we have:
\begin{align*}
\|\mathcal{I}_{t}v\|^{2}_{2}&\leq \int_{\partial X} \sigma_{t}(\xi)\bigg(\int_{\partial X}\dfrac{|v^{2}(\eta)|}{d_{o,\epsilon}^{(1-2t) D}(\xi,\eta)}d\nu_{o}(\eta) \bigg)d\nu_{o}(\xi)\\
\mbox{Lemma \ref{Cauchy} }&\leq \|\sigma_{t}\|_{\infty} \int_{\partial X} \bigg(\int_{\partial X}\dfrac{|v^{2}(\eta)|}{d_{o,\epsilon}^{(1-2t) D}(\xi,\eta)}d\nu_{o}(\eta)\bigg) d\nu_{o}(\xi)\\
\mbox{by Fubini's Theorem }&\leq \|\sigma_{t}\|^{2}_{\infty}   \bigg(\int_{\partial X}|v^{2}(\eta)|d\nu_{o}(\eta) \bigg)\\
&=
\|\sigma_{t}\|^{2}_{\infty} \|v\|^{2}_{2}.
\end{align*}
Hence, we have $\|\mathcal{I}_{t}v\|_{2}\leq \|\sigma_{t}\|_{\infty} \|v\|_{2}$ and
\begin{align*}
\|\mathcal{R}_{t}\|_{L^{2}\to L^{2}}=\|M_{\sigma^{-1}_{t}}\mathcal{I}_{t}\|_{L^{2}\to L^{2}}=&\leq  \|\mathcal{I}_{t}\|_{L^{2}\to L^{2}} \|M_{\sigma^{-1}_{t}}\|_{L^{2}\to L^{2}}\\
&\leq \|\sigma_{t}\|_{\infty}\|\sigma^{-1}_{t}\|_{\infty}.
\end{align*}

Moreover, since $k_{t}$ is symmetric, Fubini's Theorem implies that $\mathcal{I}_{t}$ is self adjoint.
 \end{proof}

The following statement will also be useful :
 \begin{lemma}\label{positiv}
 Let $t>0$.
The Riesz operator $\mathcal{R}_{t}$ is positive if and only if the Knapp-Stein operator $\mathcal{I}_{t} $ is positive.
\end{lemma}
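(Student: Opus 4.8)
The plan is to exploit the factorization \eqref{relation}, $\mathcal{R}_{t}=M_{\sigma_{t}^{-1}}\mathcal{I}_{t}$, together with two facts already at hand: $\mathcal{I}_{t}$ is self-adjoint on $L^{2}(\partial X,\nu_{o})$ (Proposition \ref{bounded}), and by Proposition \ref{cont2} the function $\sigma_{t}$ is continuous on the compact set $\partial X$ and bounded above and below by strictly positive constants. Consequently $\sigma_{t}^{1/2}$ is a bounded function bounded away from $0$, so $M_{\sigma_{t}^{1/2}}$ is a bounded, self-adjoint, positive, boundedly invertible multiplication operator with inverse $M_{\sigma_{t}^{-1/2}}$.

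First I would introduce the auxiliary operator $\mathcal{J}_{t}:=M_{\sigma_{t}^{-1/2}}\,\mathcal{I}_{t}\,M_{\sigma_{t}^{-1/2}}$, which is self-adjoint on $L^{2}(\partial X,\nu_{o})$ since $\mathcal{I}_{t}$ and the multiplication operators are. A one-line computation from \eqref{relation} gives $\mathcal{J}_{t}=M_{\sigma_{t}^{1/2}}\,\mathcal{R}_{t}\,M_{\sigma_{t}^{-1/2}}$, so $\mathcal{R}_{t}$ and $\mathcal{J}_{t}$ are conjugate through a boundedly invertible operator and therefore have the same spectrum; hence $\mathcal{R}_{t}$ is positive if and only if the self-adjoint operator $\mathcal{J}_{t}$ is positive. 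It then remains only to observe that $\mathcal{J}_{t}\ge 0$ if and only if $\mathcal{I}_{t}\ge 0$: for $w\in L^{2}(\partial X,\nu_{o})$, putting $u=M_{\sigma_{t}^{-1/2}}w$, which runs over all of $L^{2}(\partial X,\nu_{o})$ as $w$ does, one has $\langle\mathcal{J}_{t}w,w\rangle=\langle\mathcal{I}_{t}u,u\rangle$, so one quadratic form is nonnegative on the whole space exactly when the other is. Chaining the two equivalences proves the lemma.

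I do not expect a genuine obstacle here. The one point that needs care is the meaning of ``$\mathcal{R}_{t}$ is positive'': since $M_{\sigma_{t}^{-1}}$ and $\mathcal{I}_{t}$ need not commute, $\mathcal{R}_{t}$ is in general not self-adjoint for the plain $L^{2}$ inner product, and its positivity must be understood spectrally, equivalently as positivity of $\mathcal{R}_{t}$ for the equivalent inner product $\langle u,v\rangle_{\sigma_{t}}:=\langle M_{\sigma_{t}}u,v\rangle$. With that reading one can also argue without square roots: a direct computation gives $\langle\mathcal{R}_{t}u,v\rangle_{\sigma_{t}}=\langle\mathcal{I}_{t}u,v\rangle=\langle u,\mathcal{R}_{t}v\rangle_{\sigma_{t}}$, so $\mathcal{R}_{t}$ is self-adjoint for $\langle\cdot,\cdot\rangle_{\sigma_{t}}$ and $\langle\mathcal{R}_{t}u,u\rangle_{\sigma_{t}}=\langle\mathcal{I}_{t}u,u\rangle$, which yields the equivalence immediately.
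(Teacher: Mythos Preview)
Your argument is correct and follows essentially the same route as the paper's: both introduce the symmetrized operator $M_{\sigma_t^{-1/2}}\mathcal{I}_t M_{\sigma_t^{-1/2}}$, observe that it is similar to $\mathcal{R}_t$ (hence has the same spectrum) and congruent to $\mathcal{I}_t$ (hence positive iff $\mathcal{I}_t$ is), and chain the two facts. The paper phrases the similarity step via the identity $sp(AB)=sp(BA)$ rather than explicit conjugation, but the content is the same. Your remark that positivity of $\mathcal{R}_t$ must be read spectrally (equivalently, as self-adjoint positivity for the weighted inner product $\langle\cdot,\cdot\rangle_{\sigma_t}$) is a useful clarification that the paper leaves implicit.
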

\begin{proof}
Assume that $\mathcal{R}_{t}$ is positive. Then, the operator $M_{\sigma^{1/2}_{t}}\mathcal{R}_{t}M_{\sigma^{1/2}_{t}}^{*}=M_{\sigma^{-1/2}_{t}}\mathcal{I}_{t}M_{\sigma^{1/2}_{t}}$ is positive and thus $sp(M_{\sigma^{-1/2}_{t}}\mathcal{I}_{t}M_{\sigma^{1/2}_{t}})\subset \mathbb{R}^{+}$, where $sp(\cdot)$ denotes the spectrum of an operator. Moreover $sp(M_{\sigma^{-1/2}_{t}}\mathcal{I}_{t}M_{\sigma^{1/2}_{t}})=sp(\mathcal{I}_{t}M_{\sigma^{1/2}_{t}}M_{\sigma^{-1/2}_{t}})=sp(\mathcal{I}_{t})$. Thus $sp(\mathcal{I}_{t})\subset \mathbb{R}^{+}$ and  the operator $\mathcal{I}_{t}$ is positive. \\
Assume that $\mathcal{I}_{t}$ is positive. Then, the operator $M_{\sigma^{-1/2}_{t}}\mathcal{I}_{t}M_{\sigma^{-1/2}_{t}}^{*}=M_{\sigma^{-1/2}_{t}}\mathcal{I}_{t}M_{\sigma^{-1/2}_{t}}$ is positive and thus $sp(M_{\sigma^{-1/2}_{t}}\mathcal{I}_{t}M_{\sigma^{-1/2}_{t}})\subset \mathbb{R}^{+}$. Then $sp(M_{\sigma^{-1/2}_{t}}\mathcal{I}_{t}M_{\sigma^{-1/2}_{t}})=sp(M_{\sigma^{-1}_{t}}\mathcal{I}_{t})$. Since $M_{\sigma^{-1}_{t}}\mathcal{I}_{t}=\mathcal{R}_{t}$,  thus the operator $\mathcal{R}_{t}$ is positive.
\end{proof}

  \subsection{Truncature of operators}
\subsubsection{Truncature of $\mathcal{I}_{t}$}
  Define for any $A>0$ and $t>0$ the bounded operators  acting on $ L^{2}(\partial X,\nu_{o})$ : 
 $$\mathcal{I}^{A}_{t}(v)(\xi):=\int_{\{\eta|(\xi,\eta)_{o}< A\}}\frac{v(\eta)}{d^{(1-2t)D}_{o,\epsilon}(\xi,\eta)}d\nu_{o}(\eta)$$
  and   $$ \mathcal{I}_{t,A}(v)(\xi):=(\mathcal{I}_{t}-\mathcal{I}^{A}_{t})(v)(\xi)=\int_{\{\eta|(\xi,\eta)_{o}\geq A\}}\frac{v(\eta)}{d^{(1-2t)D}_{o,\epsilon}(\xi,\eta)}d\nu_{o}(\eta).$$



\begin{lemma}\label{spectralgap}
For any $R>0$ large enough there exists $C>0$ such that for all $t>0:$ 
$$
\|\mathcal{I}^{A}_{t}\|_{L^{2}\to L^{2}}\leq C\cdot \frac{1-e^{-2tQA}}{1-e^{-2tQR}}\;\;\;{\rm  and }\;\;\;
 \|\mathcal{I}_{A,t}\|_{L^{2}\to L^{2}}\leq C\cdot \frac{e^{-2tQA}}{1-e^{-2tQR}}.$$ 
\end{lemma}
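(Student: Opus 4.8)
The plan is to estimate the two truncated operators by the same Schur-test argument used in Proposition \ref{bounded}, only now the relevant symmetric kernels are the truncatures $k^{A}_{t}$ and $k_{t,A}$ defined in (\ref{kernel}), whose integrals against $\nu_{o}$ are controlled by $\sigma^{A}_{t}$ and $\sigma_{t,A}$. So the key inputs are the two-sided estimates of Lemma \ref{ineqsigma}, namely $\sigma^{A}_{t}(\eta)\asymp \frac{1-e^{-2tQA}}{1-e^{-2tQ}}$ and $\sigma_{t,A}(\eta)\asymp \frac{e^{-2tQA}}{1-e^{-2tQ}}$ uniformly in $\eta\in\partial X$ and $t>0$ (after fixing $R$ large enough; one rescales $R$ out of the bound, so we may as well state the conclusion with $R$ in the denominator as in the claim, using that $\frac{1-e^{-2tQA}}{1-e^{-2tQ}}\leq C\frac{1-e^{-2tQA}}{1-e^{-2tQR}}$ for $R$ bounded).

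Concretely, for the first inequality I would start from, for $\nu_{o}$-a.e.\ $\xi$,
\begin{align*}
|\mathcal{I}^{A}_{t}v(\xi)|
&\leq \int_{\{(\xi,\eta)_{o}<A\}} k_{t}(\xi,\eta)|v(\eta)|\,d\nu_{o}(\eta)\\
&\leq \bigg(\int_{\partial X} k^{A}_{t}(\xi,\eta)\,d\nu_{o}(\eta)\bigg)^{1/2}\bigg(\int_{\partial X} k^{A}_{t}(\xi,\eta)|v(\eta)|^{2}\,d\nu_{o}(\eta)\bigg)^{1/2}\\
&= \sigma^{A}_{t}(\xi)^{1/2}\bigg(\int_{\partial X} k^{A}_{t}(\xi,\eta)|v(\eta)|^{2}\,d\nu_{o}(\eta)\bigg)^{1/2}.
\end{align*}
Squaring, integrating in $\xi$, using Lemma \ref{ineqsigma}(1) to bound $\sigma^{A}_{t}(\xi)$ by $C\frac{1-e^{-2tQA}}{1-e^{-2tQR}}$, then Fubini (the kernel is symmetric) and Lemma \ref{ineqsigma}(1) again to bound $\int k^{A}_{t}(\xi,\eta)\,d\nu_{o}(\xi)=\sigma^{A}_{t}(\eta)$ by the same quantity, yields $\|\mathcal{I}^{A}_{t}v\|_{2}^{2}\leq C^{2}\big(\frac{1-e^{-2tQA}}{1-e^{-2tQR}}\big)^{2}\|v\|_{2}^{2}$, which is the first claim. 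The second inequality is identical word for word with $k^{A}_{t}$ replaced by $k_{t,A}$ and Lemma \ref{ineqsigma}(1) replaced by Lemma \ref{ineqsigma}(2), giving the factor $\frac{e^{-2tQA}}{1-e^{-2tQR}}$.

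I do not expect a genuine obstacle here: the proof is a transcription of Proposition \ref{bounded} with the truncated kernels in place of $k_{t}$, and everything hinges on already-established facts. The only points requiring a little care are: (i) checking that $\mathcal{I}^{A}_{t}$ and $\mathcal{I}_{A,t}$ are well defined on all of $L^{2}(\partial X,\nu_{o})$ — for $\mathcal{I}^{A}_{t}$ the kernel is bounded by $e^{(1-2t)QA}$ so this is immediate, and for $\mathcal{I}_{A,t}=\mathcal{I}_{t}-\mathcal{I}^{A}_{t}$ it follows from Proposition \ref{bounded}; (ii) absorbing the discrepancy between the denominator $1-e^{-2tQ}$ coming from Lemma \ref{ineqsigma} and the denominator $1-e^{-2tQR}$ in the statement, which is harmless since $R$ ranges over a bounded set once it is fixed "large enough", so $\frac{1}{1-e^{-2tQ}}\leq \frac{C}{1-e^{-2tQR}}$ with $C$ depending only on $R$. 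With these remarks the estimates follow, and the constant $C$ is the square of the constant in Lemma \ref{ineqsigma} up to the harmless factor just mentioned.
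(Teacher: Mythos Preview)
Your proof is correct and follows exactly the paper's approach: apply the Schur test as in Proposition~\ref{bounded} with the truncated kernels $k^{A}_{t}$ and $k_{t,A}$ in place of $k_{t}$, obtaining $\|\mathcal{I}^{A}_{t}\|_{L^{2}\to L^{2}}\leq \|\sigma^{A}_{t}\|_{\infty}$ and $\|\mathcal{I}_{A,t}\|_{L^{2}\to L^{2}}\leq \|\sigma_{t,A}\|_{\infty}$, then invoke Lemma~\ref{ineqsigma}. Your remark (ii) about reconciling the denominators $1-e^{-2tQ}$ and $1-e^{-2tQR}$ is a fair bookkeeping observation (and your inequality $\frac{1}{1-e^{-2tQ}}\leq \frac{R}{1-e^{-2tQR}}$ for $R\geq 1$ does the job), though in practice the computation behind Lemma~\ref{ineqsigma} already produces $1-e^{-2tQR}$ directly, as in Lemmas~\ref{Cauchy} and~\ref{limsup}.
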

\begin{proof}
Following the proof of Proposition \ref{bounded}, we obtain for all $A>0:$
$$\|\mathcal{I}^{A}_{t}\|_{L^{2}\to L^{2}}\leq \|\sigma^{A}_{t}\|_{\infty}.$$
Use now Lemma \ref{ineqsigma} to estimate $\|\sigma^{A}_{t}\|_{\infty}$: for $R>0$ large enough there exists $C>0$ such that
$$\|\mathcal{I}^{A}_{t}\|_{L^{2}\to L^{2}}\leq C\cdot \frac{1-e^{-2tQA}}{1-e^{-2tQR}}.$$
The proof dealing with $\mathcal{I}_{A,t}$ is exactly the same.
\end{proof}

\begin{prop}\label{compactope}
For $t>0$, the operator $\mathcal{I}_{t}$ is compact 
from $L^{2}(\partial X)$ to $L^{2}(\partial X)$. \end{prop}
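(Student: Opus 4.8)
\medskip

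The plan is to exhibit $\mathcal{I}_t$ as a norm-limit of finite-rank operators (or more precisely, of operators with continuous, hence nonsingular, kernels), using the decomposition $\mathcal{I}_t = \mathcal{I}^A_t + \mathcal{I}_{A,t}$ together with the norm estimates from Lemma \ref{spectralgap}. First I would observe that for each fixed $A>0$, the truncated kernel $k_{t,A}(\xi,\eta)=\textbf{1}_{\{(\xi,\eta)_o\geq A\}}e^{(1-2t)Q(\xi,\eta)_o}$ is bounded (by $e^{(1-2t)QA}$ when $t<1/2$, and trivially when $t\geq 1/2$), but it is not continuous on $\partial X\times\partial X$ because of the sharp cutoff and because the extended Gromov product need not be continuous across the diagonal. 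So rather than using $\mathcal{I}_{A,t}$ directly, I would replace the sharp truncation by a \emph{continuous} truncation: fix $A$ and pick a continuous function $\chi_A:\overline X\times\overline X\to[0,1]$ that equals $1$ on $U_{A}=\{(\xi,\eta)_o\geq A\}$ and is supported inside $U_{A/2}$ (say), so that the kernel $\tilde k_{A,t}:=\chi_A\cdot e^{(1-2t)Q(\cdot,\cdot)_o}$ is continuous on $\partial X\times\partial X$ (the Gromov product is continuous on the region $(\xi,\eta)_o\geq A/2$ for $A$ large since $\partial X$ is $\epsilon$-good, so no diagonal problem arises) and bounded. The associated integral operator $\tilde{\mathcal I}_{A,t}$ has a continuous kernel on the compact space $(\partial X,\nu_o)$, hence is compact both as an operator on $C(\partial X)$ (by Arzelà–Ascoli: the image of the unit ball is uniformly bounded and equicontinuous, using uniform continuity of $\tilde k_{A,t}$) and as an operator on $L^2(\partial X,\nu_o)$ (it is Hilbert–Schmidt, since $\tilde k_{A,t}\in L^2(\partial X\times\partial X)$, a fortiori; alternatively one quotes that a continuous kernel on a compact measure space gives a compact $L^2$-operator).

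\medskip

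Next I would show that $\mathcal{I}_t - \tilde{\mathcal I}_{A,t}\to 0$ in operator norm as $A\to\infty$, on both spaces. The difference has kernel $(1-\chi_A)e^{(1-2t)Q(\xi,\eta)_o}$, which is supported in $\{(\xi,\eta)_o < A\}$ and hence is dominated pointwise by $k^A_t$; an application of Schur's test exactly as in the proof of Proposition \ref{bounded}, combined with Lemma \ref{ineqsigma} (or Lemma \ref{spectralgap}), gives
\[
\|\mathcal{I}_t - \tilde{\mathcal I}_{A,t}\|_{L^2\to L^2} \;\leq\; C\cdot\frac{1-e^{-2tQA}}{1-e^{-2tQR}}\cdot\big(\text{correction}\big),
\]
— wait, this bound does not go to zero; the correct estimate is the reverse one. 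Let me restate: I should instead take $\chi_A$ supported in $U_A$ and equal to $1$ on $U_{2A}$, so the difference $\mathcal I_t-\tilde{\mathcal I}_{A,t}$ has kernel supported in $\{(\xi,\eta)_o<2A\}$... still the tail is the \emph{small} piece $\mathcal I_{A,t}$. So the right move is the opposite: write $\mathcal{I}_t = \mathcal{I}^A_t + \mathcal{I}_{A,t}$, note $\|\mathcal{I}_{A,t}\|\leq C e^{-2tQA}/(1-e^{-2tQR})\to 0$ as $A\to\infty$ by Lemma \ref{spectralgap}, and show that $\mathcal{I}^A_t$ — the piece with kernel supported \emph{near the diagonal}, $(\xi,\eta)_o< A$, i.e.\ $d_{o,\epsilon}(\xi,\eta)$ bounded below — is compact for each fixed $A$. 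The kernel $k^A_t(\xi,\eta)=\textbf{1}_{\{(\xi,\eta)_o<A\}}e^{(1-2t)Q(\xi,\eta)_o}$ is bounded (by $e^{(1-2t)QA}$ if $t<1/2$, by $1$ if $t\ge1/2$) but again not continuous; so I would approximate it \emph{from below in $A$} together with a mollification: since $\mathcal I^A_t=\mathcal I^{A'}_t + (\text{piece supported in } A'\le(\xi,\eta)_o<A)$ and the latter piece has a kernel which, on the compact region $\{A'\leq(\xi,\eta)_o\leq A\}$, is continuous (the Gromov product is continuous there, away from the diagonal, on an $\epsilon$-good space) and bounded, it is Hilbert–Schmidt, hence compact. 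Meanwhile $\mathcal I^{A'}_t$ is the small near-diagonal piece: it is the restriction of $\mathcal I_t$ to the singular region $(\xi,\eta)_o<A'$, and by the very computation of Lemma \ref{ineqsigma}(1) and Schur's test, $\|\mathcal I^{A'}_t\|\leq C(1-e^{-2tQA'})/(1-e^{-2tQR})\to 0$ as $A'\to 0^+$. Thus $\mathcal I^A_t$ is a norm-limit (as $A'\to0$) of compact operators, hence compact; and then $\mathcal I_t=\mathcal I^A_t+\mathcal I_{A,t}$ with the second term $\to0$ in norm as $A\to\infty$ — but this only shows $\mathcal I_t$ is a norm-limit of compacts, so $\mathcal I_t$ is compact. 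For the $C(\partial X)$ statement, I would run the same scheme with Arzelà–Ascoli replacing Hilbert–Schmidt: a continuous kernel on the compact metric space $\partial X$ gives an operator mapping bounded sets to equicontinuous, uniformly bounded families, and the norm estimates in $C(\partial X)$ (via $\|\mathcal I^{A'}_t\|_{C\to C}\le\|\sigma^{A'}_t\|_\infty$, Lemma \ref{ineqsigma}, and similarly $\|\mathcal I_{A,t}\|_{C\to C}\le\|\sigma_{t,A}\|_\infty$) control the tails uniformly.

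\medskip

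The main obstacle, and the only delicate point, is handling the \emph{singularity of $k_t$ along the diagonal} when $t<1/2$: one cannot simply cut at $(\xi,\eta)_o<A$ and call the kernel continuous, because as $\xi\to\eta$ one has $(\xi,\eta)_o\to\infty$, so the region $(\xi,\eta)_o<A$ is an annular neighborhood avoiding the diagonal — good — but the region $(\xi,\eta)_o\geq A$ is precisely a diagonal neighborhood where $k_t$ blows up if $t<1/2$. So the "small" piece for compactness purposes is the diagonal piece $\mathcal I_{A,t}$ as $A\to\infty$, whose smallness is exactly Lemma \ref{spectralgap}; and the genuinely compact approximants are the bounded, continuous, off-diagonal kernels $\textbf{1}_{\{A'\le(\xi,\eta)_o< A\}}e^{(1-2t)Q(\xi,\eta)_o}$. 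I expect the bookkeeping — choosing the order of limits $A'\to 0$, $A\to\infty$, and verifying continuity of the Gromov product on each off-diagonal shell using the $\epsilon$-good hypothesis — to be the part requiring care, but no essentially new idea beyond Schur's test, Lemma \ref{ineqsigma}, and Arzelà–Ascoli.
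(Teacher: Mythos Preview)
Your $L^2$ argument is essentially the paper's: the paper writes $\mathcal{I}_t=\mathcal{I}^N_t+\mathcal{I}_{N,t}$, observes that $\mathcal{I}^N_t$ has a \emph{bounded} kernel on the finite-measure space $\partial X\times\partial X$, hence is Hilbert--Schmidt directly, and invokes Lemma~\ref{spectralgap} for $\|\mathcal{I}_{N,t}\|_{L^2\to L^2}\to0$. Your extra $A'$ layer is unnecessary --- continuity of the kernel is irrelevant for Hilbert--Schmidt; boundedness suffices. (Also, your labeling is reversed: the region $\{(\xi,\eta)_o<A'\}$ is the \emph{far-from-diagonal} region, where the kernel is bounded near $1$; the diagonal corresponds to $(\xi,\eta)_o=\infty$. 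The singular piece is $\mathcal{I}_{A,t}$, and that is precisely the one whose norm vanishes.)

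For $C(\partial X)$ your route genuinely differs from the paper's. The paper first shows $\mathcal{I}_t f\in C(\partial X)$ via the uniform convergence $\mathcal{I}^{A_N}_t f\to\mathcal{I}_t f$ (controlled by $\|f\|_\infty\|\sigma_{t,A_N}\|_\infty$), and then applies Arzel\`a--Ascoli \emph{directly to $\mathcal{I}_t$}, asserting equicontinuity via the bound $|\mathcal{I}_t f(\xi)-\mathcal{I}_t f(\eta)|\le|\sigma_t(\xi)-\sigma_t(\eta)|$ for $\|f\|_\infty\le1$ --- a claim which, as written, is not justified (the honest bound is $\int|k_t(\xi,\zeta)-k_t(\eta,\zeta)|\,d\nu_o(\zeta)$, not the absolute value of the difference of integrals). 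Your norm-approximation scheme is a cleaner alternative and sidesteps this issue, but be aware that your sharp-cutoff annular kernels $\mathbf{1}_{\{A'\le(\xi,\eta)_o<A\}}e^{(1-2t)Q(\xi,\eta)_o}$ are still discontinuous across the cutoff hypersurfaces, so Arzel\`a--Ascoli does not apply to them as stated; you must actually carry out the continuous mollification $\chi_A$ you introduced at the start, after which the $\|\cdot\|_{C\to C}$ tails are controlled by $\|\sigma_{t,A}\|_\infty\to0$ exactly as you say.
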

\begin{proof}

 Let us prove that  $\mathcal{I}_{t}$ is a compact operator from $L^{2}\to L^{2}$. Let $N>0$ and note that 
$$\mathcal{I}^{N}_{t}(v)(\xi)=\int _{\partial X} v(\eta)h_{N}(\xi,\eta)d\nu_{o}(\eta),$$
is a Hilbert-Schmidt operator since $h_{N}$ is in $L^{2}(\partial X \times \partial X,\nu_{o} \otimes \nu_{o})$ where :
$$
h_{N}(\xi,\eta) = \left\{
    \begin{array}{ll}
        \frac{1}{d^{(1-2t)D}_{o,\epsilon}(\xi,\eta)}& \mbox{if $(\xi,\eta)_{o}<N,$ }  \\
        0 & \mbox{if not.}
    \end{array}
\right.
$$
Hence, $\mathcal{I}^{N}_{t}$ is compact for all $N$ and moreover, Lemma \ref{spectralgap} implies that
$$\|\mathcal{I}_{t}-\mathcal{I}^{N}_{t}\|_{L^{2}\to L^{2}}=\|\mathcal{I}_{N,t}\|_{L^{2}\to L^{2}} \to 0,$$ as $N\to +\infty$. So, $\mathcal{I}_{t}$ is compact.\\

\end{proof}

\subsection{A one parameter family of operators}

Set for $v\in L^{2}(\partial X)$ :
$$(\delta_{t}(v))(\xi,\eta):=\frac{1}{d_{o,\epsilon}^{(\frac{1}{2}-t) D} (\xi,\eta)}\big( v(\xi)-v(\eta)\big).$$
The purpose of this section is to define a Laplacian related to the Riesz operator $\mathcal{R}_{t}$ for each $t>0$.

\begin{prop}\label{gdelta} Let $t>0$.
 \begin{enumerate}
 \item \label{1'''}The operator $\delta_{t}$ is a well defined bounded operator from $L^{2}(\partial X,\nu_{o})$ to $L^{2}(\partial X \times \partial X,\nu_{o} \otimes \nu_{o})$ whose adjoint $\delta^{*}_{t}:L^{2}(\partial X \times \partial X,\nu_{o} \otimes \nu_{o})\rightarrow L^{2}(\partial X,\nu_{o})$ is defined by the following formula:   $$ \delta^{*}_{t}F(\cdot):= \int_{\partial X}\frac{ F(\cdot,\eta)-F(\eta,\cdot)}{d^{(\frac{1}{2}-t)D}_{o,\epsilon}(\cdot,\eta)}d\nu_{o}(\eta).$$
  \item \label{2'''} The operator $\Delta_{t}:=\frac{\delta_{t}^{*}\delta_{t}}{2} $ is a bounded and positive operator acting on $L^{2}(\partial X,\nu_{o})$  defined by   : $$ \Delta_{t}(v)(\xi)=\int_{\partial X}\frac{  v(\xi)-v(\eta) }{d_{o,\epsilon}^{(1-2t)D} (\xi,\eta)}d\nu_{o}(\eta)\;\;\;\; (v\in L^{2}(\partial X,\nu_{o}),\;\; \xi\in \partial X).$$
 \item \label{3'''} We have: $ \mathcal{I}_{t}=M_{\sigma_{t}}-\Delta_{t}.$
  
\end{enumerate}

    \end{prop}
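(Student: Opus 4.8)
The plan is to establish the three items in order, since each one feeds into the next. First I would prove item (\ref{1'''}): boundedness of $\delta_t$ amounts to showing that $\mathcal{D}(\delta_t)$ contains a dense subspace and that $\|\delta_t v\|_{L^2(\partial X\times\partial X)}\leq C\|v\|_{L^2(\partial X)}$. The natural way is to expand $\|\delta_t v\|^2$ as a double integral of $\frac{|v(\xi)-v(\eta)|^2}{d_{o,\epsilon}^{(1-2t)D}(\xi,\eta)}$, use $|v(\xi)-v(\eta)|^2 \leq 2|v(\xi)|^2 + 2|v(\eta)|^2$, and recognize each resulting term as $\langle M_{\sigma_t}v,v\rangle$ after integrating one variable out (using $\int_{\partial X} k_t(\xi,\eta)\,d\nu_o(\eta)=\sigma_t(\xi)$). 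Since $\sigma_t\in L^\infty$ by Proposition \ref{cont2}, boundedness with $\|\delta_t\|^2\leq 4\|\sigma_t\|_\infty$ follows, and in particular $\mathcal{D}(\delta_t)=L^2(\partial X,\nu_o)$. The Leibniz rule is already recorded. For the formula for $\delta_t^*$, I would compute $\langle \delta_t v, F\rangle_{L^2(\partial X\times\partial X)}$ for $F$ continuous and $v\in C(\partial X)$, split the integrand using $(v(\xi)-v(\eta))\overline{F(\xi,\eta)}$, perform the change of variables $(\xi,\eta)\leftrightarrow(\eta,\xi)$ in the second half together with symmetry of $k_t$, and collect terms to read off $\delta_t^* F(\xi) = \int_{\partial X}\frac{F(\xi,\eta)-F(\eta,\xi)}{d_{o,\epsilon}^{(\frac12-t)D}(\xi,\eta)}\,d\nu_o(\eta)$; Fubini is justified because the kernel $d_{o,\epsilon}^{-(\frac12-t)D}$ is in $L^1$ in one variable uniformly (by the Ahlfors-regularity computation underlying Lemma \ref{Cauchy}, noting $(\frac12-t)D < D$).

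Next, for item (\ref{2'''}), positivity of $\Delta_t=\frac12\delta_t^*\delta_t$ is automatic: for any $v$, $\langle\Delta_t v,v\rangle = \frac12\|\delta_t v\|^2\geq 0$. Boundedness follows from boundedness of $\delta_t$ together with boundedness of $\delta_t^*$ (equal norms). To get the pointwise formula, I would apply the expression for $\delta_t^*$ to $F=\delta_t v$, i.e. to $F(\xi,\eta)=d_{o,\epsilon}^{-(\frac12-t)D}(\xi,\eta)(v(\xi)-v(\eta))$. Since this $F$ is antisymmetric, $F(\xi,\eta)-F(\eta,\xi)=2F(\xi,\eta)$, so $\delta_t^*\delta_t v(\xi) = 2\int_{\partial X} d_{o,\epsilon}^{-(\frac12-t)D}(\xi,\eta)\cdot d_{o,\epsilon}^{-(\frac12-t)D}(\xi,\eta)(v(\xi)-v(\eta))\,d\nu_o(\eta) = 2\int_{\partial X}\frac{v(\xi)-v(\eta)}{d_{o,\epsilon}^{(1-2t)D}(\xi,\eta)}\,d\nu_o(\eta)$, and dividing by $2$ gives the claimed formula for $\Delta_t(v)$.

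Finally, item (\ref{3'''}) is then essentially bookkeeping: starting from the formula just obtained,
\[
\Delta_t(v)(\xi) = \int_{\partial X}\frac{v(\xi)}{d_{o,\epsilon}^{(1-2t)D}(\xi,\eta)}\,d\nu_o(\eta) - \int_{\partial X}\frac{v(\eta)}{d_{o,\epsilon}^{(1-2t)D}(\xi,\eta)}\,d\nu_o(\eta) = v(\xi)\sigma_t(\xi) - \mathcal{I}_t(v)(\xi) = M_{\sigma_t}v(\xi) - \mathcal{I}_t(v)(\xi),
\]
using the definition (\ref{sigma}) of $\sigma_t$ and the definition (\ref{intertwiner}) of $\mathcal{I}_t$; rearranging gives $\mathcal{I}_t = M_{\sigma_t}-\Delta_t$. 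The main obstacle, such as it is, lies in item (\ref{1'''}): one must be careful that the double integral defining $\|\delta_t v\|^2$ really does converge for $v\in L^2$ (a priori it is defined only on the domain $\mathcal{D}(\delta_t)$), and that splitting it and applying Fubini is legitimate — this is where the $L^1$-integrability of the kernel in a single variable, together with $\sigma_t\in L^\infty$ from Proposition \ref{cont2}, does the real work and forces $\mathcal{D}(\delta_t)=L^2$. The rest is formal manipulation of the adjoint identity and of the definitions of $\sigma_t$ and $\mathcal{I}_t$.
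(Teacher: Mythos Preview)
Your proposal is correct and follows essentially the same approach as the paper: expand $\|\delta_t v\|^2$ and control it by $4\|\sigma_t\|_\infty\|v\|_2^2$ via the integrability of $k_t$ in one variable, read off $\delta_t^*$ from the pairing identity after a change of variables $(\xi,\eta)\leftrightarrow(\eta,\xi)$, apply $\delta_t^*$ to the antisymmetric function $\delta_t v$ to get $\Delta_t$, and then subtract and add $v(\xi)$ inside the integral to recover $\mathcal{I}_t=M_{\sigma_t}-\Delta_t$. The only cosmetic difference is that you use the cleaner inequality $|v(\xi)-v(\eta)|^2\leq 2|v(\xi)|^2+2|v(\eta)|^2$, whereas the paper expands $|v(\xi)-v(\eta)|^2$ and bounds the cross term by Cauchy--Schwarz; both routes yield the same constant $4\|\sigma_t\|_\infty$.
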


  \begin{proof} We prove (\ref{1'''}). We have

  $$\begin{array}{l}
  \|\delta_{t}v\|^{2}_{L^{2}\otimes L^{2}}=\displaystyle\int_{\partial X \times \partial X}\frac{|v(\xi)-v(\eta)|^{2}}{d^{(1-2t)D}_{o,\epsilon}(\xi,\eta)}d\nu_{o}(\xi)d\nu_{o}(\eta)\\
  \\
 \;\;\;\;\;\;\;\hspace{1cm}\displaystyle \leq \int_{\partial X \times \partial X}\frac{|v(\xi)|^{2}}{d^{(1-2t)D}_{o,\epsilon}(\xi,\eta)}d\nu_{o}(\xi)d\nu_{o}(\eta)+\int_{\partial X \times \partial X}\frac{|v(\eta)|^{2}}{d^{(1-2t)D}_{o,\epsilon}(\xi,\eta)}d\nu_{o}(\xi)d\nu_{o}(\eta)\\
 \hspace{1,8cm}\displaystyle +2\bigg(\int_{\partial X \times \partial X}\frac{|v(\xi)|^{2}}{d^{(1-2t)D}_{o,\epsilon}(\xi,\eta)}d\nu_{o}(\xi)d\nu_{o}(\eta)\bigg)^{\frac{1}{2}} \bigg(\int_{\partial X \times \partial X}\frac{|v(\eta)|^{2}}{d^{(1-2t)D}_{o,\epsilon}(\xi,\eta)}d\nu_{o}(\xi)d\nu_{o}(\eta)\bigg)^{\frac{1}{2}}  \\
 \\
\hspace{2cm} \displaystyle =4\int_{\partial X }|v(\xi)|^{2}\sigma_{t}(\xi)d\nu_{o}(\xi) \;\;\;  \mbox{  (Fubini Theorem) } \\
 \\
 \hspace{2cm}\displaystyle \leq 4\|\sigma_{t}\|_{\infty}\|v\|^{2}_{2},
 \end{array}$$

\noindent   hence $\delta_{t}$ is bounded from $L^{2}(\partial X,\nu_{o})$ to $L^{2}(\partial X \times \partial X,\nu_{o} \otimes \nu_{o})$.
Now observe for all $v\in L^{2}(\partial X,\nu_{o})$ and $F\in L^{2}(\partial X \times \partial X, \nu_{o}\otimes \nu_{o})$: 
  \begin{align*}
  \langle \delta_{t}v,F \rangle &=\int_{\partial X \times \partial X}\frac{(v(\xi)-v(\eta)) \overline{F}(\xi,\eta)}{d^{(\frac{1}{2}-t)D}_{o,\epsilon}(\xi,\eta)}d\nu_{o}(\xi)d\nu_{o}(\eta)\\
  &=\int_{\partial X \times \partial X}\frac{v(\xi) \overline{F}(\xi,\eta)}{d^{(\frac{1}{2}-t)D}_{o,\epsilon}(\xi,\eta)}d\nu_{o}(\xi)d\nu_{o}(\eta)-\int_{\partial X \times \partial X}\frac{v(\eta) \overline{F}(\xi,\eta)}{d^{(\frac{1}{2}-t)D}_{o,\epsilon}(\xi,\eta)}d\nu_{o}(\xi)d\nu_{o}(\eta)\\
  &=\int_{\partial X}v(\xi)\bigg(\int_{\partial X}\frac{ \overline{F}(\xi,\eta)}{d^{(\frac{1}{2}-t)D}_{o,\epsilon}(\xi,\eta)}d\nu_{o}(\eta) \bigg)d\nu_{o}(\xi)-\int_{\partial X }v(\eta)\bigg(\int_{\partial X}\frac{ \overline{F}(\xi,\eta)}{d^{(\frac{1}{2}-t)D}_{o,\epsilon}(\xi,\eta)}d\nu_{o}(\xi)\bigg)d\nu_{o}(\eta)\\
  &=\int_{\partial X}v(\xi)\bigg(\int_{\partial X}\frac{ \overline{F}(\xi,\eta)-\overline{F}(\eta,\xi)}{d^{(\frac{1}{2}-t)D}_{o,\epsilon}(\xi,\eta)}d\nu_{o}(\eta) \bigg)d\nu_{o}(\xi)\\
  &=\langle v,\delta^{*}_{t}F\rangle \\
  \end{align*}
  where for  almost all $\xi\in \partial X$ $$\delta^{*}_{t}F(\xi):= \bigg(\int_{\partial X}\frac{ F(\xi,\eta)-F(\eta,\xi)}{d^{(\frac{1}{2}-t)D}_{o,\epsilon}(\xi,\eta)}d\nu_{o}(\eta) \bigg),$$
  and (\ref{1'''}) is proved.\\
  Expression (\ref{2'''}) is straightforward from that of  $\delta_t$ and  $\delta^{*}_{t}$.
  
As for (\ref{3'''}), write for $v\in L^{2}(\partial X,\nu_{o})$:
\begin{align*}
\mathcal{I}_{t}(v)(\xi)&=\int_{\partial X}\frac{v(\eta)}{d^{(1-2t)D}_{o,\epsilon}(\xi,\eta)}d\nu_{o}(\eta)  \\
&=\int_{\partial X}\frac{v(\eta)-v(\xi)}{d^{(1-2t)D}_{o,\epsilon}(\xi,\eta)}d\nu_{o}(\eta)+\int_{\partial X}\frac{v(\xi)}{d^{(1-2t)D}_{o,\epsilon}(\xi,\eta)}d\nu_{o}(\eta) \\
&=-\Delta_{t}(v)(\xi)+\sigma_{t}(\xi)v(\xi),
\end{align*}
  and the proof is complete.
  \end{proof}
  
  We close this section by a characterization of the positivity of $\mathcal{R}_{t}$ is terms of spectral radius of $\Delta_{t}$. Consider
   \begin{equation}\label{deltanormalise}
  \widetilde{\Delta_{t}}:=M_{\sigma^{-1}_{t}}\Delta_{t}.
  \end{equation}


 A spectrum analysis of ${\mathcal I}_t$ is hopeless for general hyperbolic groups and general hyperbolic metric spaces $(X,d)$.
Nevertheless, we obtain the following :
\begin{prop} We have:
\begin{enumerate}
\item \label{1''''}The eigenspace $\lbrace v\in L^{2}(\partial X,\nu_{o})|  \mathcal{R}_{t}v=v \rbrace$ is nothing but the space of constant functions.
\item  \label{2''''}The operator $\mathcal{R}_{t}$ is positive if and only if $\|\widetilde{\Delta_{t}}\|_{L^{2}\to L^{2}}\leq 1$.
\end{enumerate}
\end{prop}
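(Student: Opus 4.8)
The plan is to derive everything from the single algebraic identity
\[
\mathcal{R}_{t}=M_{\sigma_{t}^{-1}}\mathcal{I}_{t}=M_{\sigma_{t}^{-1}}\bigl(M_{\sigma_{t}}-\Delta_{t}\bigr)=\mathrm{Id}-\widetilde{\Delta_{t}},
\]
obtained by inserting the decomposition $\mathcal{I}_{t}=M_{\sigma_{t}}-\Delta_{t}$ of Proposition \ref{gdelta}(\ref{3'''}) into the relation (\ref{relation}) and using the definition (\ref{deltanormalise}) of $\widetilde{\Delta_{t}}$. For (\ref{1''''}): since $\sigma_{t}$ is bounded below by a positive constant (Proposition \ref{cont2}), $M_{\sigma_{t}^{-1}}$ is invertible, hence $\mathcal{R}_{t}v=v\iff\widetilde{\Delta_{t}}v=0\iff\Delta_{t}v=0$. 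From $\Delta_{t}=\tfrac12\delta_{t}^{*}\delta_{t}$ one has $\langle\Delta_{t}v,v\rangle=\tfrac12\|\delta_{t}v\|^{2}_{L^{2}\otimes L^{2}}$, so $\Delta_{t}v=0$ forces $\delta_{t}v=0$, i.e.\ $v(\xi)-v(\eta)=0$ for $\nu_{o}\otimes\nu_{o}$-almost every $(\xi,\eta)$ (the diagonal is $\nu_{o}\otimes\nu_{o}$-null and $d_{o,\epsilon}$ is finite and nonzero off it); Fubini then forces $v$ to be $\nu_{o}$-almost everywhere constant. The reverse inclusion is immediate, since $\mathcal{R}_{t}\textbf{1}_{\partial X}=M_{\sigma_{t}^{-1}}\mathcal{I}_{t}\textbf{1}_{\partial X}=M_{\sigma_{t}^{-1}}\sigma_{t}=\textbf{1}_{\partial X}$ by (\ref{lafonction}).

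For (\ref{2''''}) I would first unwind the word ``positive'': by Lemma \ref{positiv} together with the self-adjointness of $\mathcal{I}_{t}$ (Proposition \ref{bounded}), ``$\mathcal{R}_{t}$ positive'' means $\operatorname{sp}(\mathcal{R}_{t})\subset[0,+\infty)$, which is a spectral condition, hence unaffected by replacing the scalar product of $L^{2}(\partial X,\nu_{o})$ by an equivalent one. Next I would introduce the equivalent inner product $\langle\,\cdot\,,\,\cdot\,\rangle_{\sigma_{t}}:=\langle M_{\sigma_{t}}\,\cdot\,,\,\cdot\,\rangle$ (equivalent because $\sigma_{t}$ is bounded above and below by positive constants, Proposition \ref{cont2}). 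A direct computation shows that both $\widetilde{\Delta_{t}}=M_{\sigma_{t}^{-1}}\Delta_{t}$ and $\mathcal{R}_{t}=M_{\sigma_{t}^{-1}}\mathcal{I}_{t}$ are self-adjoint for $\langle\,\cdot\,,\,\cdot\,\rangle_{\sigma_{t}}$ (because $\Delta_{t}$ and $\mathcal{I}_{t}$ are self-adjoint for $\langle\,\cdot\,,\,\cdot\,\rangle$), and that $\langle\widetilde{\Delta_{t}}v,v\rangle_{\sigma_{t}}=\langle\Delta_{t}v,v\rangle=\tfrac12\|\delta_{t}v\|^{2}\geq 0$, so $\widetilde{\Delta_{t}}\geq 0$ on this Hilbert space. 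Since $\mathcal{R}_{t}=\mathrm{Id}-\widetilde{\Delta_{t}}$ with $\mathcal{R}_{t}$ self-adjoint there, ``$\mathcal{R}_{t}$ positive'' is equivalent to $\mathcal{R}_{t}\geq 0$, to $\widetilde{\Delta_{t}}\leq\mathrm{Id}$, and finally (using $\widetilde{\Delta_{t}}\geq 0$) to $\|\widetilde{\Delta_{t}}\|\leq 1$.

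The one point I expect to need care is the exact meaning of $\|\widetilde{\Delta_{t}}\|_{L^{2}\to L^{2}}$ in the last line, because $\widetilde{\Delta_{t}}=M_{\sigma_{t}^{-1}}\Delta_{t}$ is \emph{not} self-adjoint for the ordinary $L^{2}$ scalar product (it is precisely when $\sigma_{t}$ is constant, e.g.\ for rank-one symmetric spaces). The norm that enters is the one of the self-adjoint, non-negative operator $\widetilde{\Delta_{t}}$ on $(L^{2}(\partial X,\nu_{o}),\langle\,\cdot\,,\,\cdot\,\rangle_{\sigma_{t}})$; equivalently, since that norm equals the spectral radius of $\widetilde{\Delta_{t}}$, the condition reads $\operatorname{sp}(\widetilde{\Delta_{t}})\subset[0,1]$, and also equivalently $\|M_{\sigma_{t}^{-1/2}}\Delta_{t}M_{\sigma_{t}^{-1/2}}\|_{L^{2}\to L^{2}}\leq 1$ for the self-adjoint conjugate $M_{\sigma_{t}^{-1/2}}\Delta_{t}M_{\sigma_{t}^{-1/2}}$ of $\widetilde{\Delta_{t}}$ (which has the same spectrum). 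Granting this reading, the chain of equivalences closes the proof; everything else is formal, relying only on $\mathcal{R}_{t}=\mathrm{Id}-\widetilde{\Delta_{t}}$ and on $\Delta_{t}=\tfrac12\delta_{t}^{*}\delta_{t}\geq 0$.
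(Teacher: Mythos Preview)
Your argument for (\ref{1''''}) is correct and matches the paper's: both reduce $\mathcal{R}_{t}v=v$ to $\Delta_{t}v=0$ via $\mathcal{R}_{t}=\mathrm{Id}-\widetilde{\Delta_{t}}$, then use $\langle\Delta_{t}v,v\rangle=\tfrac12\|\delta_{t}v\|^{2}$ to force $v$ constant a.e.; you add the (easy) reverse inclusion, which the paper leaves implicit.

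For (\ref{2''''}) you take a more careful route than the paper. The paper argues directly in the standard $L^{2}$ inner product: from $\langle\mathcal{R}_{t}v,v\rangle=\|v\|^{2}-\langle\widetilde{\Delta_{t}}v,v\rangle\geq 0$ and ``since $\widetilde{\Delta_{t}}$ is a positive operator'' it concludes $\|\widetilde{\Delta_{t}}\|_{L^{2}\to L^{2}}\leq 1$ (and conversely). This skates over exactly the point you flag: neither $\mathcal{R}_{t}=M_{\sigma_{t}^{-1}}\mathcal{I}_{t}$ nor $\widetilde{\Delta_{t}}=M_{\sigma_{t}^{-1}}\Delta_{t}$ is self-adjoint for the ordinary $L^{2}$ product unless $\sigma_{t}$ is constant, so passing from a numerical-range bound $\langle\widetilde{\Delta_{t}}v,v\rangle\leq\|v\|^{2}$ to an operator-norm bound is not automatic. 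Your remedy --- switch to the equivalent weighted product $\langle\,\cdot\,,\,\cdot\,\rangle_{\sigma_{t}}=\langle M_{\sigma_{t}}\cdot,\cdot\rangle$, in which both $\mathcal{R}_{t}$ and $\widetilde{\Delta_{t}}$ become self-adjoint and $\widetilde{\Delta_{t}}\geq 0$ --- makes the chain $\mathcal{R}_{t}\geq 0\iff\widetilde{\Delta_{t}}\leq\mathrm{Id}\iff\|\widetilde{\Delta_{t}}\|\leq 1$ watertight, at the price of reading the norm in the statement as the spectral radius of $\widetilde{\Delta_{t}}$ (equivalently $\|M_{\sigma_{t}^{-1/2}}\Delta_{t}M_{\sigma_{t}^{-1/2}}\|_{L^{2}\to L^{2}}$). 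The two arguments share the same skeleton $\mathcal{R}_{t}=\mathrm{Id}-\widetilde{\Delta_{t}}$; yours buys rigor on the self-adjointness issue, while the paper's is shorter but tacitly treats $\widetilde{\Delta_{t}}$ as if it were self-adjoint in the standard product.
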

\begin{proof}

We prove (\ref{1''''}). Indeed pick $v\in L^{2}(\partial X, \nu_{o})$ such that $\mathcal{R}_{t}v=v$. It follows 
by (\ref{3'''}) of Proposition \ref{gdelta} that $M_{\sigma^{-1}_{t}}\Delta_{t}v=0$. Thus $\Delta_{t}v=0$. Therefore $\langle \Delta_{t} v,v \rangle=0$, which writes  :

$$ \int_{\partial X \times \partial X} \frac{|v(\xi)-v(\eta)|^{2}}{d^{(1-2t)D}_{o,\epsilon}(\xi,\eta)}d\nu_{o}(\xi)d\nu_{o}(\eta)=0.$$ Hence, for almost every $(\xi,\eta)\in \partial X \times \partial X$ we have:

$$|v(\xi)-v(\eta)|=0.$$ Therefore, $v$ has to be constant.\\

The proof of  (\ref{2''''}) follows from (\ref{3'''}) of  Proposition \ref{gdelta}.\\ If $\mathcal{R}_{t}$ is positive, then for all $v \in L^{2}(\partial X,\nu_{o})$, $\langle \mathcal{R}_{t}v,v\rangle=\|v\|_{2}^{2}-\langle \widetilde{\Delta_{t}}v,v \rangle \geq 0.$ Since $\widetilde{\Delta_{t}}$ is a positive operator, we have $\|\widetilde{\Delta_{t}}\|_{L^{2}\to L^{2}}\leq 1.$ \\
Now, if $\|\widetilde{\Delta_{t}}\|_{L^{2}\to L^{2}}\leq 1$ then for all   $v \in L^{2}(\partial X,\nu_{o})$, $$\langle \mathcal{R}_{t}v,v\rangle=\|v\|_{2}^{2}-\langle \widetilde{\Delta_{t}}v,v \rangle \geq \|v\|_{2}^{2}(1- \|\widetilde{\Delta_{t}}\|_{L^{2}\to L^{2}})\geq 0,$$ and the proof is done.

\end{proof}

  \begin{lemma} With respect to the weak operator topology, $\mathcal{R}_{t}\to Id$ when $t\to 0$.\end{lemma}

\begin{proof}
It is only to prove that $ \langle \widetilde{\Delta_{t}} v ,w\rangle\to 0$ for $v,w$ in the dense subspace of Lipschitz functions since $\sup_{t>0}\|\mathcal{R}_{t}\|_{L^{2}\to L^{2}}<+\infty$ (Proposition \ref{bounded} together with Corollary \ref{sigmat}). For such functions,  
\begin{align*}
| \langle \widetilde{\Delta_{t}} v ,w\rangle |&\leq C\|v\|_{Lip}\|w\|_{Lip} \big(1-e^{-2tQR}\big) \bigg(\int_{\partial X \times \partial X}\frac{d^{2}_{o,\epsilon}(\xi,\eta)}{d^{(1-2t)D}_{o,\epsilon}(\xi,\eta)}d\nu_{o}(\xi)d\nu_{o}(\eta)\bigg)\\
 &\leq C \|v\|_{Lip}\|w\|_{Lip} \big(1-e^{-2tQR}\big) \bigg(\frac{1}{1-e^{(-2tQ -2\epsilon )R}}\bigg),
 \end{align*} 
 where the second inequality is based on the method of computations of Lemma \ref{Cauchy}. Eventually, let $t\to 0$ to conclude the proof.
\end{proof}

\subsection{Inverse of $\mathcal{I}_{t}$ (and $\mathcal{R}_{t}$)}
Finding a formula for the inverse of Riesz potentials is a broad subject (see \cite[Section 3, Ch. 6]{Sam}). In our setting, nothing is known about kernel and image of ${\mathcal R}_t$. We first prove an abstract Lemma (probably well known from the specialists) as we were not able to find it in the literature, and apply this result to operators ${\mathcal I}_t$.\\
\begin{lemma}\label{inverse}
Let $T$ be a self adjoint compact operator acting on a Hilbert space $H$ with infinite rank. There exists an unbounded closed self adjoint operator $S:\overline{Im(T)}\to H$  whose domain is $\mbox{Dom}(S)=Im(T)$ such that $S$ satisfies $ST(u)=u$ for any $u\in H\ominus \ker(T)$ and $TS(v)=v$ for any $v\in Im(T).$ Moreover,  if $T$ is a positive operator, we can write $S=c\cdot I+D$ where $D:\overline{Im(T)}\to H$ is an unbounded  closed positive self adjoint operator and $c$ a positive constant. 

 \end{lemma}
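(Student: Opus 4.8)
The plan is to use the spectral theorem for the compact self-adjoint operator $T$. Since $T$ is compact and self-adjoint on $H$, there is an orthonormal system $(e_n)_{n\ge 1}$ of eigenvectors with real eigenvalues $(\lambda_n)$, $\lambda_n\to 0$, spanning $\overline{Im(T)}=(\ker T)^\perp$, and $H=\ker T\oplus \overline{Im(T)}$. On $Im(T)$, every vector $v$ can be written $v=\sum_n c_n e_n$ with $\sum_n |c_n|^2/\lambda_n^2<\infty$ (this is exactly the condition $v\in Im(T)$, since $v=Tu$ forces $c_n=\lambda_n\langle u,e_n\rangle$ and $\sum|\langle u,e_n\rangle|^2<\infty$). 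I would then define $S$ on $\mathrm{Dom}(S):=Im(T)\subset\overline{Im(T)}$ by $Sv:=\sum_n (c_n/\lambda_n)\,e_n$. First I would check this is well defined, i.e. $Sv\in H$: indeed $\|Sv\|^2=\sum_n|c_n|^2/\lambda_n^2<\infty$ precisely when $v\in Im(T)$. Symmetry of $S$ on its domain is immediate from the diagonal form, and closedness follows because $S$ is a (possibly unbounded) diagonal operator with respect to the orthonormal basis $(e_n)$ of $\overline{Im(T)}$; such an operator is automatically closed, and being symmetric and surjective onto... more carefully, one sees $S$ is self-adjoint by the standard fact that a densely defined symmetric diagonal operator $\sum \mu_n\langle\cdot,e_n\rangle e_n$ with real $\mu_n$ is self-adjoint.

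For item (1): given $u\in H$, write $u=u_0+\sum_n a_n e_n$ with $u_0\in\ker T$; then $Tu=\sum_n \lambda_n a_n e_n\in Im(T)$, so $STu=\sum_n a_n e_n=u-u_0$. Here I note a subtlety: the statement "$ST(u)=u$ for any $u\in H$" can only hold if we agree that $S$ is applied after projecting off $\ker T$, or equivalently if one interprets the identity in $\overline{Im(T)}$; I would state it as $STu$ equals the orthogonal projection of $u$ onto $\overline{Im(T)}$, and remark that on $(\ker T)^\perp$ it is the identity, which is presumably the intended reading (the operator $S$ in the paper's applications is the inverse of $\mathcal I_t$ on its image, and $\ker\mathcal I_t$ is handled separately). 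For $v\in Im(T)$, $v=\sum_n c_n e_n$ with the summability condition, $Sv=\sum_n (c_n/\lambda_n)e_n\in H$, and $TSv=\sum_n c_n e_n=v$. For item (3): $T^2$ is again compact self-adjoint with eigenvalues $\lambda_n^2>0$ on $\overline{Im(T)}=\overline{Im(T^2)}$ (note $Im(T^2)\subset Im(T)$ but the closures agree), and $S^2=\sum_n(1/\lambda_n^2)\langle\cdot,e_n\rangle e_n$ with domain $Im(T^2)$; the identities $S^2T^2u=(\text{proj of }u)$ and $T^2S^2v=v$ for $v\in Im(T^2)=\mathrm{Dom}(S^2)$ follow exactly as in (1), once one checks $\mathrm{Dom}(S^2)=Im(T^2)$, i.e. $\sum|c_n|^2/\lambda_n^4<\infty\iff v\in Im(T^2)$.

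For item (2): if $T\ge 0$ then $\lambda_n>0$ for all $n$ (the zero eigenvalue lives in $\ker T$ which we have split off), and $\lambda_n\to 0$, so $1/\lambda_n\to+\infty$; pick any $c>0$ with $c\le 1/\lambda_n$ for all $n$ — for instance $c:=1/\|T\|$, since $\lambda_n\le\|T\|$ gives $1/\lambda_n\ge 1/\|T\|>0$. Then set $D:=S-cI$ on $\mathrm{Dom}(D):=\mathrm{Dom}(S)=Im(T)$, which is diagonal with eigenvalues $1/\lambda_n-c\ge 0$, hence positive, self-adjoint and closed by the same diagonal-operator argument. This writes $S=cI+D$ with the required properties. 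The routine verifications (well-definedness of the series, that diagonal symmetric operators are self-adjoint, that $\overline{Im(T)}=\overline{Im(T^2)}$) I would dispatch briefly.

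\textbf{Main obstacle.} The only real subtlety is the bookkeeping around $\ker T$: the literal identity $STu=u$ for \emph{all} $u\in H$ is false whenever $\ker T\neq 0$, so I would either restrict $u$ to $(\ker T)^\perp$ or phrase $ST$ as the orthogonal projection onto $\overline{Im(T)}$; everything else is a direct application of the spectral theorem for compact self-adjoint operators, and the "unbounded, closed, self-adjoint" claims are the standard facts about diagonal operators. A secondary point requiring a line of care is verifying $\mathrm{Dom}(S^2)=Im(T^2)$ rather than merely $\subseteq$, which uses that $S$ maps $Im(T^2)$ bijectively onto $Im(T)$.
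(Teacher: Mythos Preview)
Your proposal is correct and follows essentially the same route as the paper: both invoke the spectral decomposition of the compact self-adjoint operator $T$, define $S$ diagonally by $Se_n=\lambda_n^{-1}e_n$ on $\mathrm{Dom}(S)=Im(T)$, and verify closedness, self-adjointness, and the two-sided inverse identities from this diagonal form. The paper phrases the construction via the partial sums $S_n=\sum_{k\le n}\lambda_k^{-1}\langle\cdot,v_k\rangle v_k$ and passes to the limit, whereas you write the series directly; this is purely cosmetic. Your observation that the literal identity $STu=u$ fails on $\ker T$ is exactly right, and the paper's proof makes the same restriction, proving $STu=u$ only for $u\in H\ominus\ker(T)$. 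For item~(2) you take $c=1/\|T\|$, which is the correct constant; the paper normalizes $T$ by $\|T\|$ and arrives at the same decomposition (its displayed value of $c$ appears to contain a minor slip, but the argument is the one you give).
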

  
  \begin{proof}
    
 Since $T$ is a compact self adjoint operator, consider a system of orthonormal vectors $(v_{i})$ associated with corresponding real eigenvalues $\big(\lambda_{i}(T)\big)_{i}$ such that $|\lambda_{1}(T)|\geq |\lambda_{2}(T)|\geq \dots  $  with $\lambda_n(T)\to 0$ as $n\to \infty$. Write  $Tu=\sum^{+\infty}_{i=1}\lambda_{i}(T) \langle u,v_{i}\rangle v_{i}$ for all $u\in H$ with respect the $\|\cdot\|_{H}$-convergence. For all $n$, define the sequence of bounded operators $S_{n}$ as :
$$S_{n}(\cdot):=\sum_{k=1}^{n}\frac{1}{\lambda_{k}(T)}\langle \cdot,v_{k}\rangle v_{k}.$$
 We prove first that for all $w\in Im(T)$, $S_{n}(w)$ converges with respect to  $\|\cdot\|_{H}$. 
Let $w\in Im(T)$. There exists a unique $u\in H\ominus \ker(T)$ such that $w=T(u)$. Therefore, for all $n:$
$$
S_{n}(w)=S_{n}T(u)
=S_{n}(\sum^{+\infty}_{i=1}\lambda_{i}(T) \langle u,v_{i}\rangle v_{i})
=\sum_{k=1}^{n}\langle u,v_{k}\rangle v_{k}.$$
Since $u\in H$, we have the following $\|\cdot\|_{H}$-convergence: $S_{n}(w)\to u$. Thus, for all $w\in Im(T)$ define $$S(w):=\lim_{n\to +\infty}S_{n}(w).$$ For all $u \in H\ominus \ker(T)$, we have $ST(u)=u$.
  By construction of $S$, we have $Im(T)= \mbox{Dom}(S)$ and obviously $\mbox{Dom}(S)$ is dense in $\overline{Im(T)}.$\\
Since $\lambda_{i}(T) $ goes to $0$ the operator $S$ is unbounded on $H$.
Furthermore $S$ is closed: let $w_{p} \in Im(T)$ converges strongly to $w$ and $S(w_{p})$ converges strongly to $z$. Then, there exists a  unique $u_{p}\in H\ominus \ker(T)$ so that $w_{p}=T(u_{p})$. Thus, $Sw_{p}=u_{p}$. So, $u_{p}$ converges strongly to $z$. Since $T$ is continuous we have $T(u_{p})\to T(z)=w$, so  $w\in Im(T)=\mbox{Dom}(S)$ and $Sw=z$.\\
We prove the second assertion. Assume now that $T$ is positive and assume also that $\|T\|_{H \to H}\leq 1$. Consider $p_{n}$ the orthogonal projection onto the space $\bigoplus^{n}_{k=1} \ker(T-\lambda_{k}I)$. The definition of $S_{n}$ reads as follows :
 $$S_{n}(\cdot):=p_{n}(\cdot)+\sum_{k=1}^{n}\bigg(\frac{1}{\lambda_{k}(T)}-1\bigg)\langle \cdot,v_{k}\rangle v_{k},$$
and since $0\leq\lambda_{k}(T)\leq 1$, it follows that the sequence of operators $$D_{n}(\cdot)=\sum_{k=1}^{n}\big(\frac{1}{\lambda_{k}(T)}-1\big)\langle \cdot,v_{k}\rangle v_{k}$$ is a sequence of positive operators. Besides, $p_{n}$ converges strongly to the identity operator. Since $S$ is defined as the limit of $S_{n}$ on $Im(T)$, we define for all $w\in Im(T)$, $$D(w):=\lim_{n\to +\infty}D_{n}(w)=\lim_{n\to +\infty}S_{n}(w)-p_{n}(w).$$ Note that $D$ is an unbounded positive closed operator. By construction, we have $S=I+D$ on $Im(T)$. \\
If $\|T\|_{H \to H} $ is not bounded by $1$, perform the above construction with $ \dfrac{T}{ \|T\|_{H \to H}}$ and thus $S= c\cdot I+D$ where $D$ is a unbounded positive closed operator on $Im(T)$ and $c=\|T\|_{H \to H}>0$. And the proof is done.\\

   \end{proof}

 \subsection{Riesz potentials and intertwiners}
If $\Gamma\subset SU(1,1)\simeq Isom^+(\mathbb D^2)$ is a uniform lattice acting on  the Poincar\'e disk $\mathbb D^2$, the operator $\mathcal{I}_t$ (proportional to ${\mathcal R}_t$)  reads :
\begin{equation}
\mathcal{I}_{t}(v)(\omega)=\frac{1}{2\pi}\int _{[0,2\pi]}\frac{v(\theta)}{\big(1-\cos(\theta-\omega)\big)^{\frac{1}{2} -t}}d\theta,\;\;\; (v\in L^{2}(\partial \mathbb{D},\frac{d\theta}{2\pi})).
\end{equation}
 From that expression, it can be derived  positivity of ${\mathcal I}_t$ for all $t\in [0,{1\over 2}]$ and a straightforward calculus gives intertwining of complementary series representations of $SU(1,1)$, that is $\mathcal{I}_{t}\pi_t=\pi_{-t}\mathcal{I}_t$, (see  \cite[Ch. V, Section IV]{SU}).

The same intertwining relation occurs in our context when assuming strong hyperbolicity of $(X,d)$. Namely, we get


 

 
 \begin{prop}\label{intertwin}
For any $t>0$, we have for all $\gamma \in \Gamma$,
$\mathcal{I}_{t}\pi_{t}(\gamma)=\pi_{-t}(\gamma)\mathcal{I}_{t}$ on $L^{2}(\partial X, \nu_{o})$.
\end{prop}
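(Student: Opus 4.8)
The plan is to compute the action of $\mathcal{I}_t\pi_t(\gamma)$ directly on a function $v\in L^2(\partial X,\nu_o)$ and show that the conformal factors reorganize into the form required for $\pi_{-t}(\gamma)\mathcal{I}_t v$. First I would write, for almost every $\xi$,
\[
\mathcal{I}_t\pi_t(\gamma)v(\xi)=\int_{\partial X}\frac{1}{d^{(1-2t)D}_{o,\epsilon}(\xi,\eta)}\left(\frac{d\gamma_*\nu_o}{d\nu_o}(\eta)\right)^{\frac12+t}v(\gamma^{-1}\eta)\,d\nu_o(\eta),
\]
and then perform the change of variable $\eta\mapsto\gamma\eta$, using the quasi-invariance formula $\tfrac{d\gamma_*\nu_o}{d\nu_o}(\eta)=e^{Q\beta_\eta(o,\gamma o)}$ together with the fact that $d(\gamma^{-1})_*\nu_o(\eta)=\bigl(\tfrac{d\gamma_*\nu_o}{d\nu_o}(\gamma\eta)\bigr)^{-1}d\nu_o(\eta)$. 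The integrand then becomes
\[
\frac{1}{d^{(1-2t)D}_{o,\epsilon}(\xi,\gamma\eta)}\,e^{(\frac12+t)Q\beta_{\gamma\eta}(o,\gamma o)}\,e^{-Q\beta_{\gamma\eta}(o,\gamma o)}\,v(\eta)\,d\nu_o(\eta),
\]
so the surviving weight is $e^{(t-\frac12)Q\beta_{\gamma\eta}(o,\gamma o)}$ multiplied by $d^{-(1-2t)D}_{o,\epsilon}(\xi,\gamma\eta)$.

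The key computational step is then to rewrite the visual-distance factor using the conformal relation \eqref{visualrough}, namely
\[
d_{o,\epsilon}(\xi,\gamma\eta)=\exp\!\Bigl(\tfrac{\epsilon}{2}\bigl(\beta_{\xi}(\gamma o,o)+\beta_{\gamma\eta}(\gamma o,o)\bigr)\Bigr)\,d_{\gamma o,\epsilon}(\xi,\gamma\eta),
\]
and then to use $\gamma$-equivariance of the visual metric, $d_{\gamma o,\epsilon}(\xi,\gamma\eta)=d_{o,\epsilon}(\gamma^{-1}\xi,\eta)$. Recalling $D=Q/\epsilon$, raising to the power $(1-2t)D$ converts the exponential prefactors into powers of $e^{Q\beta}$: one gets a factor $e^{(\frac12-t)Q\beta_\xi(\gamma o,o)}$ depending only on $\xi$ and a factor $e^{(\frac12-t)Q\beta_{\gamma\eta}(\gamma o,o)}$ depending on $\eta$. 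The $\eta$-dependent factor is $e^{-(\frac12-t)Q\beta_{\gamma\eta}(o,\gamma o)}=e^{(t-\frac12)Q\beta_{\gamma\eta}(o,\gamma o)}$ (using $\beta_\cdot(x,y)=-\beta_\cdot(y,x)$), which exactly cancels the surviving weight from the change of variables. What remains inside the integral is $d^{-(1-2t)D}_{o,\epsilon}(\gamma^{-1}\xi,\eta)\,v(\eta)\,d\nu_o(\eta)$, i.e.\ $\mathcal{I}_t v(\gamma^{-1}\xi)$, multiplied by the $\xi$-only factor $e^{(\frac12-t)Q\beta_\xi(\gamma o,o)}=\bigl(\tfrac{d\gamma_*\nu_o}{d\nu_o}(\xi)\bigr)^{\frac12-t}$, which is precisely the conformal weight defining $\pi_{-t}(\gamma)$. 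This yields $\mathcal{I}_t\pi_t(\gamma)v(\xi)=\pi_{-t}(\gamma)\mathcal{I}_t v(\xi)$.

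The main obstacle, and the point where strong (rather than merely Gromov) hyperbolicity is used, is the passage from the visual \emph{quasi-metric} relation to the genuine conformal \emph{equality} \eqref{visualrough}: in a general $\delta$-hyperbolic space the cocycle identity for $d_{o,\epsilon}$ holds only up to multiplicative constants, and those constants would accumulate and break the exact intertwining. In the $\epsilon$-good / strongly hyperbolic setting the Busemann cocycle relation is exact and the conformal relation \eqref{visualrough} holds on the nose, so the cancellation above is exact. A secondary technical point is justifying the change of variables and Fubini-type manipulations, which is legitimate because $\mathcal{I}_t$ is bounded on $L^2$ (Proposition \ref{bounded}) and one may first argue on a dense subspace of continuous functions, where all integrals converge absolutely by the estimates of Lemma \ref{ineqsigma} and Proposition \ref{cont2}, and then extend by continuity to all of $L^2(\partial X,\nu_o)$.
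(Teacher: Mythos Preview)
Your approach is essentially identical to the paper's: both compute $\mathcal{I}_t\pi_t(\gamma)v$ by a change of variable in the integral, then invoke the exact conformal relation \eqref{visualrough} (this is where $\epsilon$-goodness is used) together with the isometry identity $d_{\gamma o,\epsilon}(\xi,\gamma\eta)=d_{o,\epsilon}(\gamma^{-1}\xi,\eta)$ to produce the $\pi_{-t}$ weight in front of $\mathcal{I}_tv(\gamma^{-1}\xi)$. The paper carries out the computation for a general pair $(s,t)$ and specialises to $s=t$ at the end, whereas you work with $s=t$ throughout; this is only a cosmetic difference.

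One bookkeeping slip to fix: when you pass from $d_{o,\epsilon}^{(1-2t)D}$ to the \emph{reciprocal} $d_{o,\epsilon}^{-(1-2t)D}$ appearing in the kernel, the sign of the Busemann exponents flips. Concretely, from \eqref{visualrough} one gets
\[
\frac{1}{d_{o,\epsilon}^{(1-2t)D}(\xi,\gamma\eta)}
= e^{-(\frac12-t)Q\beta_\xi(\gamma o,o)}\,e^{-(\frac12-t)Q\beta_{\gamma\eta}(\gamma o,o)}\,
\frac{1}{d_{o,\epsilon}^{(1-2t)D}(\gamma^{-1}\xi,\eta)},
\]
so the $\eta$-factor is $e^{(\frac12-t)Q\beta_{\gamma\eta}(o,\gamma o)}$, which indeed cancels the surviving weight $e^{(t-\frac12)Q\beta_{\gamma\eta}(o,\gamma o)}$; as written, your $\eta$-factor $e^{(\frac12-t)Q\beta_{\gamma\eta}(\gamma o,o)}=e^{(t-\frac12)Q\beta_{\gamma\eta}(o,\gamma o)}$ would square it rather than cancel it. With that sign corrected the argument is complete and matches the paper's.
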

 
 \begin{proof}

Let $v\in L^{2}(\partial X, \nu_{o})$ and write $\pi_{t}(\gamma)v=e^{ (\frac{1}{2}+t)Q\beta_{\xi}(o,\gamma o)}v(\gamma^{-1}\xi)$. We have for all real numbers $s>0$ and $t\in \mathbb{R}$ and for almost all $\eta \in \partial X$
 \begin{align*}
(\mathcal{I}_{s}\pi_{t}(\gamma)v)(\eta)&=\int_{\partial X}\frac{e^{ (\frac{1}{2}+t)Q\beta_{\xi}(o,\gamma o)}v(\gamma^{-1}\xi)}{d_{ o,\epsilon}^{(1-2s)D}(\xi, \eta)} d\nu_{o}(\xi)\\
&=\int_{\partial X}\frac{v(\gamma^{-1}\xi)}{d_{ o,\epsilon}^{(1-2s)D}(\xi, \eta)} e^{ (\frac{1}{2}+t)Q\beta_{\gamma^{-1} \xi}( \gamma^{-1} o, o)}d\nu_{o}(\xi)\\
&=\int_{\partial X}\frac{v(\gamma^{-1}\xi)}{d_{ o,\epsilon}^{(1-2s)D}(\xi, \eta)} e^{ -(\frac{1}{2}+t)Q\beta_{\gamma^{-1} \xi}( o,\gamma^{-1} o)}d\nu_{o}(\xi)\\
&=\int_{\partial X}\frac{v(\xi)}{d_{ o,\epsilon}^{(1-2s)D}(\gamma \xi, \eta)} e^{ (\frac{1}{2}-t)Q\beta_{ \xi}(o, \gamma^{-1} o)}d\nu_{o}(\xi)\\
\mbox{Thanks to (\ref{visualrough}) and (\ref{DQ}) }&=\int_{\partial X}\frac{v(\xi)e^{ (\frac{1}{2}-t)Q\beta_{ \xi}(o, \gamma^{-1} o)}}{e^{(\frac{1}{2}-s)Q (\beta_{ \xi}(o,\gamma^{-1} o )+\beta_{\gamma^{-1} \eta}(o,\gamma^{-1} o ))}d_{ o,\epsilon}^{(1-2s)D}( \xi, \gamma^{-1}\eta)}d\nu_{o}(\xi)\\
&=e^{(\frac{1}{2}-s)Q\beta_{\eta}(o,\gamma o)} \int_{\partial X}\frac{v(\xi)e^{ (s-t)Q\beta_{ \xi}(o, \gamma^{-1} o)}}{d_{ o,\epsilon}^{(1-2s)D}( \xi, \gamma^{-1} \eta)}d\nu_{o}(\xi)\\
&=\pi_{-s}(\gamma)\mathcal{I}_{s} \big(M_{e^{ (s-t)Q\beta_{ \cdot}(o, \gamma^{-1} o)}}v\big)(\eta).
\end{align*}

 
 Then the proof is done by taking $s=t$.
 \end{proof}

\section{Boundary Dual system representations and Spherical functions}\label{section4}
\subsection{Dual system representations}

A $\mathbb{C}$-pairing is a pair of $\mathbb C$-vector spaces $(H_{1},H_{2})$ together with a non-degenerate bilinear form $\langle\cdot,\cdot \rangle : H_{1}\times H_{2}\rightarrow \mathbb{C}$.
 Such bilinear form induces weak topologies $\sigma(H_1,H_2)$ and $\sigma(H_2,H_1)$ on $H_1$ and $H_2$ respectively  generated by linear forms fixing one variable. We refer  to \cite[Section 8]{Jar} for a general discussion.

 A \emph{dual system representation of $\Gamma$} (called rather a ``linear system representation" in \cite{Fe}) is the data of:
\begin{enumerate}
\item A $\mathbb{C}$-pairing of Banach spaces : $(H_{1},H_{2},\langle\cdot ,\cdot\rangle).$

\item Two representations $(\pi_{1},H_{1})$ and $(\pi_{2},H_{2})$ of $\Gamma$ such that $\langle \pi_{1}(\gamma)v,\pi_{2}(\gamma) w \rangle=\langle v, w \rangle$ for all $(v,w)\in H_{1}\times H_{2},$ for all $\gamma \in \Gamma$.
 
\end{enumerate}
We refer to \cite[Section 2]{Fe} for dual systems and we adapt this notion here for a group rather than for an algebra.\\ 
\begin{example}
If $H_{2}=H^{*}_{1}$, the representation $\pi_{2}$ is nothing but the contragredient representation of $\pi_{1}$ for the natural dual pairing.\textbf{We say that $(H_{1},H_{2},\langle\cdot ,\cdot\rangle)$ is the natural dual pairing associated with $H_{1}$}.
\end{example}
For a $\mathbb C$-vector space $V$, we denote in what follows by $\overline{V}$ its conjugate, that is multiplication by scalars on $\overline{V}$ is given by $\lambda \cdot   v:= \bar \lambda v, \; (v\in V)$.\\
\begin{example}\label{dual}Consider the $\mathbb{C}$-pairing $(L^{2}(\partial X,\nu_{o}),\overline{L^{2}(\partial X,\nu_{o})})$ with respect to the $L^{2}$-inner product.
 The conjugate representation $\overline{\pi_{-t}}$ satisfies: $\overline{\pi_{-t}}(\gamma)(\lambda \cdot v)=\lambda \cdot \overline{\pi_{-t}}(\gamma)( v),$
 so that  representations $(\pi_{t},L^{2}(\partial X,\nu_{o}))$ and $(\overline{\pi_{-t}},\overline{L^{2}(\partial X,\nu_{o}))}$ are a dual linear system of $\Gamma$. 
 \end{example}
 

We say that a dual system representations $(\pi_{1},H_{1})$ and $(\pi_{2},H_{2})$ of $\Gamma$ is irreducible if $(\pi_{1},H_{1})$ has no proper $\sigma(H_1,H_2)$-closed subspace stable by $\pi_{1}$, \emph{and} $(\pi_{2},H_{2})$ has no  proper $\sigma(H_2,H_1)$-closed subspace stable by $\pi_{2}$. Since norm closure and weak closure of linear spaces are the same, the weak closure in the above definitions can be replaced by norme closure. Observe that in the case of a natural dual pairing of a reflexiv Banach space then a dual system representations $(H_{1},H_{2})$ of $\Gamma$ is irreducible if $(\pi_{1},H_{1})$ has no proper closed invariant subspace {\emph{equivalently}} $(\pi_{2},H_{2})$ has no proper closed invariant subspace.
\\

The aim of this section is to construct three dual system representations of $\Gamma$ for which we will prove irreducibility. \textbf{They are all natural dual system associated with some Hilbert spaces.}


 \subsection{Linear representations}
 We fix $t>0$. A priori the operator $\mathcal{I}_{t}$ is not injective, thus consider $N_{t}:=\{ v\in \mathcal{F}_{t,\textbf{1}_{\partial X}}| \mathcal{I}_{t}(v)=0\}$ that is a subrepresentation of $(\pi_{t}, \mathcal{F}_{t,\textbf{1}_{\partial X}})$.
 The algebraic representation defined in (\ref{algrepre}) provides another algebraic representation of $\Gamma$ via $\pi_{-t}$, defined as follows :

$$ \begin{tikzpicture}\label{isoalg}
  \matrix (m)
    [
      matrix of math nodes,
      row sep    = 3em,
      column sep = 4em
    ]
    {
     \mathcal{F}_{t,\textbf{1}_{\partial X}}              & \mathcal{I}_{t}( \mathcal{F}_{t,\textbf{1}_{\partial X}}) \\
 \mathcal{F}_{t, \textbf{1}_{\partial X}}  / N_{t}&             \\
    };
  \path
    (m-1-1) edge [->>] node [left] {$p$} (m-2-1)
    (m-1-1.east |- m-1-2)
      edge [->] node [above] {$\mathcal{I}_{t}$} (m-1-2)
    (m-2-1.east) edge [->] node [below] {$[\mathcal{I}_{t} ]$} (m-1-2);
\end{tikzpicture} $$

 \begin{equation}
\mathcal{F}_{-t,\sigma_{t}}:=[\mathcal{I}_{t}](\mathcal{F}_{t,\textbf{1}_{\partial X}}/N_{t})=\mathcal{I}_{t}(\mathcal{F}_{t,\textbf{1}_{\partial X}}) \subset L^{2}(\partial X ,\nu_{o}) .
\end{equation}
Hence $\mathcal{F}_{t,\textbf{1}_{\partial X}}/N_{t}$ and $\mathcal{F}_{-t,\sigma_{t}}$ are isomorphic as vector spaces.
Any element $v \in \mathcal{F}_{-t,\sigma_{t}}$ can be represented by $v=\mathcal{I}_{t}(x)$ where $x\in \mathcal{F}_{t,\textbf{1}_{\partial X}}$ an unique element modulo $N_{t}$.
 Since $\mathcal{I}_{t}$ intertwines $\pi_{t}$ and $\pi_{-t}$ on $L^{2}(\partial X,\nu_{o})$, we obtain a representation given by $(\pi_{-t},\mathcal{F}_{-t,\sigma_{t}})$.\\

 \subsection{A first Hilbertian dual system representations of $\Gamma$ associated with $\mathcal{I}_{t}$} Associated to the intertwiner $\mathcal{I}_{t}$, the Hilbert space $L^{2}(\partial X,\nu_{o})$ decomposes as:

\begin{equation}
L^{2}(\partial X,\nu_{o})=\ker(\mathcal{I}_{t})\oplus \overline{Im(\mathcal{I}_{t})}^{\|\cdot\|_{2}}.
\end{equation}

The closed subspace $\mathcal{V}_{t}:=\ker(\mathcal{I}_{t})$ of $L^{2}(\partial X,\nu_{o})$ is a $\pi_{t}$ subrepresentation of $\Gamma$ although $\mathcal{W}_{t}:=\overline{Im(\mathcal{I}_{t})}^{\|\cdot\|_{2}}$ is a $\pi_{-t}$ subrepresentation.  Hence, if $\mathcal{I}_{t}$ is not injective, then neither $(\pi_{t},L^{2}(\partial X ,\nu_{o}))$ nor $(\overline{\pi_{-t}},\overline{L^{2}(\partial X,\nu_{o})})$ is irreducible. Consider the representations  $(\pi_{t},L^{2}(\partial X ,\nu_{o})/\mathcal{V}_{t})$ and $(\overline{\pi_{-t}},\overline{\mathcal{W}_{t}})$ and  notice that it is an example of a dual system representations of $\Gamma$ with respect to the standard $L^{2}$-inner product.

\begin{example}
The dual space $L^{2}(\partial X,\nu_{o})/\mathcal{V}_{t}$ is nothing but $\overline{\mathcal{W}_{t}}$. Hence the contragredient representation of $(\pi_{t},L^{2}(\partial X,\nu_{o})/\mathcal{V}_{t})$ with respect to the standard $L^{2}$-inner product is $(\overline{\pi}_{-t},\overline{\mathcal{W}_{t}})$. Note that if $\mathcal{I}_{t}$ is injective, we recover Example \ref{dual}.
 \end{example}

\begin{remark}
 The space $\mathcal{F}_{t,\textbf{1}_{ \partial X}}$ endowed with $\langle \cdot,\cdot\rangle$ is a pre-Hilbert space. The representation \begin{equation}\label{dense}
\overline{\mathcal{F}_{t,\textbf{1}_{ \partial X}}}^{\|\cdot\|_{2}}=\mathcal{L}_{t},
\end{equation}
 is well defined since there exists $C>0$ such that for all $\gamma\in \Gamma$, for all $v\in\mathcal{L}_{t}$, we have: $$ \|\pi_{t}(\gamma)v\|_{2}\leq Ce^{Qt|\gamma|}\|v\|_{2}.$$
\end{remark}


\subsection{A non unitary Hilbertian analog of complementary series}

In the following, we will make use that $\mathcal{L}_{t}=L^{2}(\partial X,\nu_{o})$ for $t>0$ and postpone  the proof   to Section \ref{section6} (Proposition \ref{density}).\\

 Consider the sesquilinear form $$\langle \cdot ,\cdot \rangle_{\mathcal{K}_{t}}:(v,w)\in \mathcal{F}_{t,\textbf{1}_{ \partial X}}\times \mathcal{F}_{t,\textbf{1}_{ \partial X}}\mapsto \langle     v, \mathcal{K}_{t}(w)\rangle.$$ 
Turn first $\mathcal{F}_{t,\textbf{1}_{ \partial X}}$ into a pre-Hilbert space. Write $N_{t}=\mathcal{V}_{t}\cap \mathcal{F}_{t,\textbf{1}_{ \partial X}}$. Observe that one can equip  $\mathcal{F}_{t,\textbf{1}_{ \partial X}}/ N_{t}$  with $\langle\cdot ,\cdot \rangle_{\mathcal{K}_{t}}$. Hence,    $(\mathcal{F}_{t,\textbf{1}_{ \partial X}}/ N_{t},\langle\cdot ,\cdot \rangle_{\mathcal{K}_{t}})$ becomes a pre-Hilbert space. Eventually consider the Hilbert completion of $\mathcal{F}_{t,\textbf{1}_{ \partial X}}/N_{t}$ and define the Hilbert space
\begin{equation}
\overline{\mathcal{F}_{t,\textbf{1}_{ \partial X}}/N_{t}}^{\|\cdot\|_{\mathcal{K}_{t}}}:=\mathcal{K}_{t},
\end{equation}
\begin{lemma}
The Hilbertian representation $(\pi_{t},\mathcal{K}_{t})$ is well defined.
\end{lemma}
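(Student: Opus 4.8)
The plan is to show that the formula $\pi_t(\gamma)$ descends to a well-defined bounded invertible operator on the completion $\mathcal{K}_t$, which amounts to checking three things: that $\pi_t(\Gamma)$ preserves the subspace $\mathcal{F}_{t,\textbf{1}_{\partial X}}$, that it preserves the null space $N_t$ (so the action passes to the quotient $\mathcal{F}_{t,\textbf{1}_{\partial X}}/N_t$), and that each $\pi_t(\gamma)$ is bounded for the norm $\|\cdot\|_{\mathcal{I}^2_t}$ (so it extends continuously to the completion).

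First I would note that $\mathcal{F}_{t,\textbf{1}_{\partial X}}=\pi_t(\mathbb{C}[\Gamma])\textbf{1}_{\partial X}$ is by its very definition a $\pi_t(\Gamma)$-invariant subspace of $L^2(\partial X,\nu_o)$, since $\mathbb{C}[\Gamma]$ is a left ideal in itself. Next, for invariance of $N_t$: by Proposition \ref{intertwin} we have $\mathcal{I}_t\pi_t(\gamma)=\pi_{-t}(\gamma)\mathcal{I}_t$ on $L^2(\partial X,\nu_o)$; hence if $v\in N_t$, i.e. $\mathcal{I}_t(v)=0$, then $\mathcal{I}_t(\pi_t(\gamma)v)=\pi_{-t}(\gamma)\mathcal{I}_t(v)=0$, so $\pi_t(\gamma)v\in N_t$ as well (and $\pi_t(\gamma)v\in\mathcal{F}_{t,\textbf{1}_{\partial X}}$ by the previous point). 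Therefore $\pi_t(\gamma)$ induces a linear automorphism of $\mathcal{F}_{t,\textbf{1}_{\partial X}}/N_t$.

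The main point is the boundedness estimate. For $v\in\mathcal{F}_{t,\textbf{1}_{\partial X}}$ one computes, using the intertwining relation and then $\pi_{-t}^*(\gamma)=\pi_t(\gamma^{-1})$:
\begin{align*}
\|\pi_t(\gamma)v\|^2_{\mathcal{I}^2_t}
&=\langle \pi_t(\gamma)v,\mathcal{I}^2_t(\pi_t(\gamma)v)\rangle
=\langle \pi_t(\gamma)v,\mathcal{I}_t\,\pi_{-t}(\gamma)\,\mathcal{I}_t(v)\rangle\\
&=\langle \pi_t(\gamma)v,\pi_{-t}(\gamma)\,\mathcal{I}^2_t(v)\rangle
=\langle \pi_{-t}(\gamma^{-1})\pi_t(\gamma)v,\mathcal{I}^2_t(v)\rangle,
\end{align*}
where I used $\mathcal{I}_t\pi_{-t}(\gamma)=\pi_{-t}(\gamma)\mathcal{I}_t$, which follows from $\mathcal{I}_t$ being self-adjoint together with $\mathcal{I}_t\pi_t(\gamma)=\pi_{-t}(\gamma)\mathcal{I}_t$ (take adjoints and replace $\gamma$ by $\gamma^{-1}$). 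Now $\pi_{-t}(\gamma^{-1})\pi_t(\gamma)$ is the multiplication operator by $\big(d\gamma_*\nu/d\nu\big)^{-2t}$, which by the Ahlfors-regularity / Busemann estimates is bounded on $L^2$ by $Ce^{2Qt|\gamma|}$; combined with Cauchy--Schwarz and $\|\mathcal{I}^2_t(v)\| = \|v\|_{\mathcal{I}^2_t}^2$ this yields $\|\pi_t(\gamma)v\|_{\mathcal{I}^2_t}\le C^{1/2}e^{Qt|\gamma|}\|v\|_{\mathcal{I}^2_t}$. Hence $\pi_t(\gamma)$ extends to a bounded operator on $\mathcal{K}_t$, and since $\pi_t(\gamma)\pi_t(\gamma^{-1})=\mathrm{id}$ already at the level of $\mathcal{F}_{t,\textbf{1}_{\partial X}}/N_t$, it extends to an invertible operator; the homomorphism property is inherited from $L^2$. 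I expect the delicate point to be the precise justification of the multiplier bound $\|\pi_{-t}(\gamma^{-1})\pi_t(\gamma)\|_{L^2\to L^2}\le Ce^{2Qt|\gamma|}$, which rests on the Busemann-cocycle identity $\beta_\xi(o,\gamma^{-1}o)=-\beta_{\gamma\xi}(o,\gamma o)$ together with the global bound $\beta_\xi(x,y)\le d(x,y)$ from (\ref{buseman''}); everything else is bookkeeping.
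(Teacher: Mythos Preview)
Your first two steps (invariance of $\mathcal{F}_{t,\textbf{1}_{\partial X}}$ and of $N_t$ via the intertwining relation) are correct and match the paper. The boundedness argument, however, contains a genuine error.

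The commutation relation $\mathcal{I}_t\pi_{-t}(\gamma)=\pi_{-t}(\gamma)\mathcal{I}_t$ that you invoke is false. If you carry out your own recipe---take adjoints of $\mathcal{I}_t\pi_t(\gamma)=\pi_{-t}(\gamma)\mathcal{I}_t$ and replace $\gamma$ by $\gamma^{-1}$---you obtain $\pi_{-t}(\gamma)\mathcal{I}_t=\mathcal{I}_t\pi_t(\gamma)$, which is just the original identity again, not the one you claim. Indeed, had $\mathcal{I}_t$ commuted with $\pi_{-t}(\gamma)$, combining this with the genuine intertwining would force $\mathcal{I}_t\pi_t(\gamma)=\mathcal{I}_t\pi_{-t}(\gamma)$, hence $\pi_t(\gamma)=\pi_{-t}(\gamma)$ on $(\ker\mathcal{I}_t)^\perp$, which is absurd for $t\neq 0$. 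So the passage from $\langle \pi_t(\gamma)v,\mathcal{I}_t\pi_{-t}(\gamma)\mathcal{I}_t(v)\rangle$ to $\langle \pi_t(\gamma)v,\pi_{-t}(\gamma)\mathcal{I}^2_t(v)\rangle$ is unjustified. Even granting that step, the Cauchy--Schwarz conclusion would involve $\|v\|_2$ rather than $\|v\|_{\mathcal{I}^2_t}$ (your claimed identity $\|\mathcal{I}^2_t(v)\|=\|v\|_{\mathcal{I}^2_t}^2$ is also incorrect: one has $\|v\|_{\mathcal{I}^2_t}^2=\|\mathcal{I}_t(v)\|_2^2$, not $\|\mathcal{I}^2_t(v)\|_2$).

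The paper's route is the one-line fix you are missing: since $\|w\|_{\mathcal{I}^2_t}=\|\mathcal{I}_t(w)\|_2$, apply the intertwining relation \emph{once} to get
\[
\|\pi_t(\gamma)v\|_{\mathcal{I}^2_t}=\|\mathcal{I}_t(\pi_t(\gamma)v)\|_2=\|\pi_{-t}(\gamma)\mathcal{I}_t(v)\|_2\le \|\pi_{-t}(\gamma)\|_{L^2\to L^2}\,\|v\|_{\mathcal{I}^2_t},
\]
and then bound $\|\pi_{-t}(\gamma)\|_{L^2\to L^2}\le Ce^{Qt|\gamma|}$ (e.g.\ via the multiplier bound you mention, or via (\ref{RDgeneral})).
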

\begin{proof}
First of all, note that $(\pi_{t},\mathcal{F}_{t,\textbf{1}_{ \partial X}} /N_{t})$ is still a linear representation of $\Gamma$ because $N_{t}$ is a linear subrepresentation of $\mathcal{F}_{t,\textbf{1}_{ \partial X}}$. Moreover we have:
$$
\|\pi_{t}(\gamma)v\|_{\mathcal{K}_{t}}=\|\mathcal{I}_{t}(\pi_{t}(\gamma)v)\|_{2}
=\|\pi_{-t}(\gamma)\mathcal{I}_{t}(v)\|_{2}
\leq Ce^{tQ|\gamma|}\|\mathcal{I}_{t}(v)\|_{2}
=Ce^{tQ|\gamma|}\|v\|_{\mathcal{K}_{t}},$$
where the second equality follows from Proposition \ref{intertwin}. 

\end{proof}

Endow now $\mathcal{F}_{-t,\sigma_{t}} $ with the sesquilinear form $\langle\cdot,\cdot\rangle_{\mathcal{K}^{'}_{t}}:= \langle \mathcal{J}_{t} (\cdot), \mathcal{J}_{t}(\cdot)\rangle$ where  $\mathcal{J}_{t}$ is the left inverse of $\mathcal{I}_{t}$. Note that   $(\mathcal{F}_{-t,\sigma_{t}},\langle \cdot , \cdot \rangle_{\mathcal{K}^{'}_{t}})$ is a pre-Hilbert space.\\
Then turn $\mathcal{F}_{-t,\sigma_{t}}$ into a Hilbert space by considering

$$\mathcal{K}'_{t}:=\overline{\mathcal{F}_{-t,\sigma_{t}}}^{ \| \cdot \|_{\mathcal{K}^{'}_{t} } }.$$
We go on in this subsection by the following observation.
\begin{lemma}\label{intertwin2}
The left inverse $\mathcal{J}_{t}$ of $\mathcal{I}_{t}$ intertwines $\pi_{-t}$ and $\pi_{t}$ i.e. $$ \pi_{t} \mathcal{J}_{t}=\mathcal{J}_{t} \pi_{-t}$$ on $\mathcal{F}_{-t,\sigma_{t}}.$
\end{lemma}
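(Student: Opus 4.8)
The plan is to exploit the abstract inverse $\mathcal{J}_t$ constructed in Lemma \ref{inverse} (applied to the compact self-adjoint operator $T=\mathcal{I}_t$ on $H=L^2(\partial X,\nu_o)$) together with the intertwining relation $\mathcal{I}_t\pi_t(\gamma)=\pi_{-t}(\gamma)\mathcal{I}_t$ from Proposition \ref{intertwin}. Recall that $\mathcal{J}_t$ is defined on $\mathrm{Dom}(\mathcal{J}_t)=\mathrm{Im}(\mathcal{I}_t)$ with image in $H\ominus\ker(\mathcal{I}_t)$, and satisfies $\mathcal{J}_t\mathcal{I}_t(u)=u$ for $u\in H\ominus\ker(\mathcal{I}_t)$ and $\mathcal{I}_t\mathcal{J}_t(v)=v$ for $v\in\mathrm{Im}(\mathcal{I}_t)$.

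First I would fix $\gamma\in\Gamma$ and a vector $w\in\mathrm{Dom}(\mathcal{J}_t)=\mathrm{Im}(\mathcal{I}_t)$, so $w=\mathcal{I}_t(u)$ for a unique $u\in H\ominus\ker(\mathcal{I}_t)$. I then need to check that $\pi_{-t}(\gamma)w$ again lies in $\mathrm{Dom}(\mathcal{J}_t)$: indeed $\pi_{-t}(\gamma)w=\pi_{-t}(\gamma)\mathcal{I}_t(u)=\mathcal{I}_t(\pi_t(\gamma)u)$ by Proposition \ref{intertwin}, so $\pi_{-t}(\gamma)w\in\mathrm{Im}(\mathcal{I}_t)$. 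Applying $\mathcal{J}_t$ and using that $\mathcal{J}_t\mathcal{I}_t$ is the orthogonal projection onto $H\ominus\ker(\mathcal{I}_t)$, I get
\begin{equation*}
\mathcal{J}_t\pi_{-t}(\gamma)w=\mathcal{J}_t\mathcal{I}_t\big(\pi_t(\gamma)u\big)=P_{H\ominus\ker(\mathcal{I}_t)}\,\pi_t(\gamma)u.
\end{equation*}
On the other hand $\mathcal{J}_t(w)=\mathcal{J}_t\mathcal{I}_t(u)=u$, so $\pi_t(\gamma)\mathcal{J}_t(w)=\pi_t(\gamma)u$. Hence the identity $\pi_t(\gamma)\mathcal{J}_t=\mathcal{J}_t\pi_{-t}(\gamma)$ on $\mathrm{Dom}(\mathcal{J}_t)$ will follow once I know that $\pi_t(\gamma)u\in H\ominus\ker(\mathcal{I}_t)$, i.e. that $H\ominus\ker(\mathcal{I}_t)=\overline{\mathrm{Im}(\mathcal{I}_t)}$ is $\pi_t(\gamma)$-invariant — which is exactly the statement already recorded in Section \ref{section4} that $\mathcal{W}_t:=\overline{\mathrm{Im}(\mathcal{I}_t)}$ is a $\pi_{-t}$-subrepresentation, wait: more precisely $\ker(\mathcal{I}_t)=\mathcal{V}_t$ is $\pi_t$-invariant, hence its orthogonal complement is $\pi_t(\gamma^{-1})^*$-invariant; using $\pi_t(\gamma)^*=\pi_{-t}(\gamma^{-1})$ one checks the required invariance directly. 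So the projection $P_{H\ominus\ker(\mathcal{I}_t)}$ commutes with $\pi_t(\gamma)$ on the relevant subspace and the two sides agree.

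I expect the main obstacle to be purely bookkeeping about domains rather than anything deep: one must be careful that $\mathcal{J}_t$ is only densely defined (unbounded), so the identity $\pi_t(\gamma)\mathcal{J}_t=\mathcal{J}_t\pi_{-t}(\gamma)$ must be read as an equality of unbounded operators, including the assertion that $\pi_{-t}(\gamma)$ maps $\mathrm{Dom}(\mathcal{J}_t)$ into itself (which is the first thing verified above, since $\pi_t(\gamma)$ preserves $H\ominus\ker(\mathcal{I}_t)$ and $\mathcal{I}_t$ intertwines). The space ``$\mathcal{H}'_t$'' appearing in the statement is presumably $\overline{\mathrm{Im}(\mathcal{I}_t)}=\mathcal{W}_t$, on which $\pi_{-t}$ acts and on whose dense subspace $\mathrm{Im}(\mathcal{I}_t)$ the operator $\mathcal{J}_t$ is defined; with that identification the computation above is the whole proof. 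No new estimates are needed beyond Propositions \ref{intertwin} and the invariance of $\mathcal{V}_t$ under $\pi_t$, both already established.
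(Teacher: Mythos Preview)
Your overall strategy coincides with the paper's: write $w=\mathcal{I}_t(u)$, use $\mathcal{I}_t\pi_t(\gamma)=\pi_{-t}(\gamma)\mathcal{I}_t$, and reduce everything to $\mathcal{J}_t\mathcal{I}_t=\mathrm{Id}$. The paper simply runs this on the dense set $\mathcal{F}_{-t,\sigma_t}=\mathcal{I}_t(\mathcal{F}_{t,\mathbf{1}_{\partial X}})$, invoking Lemma~\ref{inverse}(1) verbatim, and then appeals to closedness of $\mathcal{J}_t$.

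There is, however, a genuine gap in your invariance step. You correctly isolate the issue: you need $\pi_t(\gamma)u\in H\ominus\ker(\mathcal{I}_t)$ whenever $u\in H\ominus\ker(\mathcal{I}_t)$. But your justification only yields $\pi_{-t}$-invariance of $\ker(\mathcal{I}_t)^\perp$, not $\pi_t$-invariance: from $\pi_t$-invariance of $\mathcal{V}_t=\ker(\mathcal{I}_t)$ one gets that $\mathcal{V}_t^\perp$ is invariant under the adjoints $\pi_t(\gamma)^*=\pi_{-t}(\gamma^{-1})$, which is invariance under $\pi_{-t}$, not $\pi_t$. Since $\pi_t$ is non-unitary for $t\neq 0$, the orthogonal complement of a $\pi_t$-invariant subspace has no reason to be $\pi_t$-invariant (in the block decomposition $H=\mathcal{V}_t\oplus\mathcal{V}_t^\perp$, $\pi_t(\gamma)$ is only upper triangular). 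Consequently the literal equality $\pi_t(\gamma)\mathcal{J}_t(w)=\mathcal{J}_t\pi_{-t}(\gamma)(w)$ in $L^2$ is not established. What your own computation \emph{does} show is that the two sides differ by an element of $\ker(\mathcal{I}_t)$; this is exactly what is needed, since in the paper $\mathcal{J}_t$ lands in the quotient space $\mathcal{H}_t$ (resp.\ $\mathcal{K}_t$), where $\ker(\mathcal{I}_t)$ has been killed (cf.\ Proposition~\ref{duality}(5)). The paper's proof implicitly adopts this reading by applying Lemma~\ref{inverse}(1) as stated to $u=\pi_t(\gamma h)\mathbf{1}_{\partial X}\in\mathcal{F}_{t,\mathbf{1}_{\partial X}}$.
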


\begin{proof}
Let $v=\mathcal{I}_{t}(\pi_{t}(h)\textbf{1}_{\partial X})\in \mathcal{F}_{-t,\sigma_{t}}=\mathcal{I}_{t}(\mathcal{F}_{t,\textbf{1}_{ \partial X}})$ with $h\in \mathbb{C}[\Gamma]$. If $\pi_{t}(h)\textbf{1}_{\partial X}\in N_{t}$ there is nothing to do, thus assume that $h$ is such that $ p( \pi_{t}(h)\textbf{1}_{\partial X})\neq 0$ in $\mathcal{F}_{t,\textbf{1}_{\partial X}}/N_{t}$.  Thus: 
$$
\mathcal{J}_{t}\pi_{-t}(\gamma)v=\mathcal{J}_{t}\mathcal{I}_{t}(\pi_{t}(\gamma)\pi_{t}(h)\textbf{1}_{\partial X})
=(\pi_{t}(\gamma)\pi_{t}(h)\textbf{1}_{\partial X})
=(\pi_{t}(\gamma)\mathcal{J}_{t}\mathcal{I}_{t}\pi_{t}(h)\textbf{1}_{\partial X})
=\pi_{t}(\gamma)\mathcal{J}_{t}v.
$$

\end{proof}

\begin{lemma}
The representation $(\pi_{-t},\mathcal{K}'_{t})$ is well defined.
\end{lemma}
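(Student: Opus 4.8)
The plan is to repeat, with minor changes, the proof that $(\pi_{t},\mathcal{K}_{t})$ is well defined, swapping $\mathcal{I}_{t}$ for its left inverse $\mathcal{J}_{t}$ and interchanging the roles of $\pi_{t}$ and $\pi_{-t}$. Recall that $\mathcal{K}'_{t}$ is the $\|\cdot\|_{\mathcal{J}^{2}_{t}}$-completion of the dense subspace $\mathcal{F}_{-t,\sigma_{t}}=\mathcal{I}_{t}(\mathcal{F}_{t,\textbf{1}_{\partial X}})=\lbrace \pi_{-t}(f)\sigma_{t}\,;\,f\in\mathbb{C}[\Gamma]\rbrace$, which is contained in $Im(\mathcal{I}_{t})=\mathrm{Dom}(\mathcal{J}_{t})$. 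I would first note that $\langle\cdot,\cdot\rangle_{\mathcal{J}^{2}_{t}}$ really is an inner product on $\mathcal{F}_{-t,\sigma_{t}}$: by Lemma~\ref{inverse}, $\mathcal{J}_{t}$ is self-adjoint on its domain and injective on $Im(\mathcal{I}_{t})$ (if $\mathcal{J}_{t}v=0$ then $v=\mathcal{I}_{t}\mathcal{J}_{t}v=0$), so $\langle v,\mathcal{J}^{2}_{t}v\rangle=\|\mathcal{J}_{t}v\|^{2}_{2}$ is positive definite; unlike for $\mathcal{K}_{t}$, no null-space quotient is needed here.

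The next step is to check that $\pi_{-t}$ leaves $\mathcal{F}_{-t,\sigma_{t}}$ invariant: for $\gamma\in\Gamma$ and $f\in\mathbb{C}[\Gamma]$ one has $\pi_{-t}(\gamma)\bigl(\pi_{-t}(f)\sigma_{t}\bigr)=\pi_{-t}(\gamma f)\sigma_{t}$, and because $\gamma$ is invertible in $\mathbb{C}[\Gamma]$ this exhibits $\pi_{-t}(\gamma)$ as a linear automorphism of $\mathcal{F}_{-t,\sigma_{t}}$. In particular $(\pi_{-t},\mathcal{F}_{-t,\sigma_{t}})$ is a genuine linear representation of $\Gamma$, and, decisively for the estimate below, $\pi_{-t}(\gamma)v$ stays in $\mathrm{Dom}(\mathcal{J}_{t})$ for every $v\in\mathcal{F}_{-t,\sigma_{t}}$.

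Finally comes the boundedness estimate. For $v\in\mathcal{F}_{-t,\sigma_{t}}$, since $\pi_{-t}(\gamma)v\in\mathrm{Dom}(\mathcal{J}_{t})$, Lemma~\ref{intertwin2} yields $\mathcal{J}_{t}\pi_{-t}(\gamma)v=\pi_{t}(\gamma)\mathcal{J}_{t}v$, so that $\|\pi_{-t}(\gamma)v\|_{\mathcal{J}^{2}_{t}}=\|\mathcal{J}_{t}\pi_{-t}(\gamma)v\|_{2}=\|\pi_{t}(\gamma)\mathcal{J}_{t}v\|_{2}$; applying the growth bound $\|\pi_{t}(\gamma)w\|_{2}\leq Ce^{Qt|\gamma|}\|w\|_{2}$ recalled above to $w=\mathcal{J}_{t}v\in L^{2}(\partial X,\nu_{o})$ gives $\|\pi_{-t}(\gamma)v\|_{\mathcal{J}^{2}_{t}}\leq Ce^{Qt|\gamma|}\|v\|_{\mathcal{J}^{2}_{t}}$. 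Hence each $\pi_{-t}(\gamma)$ is bounded on the dense subspace $\mathcal{F}_{-t,\sigma_{t}}\subset\mathcal{K}'_{t}$, extends uniquely to a bounded operator on $\mathcal{K}'_{t}$, and the relations $\pi_{-t}(\gamma_{1})\pi_{-t}(\gamma_{2})=\pi_{-t}(\gamma_{1}\gamma_{2})$ and $\pi_{-t}(\gamma)\pi_{-t}(\gamma^{-1})=\mathrm{Id}$ pass to the completion by continuity, so $\pi_{-t}\colon\Gamma\to GL_{c}(\mathcal{K}'_{t})$ is a Hilbertian representation. The one point demanding care — the main obstacle — is the bookkeeping imposed by the \emph{unboundedness} of $\mathcal{J}_{t}$: it may be applied only to vectors in its domain, which is exactly why the invariance of $\mathcal{F}_{-t,\sigma_{t}}$ under $\pi_{-t}$ must be proved first, and why Lemma~\ref{intertwin2} is used solely on $\mathcal{F}_{-t,\sigma_{t}}$ (equivalently on $Im(\mathcal{I}_{t})$), never on a larger part of $L^{2}(\partial X,\nu_{o})$.
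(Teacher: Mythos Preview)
Your proof is correct and follows essentially the same approach as the paper: the core estimate $\|\pi_{-t}(\gamma)v\|_{\mathcal{J}^{2}_{t}}=\|\mathcal{J}_{t}\pi_{-t}(\gamma)v\|=\|\pi_{t}(\gamma)\mathcal{J}_{t}v\|\leq Ce^{tQ|\gamma|}\|v\|_{\mathcal{J}^{2}_{t}}$, via Lemma~\ref{intertwin2} and the growth bound~(\ref{RDgeneral}), is exactly the paper's argument. Your version is in fact more careful than the paper's terse proof, since you explicitly verify positive-definiteness of $\langle\cdot,\cdot\rangle_{\mathcal{J}^{2}_{t}}$, the $\pi_{-t}$-invariance of $\mathcal{F}_{-t,\sigma_{t}}$, and the domain bookkeeping for the unbounded operator $\mathcal{J}_{t}$.
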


\begin{proof}
We have for all $\gamma \in \Gamma$, for all $v\in \mathcal{F}_{-t,\sigma_{t}}$ :
$$
\|\pi_{-t}(\gamma)v\|_{\mathcal{K}^{'}_{t}}=\|\mathcal{J}_{t}(\pi_{-t}(\gamma)v)\|_{2}
=\|\pi_{t}(\gamma)\mathcal{J}_{t}(v)\|_{2}
\leq Ce^{tQ|\gamma|}\|\mathcal{J}_{t}(v)\|_{2}
=Ce^{tQ|\gamma|}\|v\|_{\mathcal{K}^{'}_{t}},
$$
where the second equality follows from Lemma \ref{intertwin2}. 

\end{proof}

Using the general theory of dual pairings, several properties are obvious and well known. We sum up the properties of the dual pairs we shall consider, and we give a proof of these facts. 

\begin{prop}\label{duality}
Let $t>0$. Assume that $\mathcal{F}_{t,\textbf{1}_{\partial X}}$ is dense in $L^{2}(\partial X,\nu_o)$.\\

\begin{enumerate}
\item \label{1''}The pair $(\mathcal{K}_{t},\overline{\mathcal{K}'_{t}},\langle \cdot,\cdot\rangle)$ is a $\mathbb{C}$-pairing.
\item \label{2''}The topological dual of $\mathcal{K}_{t}$ is isomorphic and isometric to $\overline{\mathcal{K}'_{t}}$. 

\item \label{3''}We have the following embeddings  for all $t>0$:
$$ \mathcal{K}'_{t}\subset L^{2}(\partial X,\nu_{o})\subset  \mathcal{K}_{t}.$$


\end{enumerate}

\end{prop}

\begin{proof}
We give a proof of Item (\ref{1''}). Observe first that $\langle \cdot ,\cdot \rangle$ is a well defined bilinear form on $\mathcal{F}_{t,\textbf{1}_{ \partial X}}/ N_{t}\times \overline{ \mathcal{F}_{-t,\sigma_{t}}} $. Then notice that the standard inner product is non degenerate on $\mathcal{F}_{t,\textbf{1}_{ \partial X}}/ N_{t}\times  \mathcal{F}_{-t,\sigma_{t}} $:  on one hand, if $v\in \mathcal{F}_{t,\textbf{1}_{\partial X}}$ satisfies $\langle v,w\rangle=0$ for all $w\in  \mathcal{F}_{-t,\sigma_{t}}$ it follows, since $\mathcal{I}_{t}$ is continuous and since $\mathcal{F}_{t,\textbf{1}_{ \partial X}}$ is $\|\cdot\|_{2}$-dense  in $L^{2}(\partial X,\nu_{o})$, that $v\perp \overline{Im(\mathcal{I}_{t})}^{\|\cdot\|_{2}}$. Hence, $v\in \mathcal{V}_{t}\cap \mathcal{F}_{t,\textbf{1}_{\partial X}}=N_{t}$ and  it follows that $v=0$ in $\mathcal{F}_{t,\textbf{1}_{ \partial X}}/ N_{t}$. \\
 On the other hand, using again the fact that $\mathcal{F}_{t,\textbf{1}_{ \partial X}}$ is $\|\cdot\|_{2}$-dense in $L^{2}(\partial X,\nu_{o})$, we obtain that if $w$ satisfies $\langle v,w\rangle =0$ for all $v\in \mathcal{F}_{t,\textbf{1}_{ \partial X}}/ N_{t}$, then $w=0$.\\ 
 Observe now the continuity property given for $(v,w)\in \mathcal{F}_{t,\textbf{1}_{ \partial X}}/ N_{t}\times  \mathcal{F}_{-t,\sigma_{t}}$ by : 
$$
|\langle v,w\rangle|=|\langle \mathcal{J}_{t} \mathcal{I}_{t} v,w\rangle|
=|\langle  \mathcal{I}_{t} v,\mathcal{J}_{t}w\rangle|
\leq \|v\|_{\mathcal{
K}_{t}}\|w\|_{\mathcal{K}'_{t}}.
$$
Hence, $\langle \cdot ,\cdot \rangle$ is a well defined non-degenerate bilinear form on $\mathcal{
K}_{t} \times \overline{\mathcal{
K}'_{t}}$.\\

We prove now Item (\ref{2''}).
 For any $w\in \overline{\mathcal{K}'_{t}}$, the map $\lambda_w:v\mapsto\langle v,w\rangle$ defines an element of $(\mathcal{K}_{t})^{*}$. We shall show that given $\ell \in (\mathcal{K}_{t})^{*}$, there exists $w\in \mathcal{K}'_{t}$ such that $\ell =\lambda_w$.
 Using the Riesz representation theorem there exists $a\in \overline{\mathcal{K}_{t}}$ such that for all $v\in \mathcal{K}_{t}$, $\ell(v)=\langle v,a\rangle_{\mathcal{K}_{t}}=\langle v,\mathcal{I}^{2}_{t}{a}\rangle.$  It turns out that 
 $\mathcal{I}^{2}_{t}{a}$  is an element of $ \overline{\mathcal{K}'_{t}}$ :
 $$\|\mathcal{I}^{2}_{t}{a}\|^{2}_{\mathcal{K}'_{t}}=\langle \mathcal{J}_{t} \mathcal{I}^{2}_{t}{a}, \mathcal{J}_{t}\mathcal{I}^{2}_{t}{a}\rangle=\langle \mathcal{I}_{t}{a}, \mathcal{I}_{t}{a}\rangle=\|a\|^{2}_{\mathcal{K}_{t}}.$$
 
 Hence any element of $(\mathcal{K}_{t})^{*}$ is represented by an element of $ \overline{\mathcal{K}'_{t}}$ and since the pairing is non degenerate, $(\mathcal{K}_{t})^{*}$ is isomorphic and isometric to $ \overline{\mathcal{K}'_{t}}$.\\
 We prove Item  (\ref{3''}).  Proposition \ref{bounded} implies for all $v\in L^{2} (\partial X,\nu_{o})$ $$ \|v\|_{\mathcal{K}_{t}}\leq \|\sigma_{t}\|_{\infty} \|v\|_{2}.$$ The other inclusion  follows from Lemma \ref{inverse}: Pick $v\in Im (\mathcal{I}_{t})$ represented uniquely by $v=\mathcal{I}_{t}(x)$ with $x$ in the closure of  $Im (\mathcal{I}_{t})$. We have  $$\|v\|^{2}_{\mathcal{K}'_{t}}=\langle  \mathcal{J}_{t}(v),\mathcal{J}_{t}(v)\rangle
=\langle  \mathcal{J}_{t}\mathcal{I}_{t}(x),\mathcal{J}_{t}\mathcal{I}_{t}(x)\rangle 
=\langle x,x \rangle 
\geq \frac{1}{\|\sigma_{t}\|^{2}_{\infty}}   \| \mathcal{I}_{t}(x)\|_{2}^{2}=\frac{1}{\|\sigma_{t}\|^{2}_{\infty}}  \|v\|^{2}_{2}.$$
and the proof is done.\\

 
 \end{proof}

\subsection{   If $\mathcal{I}_{t}$ is a positive operator: a unitary complementary series}
 Assume that there exists $t$  with $0<t<\frac{1}{2}$ such that $\mathcal{R}_{t}$ is positive. Therefore, Lemma \ref{positiv} implies that $\mathcal{I}_{t}$ is positive.
 Consider the sesquilinear form 
 \begin{equation}
 \langle \cdot,\cdot\rangle_{\mathcal{H}_{t} }:(v,w)\in \mathcal{F}_{t,\textbf{1}_{ \partial X}}\times \mathcal{F}_{t,\textbf{1}_{ \partial X}}\mapsto \langle v,\mathcal{I}_{t} (w)\rangle  \in \mathbb{C}.
 \end{equation}

 Let $L_{t}:=\{ v\in \mathcal{F}_{t,\textbf{1}_{ \partial X}}| \langle v,v\rangle_{\mathcal{I}_{t} }=0\}.$
 Consider the pre-Hilbert space $\mathcal{F}_{t,\textbf{1}_{ \partial X}}/L_{t}$ endowed with $\langle \cdot,\cdot \rangle_{\mathcal{I}_t}$ and turn it into a Hilbert space defined as:
 \begin{equation}
\overline{\mathcal{F}_{t,\textbf{1}_{ \partial X}}/L_{t}}^{\|\cdot\|_{\mathcal{I}_{t}}}:=\mathcal{H}_{t}.
\end{equation}
Now, we are ready to prove Proposition \ref{posintro}.

\begin{proof}
We only have to prove that $(\pi_{t},\mathcal{H}_{t})$ is a unitary representation.
Let $\gamma \in \Gamma$ and $v,w\in \mathcal{H}_{t}$.
We have
$$\begin{array}{lcl}
\langle \pi_{t}(\gamma)v,w\rangle_{\mathcal{H}_{t}}&=&\langle \pi_{t}(\gamma)v,w\rangle_{\mathcal{H}_{t}}
=\langle \pi_{t}(\gamma)v,\mathcal{I}_{t}(w)\rangle
=\langle v,\pi_{-t}(\gamma^{-1})\mathcal{I}_{t}(w)\rangle\\
&=&\langle v,\mathcal{I}_{t}(\pi_{t}(\gamma^{-1})w)\rangle
=\langle v,\pi_{t}(\gamma^{-1})w\rangle_{\mathcal{H }_{t}},\end{array}
$$
where the fourth equality follows from Proposition \ref{intertwin}. 
\end{proof}

Consider  the sesquilinear form 
 \begin{equation}
 \langle \cdot,\cdot\rangle_{\mathcal{J}_{t} }:(v,w)\in \mathcal{F}_{-t,\sigma_{t}}\times \mathcal{F}_{-t,\sigma_{t}}\mapsto \langle v,\mathcal{J}_{t} (w)\rangle  \in \mathbb{C},
 \end{equation}
that is an inner product on $\mathcal{F}_{-t,\sigma_{t}}$. Thus, consider the Hilbert space
\begin{equation}
\overline{\mathcal{F}_{-t,\sigma_{t}}}^{\|\cdot\|_{\mathcal{J}_{t}}}:=\mathcal{H}'_{t}.
\end{equation}

\begin{lemma}
The representation $(\pi_{-t},\mathcal{H}'_{t})$ is a well defined unitary representation.
\end{lemma}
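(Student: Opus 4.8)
The plan is to check, in order, three things: that $\langle\cdot,\cdot\rangle_{\mathcal{J}_t}$ is a genuine (positive definite) inner product on $\mathcal{F}_{-t,\sigma_t}$ so that the completion $\mathcal{H}'_t$ makes sense; that $\mathcal{F}_{-t,\sigma_t}$ is invariant under $\pi_{-t}(\Gamma)$; and that each $\pi_{-t}(\gamma)$ preserves $\langle\cdot,\cdot\rangle_{\mathcal{J}_t}$. Unitarity of the extension to $\mathcal{H}'_t$ then follows formally.

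First I would use that, under the standing hypotheses $0<t<\tfrac12$ and $\mathcal{R}_t$ positive, Lemma~\ref{positiv} gives that $\mathcal{I}_t$ is positive, so Lemma~\ref{inverse}(2) applied to the positive compact self-adjoint operator $T=\mathcal{I}_t$ yields a left inverse of the form $\mathcal{J}_t=c\cdot I+D$ on $\mathrm{Dom}(\mathcal{J}_t)=\mathrm{Im}(\mathcal{I}_t)$, with $c>0$ and $D$ an (unbounded, closed) positive self-adjoint operator. Since $\mathcal{F}_{-t,\sigma_t}=\mathcal{I}_t(\mathcal{F}_{t,\textbf{1}_{\partial X}})\subset \mathrm{Im}(\mathcal{I}_t)=\mathrm{Dom}(\mathcal{J}_t)$, for every nonzero $v\in\mathcal{F}_{-t,\sigma_t}$ one has
\[
\langle v,v\rangle_{\mathcal{J}_t}=\langle v,\mathcal{J}_t v\rangle=c\|v\|_2^2+\langle Dv,v\rangle\geq c\|v\|_2^2>0 .
\]
Hence $\langle\cdot,\cdot\rangle_{\mathcal{J}_t}$ is a positive definite inner product (no quotient by a null space is needed), $\mathcal{H}'_t$ is a well-defined Hilbert space, and $\mathcal{F}_{-t,\sigma_t}$ sits inside it with $\|\cdot\|_2\leq c^{-1/2}\|\cdot\|_{\mathcal{J}_t}$ — the ``dual-side'' analogue of Proposition~\ref{duality}(3).

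Next, $\mathcal{F}_{-t,\sigma_t}=\{\pi_{-t}(f)\sigma_t : f\in\mathbb{C}[\Gamma]\}$ is visibly $\pi_{-t}(\Gamma)$-invariant, so it remains to see that $\pi_{-t}(\gamma)$ is isometric for $\langle\cdot,\cdot\rangle_{\mathcal{J}_t}$. Fixing $\gamma\in\Gamma$ and $v,w\in\mathcal{F}_{-t,\sigma_t}$ (all terms below lie in $\mathrm{Dom}(\mathcal{J}_t)$), I would use Lemma~\ref{intertwin2} in the form $\mathcal{J}_t\pi_{-t}(\gamma)=\pi_t(\gamma)\mathcal{J}_t$, then the $L^2$-adjoint relation $\pi_{-t}(\gamma)^{*}=\pi_t(\gamma^{-1})$ (i.e. $\pi_t^{*}(\gamma)=\pi_{-t}(\gamma^{-1})$ with $t\rightsquigarrow -t$), and finally that $\pi_t$ is a homomorphism:
\begin{align*}
\langle \pi_{-t}(\gamma)v,\pi_{-t}(\gamma)w\rangle_{\mathcal{J}_t}
&=\langle \pi_{-t}(\gamma)v,\mathcal{J}_t\pi_{-t}(\gamma)w\rangle
=\langle \pi_{-t}(\gamma)v,\pi_t(\gamma)\mathcal{J}_t w\rangle\\
&=\langle v,\pi_t(\gamma^{-1})\pi_t(\gamma)\mathcal{J}_t w\rangle
=\langle v,\mathcal{J}_t w\rangle=\langle v,w\rangle_{\mathcal{J}_t}.
\end{align*}
Thus each $\pi_{-t}(\gamma)$ acts isometrically on $(\mathcal{F}_{-t,\sigma_t},\|\cdot\|_{\mathcal{J}_t})$, hence extends uniquely by density to an isometry of $\mathcal{H}'_t$; since $\pi_{-t}(\gamma^{-1})$ is a two-sided inverse that likewise extends to an isometry, the extension of $\pi_{-t}(\gamma)$ is surjective, i.e.\ unitary, and $(\pi_{-t},\mathcal{H}'_t)$ is a unitary representation, the companion of $(\pi_t,\mathcal{H}_t)$ from Proposition~\ref{posintro}.

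I do not expect a real obstacle: the only place the hypotheses $0<t<\tfrac12$ and positivity of $\mathcal{R}_t$ are used is the positive-definiteness in the first step, via the structural splitting $\mathcal{J}_t=c\,I+D$ of Lemma~\ref{inverse}; everything else is a formal consequence of the intertwining identities already proved. The only mild care needed is domain bookkeeping for the unbounded operator $\mathcal{J}_t$, which is automatic because all vectors occurring lie in $\mathcal{F}_{-t,\sigma_t}\subset\mathrm{Dom}(\mathcal{J}_t)$.
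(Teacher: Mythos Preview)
Your proof is correct and follows essentially the same approach as the paper, which simply states that the lemma follows from Lemma~\ref{intertwin2}. You have unpacked that one-line hint into the full computation (positive-definiteness via the splitting $\mathcal{J}_t=cI+D$ from Lemma~\ref{inverse}, invariance of $\mathcal{F}_{-t,\sigma_t}$, and the isometry calculation using $\mathcal{J}_t\pi_{-t}(\gamma)=\pi_t(\gamma)\mathcal{J}_t$ together with $\pi_{-t}(\gamma)^*=\pi_t(\gamma^{-1})$), which is exactly what the paper's reference intends.
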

The proof of the above lemma follows from Lemma \ref{intertwin2}.
The following is similar to Proposition \ref{duality} but deals with the space $\mathcal{H}_{t}$ and $\mathcal{H}'_{t}$.
\begin{prop}\label{duality'}
 Assume there exists $t\in ]0,\frac{1}{2}]$ such that $\mathcal{I}_{t}$ is positive operator.\\

\begin{enumerate}
\item  \label{1'}  The intertwiner  $\mathcal{I}_{t}$ defines an isometry from $\mathcal{H}_{t}\rightarrow \mathcal{H}'_{t} $ and $\mathcal{J}_{t}$ defines an isometry from $\mathcal{H}'_{t}\rightarrow \mathcal{H}_{t} $ such that $ \mathcal {I}_{t}\circ \mathcal{J}_{t}=I_{\mathcal{H}'_{t}}$ and $ \mathcal {J}_{t}\circ \mathcal{I}_{t}=I_{\mathcal{H}_{t}}$.

\item \label{2'} The pair $(\mathcal{H}_{t},\overline{\mathcal{H}'_{t}},\langle \cdot,\cdot\rangle)$ is a $\mathbb{C}$-pairing.

\item \label{3'}  The topological dual of $\mathcal{H}_{t}$ is isomorphic and isometric to $\overline{\mathcal{H}'_{t}}$.
\item  \label{4'} We have the following embeddings 
$$ \mathcal{H}'_{t}\subset L^{2}(\partial X,\nu_{o})\subset  \mathcal{H}_{t}.$$

\end{enumerate}

\end{prop}

\begin{proof}
For Item (\ref{1'}) notice  that for $v\in \mathcal{F}_{t,\textbf{1}_{\partial X}}/N_{t}$ we have $ \|\mathcal{I}_{t}(v)\|^{2}_{\mathcal{H}'_{t}}=\langle \mathcal{I}_{t}(v),\mathcal{J}_{t}\mathcal{I}_{t}(v) \rangle=\langle \mathcal{I}_{t}(v),v \rangle =\|v\|^{2}_{\mathcal{H}_{t}}$ and for $v\in \mathcal{F}_{-t,\sigma_{t}}$ we have $ \|\mathcal{J}_{t}(v)\|^{2}_{\mathcal{H}_{t}}=\langle \mathcal{J}_{t}(v),\mathcal{I}_{t}\mathcal{J}_{t}(v) \rangle=\langle \mathcal{J}_{t}(v),v \rangle =\|v\|^{2}_{\mathcal{H}'_{t}}.$ Extend the above equalities on the Hilbert spaces to obtain the result and use Lemma \ref{inverse}. \\
For Item (\ref{2'}), observe that for $v$ in $\mathcal{F}_{t,\textbf{1}_{\partial X}}$ and $w\in \mathcal{F}_{-t,\sigma_{t}}$ we have $$| \langle v,w\rangle | = | \langle v,\mathcal{I}_{t}\mathcal{J}_{t}(w)\rangle | \leq \|v\|_{\mathcal{H}_{t}}\|\mathcal{J}_{t}(w)\|_{\mathcal{H}_{t}}=\|v\|_{\mathcal{H}_{t}}\|w\|_{\mathcal{H}'_{t}}.$$
Then the proof is the same as the proof of Item (\ref{1''}) of Proposition \ref{duality}.\\
For Item (\ref{3'}), since the bilinear form is non degenerate we have an isometric inclusion of the map $w\in \mathcal{H}'_{t}\longmapsto \lambda_{w} \in (\mathcal{H}_{t})^{*} $, where $\lambda_{w}:v\in \mathcal{H}_{t}\mapsto \langle v,w\rangle \in \mathbb{C}$.
Pick now $\ell \in \mathcal{H}_{t}^{*}$. There exists $a\in \mathcal{H}_{t}$ such that $\ell(v)=\langle v,a\rangle_{\mathcal{H}_{t}}=\langle v,\mathcal{I}_{t}(a)\rangle $. Set $w=\mathcal{I}_{t}(a)\in \mathcal{H}'_{t}$ to conclude the proof.\\
Eventually for Item (\ref{4'}) we have for the right inclusion : $$\|v\|^{2}_{\mathcal{H}_{t}}=\langle v,\mathcal{I}_{t}(v) \rangle \leq  \|v\|_{2}\|\mathcal{I}_{t}(v)\|_{2}\leq \|\sigma_{t}\|_{\infty} \|v\|_{2}^{2}.$$ And for the left inclusion use Lemma \ref{inverse} to write $\|v\|_{\mathcal{H}'_{t}}=c^{2}\|v\|^{2}_{2}+q(v)$ where $q(v)$ is positive quadratic form and $c$ a non-zero positive constant. The left inclusion follows.
 \end{proof}

 \subsection{Basic properties of those Hilbertian representations}

We  sum up in what follows useful properties of the Hilbertian representations we are dealing with.
\begin{prop}  Let $t>0$.

\begin{enumerate}
\item \label{1} The intertwiner $\mathcal{I}_{t}$ extends  to an isometry from $\mathcal{K}_{t}$ to $\mathcal{W}_{-t}$. 
\item \label{2}  If $0<t<\frac{1}{2}$ the representations $(\pi_{t},\mathcal{K}_{t})$ and $(\pi_{-t},\mathcal{W}_{t})$ are infinite dimensional.
\item \label{3} The representations $(\pi_{\frac{1}{2}},\mathcal{K}_{\frac{1}{2}})$ and $(\pi_{-\frac{1}{2}},\mathcal{W}_{-\frac{1}{2}})$ are nothing but the trivial representation.
\item  \label{4} The representations $(\pi_{t},\mathcal{K}_{t})$ and $(\overline{\pi_{-t}},\overline{\mathcal{K}'_{t}})$ are a dual system representation with respect to the $L^{2}$-inner product.\\

\noindent And \textbf{if $\mathcal{I}_{t}$ is positive} then :\\

\item \label{5}  It implies $t\leq {1\over 2}$.
\item \label{6} If $t\neq \frac{1}{2}$, the representations $(\pi_{t},\mathcal{H}_{t})$ and $(\pi_{-t},\mathcal{H}_{t}')$ are infinite dimensional unitary representations.
\item \label{7} The representation $(\pi_{\frac{1}{2}},\mathcal{H}_{\frac{1}{2}})$ is nothing but the trivial representation.
\item \label{8} The representations of $(\pi_{t},\mathcal{H}_{t})$ and $(\overline{\pi_{-t}},{\mathcal{H}'_{t}})$ are a dual system representation of $\Gamma$ with respect to the $L^{2}$-inner product.

\end{enumerate}

\end{prop}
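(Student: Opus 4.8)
The plan is to obtain each item by assembling the structural results already in place — the pairing Propositions \ref{duality}, the intertwining relation \ref{intertwin}, the boundedness/self-adjointness of $\mathcal{I}_{t}$ from \ref{bounded} and its compactness from \ref{compactope}, and the density of $\mathcal{F}_{t,\textbf{1}_{\partial X}}$ in $L^{2}(\partial X,\nu_{o})$ from \ref{density} — and noting that only the infinite-dimensionality claims (2),(6) and the positivity obstruction (5) need a genuinely new argument. Throughout I write $\mathcal{W}_{-t}$ for $\overline{\mathrm{Im}\,\mathcal{I}_{t}}^{\|\cdot\|_{2}}$ (the space called $\mathcal{W}_{t}$ in the text).

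Items (1),(3),(4),(7),(8) are formal. For (1): the assignment $[v]\mapsto\mathcal{I}_{t}v$ on $\mathcal{F}_{t,\textbf{1}_{\partial X}}/N_{t}$ is well defined (since $N_{t}=\ker\mathcal{I}_{t}\cap\mathcal{F}_{t,\textbf{1}_{\partial X}}$) and isometric onto its image because $\|\mathcal{I}_{t}v\|_{2}^{2}=\langle v,\mathcal{I}_{t}^{2}v\rangle=\|[v]\|_{\mathcal{K}_{t}}^{2}$ by self-adjointness; by \ref{density} and boundedness of $\mathcal{I}_{t}$, $\mathcal{I}_{t}(\mathcal{F}_{t,\textbf{1}_{\partial X}})$ is dense in $\mathcal{W}_{-t}$, so the extension $\mathcal{K}_{t}\to\mathcal{W}_{-t}$ is an isometric isomorphism, and it intertwines $\pi_{t}$ with $\pi_{-t}$ by Proposition \ref{intertwin}. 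For (3) and (7): when $t=\tfrac12$ the kernel is $k_{1/2}\equiv 1$, so $\mathcal{I}_{1/2}v=\langle v,\textbf{1}_{\partial X}\rangle\textbf{1}_{\partial X}$ is the (manifestly positive) rank-one projection onto the constants; one reads off $N_{1/2}=L_{1/2}=\{v:\langle v,\textbf{1}_{\partial X}\rangle=0\}$, hence $\mathcal{K}_{1/2},\mathcal{W}_{-1/2},\mathcal{H}_{1/2}$ are one-dimensional, and $\langle\pi_{\pm1/2}(\gamma)\textbf{1}_{\partial X},\textbf{1}_{\partial X}\rangle=\int d\gamma_{*}\nu_{o}=1$ shows the action is trivial. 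For (4) and (8): the $\mathbb{C}$-pairings and the identifications of duals are exactly Proposition \ref{duality}; the invariance $\langle\pi_{t}(\gamma)v,\overline{\pi_{-t}}(\gamma)w\rangle=\langle v,w\rangle$ is verified on the dense subspaces $\mathcal{F}_{t,\textbf{1}_{\partial X}}$ and $\mathcal{I}_{t}(\mathcal{F}_{t,\textbf{1}_{\partial X}})$, where the pairing coincides with the $L^{2}$-pairing and the identity reduces to $\pi_{t}(\gamma)^{*}\pi_{-t}(\gamma)=\pi_{-t}(\gamma^{-1})\pi_{-t}(\gamma)=\mathrm{Id}$, and then extends by the continuity of the pairing established in the proof of \ref{duality}; unitarity in (6),(7) is Proposition \ref{posintro} and its companion lemma for $\mathcal{H}'_{t}$.

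For (2) and (6) everything reduces to the single assertion that \emph{$\mathcal{I}_{t}$ is not of finite rank for $0<t<\tfrac12$}: indeed $\dim(\mathcal{F}_{t,\textbf{1}_{\partial X}}/N_{t})=\dim\mathcal{I}_{t}(\mathcal{F}_{t,\textbf{1}_{\partial X}})$, which by density is infinite exactly when $\mathrm{rank}\,\mathcal{I}_{t}=\infty$, and $\mathcal{K}_{t}\cong\mathcal{W}_{-t}$ by (1), while if $\mathcal{I}_{t}\ge0$ then $L_{t}=N_{t}$, so $\mathcal{H}_{t}$ is a completion of $\mathcal{F}_{t,\textbf{1}_{\partial X}}/N_{t}$ and $\mathcal{H}'_{t}$ of $\mathcal{I}_{t}(\mathcal{F}_{t,\textbf{1}_{\partial X}})$. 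When $0<t\le\tfrac14$ this is immediate: a finite-rank operator is Hilbert--Schmidt, but $\mathcal{I}_{t}$ is not, since its kernel $d_{o,\epsilon}^{-(1-2t)D}$ fails to be square-integrable because $2(1-2t)D\ge D$ and $\nu_{o}$ is Ahlfors $D$-regular (Lemma \ref{Ahlfors}). When $\tfrac14<t<\tfrac12$ one argues directly that $\mathcal{W}_{-t}$ contains the infinite linearly independent family $\xi\mapsto d_{o,\epsilon}(\xi,\xi_{0})^{-(1-2t)D}$, $\xi_{0}\in\partial X$ (these now lie in $L^{2}$), obtained as the $L^{2}$-limits of $\nu_{o}(B(\xi_{0},r))^{-1}\mathcal{I}_{t}(\textbf{1}_{B(\xi_{0},r)})\in\mathrm{Im}\,\mathcal{I}_{t}$ as $r\to0$, using the continuity of $k_{t}$ off the diagonal and the volume bound of Lemma \ref{Ahlfors} as a dominating estimate.

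Finally (5) is the substantive point; by Proposition \ref{positiv} it suffices to show $\mathcal{I}_{t}$ is not positive once $t>\tfrac12$. For such $t$ the kernel $k_{t}(\xi,\eta)=d_{o,\epsilon}(\xi,\eta)^{(2t-1)D}$ is bounded, continuous, strictly positive off the diagonal and vanishes on it (using $d_{o,\epsilon}=e^{-\epsilon(\cdot,\cdot)_{o}}$ in an $\epsilon$-good space). Picking distinct $a,b\in\partial X$, small disjoint balls $A=B(a,r),\,B=B(b,r)$, and testing against the mean-zero vector $v=\textbf{1}_{A}-\tfrac{\nu_{o}(A)}{\nu_{o}(B)}\textbf{1}_{B}$: with $\nu_{o}(A)\asymp\nu_{o}(B)\asymp r^{D}$, the diagonal terms $\langle\mathcal{I}_{t}\textbf{1}_{A},\textbf{1}_{A}\rangle\le(2r)^{(2t-1)D}\nu_{o}(A)^{2}$ and $\tfrac{\nu_{o}(A)^{2}}{\nu_{o}(B)^{2}}\langle\mathcal{I}_{t}\textbf{1}_{B},\textbf{1}_{B}\rangle$ are $o(r^{2D})$, whereas the cross term $-2\tfrac{\nu_{o}(A)}{\nu_{o}(B)}\langle\mathcal{I}_{t}\textbf{1}_{A},\textbf{1}_{B}\rangle\asymp -2\,d_{o,\epsilon}(a,b)^{(2t-1)D}r^{2D}$ is negative of order $r^{2D}$; hence $\langle\mathcal{I}_{t}v,v\rangle<0$ for $r$ small, so $\mathcal{I}_{t}$ (and $\mathcal{R}_{t}$) cannot be positive, forcing $t\le\tfrac12$. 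I expect the main obstacle to be (2)/(6) in the range $\tfrac14<t<\tfrac12$, where the Hilbert--Schmidt shortcut is unavailable and one must work with the singular kernel directly; the rest is bookkeeping with the pairings and the intertwiner.
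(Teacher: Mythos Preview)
Your proof is correct; items (1), (3), (4), (7), (8) match the paper essentially verbatim. The interesting divergences are at (2)/(6) and (5).

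For (2) and (6), the paper avoids your case split entirely. It observes that $\mathrm{Im}\,\mathcal{I}_{t}$ is a $\pi_{-t}$-subrepresentation of $L^{2}(\partial X,\nu_{o})$ (by the intertwining relation) and then invokes Proposition \ref{infinitedim}: for $|t|<\tfrac12$ the representation $(\pi_{-t},L^{2})$ has no nonzero finite-dimensional subrepresentation. That proposition in turn rests on the weak-mixing property (Proposition \ref{weakmixing}), which is proved via the spectral inequality (\ref{RDgeneral}). So the paper's route is representation-theoretic and uniform in $t$, at the cost of relying on the RD-type estimate. Your route is more elementary analysis of the kernel, but the $\tfrac14<t<\tfrac12$ case as written is a bit loose: the $L^{2}$-limit argument with a ``dominating estimate'' is sketchy near the singularity. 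A cleaner way to finish your idea is to use $\overline{\mathrm{Im}\,\mathcal{I}_{t}}=(\ker\mathcal{I}_{t})^{\perp}$ and note that for $v\in\ker\mathcal{I}_{t}$ one has $\langle v,\,d_{o,\epsilon}(\cdot,\xi_{0})^{-(1-2t)D}\rangle=\mathcal{I}_{t}(v)(\xi_{0})=0$ for a.e.\ $\xi_{0}$, which immediately puts these functions into $\mathcal{W}_{-t}$.

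For (5), the paper cites Proposition \ref{nonpos}: positivity of $\mathcal{I}_{t}$ would make $\varphi_{t}(\gamma)=\|\sigma_{t}\|_{1}^{-1}\langle\pi_{t}(\gamma)\textbf{1}_{\partial X},\sigma_{t}\rangle$ a positive-definite function on $\Gamma$, hence bounded, contradicting $\varphi_{t}\asymp\phi_{t}$ and the growth of $\phi_{t}$ for $t>\tfrac12$ (Proposition \ref{estimatesspherical}). Your direct test-function argument with $v=\textbf{1}_{A}-\tfrac{\nu_{o}(A)}{\nu_{o}(B)}\textbf{1}_{B}$ is correct and more self-contained --- it needs only Ahlfors regularity and continuity of the kernel, not the spherical-function machinery --- so it is arguably a nicer proof of this item.
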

\begin{proof}
We first prove Item (\ref{1}). The operator $\mathcal{I}_{t}:\mathcal{F}_{t,\textbf{1}_{ \partial X}}\rightarrow \mathcal{F}_{-t,\sigma_{t}}$ extends as an isometry from $\mathcal{K}_{t}$ to $\mathcal{W}_{-t}$. Indeed  for all $v\in \mathcal{F}_{t,\textbf{1}_{ \partial X}}$ we have
$$ \|\mathcal{I}_{t}(v)\|_{2}=\|v\|_{\mathcal{K}_{t}}.$$ Hence, $\mathcal{I}_{t}$ extends to an isometry from $\overline{\mathcal{F}_{t,\textbf{1}_{ \partial X}}}^{\|\cdot\|_{\mathcal{K}_{t}}}$ to $\overline{\mathcal{F}_{-t},\sigma_{t}}^{\|\cdot\|_{2}}$.\\
We prove Item (\ref{2}): the vector space $\mathcal{F}_{t,\textbf{1}_{ \partial X}}/N_{t}$ is infinite dimensional as a $\pi_{-t}$ representation of $\Gamma$ by Corollary \ref{infinitedim}. Hence $\mathcal{K}_{t}$ and $\mathcal{W}_{t}$ are infinite dimensional Hilbert spaces.\\
For Item (\ref{3}), observe that $\mathcal{I}_{\frac{1}{2}}v=\big(\int_{\partial X}vd\nu_{o}\big)\textbf{1}_{\partial X},$ that is the orthogonal projection onto the one dimensional space of constant functions. Hence, $\mathcal{W}_{-\frac{1}{2}}$ is a one dimensional vector space on which $\pi_{-\frac{1}{2}}$ acts trivially. Observe also that $\ker \mathcal{I}_{\frac{1}{2}}=N_{\frac{1}{2}}=L^{2}_{0}(\partial X)$, where $L^{2}_{0}(\partial X)=L^{2}(\partial X,\nu_{o})\ominus \mathbb{C}\textbf{1}_{\partial X}$. Therefore, $(\pi_{\frac{1}{2}},\mathcal{K}_{\frac{1}{2}})$, $(\pi_{\frac{1}{2}},L^{2}(\partial X,\nu_{o}){/} \mathcal{V}_{\frac{1}{2}})$ are the trivial representations.\\
For Item (\ref{4}), notice that $(\mathcal{K}_{t},\overline{\mathcal{K}'_{t}},\langle\cdot,\cdot\rangle)$ is a $\mathbb{C}$-pairing with respect to $\langle\cdot,\cdot\rangle$. Moreover we have for all $\gamma \in \Gamma$ and for any $(v,w)\in \mathcal{K}_{t}\times \mathcal{K}'_{t}$ that $\langle\pi_{t}(\gamma)v,\overline{\pi_{-t}}(\gamma)w\rangle=\langle v,w\rangle$.\\
For Item (\ref{5}), see Proposition \ref{nonpos}. Proofs of (\ref{6}), (\ref{7}) and (\ref{8}) are analogous to that of (\ref{2}), (\ref{3}) and (\ref{4}) respectively.

\end{proof}

    \section{On positivity of $\mathcal{I}_{t}$}\label{section5}
We start proving that $\mathcal{I}_{t}$ can't be positive  for any  $t>\frac{1}{2}$.

 \subsection{Spherical functions on hyperbolic groups }\hfill
 
 \vspace*{0,2cm}
 
 Asymptotic behavior of matrix coefficients' functions : 
 $$\phi_t\,:\, \gamma\in \Gamma \mapsto \langle\pi_t(\gamma)\textbf{1}_{\partial X},\textbf{1}_{\partial X}\rangle$$
 is a central question in representation theory and harmonic analysis on Lie groups (see for example \cite{GV}). The above functions are also called spherical, as they can be estimated by functions of length of $\gamma$ (see below). For instance, in the Lie group framework,  $\phi_0$ is  Harish-Chandra's  function (denoted by $\Xi$ in that context) and has been estimated first by the eponymous author  and  then by Anker  who gave an optimal lower bound. In the context of hyperbolic groups one can prove the following estimates (\cite{FiPi}, \cite{BM}, \cite{Ga}, \cite{Boy2}), called \emph{ Harish-Chandra Anker estimates} referring to  \cite{Ank} : there exists $C>0$ such that for all $\gamma \in \Gamma:$
\begin{align}\label{HCHestimates}
C^{-1}(1+|\gamma|)e^{-\frac{1 }{2}Q |\gamma|} \leq \phi_{0}(\gamma)\leq C (1+|\gamma|)e^{-\frac{1 }{2}Q |\gamma|}.
\end{align}
\\
 The family of functions $(\omega_t)_{t\in \mathbb{R}^{*} }$ defined by
\begin{equation}\label{functiono}
\omega_{t}(x) =\frac{2 \sinh\big( tQ x\big) }{e^{2tQ}-1},
\end{equation}
are positive, converges uniformly on compact sets to $x\mapsto x$ (when $t\to 0$) and satisfy $\omega_{-t}(x)=e^{2tQ}\omega_{t}(x)$. It allows to extend Harish-Chandra Anker estimates for all representations $\pi_t$ (see \cite{Boy}). 
 \begin{prop}\label{estimatesspherical}
   There exists $C>0$, such that for any $t\in \mathbb{R}$, we have for all $\gamma \in \Gamma:$
     $$C^{-1}e^{-\frac{1}{2}Q|\gamma| }\big(1+\omega_{|t|}(|\gamma|)\big)
 \leq   \phi_{t}(\gamma)\leq 
 Ce^{-\frac{1}{2}Q|\gamma|}\big(1+\omega_{|t|}(|\gamma|)\big).$$
    \end{prop}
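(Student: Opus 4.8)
The plan is to make $\phi_t$ explicit and reduce the two-sided estimate to a geometric-series count on $\partial X$. Using $\frac{d\gamma_*\nu_o}{d\nu_o}(\xi)=e^{Q\beta_\xi(o,\gamma o)}$ and $\beta_\xi(o,\gamma o)=2(\xi,\gamma o)_o-|\gamma|$ one gets
\begin{equation*}
\phi_t(\gamma)=\int_{\partial X}e^{(\frac12+t)Q\beta_\xi(o,\gamma o)}\,d\nu_o(\xi)=e^{-(\frac12+t)Q|\gamma|}\int_{\partial X}e^{(1+2t)Q(\xi,\gamma o)_o}\,d\nu_o(\xi).
\end{equation*}
Two reductions then apply. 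First, since $\pi_t(\gamma^{-1})^{\ast}=\pi_{-t}(\gamma)$ one has $\phi_{-t}(\gamma)=\phi_t(\gamma^{-1})$, and because $|\gamma^{-1}|=|\gamma|$ while $\omega_{|t|}$ depends only on $|t|$, it suffices to treat $t\geq 0$. Second, the case $t=0$ is exactly the Harish--Chandra--Anker estimate (\ref{HCHestimates}) together with the fact that $\omega_t$ tends uniformly on compacta to $x\mapsto x$ as $t\to 0$; so from now on assume $t>0$.

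It remains to estimate $J_t(\gamma):=\int_{\partial X}e^{(1+2t)Q(\xi,\gamma o)_o}\,d\nu_o(\xi)$. Fix $R$ large enough for Lemma \ref{Ahlfors}, let $n=n(\gamma)$ with $\gamma\in S^{\Gamma}_{n,R}$ (so that $nR$ and $|\gamma|$ differ by at most $R$), and split along the partition $\partial X=\bigsqcup_{k=0}^{n}A_{k,R}(\gamma o)$. On $A_{k,R}(\gamma o)$ one has $(\xi,\gamma o)_o\in[kR,(k+1)R)$, the top slab being moreover bounded above by $|\gamma|$ because of (\ref{buseman'}); hence the integrand is comparable to $e^{(1+2t)QkR}$ there, while $\nu_o(A_{k,R}(\gamma o))\asymp e^{-QkR}$ by Lemma \ref{Ahlfors}. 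Summing the resulting geometric series and then undoing the first factor above,
\begin{equation*}
J_t(\gamma)\asymp\sum_{k=0}^{n}e^{-QkR}e^{(1+2t)QkR}=\frac{e^{2tQ(n+1)R}-1}{e^{2tQR}-1}\asymp\frac{e^{2tQ|\gamma|}-1}{e^{2tQR}-1},\qquad \phi_t(\gamma)\asymp e^{-\frac12 Q|\gamma|}\cdot\frac{2\sinh(tQ|\gamma|)}{e^{2tQR}-1},
\end{equation*}
which is of the announced shape $e^{-\frac12 Q|\gamma|}\bigl(1+\omega_t(|\gamma|)\bigr)$: one checks $\omega_t(|\gamma|)\geq 1$, hence $1+\omega_t(|\gamma|)\asymp\omega_t(|\gamma|)$, as soon as $|\gamma|$ exceeds a universal constant, while for the finitely many remaining $\gamma$ (finite by properness of the action) the bound is trivial, both sides being $1$ at $\gamma=e$. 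An equivalent layer-cake/integration-by-parts argument run directly against the Ahlfors $D$-regular $\nu_o$ (via $\nu_o(\{(\xi,\gamma o)_o\geq s\})\asymp\min(1,e^{-Qs})$ for $0\leq s\leq|\gamma|$, which holds in an $\epsilon$-good space) gives the same conclusion and keeps the geometric comparison constants tied only to the global geometry.

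The delicate point — and the step I expect to be the real obstacle, the geometric input being essentially just Lemma \ref{Ahlfors} — is matching the discrete count $\sum_k e^{2tQkR}$ with the closed form $\omega_t$ while keeping the implied constants under control: the coarse slab count produces a denominator $e^{2tQR}-1$ rather than $e^{2tQ}-1$, so the constant in the statement should be understood as depending on $t$ (equivalently, one works with $t$ in a fixed compact range, e.g. $[0,\tfrac12]$ for the complementary series). Carrying this out carefully is exactly what motivates the normalisation of $\omega_t$ (the $e^{2tQ}$ in the denominator, and the relation $\omega_{-t}=e^{2tQ}\omega_t$ used in the $t\leftrightarrow -t$ reduction), and is the computation done in \cite{Boy}.
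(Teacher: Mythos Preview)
The paper does not prove this proposition; it is imported from \cite{Boy}, so there is no in-paper argument to compare against. Your sketch is precisely the argument carried out there: write $\phi_t(\gamma)=e^{-(\frac12+t)Q|\gamma|}\int e^{(1+2t)Q(\xi,\gamma o)_o}d\nu_o$, decompose $\partial X$ into the annuli $A_{k,R}(\gamma o)$, invoke Lemma~\ref{Ahlfors} for the Ahlfors bounds on each slab, and sum the geometric series. The reduction to $t\geq 0$ via $\phi_{-t}(\gamma)=\phi_t(\gamma^{-1})$ and the separate handling of $t=0$ through (\ref{HCHestimates}) are likewise the standard moves.

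Your closing caveat is the honest point and is correct. The slab comparison introduces a factor $e^{(1+2t)QR}$ within each annulus, and the passage from the discrete sum $\sum_k e^{2tQkR}$ to the closed form $\omega_t$ carries a mismatch of order $e^{2tQR}/e^{2tQ}$; the constant produced by this method therefore depends on $t$ (equivalently, it is uniform only for $t$ in compacta). The paper's wording ``there exists $C>0$ such that for any $t$'' should be read with this in mind. Every application in the present paper --- Propositions~\ref{nonpos}, \ref{Haagerup}, \ref{radialestimates}, and the inequivalence arguments at the end of Section~\ref{section7} --- is for a fixed $t$ or for $t$ ranging over a bounded interval, so nothing downstream is affected.
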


 For our purposes, it will be suitable to normalize spherical functions, so that we adopt a slightly different definition of such functions (which coincide to the mentioned ones in the context of semisimple Lie group). 
 Pick $t\in ]0,\frac{1}{2}]$ and set  \begin{equation}\label{sphericalfunction}
 \varphi_{t}:\Gamma \mapsto \frac{1}{\|\sigma_{t}\|_{1}}\langle  \pi_{t}(\gamma) \textbf{1}_{\partial X},\mathcal{I}_{t}(\textbf{1}_{\partial X} )\rangle\in \mathbb{R}.
 \end{equation}
 which is well defined since  for $t>0$ the operator $\mathcal{I}_{t}$ is well defined. 
 We obtain the following characterization :
 \begin{prop}\label{nonpos}
 If $t$ is not in  $]0,\frac{1}{2}]$, the intertwiner $\mathcal{I}_{t}$ cannot be positive.\end{prop}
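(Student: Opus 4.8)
The plan is to show that if $\mathcal{I}_t$ were positive for some $t\notin\,]0,\tfrac12]$, then the spherical function $\varphi_t$ of \eqref{sphericalfunction} would be a positive-definite function on $\Gamma$, and then to derive a contradiction from the growth estimates already established for $\phi_t$ (Proposition \ref{estimatesspherical}) together with the Harish-Chandra-Anker bounds and the known spectral/growth constraints on positive-definite functions on a non-elementary hyperbolic group. First I would dispose of $t<0$: for $t<0$ the exponent $(1-2t)D$ is strictly larger than $D$, and the kernel $k_t(\xi,\eta)=d_{o,\epsilon}^{-(1-2t)D}(\xi,\eta)$ is no longer integrable against $\nu_o$ (by the Ahlfors-regularity Lemma \ref{Ahlfors} the series $\sum_k \nu_o(A_{k,R}(\eta))e^{(1-2t)QkR}\asymp\sum_k e^{-2tQkR}$ diverges when $t<0$), so $\sigma_t\equiv+\infty$ and $\mathcal{I}_t$ is not even a densely defined operator on $L^2$, let alone a positive one; this is really the remark that $\sigma_t$ is finite precisely for $t>0$. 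So the substance is the case $t>\tfrac12$.

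For $t>\tfrac12$: assume $\mathcal{I}_t$ is positive. Then $\langle\pi_t(\gamma)\mathbf{1}_{\partial X},\mathcal{I}_t(\mathbf{1}_{\partial X})\rangle$ is, up to the positive normalizing constant $\|\sigma_t\|_1$, a positive-definite function on $\Gamma$. Indeed, using the intertwining identity $\mathcal{I}_t\pi_t(\gamma)=\pi_{-t}(\gamma)\mathcal{I}_t$ (Proposition \ref{intertwin}) and $\pi_{-t}(\gamma)=\pi_t^*(\gamma^{-1})$ one computes, for $\lambda_1,\dots,\lambda_n\in\mathbb{C}$ and $\gamma_1,\dots,\gamma_n\in\Gamma$,
\begin{align*}
\sum_{i,j}\lambda_i\overline{\lambda_j}\,\varphi_t(\gamma_j^{-1}\gamma_i)
&=\frac{1}{\|\sigma_t\|_1}\sum_{i,j}\lambda_i\overline{\lambda_j}\,\langle\pi_t(\gamma_j^{-1}\gamma_i)\mathbf{1}_{\partial X},\mathcal{I}_t\mathbf{1}_{\partial X}\rangle\\
&=\frac{1}{\|\sigma_t\|_1}\Big\langle \mathcal{I}_t\Big(\sum_i\lambda_i\pi_t(\gamma_i)\mathbf{1}_{\partial X}\Big),\sum_j\lambda_j\pi_t(\gamma_j)\mathbf{1}_{\partial X}\Big\rangle\ \geq 0,
\end{align*}
the last inequality by positivity of $\mathcal{I}_t$ together with its self-adjointness (Proposition \ref{bounded}). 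Thus $\varphi_t$ is positive-definite and (being a genuine matrix coefficient of the Hilbertian representation $(\pi_t,\mathcal{H}_t)$) bounded. A bounded positive-definite function on $\Gamma$ is a matrix coefficient of a unitary representation, so by the well-known square-summability obstruction — more concretely, via the spectral-radius/Haagerup-type inequality, or directly by testing against the equidistributed weights $\beta_{n,R}$ of Theorem \ref{equi} as in the proof of the asymptotic Schur relations — the function $\varphi_t$ must decay no slower than the square root of the heat/return behaviour; in particular $\sum_{n}|S^\Gamma_{n,R}|\,\varphi_t(\gamma)^2<\infty$ cannot fail, forcing $\varphi_t(\gamma)=O((1+|\gamma|)^{N}e^{-\frac12 Q|\gamma|})$. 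But by construction $\varphi_t$ is comparable to $\phi_t$ up to the positive factor coming from $\mathcal{I}_t\mathbf{1}_{\partial X}=\sigma_t$, which is bounded above and below on $\partial X$ by Proposition \ref{cont2}; hence the estimate of Proposition \ref{estimatesspherical} gives $\varphi_t(\gamma)\asymp e^{-\frac12 Q|\gamma|}(1+\omega_{t}(|\gamma|))\asymp e^{(t-\frac12)Q|\gamma|}$ for $t>\tfrac12$, which \emph{grows} exponentially. This contradicts boundedness (already for boundedness alone), completing the argument.

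The main obstacle I expect is making the "positive-definite $\Rightarrow$ contradiction" step clean without circularity: one must be careful that the boundedness of $\varphi_t$ is not being silently assumed from $\mathcal{H}_t$ being a unitary representation (which itself presupposes positivity of $\mathcal{I}_t$). The cleanest route is probably to avoid invoking $\mathcal{H}_t$ at all and argue directly: positivity of $\mathcal{I}_t$ on $L^2(\partial X,\nu_o)$ makes $(v,w)\mapsto\langle\mathcal{I}_t v,w\rangle$ a (possibly degenerate) pre-inner product, and $\varphi_t(\gamma)=\frac{1}{\|\sigma_t\|_1}\langle\mathcal{I}_t\pi_t(\gamma)\mathbf{1}_{\partial X},\mathbf{1}_{\partial X}\rangle$ is the corresponding diagonal matrix coefficient, which by Cauchy-Schwarz for this pre-inner product satisfies $|\varphi_t(\gamma)|\le\varphi_t(e)=\frac{\|\sigma_t\|_1}{\|\sigma_t\|_1}$, hence $|\varphi_t(\gamma)|\le 1$ for all $\gamma$; then the exponential growth $\varphi_t(\gamma)\asymp e^{(t-\frac12)Q|\gamma|}$ from Proposition \ref{estimatesspherical} (valid because $\sigma_t$ is bounded above and below, Proposition \ref{cont2}) immediately contradicts this for $t>\tfrac12$, since $|\gamma|\to\infty$ along any orbit of the non-elementary group $\Gamma$. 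That reduces the whole proposition to the elementary Cauchy-Schwarz bound plus the already-proved spherical estimates, and sidesteps any delicate harmonic-analytic input.
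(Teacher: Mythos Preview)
Your final ``cleanest route'' is exactly the paper's proof: show that positivity of $\mathcal{I}_t$ forces $\varphi_t$ to be positive-definite (via the intertwining relation), observe that positive-definite functions are bounded by their value at $e$, and then invoke $\varphi_t\asymp\phi_t$ (from Proposition~\ref{cont2}) together with the exponential growth of $\phi_t$ for $t>\tfrac12$ (Proposition~\ref{estimatesspherical}) to get a contradiction. The paper cites \cite[Proposition C.4.2]{BDV} for the boundedness step, while you prove it directly via Cauchy--Schwarz in the pre-inner product $\langle\mathcal{I}_t\cdot,\cdot\rangle$ (which works because $\pi_t$ is isometric for this form, exactly the content of Proposition~\ref{posintro}); your treatment of $t\le 0$ is superfluous since the paper only defines $\mathcal{I}_t$ for $t>0$, but harmless.
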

 
 \begin{proof}
 Let $t>0$.
 Let $f=\sum_{g}c_{g}D_{g}$ be a finitely supported function in $\mathbb{C}[\Gamma]$ and let $v=\pi_{t}(f)=\sum_{g}c_g\pi_{t}(g) \textbf{1}_{\partial X}\in \mathcal{F}_{t,\textbf{1}_{\partial X}}$.
 Consider $\gamma \in \Gamma \mapsto \varphi_{t}(\gamma)$ and observe that : 
 \begin{align*}
 \sum_{g,h }\overline{c_{h}}c_{g} \varphi_{t}(h^{-1}g)&= \frac{1}{\|\sigma_{t}\|_{1}} \sum_{g,h }\overline{c_{h}}c_{g}  \langle  \pi_{t}(h^{-1}g) \textbf{1}_{\partial X},\mathcal{I}_{t}(\textbf{1}_{\partial X} )\rangle\\
 &= \frac{1}{\|\sigma_{t}\|_{1}} \sum_{g,h }\overline{c_{h}}c_{g}  \langle  \pi_{t}(h^{-1}) \pi_{t}(g) \textbf{1}_{\partial X},\mathcal{I}_{t}(\textbf{1}_{\partial X} )\rangle\\
 &=  \frac{1}{\|\sigma_{t}\|_{1}}\sum_{g,h }\overline{c_{h}}c_{g}  \langle  \pi_{t}(g) \textbf{1}_{\partial X}, \pi_{-t}(h)\mathcal{I}_{t}(\textbf{1}_{\partial X} )\rangle\\
 &=  \frac{1}{\|\sigma_{t}\|_{1}}\sum_{g,h }\overline{c_{h}}c_{g}  \langle  \pi_{t}(g) \textbf{1}_{\partial X}, \mathcal{I}_{t}(\pi_{t}(h)\textbf{1}_{\partial X} )\rangle\\
 &=  \frac{1}{\|\sigma_{t}\|_{1}} \langle  v, \mathcal{I}_{t}(v)\rangle,\\
 \end{align*}
 Hence, $\varphi_{t}$ is of  positive type if and only if $\mathcal{I}_{t}$ is positive on $L^{2}(\partial X,\nu_{o})$, since $\mathcal{F}_{t,\textbf{1}_{ \partial X}}$ is dense in $L^{2}(\partial X,\nu_{o})$ by Proposition \ref{density}. Write $$\varphi_{t}(\cdot)=\frac{1}{\|\sigma_{t}\|_{1}}\langle  \pi_{t}(\cdot) \textbf{1}_{\partial X},\sigma_{t}\rangle,$$ and use Proposition \ref{cont2} to obtain the existence of $C>0$ such that for any $t>0$ and for all $\gamma \in \Gamma:$ $$C^{-1}\phi_{t}(\gamma)\leq \varphi_{t}(\gamma)\leq C \phi_{t}(\gamma).$$ Hence $\varphi_{t}$ is unbounded if $t>\frac{1}{2}$ since $\phi_{t}$ is unbounded, and thus $\varphi_{t}$ cannot be of positive type \cite[(ii) Proposition C.4.2]{BDV}. Thus, the intertwiner  $\mathcal{I}_{t}$ cannot be positive.  \end{proof}
 
 \begin{remark} A function $\phi$ of positive type on $\Gamma$ gives rise to a unitary cyclic representation, that is a triple $(\pi_{\phi},H_{\phi},v_{\phi})$ with $(\pi_{\phi},H_{\phi})$  unitary, with  $v_{\phi}\in H_{\phi}$ satisfying  $H_{\phi}=\overline{\pi_{\phi}(\Gamma)v_{\phi}}$ and $\phi(\cdot)=\langle \pi_{\phi}(\cdot)v_{\phi},v_{\phi}\rangle.$\\
  This is the so called Gelfand-Naimark-Segal construction (GNS for short). 
 We refer to \cite[Theorem C.4.10]{BDV} for more details.\\
 If we perform the GNS construction with $\pi_{\varphi_{t}}$ for fixed $t\in [0,\frac{1}{2}]$,
 the representations  $(\pi_{\varphi_{t}},H_{\varphi_{t}})$ and $(\pi_{t},\mathcal{H}_{t})$ are unitarily equivalent. 
   \end{remark}

Let $\Gamma$ be a discrete group acting geometrically on a proper roughly geodesic $\delta$-hyperbolic $(X,d)$. Assume that $\mathcal{I}_{t}$ is positive for all  $0<t\leq\frac{1}{2}$. Consider the unitary representations $(\pi_{t},\mathcal{H}_{t})$.
We have

\begin{prop}\label{Haagerup}
The group $\Gamma$ satisfies Haagerup property.
\end{prop}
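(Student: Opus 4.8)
The plan is to exhibit an explicit proper conditionally negative definite function on $\Gamma$, which by the standard characterization of the Haagerup property (see \cite[Chapter 2]{CCJJV}, though I will only invoke what is stated above) gives the result. The natural candidate, given the hypothesis that $\mathcal{I}_t$ is positive for all $0<t\leq\tfrac12$, is built from the spherical functions $\varphi_t$. First I would recall from the proof of Proposition \ref{nonpos} that $\mathcal{I}_t$ positive is equivalent to $\varphi_t$ being of positive type on $\Gamma$, and that moreover $\varphi_t(e)=1$ after the normalization by $\|\sigma_t\|_1$ (one checks $\langle \textbf{1}_{\partial X},\mathcal{I}_t(\textbf{1}_{\partial X})\rangle=\langle\textbf{1}_{\partial X},\sigma_t\rangle=\|\sigma_t\|_1$). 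So for each $t\in]0,\tfrac12]$ we have a normalized positive definite function $\varphi_t$.

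The key step is to extract a conditionally negative definite function by differentiating the family $(\varphi_t)$ at the endpoint, or rather by taking the right $t\to 0$ limit of $(1-\varphi_t)/t$ (a Schoenberg-type argument): if $\varphi_t=e^{-t\psi}$-like behaviour holds, then $\psi=\lim_{t\to0^+}\frac{1-\varphi_t}{t}$ is conditionally negative definite as a pointwise limit of the conditionally negative definite functions $\frac{1-\varphi_t}{t}$ (each of these is CND because $1-\varphi_t$ is, being $1$ minus a normalized positive definite function). Concretely I would compute, using $\varphi_t(\gamma)=\frac{1}{\|\sigma_t\|_1}\langle\pi_t(\gamma)\textbf{1}_{\partial X},\sigma_t\rangle$ together with the conformal formula $\frac{d\gamma_*\nu_o}{d\nu_o}(\xi)=e^{Q\beta_\xi(o,\gamma o)}$ and Proposition \ref{cont2} (which pins down $\sigma_t$ up to multiplicative constants uniformly in $t$), that $1-\varphi_t(\gamma)$ behaves like $t$ times an integral of $1-e^{-Q\beta_\xi(o,\gamma o)/2}\cdot(\text{stuff})$ against $\nu_o$; letting $t\to0$ yields a function $\psi(\gamma)$ comparable to $\int_{\partial X}\big(1-e^{?\beta_\xi(o,\gamma o)}\big)d\nu_o(\xi)$ or, after the dust settles, a function comparable to $|\gamma|=d(o,\gamma o)$. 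This is exactly the scheme of \cite{Bou} in the CAT($-1$) setting, which the paper says extends to the hyperbolic framework.

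Next I would establish properness of $\psi$. Since $\Gamma$ acts cocompactly (hence $|\cdot|=d(o,\gamma\cdot o)$ is proper on $\Gamma$ as orbits are locally finite by proper discontinuity), it suffices to show $\psi(\gamma)\to\infty$ as $|\gamma|\to\infty$. From the Harish-Chandra--Anker estimates (Proposition \ref{estimatesspherical}) and the comparison $C^{-1}\phi_t\leq\varphi_t\leq C\phi_t$ obtained in the proof of Proposition \ref{nonpos}, one reads off that $\varphi_t(\gamma)\to 0$ as $|\gamma|\to\infty$ for each fixed $t\in]0,\tfrac12[$ with the decay governed by $e^{-(\tfrac12-|t|)Q|\gamma|}$-type terms; passing this quantitative decay through the $t\to 0$ differentiation gives the lower bound $\psi(\gamma)\geq cQ|\gamma| - C$ for suitable constants, which is proper. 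Alternatively, and perhaps more cleanly, I would directly verify $\psi(\gamma)\asymp|\gamma|$ via the Lemma \ref{Ahlfors}/partition-of-the-boundary estimates, exactly as in Lemma \ref{Cauchy}, splitting $\partial X=\bigsqcup_k A_{k,R}(\check\gamma_o)$ and using $\nu_o(A_{k,R})\asymp e^{-QkR}$.

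The main obstacle I anticipate is justifying the interchange of limit and integral in forming $\psi=\lim_{t\to0^+}\frac{1-\varphi_t}{t}$ and confirming that this limit is genuinely conditionally negative definite and not merely the zero function or a bounded function — i.e. that the differentiation produces something proper. The uniform control of $\|\sigma_t\|_\infty$ and $\|\sigma_t\|_\infty^{-1}$ in $t$ from Proposition \ref{cont2} is what makes dominated convergence applicable, and the explicit Ahlfors-regularity estimates give the matching lower bound; so the obstacle is real but surmountable with the tools already in the paper. A secondary technical point is checking the Schoenberg direction carefully: each $\frac{1-\varphi_t}{t}$ is CND since $\varphi_t$ is normalized positive definite, and the pointwise limit of CND functions is CND, so $\psi$ is CND — this part is soft once the limit exists. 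I would present the argument by (i) recording $\varphi_t$ normalized positive definite, (ii) forming and estimating $\psi$, (iii) proving $\psi$ proper and CND, (iv) concluding via $\psi$ proper CND $\Rightarrow$ Haagerup.

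\medskip

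\noindent\emph{Proof sketch.} By Proposition \ref{nonpos} (more precisely its proof), the hypothesis that $\mathcal{R}_t$, equivalently $\mathcal{I}_t$ (Lemma \ref{positiv}), is positive for every $t\in]0,\tfrac12]$ implies that each $\varphi_t$ defined in \eqref{sphericalfunction} is a function of positive type on $\Gamma$; since $\varphi_t(e)=\frac{1}{\|\sigma_t\|_1}\langle\textbf{1}_{\partial X},\mathcal{I}_t(\textbf{1}_{\partial X})\rangle=\frac{1}{\|\sigma_t\|_1}\langle\textbf{1}_{\partial X},\sigma_t\rangle=1$, it is normalized. Consequently $1-\varphi_t$ is conditionally negative definite for each such $t$, and so is $\psi_t:=\tfrac1t(1-\varphi_t)$. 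Using $\pi_t(\gamma)\textbf{1}_{\partial X}(\xi)=e^{(\frac12+t)Q\beta_\xi(o,\gamma o)}$ and Proposition \ref{cont2}, one computes
\begin{equation*}
1-\varphi_t(\gamma)=\frac{1}{\|\sigma_t\|_1}\int_{\partial X}\big(\sigma_t(\xi)-e^{(\frac12+t)Q\beta_\xi(o,\gamma o)}\,\sigma_t(\gamma^{-1}\xi)\big)\,d\nu_o(\xi),
\end{equation*}
and dividing by $t$ and letting $t\to0^+$ — the passage to the limit being legitimate by the $t$-uniform bounds $C^{-1}\le(1-e^{-2tQR})^{-1}t\cdot(\text{const})\le\sigma_t\le C\,(1-e^{-2tQR})^{-1}$ of Proposition \ref{cont2} together with dominated convergence — yields a well-defined function
\begin{equation*}
\psi(\gamma):=\lim_{t\to0^+}\psi_t(\gamma),
\end{equation*}
which is conditionally negative definite as a pointwise limit of the conditionally negative definite $\psi_t$. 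Finally, the same partition-of-the-boundary estimates used in Lemma \ref{Cauchy}, applied with the Ahlfors bounds $\nu_o(A_{k,R}(\check\gamma_o))\asymp e^{-QkR}$ of Lemma \ref{Ahlfors}, give constants $c,C>0$ with $c\,d(o,\gamma o)-C\le\psi(\gamma)\le C\,d(o,\gamma o)+C$; since $\Gamma$ acts properly, $d(o,\gamma o)\to\infty$ as $\gamma\to\infty$ in $\Gamma$, so $\psi$ is proper. A group admitting a proper conditionally negative definite function has the Haagerup property, which proves the proposition. \qed
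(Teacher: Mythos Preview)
Your approach has a genuine gap at the central step: the endpoint you differentiate at is the wrong one. You take $t\to 0^+$ and assert that $\psi(\gamma)=\lim_{t\to 0^+}\frac{1-\varphi_t(\gamma)}{t}$ exists and is comparable to $|\gamma|$. But $\varphi_t$ does \emph{not} tend to $1$ as $t\to 0$. From the comparison $C^{-1}\phi_t\le\varphi_t\le C\phi_t$ (proof of Proposition \ref{nonpos}) and the Harish--Chandra--Anker estimates (\ref{HCHestimates}), one has $\varphi_t(\gamma)\to$ something comparable to $\Xi(\gamma)\asymp(1+|\gamma|)e^{-\frac{1}{2}Q|\gamma|}$, which is strictly less than $1$ for $\gamma\neq e$. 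Consequently $1-\varphi_t(\gamma)$ stays bounded away from $0$ and $\frac{1-\varphi_t(\gamma)}{t}\to+\infty$ for every $\gamma\neq e$: your $\psi$ is not a finite function, and the dominated-convergence justification you sketch cannot go through. The heuristic ``$\varphi_t=e^{-t\psi}$-like behaviour'' fails precisely because $t=0$ corresponds to the unitary boundary representation $\pi_0$, not to the trivial one.

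The paper's proof works at the \emph{other} endpoint. At $t=\tfrac12$ one has $\mathcal{I}_{1/2}=\langle\cdot,\textbf{1}_{\partial X}\rangle\textbf{1}_{\partial X}$, hence $\sigma_{1/2}\equiv 1$ and $\varphi_{1/2}(\gamma)=\int_{\partial X}e^{Q\beta_\xi(o,\gamma o)}d\nu_o(\xi)=1$. The authors simply take $t_n=\tfrac12-\tfrac{1}{n+2}\to\tfrac12$, observe that each $\varphi_{t_n}$ is a normalized positive definite function in $C_0(\Gamma)$ (the $C_0$ property from Proposition \ref{estimatesspherical} since $t_n<\tfrac12$), and that $\varphi_{t_n}\to 1$ pointwise; the Haagerup property then follows from the standard characterization via an approximate identity of $C_0$ positive definite functions (\cite[Theorem 2.1.1]{CJV}). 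No differentiation, no properness estimate, no dominated convergence is needed. Your Schoenberg route could in principle be repaired by differentiating at $t=\tfrac12$ instead of $t=0$, but this is strictly more work than the paper's argument.
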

\begin{proof}
Consider the sequence of spherical functions $$\varphi_{t_{n}}:\gamma \in \Gamma \mapsto \frac{1}{\|\sigma_{t_{n}}\|_{1}}\langle  \pi_{t_{n}}(\gamma) \textbf{1}_{\partial X},\mathcal{I}_{t_{n}}(\textbf{1}_{\partial X} )\rangle\in \mathbb{R},$$
with $t_{n}=\frac{1}{2}-\frac{1}{n+2}$ for all $n\in \mathbb{N}^*$. Observe that 

\begin{enumerate}
\item $(\varphi_{t_{n}})_{n\in \mathbb{N}^{*}}$ is in $C_{0}(\Gamma)$ (the space of functions on $\Gamma$ vanishing at infinity) by Proposition \ref{estimatesspherical}.
\item $\varphi_{t_{n}}(e)=1$ for all $n\in \mathbb{N}^{*}.$
\item $\varphi_{t_{n}} \to 1$ as $n\to \infty$ on every finite subsets of $\Gamma$.
\item $(\varphi_{t_{n}})_{n\in \mathbb{N}^{*}}$ are positive definite functions on $\Gamma$ since $\mathcal{I}_{t}$ is assumed to be positive for $0<t\leq \frac{1}{2}$.
\end{enumerate}
Using  \cite[(2) of Theorem 2.1.1]{CJV} we deduce that $\Gamma$ satisfies Haagerup property.
\end{proof}

 \subsection{Equidistribution results}

Let $t\in \mathbb{R}$ and consider the measure:
\begin{equation}\label{themesure}
dm_{o,t}(\xi,\eta):=k_{t}(\xi,\eta)d\nu_{o}(\xi)d\nu_{o}(\eta) \in C^{*}(\overline{X} \times \overline{X}).
\end{equation}

 Let $A>0$ and consider the truncated \emph{continuous} kernel (denoted by $k_{t}^A$ with a slight abuse of notation) approximating $k_{t}$ on $\overline{X}\times \overline{X}$ defined as:
 
 $$
k_{t}^A:(\xi,\eta)\in \partial X \times \partial X\mapsto  \left\{
    \begin{array}{ll}
        e^{(1-2t)Q (\xi,\eta)_{o}}& \mbox{if $(\xi,\eta)_{o}\in [0,A]$ }  \\
        e^{(1-2t)Q A}(A+1-(\xi,\eta)_{o}) & \mbox{if $A<(\xi,\eta)_{o} <A+1.$}\\
       0 & \mbox{if $(\xi,\eta)_{o}\in ]A+1,+\infty[.$}
    \end{array}
\right.
$$
Thus define 
\begin{equation}
dm^{A}_{o,t}(\xi,\eta):=k_{t}^A(\xi,\eta)d\nu_{o}(\xi)d\nu_{o}(\eta) \in C^{*}(\overline{X} \times \overline{X}).
\end{equation}

\begin{lemma}\label{pos2}
For all  $t>0$, we have the following weak$^*$ convergence:
$m^{A}_{o,t}  \to m_{o,t}$ as $A\to \infty.$ uniformly in $t$.
\end{lemma}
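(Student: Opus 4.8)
The statement to prove is Lemma \ref{pos2}: for all $t>0$, the measures $m^{A}_{o,t}$ converge to $m_{o,t}$ as $A\to\infty$, with the convergence being uniform. First I would clarify what "uniform convergence" means here: since $m^{A}_{o,t}$ and $m_{o,t}$ are elements of $C^{*}(\overline{X}\times\overline{X})$ (the dual of continuous functions), the natural interpretation is convergence in the total variation / operator norm on $C(\overline{X}\times\overline{X})$, i.e. $\|m^{A}_{o,t}-m_{o,t}\|\to 0$, which is stronger than weak-$*$ convergence and justifies the word "uniform." The plan is to estimate this norm by testing against $f\in C(\overline{X}\times\overline{X})$ with $\|f\|_{\infty}\le 1$ and bounding the resulting integral uniformly in $f$.

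The key computation: for $f$ with $\|f\|_{\infty}\le 1$,
\begin{align*}
\Big| \int f\, d(m^{A}_{o,t}-m_{o,t}) \Big| &\le \int_{\partial X\times\partial X} \big| k_{t,A}(\xi,\eta)-k_{t}(\xi,\eta)\big|\, d\nu_{o}(\xi)d\nu_{o}(\eta).
\end{align*}
By the definition of the truncated kernel $k_{t,A}$, the integrand vanishes whenever $(\xi,\eta)_{o}\le A$, and on the region $(\xi,\eta)_{o}> A$ we have $0\le k_{t,A}\le k_{t}$, so $|k_{t,A}-k_{t}|\le k_{t}\cdot\mathbf{1}_{\{(\xi,\eta)_{o}>A\}}$. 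Hence the right-hand side is at most $\int_{\{(\xi,\eta)_{o}>A\}} k_{t}(\xi,\eta)\, d\nu_{o}(\xi)d\nu_{o}(\eta)$. Now I would integrate in $\eta$ first and recognize the inner integral as exactly $\sigma_{t,A}(\eta)$ from \eqref{sigmaA}; applying Lemma \ref{ineqsigma} (item 2) gives, for $R$ large enough, a constant $C>0$ with $\sigma_{t,A}(\eta)\le C\,\frac{e^{-2tQA}}{1-e^{-2tQ}}$ uniformly in $\eta\in\partial X$. Integrating over $\eta$ against the probability measure $\nu_{o}$ then yields
$$\|m^{A}_{o,t}-m_{o,t}\| \le C\,\frac{e^{-2tQA}}{1-e^{-2tQ}},$$
which tends to $0$ as $A\to\infty$ for every fixed $t>0$. (One should take $A=NR$ with $R$ large, as in the earlier lemmas, or note that the estimate of Lemma \ref{ineqsigma} already accommodates arbitrary $A>0$.)

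I do not anticipate a serious obstacle here; the lemma is a routine consequence of the tail estimate $\sigma_{t,A}\le C e^{-2tQA}/(1-e^{-2tQ})$ already established in Lemma \ref{ineqsigma} and Lemma \ref{limsup}. The only point requiring a little care is the reduction from "uniform convergence of measures" to the total-variation bound, and making sure the constant $C$ is genuinely independent of $\eta$ and of the truncation parameter $A$ — both of which follow directly from the cited lemmas. A secondary minor point is that $k_{t,A}$ is the continuous (piecewise-linear in the overlap region $(x,y)_{o}\in[A,A+1]$) truncature rather than the sharp cutoff $k_{t,A}$ of \eqref{kernel}; but since the continuous version is still sandwiched between $0$ and $k_{t}$ and agrees with $k_{t}$ on $\{(x,y)_{o}\le A\}$, the same bound applies verbatim, possibly after replacing $A$ by $A-1$ in the final estimate, which does not affect the limit.
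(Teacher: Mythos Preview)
Your proof is correct and follows essentially the same route as the paper: bound $\|m_{o,t}-m^{A}_{o,t}\|$ by $\int_{\{(\xi,\eta)_o>A\}} k_t\,d\nu_o\otimes d\nu_o$, then control the inner integral by the uniform tail estimate on $\sigma_{t,A}$ (the paper cites inequality~\eqref{unif}/Lemma~\ref{limsup}, you cite Lemma~\ref{ineqsigma}(2), which is the same bound). Your explicit remark about the continuous piecewise-linear truncation versus the sharp cutoff is a nice touch that the paper handles implicitly by shifting to the region $(\xi,\eta)_o\ge A+1$.
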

\begin{proof}
For all $F\in C^{*}(\overline{X} \times \overline{X})$ we have:
\begin{align*}
|m^{t}_{o}(F)-m^{t,A}_{o}(F)|&=\bigg|\int_{\partial X \times \partial X}F(\xi,\eta)\, [ k_{t}(\xi,\eta)-k_{t}^A(\xi,\eta)]\, d\nu_{o}(\xi)d\nu_{o}(\eta)\bigg|\\
&\leq C \|F\|_{\infty} \int_{(\xi,\eta)_{o}\geq A+1} \frac{1}{d^{(1-2t)Q}_{o}(\xi,\eta)}d\nu_{o}(\xi)d\nu_{o}(\eta).
\end{align*}
Hence,
$$\|m_{o,t}-m^{A}_{o,t}\|_{C^{*}(\partial X \times \partial X)} \leq C \int_{(\xi,\eta)_{o}\geq A+1} \frac{1}{d^{(1-2t)Q}_{o}(\xi,\eta)}d\nu_{o}(\xi)d\nu_{o}(\eta).$$
Use Lemma \ref{limsup}, let $A\to +\infty$ to conclude the proof.
\end{proof}

Define now for $R>0$ large enough :

\begin{equation}
m^{A}_{o,t,N,R}:=\sum_{g,h\in S^{\Gamma}_{N,R}}\beta_{N,R}(g)\beta_{N,R}(h)k_{t}^A(go,ho)D_{g o}\otimes D_{ho} \in C^{*}(\overline{X} \times \overline{X}).
\end{equation}

Since $R$ is fixed, we write $m^{A}_{o,t,N}$ for $m^{A}_{o,t,N,R}$.
Consider  the two sequences of measures of $C^{*}(\overline{X} \times \overline{X})$  and defined for $t\in\mathbb R$ by :
 
 \begin{equation}
m^{t}_{o,N,R}:=\sum_{g,h\in S^{\Gamma}_{N,R}}\beta_{N,R}(g)\beta_{N,R}(h) e^{(1-2t)Q(go,ho)_{o}}D_{g o}\otimes D_{ho},\;\;\;\;\;\;\; m_{o,N,R}:=m_{o,N,R}^{1/2}
\end{equation}
 
\begin{lemma}\label{equidouble} For all $R>0$ large enough,
the sequence $(m_{o,N,R})_{N \in \mathbb{N}}$ converges to $\nu_{o}\otimes \nu_{o}$ with respect to the weak* topology on $C(\overline{X} \times \overline{X})$.  
\end{lemma}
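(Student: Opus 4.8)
The plan is to deduce this double-sided equidistribution statement directly from the product version of Roblin–Margulis equidistribution recorded in Theorem~\ref{equi}. Recall that Theorem~\ref{equi}(2) asserts precisely
$$\sum_{\gamma \in S^{\Gamma}_{n,R}}\beta_{n,R}(\gamma) D_{\gamma o} \otimes D_{\gamma^{-1} o} \rightharpoonup \nu_{o}\otimes \nu_{o}$$
in $C(\overline{X}\times\overline{X})^{*}$, whereas here we need the same convergence for the measures
$$m_{o,N,R}=\sum_{g,h\in S^{\Gamma}_{N,R}}\beta_{N,R}(g)\beta_{N,R}(h)D_{g o}\otimes D_{ho},$$
which are the \emph{product} of the pushforwards of $\beta_{N,R}$ under $\gamma\mapsto\gamma o$ in each variable, rather than the pushforward under $\gamma\mapsto(\gamma o,\gamma^{-1}o)$.

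First I would observe that by Remark~\ref{equi'} (the one-variable marginal of Theorem~\ref{equi}), the measures $\mu_{N,R}:=\sum_{g\in S^{\Gamma}_{N,R}}\beta_{N,R}(g)D_{go}$ converge weak* to $\nu_o$ on $C(\overline{X})$. Now $m_{o,N,R}=\mu_{N,R}\otimes\mu_{N,R}$ by construction (the double sum factors because $k$ is absent and $\beta_{N,R}$ appears independently in each variable). Weak* convergence is preserved under tensor products of uniformly bounded measures on compact spaces: since $\mu_{N,R}(\overline{X})\leq C$ uniformly (by Theorem~\ref{equi}(1), $\sum_{g}\beta_{N,R}(g)\leq C$), and $C(\overline{X})\otimes C(\overline{X})$ is dense in $C(\overline{X}\times\overline{X})$ by Stone–Weierstrass, testing against a pure tensor $f\otimes g$ gives
$$m_{o,N,R}(f\otimes g)=\mu_{N,R}(f)\,\mu_{N,R}(g)\longrightarrow \nu_o(f)\,\nu_o(g)=(\nu_o\otimes\nu_o)(f\otimes g),$$
and a standard $3\varepsilon$-argument using the uniform bound $\|m_{o,N,R}\|\leq C^2$ extends this to all of $C(\overline{X}\times\overline{X})$.

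The only genuine point to check is that $m_{o,N,R}$ really does factor as $\mu_{N,R}\otimes\mu_{N,R}$, which is immediate from the definition since $D_{go}\otimes D_{ho}$ is the product Dirac mass and the summand weight is $\beta_{N,R}(g)\beta_{N,R}(h)$ with $g,h$ ranging independently over $S^{\Gamma}_{N,R}$. So there is no real obstacle here; the statement is essentially a formal consequence of Remark~\ref{equi'} together with density of pure tensors. I would write the proof as: factor $m_{o,N,R}=\mu_{N,R}\otimes\mu_{N,R}$, invoke Remark~\ref{equi'} for $\mu_{N,R}\rightharpoonup\nu_o$, note the uniform total mass bound from Theorem~\ref{equi}(1), and conclude weak* convergence of the tensor products on $C(\overline{X}\times\overline{X})$ by the density of $C(\overline{X})\otimes C(\overline{X})$ and uniform boundedness. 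If one prefers, one can alternatively cite Theorem~\ref{equi} directly for a slightly different family and note that the product version follows verbatim from the same proof in \cite{BG}; but the factorization argument above is self-contained and shorter.
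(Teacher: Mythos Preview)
Your proof is correct and follows essentially the same approach as the paper: both arguments use the uniform bound on total mass coming from Theorem~\ref{equi}(1), establish convergence on pure tensors $f\otimes g$ via Remark~\ref{equi'}, and then pass to all of $C(\overline{X}\times\overline{X})$ by density. Your explicit factorization $m_{o,N,R}=\mu_{N,R}\otimes\mu_{N,R}$ is a clean way to see the convergence on pure tensors, while the paper instead phrases the last step as a subsequential compactness argument; these are cosmetic differences only.
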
 
\begin{proof}
Since for all non negative integers $k$ and for all $g\in S_{k,R}$, we have $\beta_{k}(g)\leq \frac{C}{|S_{k,R}|}$, it follows that
$$\sup_{N}\|m_{o,N,R}\|_{C(\overline{X}\times\overline{X} )^{
*}}<+\infty.$$
Thus, up to subsequences, it follows that $ (m_{o,N,R})_{N \in \mathbb{N}}$ has a limit with respect to the weak* topology on $C(\overline{X}\times\overline{X} )$. But on the space generated by pure tensors written as $f\otimes g$ with $f,g\in C(\overline{X})$, we have $m_{o,N,R}(f\otimes g) \to (\nu_{o}\otimes \nu_{o})(f \otimes g)$ as $N \to +\infty$ 
by applying Corollary \ref{equi'} for $R>0$ large enough. Hence, we deduce that for all $F\in C(\overline{X}\times \overline{X})$ we have: $m_{o,N,R}(F)\to \nu_{o}\otimes \nu_{o}(F)$ as $N\to +\infty$.
 
\end{proof}

\subsubsection{Convergence of measures}
\begin{lemma}\label{pos1}
Let $R>0$ large enough. For any $t<\frac{1}{2}$, we have the following weak* convergence for all $A>0$
$m^{A}_{o,t,N}  \to m^{A}_{o,t}$ as $N \to \infty$ in $C^{*}(\overline{X} \times \overline{X}).$
\end{lemma}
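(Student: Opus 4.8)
The strategy is the same two-step pattern already used for Lemma \ref{equidouble}: first a uniform boundedness argument to extract weak* limits along subsequences, then identification of the limit by testing against pure tensors $f\otimes g$, $f,g\in C(\overline X)$, where the continuous equidistribution theorem (Theorem \ref{equi}, in the form of Remark \ref{equi'}) applies. The crucial point that makes this work for $m^{A}_{o,t,N}$ but not for the untruncated version is that $k_{t,A}$ is a \emph{continuous} and \emph{bounded} kernel on $\overline X\times\overline X$: explicitly $0\le k_{t,A}\le e^{(1-2t)QA}$ (for $t<\tfrac12$ the exponent $(1-2t)Q$ is positive but $A$ is fixed, so this is a finite bound). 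First I would observe that
\[
\|m^{A}_{o,t,N}\|_{C(\overline X\times\overline X)^{*}}\le \|k_{t,A}\|_{\infty}\,\|m_{o,N,R}\|_{C(\overline X\times\overline X)^{*}},
\]
and the right-hand side is bounded in $N$ by the estimate $\beta_{N,R}(g)\le C/|S^{\Gamma}_{N,R}|$ from Theorem \ref{equi}(1) exactly as in the proof of Lemma \ref{equidouble}. Hence, by Banach-Alaoglu, $(m^{A}_{o,t,N})_{N}$ has weak* convergent subsequences.

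Next I would identify the limit. Fix $f,g\in C(\overline X)$ and set $G(x,y):=f(x)g(y)k_{t,A}(x,y)$. Since $k_{t,A}$ is continuous on the compact space $\overline X\times\overline X$ and $f,g$ are continuous, $G\in C(\overline X\times\overline X)$. Then
\[
m^{A}_{o,t,N}(f\otimes g)=\sum_{g',h\in S^{\Gamma}_{N,R}}\beta_{N,R}(g')\beta_{N,R}(h)\,G(g'o,ho)=m_{o,N,R}(G),
\]
and Lemma \ref{equidouble} gives $m_{o,N,R}(G)\to (\nu_{o}\otimes\nu_{o})(G)=m^{A}_{o,t}(f\otimes g)$ as $N\to\infty$. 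Since this holds for every pure tensor, and pure tensors span a dense subspace of $C(\overline X\times\overline X)$ (Stone–Weierstrass), any weak* subsequential limit of $(m^{A}_{o,t,N})_{N}$ must agree with $m^{A}_{o,t}$ on a dense set; combined with the uniform boundedness this forces the whole sequence to converge weak* to $m^{A}_{o,t}$. This is precisely the density-plus-equicontinuity argument spelled out at the end of Lemma \ref{equidouble}, and I would simply invoke it.

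The only mild subtlety — and the one place I would be careful rather than routine — is making sure that $k_{t,A}$, as defined in the piecewise formula preceding the statement, really is jointly continuous on all of $\overline X\times\overline X$ including the diagonal and the boundary: the Gromov product $(x,y)_{o}$ extends continuously to $\overline X\times\overline X$ precisely because $(X,d)$ is $\epsilon$-good, so the three-piece function of $(x,y)_{o}$ (which is continuous and bounded as a function of a real variable, being piecewise affine in the exponent near $A$) composes to a continuous bounded function on $\overline X\times\overline X$. I expect no genuine obstacle here: once continuity and boundedness of $k_{t,A}$ are in hand, the lemma is a verbatim repetition of the proof of Lemma \ref{equidouble} applied to the test function $G=(f\otimes g)\,k_{t,A}$. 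No condition on $t$ beyond $t<\tfrac12$ is actually needed for the argument itself — $t<\tfrac12$ only guarantees that $\|k_{t,A}\|_\infty$ is realized at $(x,y)_o=A$ rather than at $0$, which is immaterial to the proof.
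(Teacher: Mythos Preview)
Your proof is correct and follows the same idea as the paper: since $k_{t,A}$ is continuous and bounded on $\overline X\times\overline X$, the result reduces to Lemma \ref{equidouble}. Note however that your detour through pure tensors and density is unnecessary: for \emph{any} $F\in C(\overline X\times\overline X)$ one has $m^{A}_{o,t,N}(F)=m_{o,N,R}(F\cdot k_{t,A})$ with $F\cdot k_{t,A}\in C(\overline X\times\overline X)$, so Lemma \ref{equidouble} applies directly and gives the conclusion in one line---this is exactly what the paper does.
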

\begin{proof}
The proof is a direct application of Lemma \ref{equidouble} since the map $(x,y)\in \overline{X}\times \overline{X}\mapsto k_{t,A}(x,y)$ is continuous.
\end{proof}

\begin{lemma}\label{pos3}
Assume that $t<\frac{1}{2}$.
We have the following uniform convergence:
$$\lim_{A \to +\infty}\sup_{N} \| m_{o,t,N} - m^{A}_{o,t,N}\|_{C^{*}(\overline{X} \times\overline{ X})}\to 0.$$
\end{lemma}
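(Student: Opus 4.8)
The plan is to estimate the difference $m_{o,t,N}-m^A_{o,t,N}$ uniformly in $N$ by exploiting that the two kernels $k_t$ and $k_{t,A}$ agree on the region $\{(x,y)_o\le A\}$, so only the tail $\{(x,y)_o>A\}$ contributes. First I would write, for any $F\in C(\overline X\times\overline X)$ with $\|F\|_\infty\le 1$,
\begin{align*}
\big|(m_{o,t,N}-m^A_{o,t,N})(F)\big| &\le \sum_{g,h\in S^\Gamma_{N,R}}\beta_{N,R}(g)\beta_{N,R}(h)\,\big|k_t(go,ho)-k_{t,A}(go,ho)\big|\\
&\le \sum_{\{(g,h)\,:\,(go,ho)_o\ge A\}}\beta_{N,R}(g)\beta_{N,R}(h)\,k_t(go,ho),
\end{align*}
using $0\le k_{t,A}\le k_t$ and that the integrand vanishes off the tail.

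The second step is to bound this tail sum uniformly in $N$. Here I would invoke Lemma \ref{ineqsigma2}: fixing $g\in S^\Gamma_{N,R}$ and summing over $h$ with $(go,ho)_o\ge A$, writing $h = g\cdot (g^{-1}h)$ and using left-invariance of the geometry (or applying the lemma directly with the base point moved to $go$), one gets
$$\sum_{\{h\in S^\Gamma_{N,R}\,:\,(go,ho)_o\ge A\}}\beta_{N,R}(h)\,k_t(go,ho)\le C\,\frac{e^{-2tQA}}{1-e^{-2tQR}}$$
for all $N$ large enough, with $C$ independent of $N$ and of $g$. Summing the remaining $\sum_g\beta_{N,R}(g)=\|\beta_{N,R}\|_1\le C'$ (bounded uniformly in $N$, since $\beta_{N,R}(g)\le C/|S^\Gamma_{N,R}|$), this yields
$$\sup_N \big\|m_{o,t,N}-m^A_{o,t,N}\big\|_{C^*(\overline X\times\overline X)}\le C''\,\frac{e^{-2tQA}}{1-e^{-2tQR}}.$$
Letting $A\to+\infty$, the right-hand side tends to $0$ since $t>0$, which is exactly the claimed uniform convergence.

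The main obstacle I anticipate is making the application of Lemma \ref{ineqsigma2} clean: that lemma as stated fixes an element $h\in\Gamma$ and controls $\sum_{g}\beta_{N,R}(g)k_t(ho,go)$ over $g\in S^\Gamma_{N,R}$ with $(go,ho)_o\ge A$, whereas here I need the symmetric double sum. Because $k_t$ is symmetric and the cardinality estimate $|U_{k,R}(h)|\le Ce^{(n-k)RQ}$ from \cite{Ga} is uniform in the choice of $h$ (it only depends on $|h|$ through $n$, and here $|h|\in[(N-1)R,NR)$), the same counting argument goes through with the roles of $g$ and $h$ exchanged, giving a bound independent of the fixed element; then Fubini on the finite double sum finishes it. One should also double-check the harmless point that $k_t-k_{t,A}$ is supported in $\{(x,y)_o>A\}$ (in fact the interpolation region $(x,y)_o\in(A,A+1)$ plus the tail), and that on this set $k_t-k_{t,A}\le k_t$, which is immediate from $k_{t,A}\ge 0$. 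No other difficulty is expected; the estimate is a routine consequence of the equidistribution weights' uniform $\ell^1$-bound and the exponential decay already established in Lemma \ref{ineqsigma2}.
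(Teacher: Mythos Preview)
Your proof is correct and follows essentially the same approach as the paper: bound the difference by the tail sum over $\{(go,ho)_o\ge A\}$, apply the counting estimate of Lemma~\ref{ineqsigma2} to the inner sum (uniformly in the fixed element, which indeed holds since the bound $|U_{k,R}(h)|\le Ce^{(n-k)RQ}$ from \cite{Ga} is uniform in $h$), and use the uniform $\ell^1$-bound on the weights $\beta_{N,R}$ for the outer sum. Your remark that $0\le k_{t,A}\le k_t$ on the interpolation region (so the difference is controlled by $k_t$ itself) is exactly what is needed; the paper's argument is the same up to an inessential constant.
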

\begin{proof}
Let $F \in C(\overline{X} \times \overline{X})$. We have:
\begin{align*}
|m^{t}_{o,N}(F) - m^{t,A}_{o,N}(F)|&\leq 2 \|F\|_{\infty}\sum_{\{g,h\in S_{N}|(go,ho)>A\}}\beta_{N,R}(g) \beta_{N,R}(h)e^{(1-2t)Q (go,ho)}.
\end{align*}
For all $h\in S_{N,R}$, Lemma \ref{ineqsigma2} provides a constant $C>0$ such that: 

$$\sum_{\{g\in S^{\Gamma}_{N,R}|(go,ho)>A+1\}}\beta_{N,R}(g)e^{(1-2t)Q (go,ho)} \leq C \frac{e^{-2tQA}}{1-e^{-2tQR}}.$$
Therefore:
$$\|m_{o,t,N} - m^{A}_{o,t,N}\|_{C^{*}(\overline{X} \times\overline{ X})} \leq C\frac{e^{-2tQA}}{1-e^{-2tQR}}.$$
Let $A\to +\infty$ to conclude the proof.
\end{proof}

 We generalize the nice equidistribution result in the context of hyperbolic groups obtained in  \cite[Section 4]{Bou} for CAT(-1) spaces:
 \begin{prop}\label{equidistriboucher}
 For all $t<\frac{1}{2} $ we have:
 
 $$m_{o,t,N} \to m_{o,t},$$ as $N \to +\infty$ for the weak* convergence of $C^{*}(\overline{X} \times \overline{X})$
 \end{prop}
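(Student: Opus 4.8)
The plan is to deduce Proposition \ref{equidistriboucher} from the three approximation lemmas that precede it, using a standard triangle-inequality argument to interchange the two limits $N\to\infty$ and $A\to\infty$. Concretely, for any $F\in C(\overline X\times\overline X)$ and any $A>0$ I would write
\begin{align*}
|m_{o,t,N}(F)-m_{o,t}(F)|
&\leq |m_{o,t,N}(F)-m^{A}_{o,t,N}(F)|
+|m^{A}_{o,t,N}(F)-m^{A}_{o,t}(F)|\\
&\quad+|m^{A}_{o,t}(F)-m_{o,t}(F)|.
\end{align*}
The first term is controlled uniformly in $N$ by Lemma \ref{pos3}, which gives $\sup_{N}\|m_{o,t,N}-m^{A}_{o,t,N}\|_{C^{*}}\leq C e^{-2tQA}/(1-e^{-2tQR})$; the third term is controlled by Lemma \ref{pos2}, which gives $\|m_{o,t}-m^{A}_{o,t}\|_{C^{*}}\to 0$ as $A\to\infty$ (and in fact the proof there produces the same geometric bound in $A$). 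The middle term, for $A$ fixed, tends to $0$ as $N\to\infty$ by Lemma \ref{pos1}, since $k_{t,A}$ is a genuine continuous kernel and the double equidistribution of Lemma \ref{equidouble} applies.

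So the argument runs as follows. Fix $\varepsilon>0$. First choose $A=A(\varepsilon)$ large enough, using Lemma \ref{pos3} and Lemma \ref{pos2}, so that
\[
\sup_{N}\|m_{o,t,N}-m^{A}_{o,t,N}\|_{C^{*}(\overline X\times\overline X)}<\frac{\varepsilon}{3}
\quad\text{and}\quad
\|m^{A}_{o,t}-m_{o,t}\|_{C^{*}(\overline X\times\overline X)}<\frac{\varepsilon}{3}.
\]
With this $A$ now fixed, Lemma \ref{pos1} gives the weak* convergence $m^{A}_{o,t,N}\to m^{A}_{o,t}$ in $C^{*}(\overline X\times\overline X)$, so for each fixed $F\in C(\overline X\times\overline X)$ there is $N_{0}=N_{0}(F,\varepsilon)$ with $|m^{A}_{o,t,N}(F)-m^{A}_{o,t}(F)|<\varepsilon\|F\|_{\infty}/3$ for $N\geq N_{0}$. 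Plugging the three estimates into the triangle inequality yields $|m_{o,t,N}(F)-m_{o,t}(F)|<\varepsilon\|F\|_{\infty}$ for all $N\geq N_{0}$, which is precisely the asserted weak* convergence $m_{o,t,N}\to m_{o,t}$ in $C^{*}(\overline X\times\overline X)$ for every $t<\tfrac12$.

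I should double-check the uniform-norm bookkeeping: the first and third displayed bounds are in operator (total-variation) norm on $C^{*}(\overline X\times\overline X)$, hence they bound $|m_{o,t,N}(F)-m^{A}_{o,t,N}(F)|$ and $|m^{A}_{o,t}(F)-m_{o,t}(F)|$ by $\tfrac{\varepsilon}{3}\|F\|_{\infty}$, while the middle term only gives pointwise (weak*) convergence on each $F$; this is why $A$ must be chosen before $N$, and the order of quantifiers is the only subtle point. There is no real obstacle here — everything needed is already packaged in Lemmas \ref{pos2}, \ref{pos1}, \ref{pos3} and \ref{equidouble}, and the hypothesis $t<\tfrac12$ is exactly what makes the geometric series $\sum e^{-2tQkR}$ converge so that the tail bounds in $A$ are valid. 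The mildly delicate step, if any, is making sure that Lemma \ref{pos1} genuinely applies with $k_{t,A}$ continuous up to the bordification (so that $m_{o,N,R}\rightharpoonup\nu_o\otimes\nu_o$ of Lemma \ref{equidouble} can be integrated against it), which is the role of the $\epsilon$-good hypothesis on $(X,d)$.
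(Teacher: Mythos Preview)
Your proof is correct and follows essentially the same triangle-inequality/double-limit argument as the paper, invoking Lemmas \ref{pos1}, \ref{pos2}, and \ref{pos3} in the same roles. One small inaccuracy in your commentary: it is the condition $t>0$ (not $t<\tfrac12$) that makes the tail series $\sum_k e^{-2tQkR}$ converge and hence makes the bounds in Lemmas \ref{pos2} and \ref{pos3} effective; the paper's statement ``for all $t<\tfrac12$'' should really be read as $0<t<\tfrac12$ in context.
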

 
 \begin{proof}
 
For all $F\in C(\overline{X}\times \overline{X})$ such that $\|F\|_{\infty}\leq 1$ and for all non negative integers $N$:
 \begin{align*}
  |m_{o,t,N}(F) - m_{o,t}(F)|&\leq |m_{o,t,N}(F) - m^{A}_{o,t,N}(F)| +|m^{A}_{o,t,N}(F) - m^{A}_{o,t}(F)|+|m^{A}_{o,t}(F) - m_{o,t}(F)|\\
  &\leq \sup _{N}\|m_{o,t,N} - m^{A}_{o,t,N}\| +|m^{A}_{o,t,N}(F) - m^{A}_{o,t}(F)|+\|m^{A}_{o,t} - m_{o,t}\|.
\end{align*}
 Take the limit over $N$ applying Lemma \ref{pos1} for the second term of the above inequality, and take the limit over $A$ applying Lemma \ref{pos3} to the first term and Lemma \ref{pos2} to the last term to finish the proof.
 \end{proof}
 
 \subsection{Application: positivity of $\mathcal{I}_{t}$ for $0<t\leq\frac{1}{2}$}\hfill
 
 We prove Proposition \ref{positivcondneg}. Recall that the main assumption of this statement is conditional negativity of the distance. We suspect the latter can be relaxed, as it is a local and global geometric assumption and not only a macroscopic one like Gromov hyperbolicity - hence unsatisfactory.

 \begin{proof}
 Let $0<t<\frac{1}{2}$.
 Since the distance defines a conditionnaly negative kernel, the Gromov product $$(g,h)\in \Gamma \times \Gamma \mapsto (go ,ho)_{o}\in \mathbb{R}^{+},$$ is positive definite, see  \cite[Lemma C.3.1]{BDV}. It follows, \cite[Proposition C.1.6]{BDV}, that
 the kernel $k_{t}$ viewed on $\Gamma$  $$(g,h)\in \Gamma \times \Gamma \mapsto e^{(1-2t)Q(go ,ho)_{o}}\in \mathbb{R}^{+},$$ is positive definite for $t< \frac{1}{2}$ on $\Gamma$.
  Let $v\in C(\partial X)$ and extend $v$ to a continuous function  $\hat{v}$ in $\overline{X}$. Pick $R>0$ large enough. We have for all $N\in \mathbb{N}$:
  $$m_{o,t,N}( \hat{v} \otimes \overline{\hat{v}} )=\sum_{g,h\in S^{\Gamma}_{N,R}}\hat{v}(g)\overline{\hat{v}}(h)k_{t}(g,h)\geq 0.$$
Let $N \to +\infty$ ; Proposition \ref{equidistriboucher} implies:
  $$m_{o,t}(\hat{v} \otimes \overline{\hat{v}} )\geq 0.$$
 Finally, observe that for all $t\leq \frac{1}{2} $ and for all continuous functions $v\in C(\partial X):$
  $$m_{o,t}(\hat{v}\otimes \overline{\hat{v}} )=\langle v,\mathcal{I}_{t}(v)\rangle\geq 0.$$  We deduce that $\mathcal{I}_{t}$ is positive on $L^{2}(\partial X ,\nu_{o})$. By Lemma \ref{positiv}, we deduce that $\mathcal{R}_{t}$ is positive for $0<t< \frac{1}{2}$.\\
  The case $t=\frac{1}{2}$ gives $\mathcal{I}_{\frac{1}{2}}(v)=\mathcal{R}_{\frac{1}{2}}(v)=\langle v,\textbf{1}_{\partial X}\rangle \textbf{1}_{\partial X}$, that is the orthogonal projection onto the space of constant functions which is a positive operator. 
 
 \end{proof}

   \section{Spectral gap estimates and consequences}\label{section6}

Using duality arguments, we provide several spectral estimates  for the dual system representations studied in this paper. Such estimates will be useful to establish Schur's asymptotic relations in our context and their proofs  relie  on  \cite[Theorem 1.2]{Boy} (where representations $\pi_t$ are considered only from the operator algebra standpoint).  Notice that they are typical examples of \emph{slow growth representations} (see \cite[Section 8.3]{Ju}). The results can also be related to some of the results  of \cite{JN} in the setting of $Sp(n,1)$. \\
\subsection{Spectral inequalities}
The main result of \cite{Boy} is the following spectral inequality, generalizing the so called ``Haagerup Property" or Property RD.  Let $R>0$ large enough. There exists $C>0$ such that  for any $t \in \mathbb{R}$ and  for all $f_{n}\in \mathbb{C}[\Gamma]$, whose support is in $S^{\Gamma}_{n,R}$, we have:
\begin{equation}\label{RDgeneral}
\|\pi_{t}(f_{n})\|_{L^{2}\to L^{2}}\leq C \omega_{|t|}(nR)\|f_{n}\|_{\ell^{2}},
\end{equation}
(see (\ref{functiono}) for the definition of $\omega_{t}$).

Given $v,w\in L^{2}(\partial X,\nu_{o})$ two unit vectors, apply the above inequality to the sequence of functions $f_{n}=\sum_{\gamma \in S^{\Gamma}_{N,R}}\overline{\langle\pi_{t}(\gamma)v,w\rangle} D_{\gamma}$ to obtain a reformulation in terms of decay of matrix coefficients: there exists $C>0$  such that for all $t\in \mathbb{R}$ and for all unit vectors $v,w\in L^{2}(\partial X,\nu_{o})$
\begin{equation}\label{RDgeneralcoeff}
\big(\sum_{\gamma \in S^{\Gamma}_{N,R}}|\langle \pi_{t}(\gamma)v,w\rangle|^{2}\big)^{1/2}\leq C \omega_{|t|}(nR).
\end{equation}

Along the discussion below, $R>0$ will be large enough so that previous estimates from Section \ref{section2} and \ref{section3} are valid as well as inequality (\ref{RDgeneral}). We will write $f(x)\asymp g(x)$ for positive functions $f,g$ if there exists $C\geq 1$ such that $c^{-1}\leq f/g\leq C$. Last, as value of spherical functions $\phi_t(\gamma)$  depends essentially of length of $\gamma$ only, \textbf{we will write by abuse of notation $\phi_t(nR)$  instead of   $\phi_t(\gamma)$   for (indifferent)  $\gamma\in S_{n,R}^{\Gamma}$.}\\


\begin{prop}\label{radialestimates}
There exists $C>0$ such that for all    $t\in\mathbb R$ and $f_{n}=\sum_{\gamma \in S^{\Gamma}_{n,R}}\beta_{n,R}(\gamma)D_{\gamma}$,
$$\|\pi_{t}(f_{n})\|_{L^{2}\to L^{2}}\leq C \phi_{t}(nR).$$
\end{prop}


\begin{proof}
From $\beta_{n,R}(\gamma)\asymp e^{-nQR}$ when $\gamma\in S_{n,R}^{\Gamma}$ and $|S_{n,R}^{\Gamma}|\asymp e^{nQR}$, it follows $\|f_{n}\|^{2}_{\ell^{2}}=\sum_{\gamma \in S^{\Gamma}_{n,R}}\beta^{2}_{n,R}(\gamma)\asymp e^{-nQR}$. Hence, using (\ref{RDgeneral}), we get for $v,w\in L^2(\partial X,\nu_o)$ unit vectors, 
$$|\langle\pi_{t}(f_{n})v,w\rangle| \leq C \omega_{|t|}(nR)\|f_{n}\|_{\ell^{2}}\leq  C\omega_{|t|}(nR)e^{-nQR/2}\leq C\phi_t(nR)$$
the last inequality being estimate from below of $\phi_t$ given by Proposition \ref{estimatesspherical}. The proof is complete by definition of the operator norm. \end{proof}

\begin{theorem} \label{Hspectral}There exists $C>0$ such that for any $t>0$ and any $f_n\in\mathbb C[\Gamma]$ supported on $S_{n,R}^{\Gamma}$, 
$$ \|\pi_{t}(f_{n})\|_{\mathcal{K}_t\to \mathcal{K}_t}\leq C \|f_{n}\|_{\ell^{2}}\, \omega_{t}(nR)\;\;\; {\rm and}\;\;\;\;  \|\pi_{-t}(f_{n})\|_{\mathcal{K}_t'\to \mathcal{K}_t'}\leq C \|f_{n}\|_{\ell^{2}}\, \omega_{t}(nR)$$
\end{theorem}

\begin{proof}[Proof of Theorem \ref{Hspectral} for $\mathcal{K}_t$ and $\mathcal{K}'_t$]  We first prove norm operator estimates for $\mathcal {K}_t$ and ${\mathcal K}_t'$. Let $v,w\in \mathcal {K}_t\subset L^2(\partial X,\nu_o)$.

$$\begin{array}{lll}|\langle \pi_{t}(f_{n})v,w\rangle_{{\mathcal K}_t} | &=&|\langle {\mathcal I}_t \pi_{t}(f_{n})v,{\mathcal I}_tw\rangle |= |\langle  \pi_{-t}(f_{n}){\mathcal I}_t(v),{\mathcal I}_t(w)\rangle |\\
                                                                                                                  &\leq & \|  \pi_{t}(f_{n})\|_{L^2\to L^2}\| {\mathcal I}_t(v)\|_2\; \| {\mathcal I}_t(w)\|_2= \|  \pi_{t}(f_{n})\|_{L^2\to L^2}\|v \|_{{\mathcal K}_t}\; \| w\|_{{\mathcal K}_t}\\
                                                                                                                  &\leq & C\omega_t(nR) \|f_n\|_{\ell^2}\|v \|_{{\mathcal K}_t}\|w \|_{{\mathcal K}_t}       \end{array}$$
                                                                                                                  
For $v,w\in {\mathcal F}_{-t,\sigma_t}$, we get             
 $$\begin{array}{lll}|\langle \pi_{-t}(f_{n})v,w\rangle_{{\mathcal K}_t'} | &=&|\langle {\mathcal J}_t \pi_{t}(f_{n})v,{\mathcal J}_tw\rangle |= |\langle  \pi_{t}(f_{n}){\mathcal J}_t(v),{\mathcal J}_t(w)\rangle |\\
                                                                                                                  &\leq & \|  \pi_{t}(f_{n})\|_{L^2\to L^2}\| {\mathcal J}_t(v)\|_2\; \| {\mathcal J}_t(w)\|_2= \|  \pi_{t}(f_{n})\|_{L^2\to L^2}\|v \|_{{\mathcal K}_t'}\; \| w\|_{{\mathcal K}_t'}\\
                                                                                                                  &\leq & C\omega_t(nR) \|f_n\|_{\ell^2}\|v \|_{{\mathcal K}_t'}\|w \|_{{\mathcal K}_t'}       \end{array}$$
 the two last inequalities coming from \ref{RDgeneral} applied to $\pi_{-t}$ and $\pi_t$ respectively. The proofs are complete by definition of norm of operators.  \end{proof}
 
We prove now  norm operator estimates assuming there exists $t>0$ such that ${\mathcal I}_t>0$, that is estimates for  ${\mathcal H}_t$ and ${\mathcal H}_t'$. \\

\begin{theorem}\label{Uspectral} There exists $C>0$ such that for any $t>0$ and any $f_n\in\mathbb C[\Gamma]$ supported on $S_{n,R}^{\Gamma}$, 
$$ \|\mathcal{I}_{t}\pi_{t}(f_{n})\|_{\mathcal{H}_t\to \mathcal{H}_t}\leq C \|f_{n}\|_{\ell^{2}}\, \omega_{t}(nR)\;\;\; {\rm  and}\;\;\;\;  \|\pi_{-t}(f_{n})\|_{\mathcal{H}_t'\to \mathcal{H}_t}\leq C \| f_{n} \|_{\ell^{2}} \omega_{t}(nR).$$

\end{theorem}
\begin{proof} Let $v',w'\in {\mathcal H}_t'$. Recall that $\check{f}(\gamma)= \bar{f}(\gamma^{-1})$ and note that $\| \check{f_{n}}\|_{\ell^2}= \| f_n\|_{\ell^2}$. Then, we have
$$| \langle\pi_{t}(f_{n})v',w'\rangle|=|\langle v',\pi_{-t}(\check{f_{n}})w'\rangle|\leq C \|f_{n}\|_{\ell^{2}} \|v'\|_{2}\, \|w'\|_{2}\; \omega_{t}(nR),$$
the inequality being (\ref{RDgeneral}) applied to $\pi_{-t}$. Embeddings ${\mathcal H}_t'\subset L^2(\partial X,\nu_o)\subset {\mathcal H}_t$  (Proposition \ref{duality'}) implies 
$$|\langle v',\pi_{-t}(\check{f_{n}})w'\rangle|\leq C \|f_{n}\|_{\ell^{2}} \| v' \|_{{\mathcal H}_t'}\, \| w'  \|_{{\mathcal H}_t'}\; \omega_{t}(nR)$$
and the map $v'\in {\mathcal H}_t'\mapsto \langle v',\pi_{-t}(\check{f_{n}})w'\rangle \in \mathbb{C}$  defines an element of $ (\mathcal{H}_{t}')^{*}=\overline{{\mathcal H}_t}$. Thus
$$\| \pi_{-t}(\check{f_{n}})w'\|_{{\mathcal H}_t}\leq C  \|f_{n}\|_{\ell^{2}} \, \| w'  \|_{\mathcal{H}'_{t}}\; \omega_{t}(nR).$$ 

We obtain
$$\| \pi_{-t}(f_{n})w'\|_{{\mathcal H}_t}\leq C  \|f_{n}\|_{\ell^{2}} \, \| w'  \|_{\mathcal{H}'_{t}}\; \omega_{t}(nR).$$

Again from Proposition \ref{duality'}, we know that ${\mathcal I}_t\;:\; {\mathcal H}_t\to {\mathcal H}_t'$ is an isometry. If we set $w'={\mathcal I}_t(w)$ with $w\in {\mathcal H}_t$, using the intertwining relation gives for all $w\in {\mathcal H}_t$,
$$\|\mathcal{I}_{t}\pi_t(f_n)w\|_{{\mathcal H}_t}\leq C \|f_{n}\|_{\ell^{2}} \, \| w \|_{{\mathcal H}_t}\; \omega_t(nR).$$

 \end{proof}
   \red{ \begin{remark}
    We do not know if the bound  $\|\pi_{t}(f_{n})\|_{\mathcal{H}_t\to \mathcal{H}_t}\leq C \|f_{n}\|_{\ell^{2}}\, \omega_{t}(nR)$ is true .
    \end{remark}  }

\subsection{Mixing properties of  representations $\pi_{t}$ for $-\frac{1}{2}<t<\frac{1}{2}  $}

The notion of mixing  and weak mixing properties has been intensively used in ergodic theory, unitary representation theory, operator algebras. See for example \cite[p. 280 for definitions and Proposition A.1.12]{BDV}.  The main point here is to deduce from weak mixing property  that the \emph{non unitary} representations we are dealing with do not contain finite dimensional subrepresentations. \\
\begin{defi}
Let $(\langle\cdot,\cdot\rangle,H_{1},H_{2})$ a $\mathbb{C}$-pairing and let  $(\pi_{1},H_{1})$ and $(\pi_{2},H_{2})$ a dual system representation of $\Gamma$ on Hilbert spaces. We say that this dual system representation of $\Gamma$  is weak mixing if for all finite subsets $F_{1}\subset H_{1}$, $F_{2}\subset H_{2}$ and for all $\epsilon >0$ there exists $\gamma \in \Gamma$ such that $|\langle \pi_{1}(\gamma) v,w\rangle \langle  v',\pi_{2}(\gamma)w'\rangle| <\epsilon$ for all $v,v'\in F_{1}$ and $w,w'\in F_{2}.$  
\end{defi}

\begin{prop}\label{weakmixing}
If $t\in]-\frac{1}{2},\frac{1}{2}[ $, then the Hilbertian dual system representation of $\Gamma$ given by $(\pi_{t},L^{2}(\partial X,\nu_{o}) )$ and  $(\pi_{-t},\overline{L^{2}(\partial X,\nu_{o})} )$  is weak mixing. 
\end{prop}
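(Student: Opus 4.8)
The plan is to prove weak mixing by using the equidistribution and spectral estimates already developed, following the philosophy that matrix coefficients of $\pi_t$ asymptotically factor through integration against $\nu_o$. Concretely, for $t \in ]-\tfrac12,\tfrac12[$, the key mechanism is that for $v,w,v',w' \in L^2(\partial X,\nu_o)$, the product $\langle \pi_t(\gamma)v,w\rangle\,\langle v',\pi_{-t}(\gamma)w'\rangle$ decays along suitable sequences $\gamma_n \in S^\Gamma_{n,R}$, once we divide by the appropriate normalizing factor and invoke the Harish-Chandra--Anker estimates of Proposition \ref{estimatesspherical}. First I would reduce the statement to a contradiction: suppose weak mixing fails, so there exist finite sets $F_1 \subset L^2$, $F_2 \subset \overline{L^2}$ and $\epsilon_0 > 0$ with $|\langle \pi_t(\gamma)v,w\rangle\langle v',\pi_{-t}(\gamma)w'\rangle| \geq \epsilon_0$ for all $\gamma \in \Gamma$, all $v,w \in F_1$, $v',w' \in F_2$. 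In particular, fixing representative vectors, $|\langle \pi_t(\gamma)v,w\rangle| \geq c_0 > 0$ for \emph{all} $\gamma$, for some fixed $v,w$.

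Next I would average this lower bound against the equidistribution weights $\beta_{n,R}$ of Theorem \ref{equi}. The idea is that $\sum_{\gamma \in S^\Gamma_{n,R}}\beta_{n,R}(\gamma)\,|\langle \pi_t(\gamma)v,w\rangle|^2 / \phi_t(\gamma)^2$ (or an analogous quantity with the $g(\gamma o) g(\gamma^{-1}o)$ test functions as in Corollary \ref{BML2}) should, after suitable massaging, be comparable on one side to a positive constant coming from the assumed non-decay, and on the other side be controlled by the spectral gap inequality (\ref{RDgeneral}) together with the lower Harish-Chandra--Anker bound. Here is where the hypothesis $|t| < \tfrac12$ is essential: $\phi_t(\gamma) \asymp e^{-\tfrac12 Q|\gamma|}(1+\omega_{|t|}(|\gamma|))$ and $\omega_{|t|}(|\gamma|) \asymp e^{(2|t|-1)\cdot(-\tfrac12)\cdots}$, so for $|t|<\tfrac12$ the ratio $\|\pi_t(f_n)\|_{L^2\to L^2}/\phi_t(nR)$ stays bounded (Proposition \ref{radialestimates}) and in fact the off-diagonal contributions vanish — precisely Corollary \ref{BML2} applied with $i=0$ tells us the limit is $\langle g|_{\partial X}\mathcal{R}_t(v),\mathbf{1}_{\partial X}\rangle\overline{\langle w,f|_{\partial X}\rangle}$, which can be made to vanish or to not depend on $\gamma$ in a way incompatible with a uniform positive lower bound on $|\langle\pi_t(\gamma)v,w\rangle|^2$ away from its "expected" value. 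The cleanest route is: apply the Schur-type relation (Theorem \ref{theo1}, case $i=j=0$, $t=t'$) to the product $\langle\pi_t(\gamma)v,w\rangle\overline{\langle\pi_{-t}(\gamma)w',v'\rangle}$ and observe the limit equals $\langle g|_{\partial X}\mathcal{R}_t(v),\mathcal{R}_{-t}(v')\rangle\overline{\langle w,f|_{\partial X}w'\rangle}$, which for generic choices of $f,g\in C(\overline X)$ (indeed, choosing $f,g$ supported near disjoint boundary points) becomes arbitrarily small; since the weights $\beta_{n,R}$ are probabilities (or sub-probabilities), smallness of the average forces smallness of $|\langle\pi_t(\gamma)v,w\rangle\langle v',\pi_{-t}(\gamma)w'\rangle|$ for some $\gamma$ in the support, contradicting the uniform lower bound.

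I expect the main obstacle to be a subtlety I am glossing over: Theorem \ref{theo1} and Corollary \ref{BML2} involve the \emph{normalized} coefficients $\langle\pi_t(\gamma)v,w\rangle/\phi_t(\gamma)$, and the spherical function $\phi_t(\gamma)$ is bounded below and above by $e^{-\tfrac12 Q|\gamma|}(1+\omega_{|t|}(|\gamma|))$, which is \emph{not} bounded — it grows like $e^{(|t|-\tfrac12)Q|\gamma|}\cdot|\gamma|$-type expressions only when $|t|<\tfrac12$ does it tend to $0$. So dividing by $\phi_t$ is dividing by something that goes to zero, meaning the unnormalized coefficient $\langle\pi_t(\gamma)v,w\rangle$ could a priori \emph{grow}, and a uniform \emph{lower} bound $c_0 > 0$ on $|\langle\pi_t(\gamma)v,w\rangle|$ is then consistent with the normalized limit relation — the contradiction must instead come from comparing the \emph{product} of a $\pi_t$-coefficient and a $\pi_{-t}$-coefficient, since $\phi_t(\gamma)\phi_{-t}(\gamma) \asymp e^{-Q|\gamma|}(1+\omega_{|t|}(|\gamma|))(1+\omega_{|t|}(|\gamma|)e^{2|t|Q})$, and here the product is genuinely controlled. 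So the careful bookkeeping is: show $\sum_{\gamma\in S^\Gamma_{n,R}}\beta_{n,R}(\gamma)|\langle\pi_t(\gamma)v,w\rangle|\,|\langle v',\pi_{-t}(\gamma)w'\rangle| \to 0$ (or at least has a limit strictly less than its pointwise infimum) by applying the Cauchy--Schwarz inequality in $\gamma$ and then Theorem \ref{theo1} to each of the two squared-coefficient averages, choosing $v=v'$, $w=w'$ and test functions $f=g=\mathbf 1$ so that the limit is $\langle\mathcal{R}_t(v),\mathcal{R}_t(v)\rangle\|w\|^2$, which is finite; combined with the fact that $\beta_{n,R}$ is asymptotically a probability and the supports $S^\Gamma_{n,R}$ exhaust $\Gamma$, a finite average limit is incompatible with every summand being $\geq\epsilon_0 > 0$ unless the number of summands stays bounded — but $|S^\Gamma_{n,R}| \to \infty$ by (\ref{volumegrowth}). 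That cardinality blow-up, fed into the probability-normalized average having a finite limit, is the actual engine of the contradiction, and making that step rigorous (in particular handling the normalization $\beta_{n,R}(\gamma) \leq C/|S^\Gamma_{n,R}|$ correctly so the average genuinely spreads mass over $|S^\Gamma_{n,R}|\to\infty$ points) is where I expect the real work to lie.
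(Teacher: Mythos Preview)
Your proposal circles around the right ingredients but never lands on a valid contradiction, and it reaches for machinery far heavier than what is needed.

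\medskip

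\textbf{The actual gap.} Your ``engine'' is the claim that a finite limit for the $\beta_{n,R}$-weighted average
\[
\sum_{\gamma\in S^{\Gamma}_{n,R}}\beta_{n,R}(\gamma)\,\frac{|\langle\pi_t(\gamma)v,w\rangle|^2}{\phi_t(\gamma)^2}
\]
is incompatible with each summand of $|\langle\pi_t(\gamma)v,w\rangle\,\langle v',\pi_{-t}(\gamma)w'\rangle|$ being $\geq\epsilon_0$, because $|S^{\Gamma}_{n,R}|\to\infty$. This is false: a probability-weighted average of quantities all $\geq\epsilon_0$ is simply $\geq\epsilon_0$, which is perfectly finite. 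The cardinality of the support is irrelevant once you normalise. What you would \emph{need} is that the average of the unnormalised products $|\langle\pi_t(\gamma)v,w\rangle\,\langle v',\pi_{-t}(\gamma)w'\rangle|$ tends to $0$; Theorem~\ref{theo1} does not give you this, because it concerns the coefficients divided by $\phi_t(\gamma)\phi_{t'}(\gamma)$, and unwinding that normalisation brings you right back to the raw spectral estimate you could have used directly. Moreover, Theorem~\ref{theo1} is stated for $t,t'>0$, so it does not even apply to the pair $(\pi_t,\pi_{-t})$ without the $\gamma\leftrightarrow\gamma^{-1}$ symmetry argument you never carry out.

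\medskip

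\textbf{What the paper does.} The paper's proof is three lines and uses only inequality~(\ref{RDgeneral}) and the volume growth~(\ref{volumegrowth}). Assume the negation of weak mixing: there exist unit vectors and $\epsilon_0>0$ with $|\langle\pi_t(\gamma)v,w\rangle\,\langle\pi_{-t}(\gamma)v',w'\rangle|\geq\epsilon_0$ for all $\gamma$. By Cauchy--Schwarz in $\gamma$ and~(\ref{RDgeneral}) applied separately to $\pi_t$ and $\pi_{-t}$ (via $f_n=\sum_\gamma\overline{\langle\pi_t(\gamma)v,w\rangle}D_\gamma$, which gives $\|f_n\|_{\ell^2}\leq C\omega_{|t|}(nR)$), one gets
\[
\sum_{\gamma\in S^{\Gamma}_{n,R}}|\langle\pi_t(\gamma)v,w\rangle\,\langle\pi_{-t}(\gamma)v',w'\rangle|\;\leq\;C\,\omega_{|t|}(nR)^2\;\leq\;C\,e^{2|t|QnR}.
\]
The lower bound is $\epsilon_0\,|S^{\Gamma}_{n,R}|\geq C^{-1}\epsilon_0\,e^{QnR}$. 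Since $|t|<\tfrac12$, the inequality $e^{QnR}\leq C\,e^{2|t|QnR}$ fails for large $n$. Done.

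\medskip

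The moral: you correctly sensed that the growth rate comparison $e^{QnR}$ versus $e^{2|t|QnR}$ is what makes $|t|<\tfrac12$ essential, but you buried it under Theorem~\ref{theo1}, the Riesz operator, and test functions, none of which are needed. Strip everything back to~(\ref{RDgeneral}) and the sphere count.
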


 \begin{proof}

Let  $F\subset \mathcal{H}:=L^{2}(\partial X ,\nu_{o})$ and $\overline{F}\subset \overline{\mathcal{H}}:=\overline{L^{2}(\partial X ,\nu_{o})}$ be  finite subsets of vectors of $\mathcal{H}$ and $\overline{\mathcal{H}}$ and $\epsilon_{0}>0$ such that 
for all $\gamma \in \Gamma$ we have $|\langle\pi_{t}(\gamma)v,w\rangle \langle\pi_{-t}(\gamma)v',w'\rangle| \geq \epsilon_{0},$ for each pair of unit vectors $v,v'\in F$ and $w,w'\in \overline{F}$ .  Inequality (\ref{RDgeneralcoeff}) combined with Cauchy-Schwarz inequality implies for $R>0$ large enough that for all non negative integers $n$ and for all unit vectors $v,v'\in F$ and $w,w'\in \overline{F}$ unit vectors:

$$\sum_{\gamma \in S^{\Gamma}_{n,R}} |\langle \pi_{t}(\gamma)v,w \rangle \langle\pi_{-t}(\gamma)v',w'\rangle | \leq C e^{2tQnR},$$
where $C>0$ does depend on $t$.
It follows, by considering the growth of spheres $S^{\Gamma}_{n,R}$ that for all $n$: $$\epsilon_{0} e^{QnR}\leq Ce^{2|t|QnR}. $$
 and since $|t|<\frac{1}{2}$, the above inequality is absurd for $n$ large enough.  Hence the dual system representation provided by $\pi_{t}$ is weak mixing for $-\frac{1}{2}<t<\frac{1}{2}$.
\end{proof}
We deduce the following property that is  well known in the case of unitary representations.

\begin{prop}\label{infinitedim}
For $t\in ]-\frac{1}{2},\frac{1}{2}[$, the representations $(\pi_{t},L^{2}(\partial X ,\nu_{o}))$ cannot have a finite dimensional subrepresentations.
\end{prop}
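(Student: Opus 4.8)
The statement to prove is Proposition \ref{infinitedim}: for $t\in(-\tfrac12,\tfrac12)$ the representation $(\pi_t, L^2(\partial X,\nu_o))$ has no finite dimensional subrepresentation. The natural approach is to deduce this from the weak mixing property established in Proposition \ref{weakmixing} for the dual system $(\pi_t, L^2)$ and $(\pi_{-t},\overline{L^2})$. The key point is the standard principle that a weakly mixing representation cannot contain a nonzero finite dimensional subrepresentation, adapted here to the setting of a dual pair rather than a single unitary representation. So first I would argue by contradiction: suppose $W\subset L^2(\partial X,\nu_o)$ is a nonzero closed $\pi_t$-invariant subspace with $\dim W=d<\infty$.

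The main step is to produce, from such a $W$, a finite family of vectors on which weak mixing fails, contradicting Proposition \ref{weakmixing}. Since $W$ is finite dimensional and $\pi_t$-invariant, fix an orthonormal basis $(e_1,\dots,e_d)$ of $W$ and consider, for each $\gamma\in\Gamma$, the matrix $M(\gamma)=(\langle \pi_t(\gamma)e_i,e_j\rangle)_{i,j}$ of the restriction of $\pi_t(\gamma)$ to $W$ (using that $\pi_t(\gamma)W=W$, so the matrix coefficients with the $e_j$ recover the whole action on $W$). The point is that $\gamma\mapsto M(\gamma)$ is a (bounded-image, since $\|\pi_t(\gamma)\|$ is controlled on the finite dimensional $W$... actually one must be a little careful here, as $\pi_t$ is not unitary) homomorphism-like object; more robustly, one uses the contragredient picture: the orthogonal projection $P\colon L^2\to W$ and the pairing $\langle \pi_t(\gamma)v, w\rangle$ with $v,w$ ranging over $(e_i)$ and over the dual vectors in $\overline{L^2}$. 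Concretely: if $W$ is $\pi_t$-invariant and finite dimensional, then for the $L^2$-pairing, its orthogonal complement structure gives a $\pi_{-t}$-invariant complement of the same finite ``co-rank'' behaviour, and one can choose $v,w\in W$ and $v',w'$ in the corresponding dual finite dimensional space so that $\sum_{\gamma\in S^\Gamma_{n,R}} |\langle\pi_t(\gamma)v,w\rangle\langle \pi_{-t}(\gamma)v',w'\rangle|$ is, by finite dimensionality, bounded below by a positive multiple of $|S^\Gamma_{n,R}|\sim e^{QnR}$ — because inside a finite dimensional invariant piece the product of matrix coefficients cannot be made uniformly small. This is exactly the hypothesis negated in the proof of Proposition \ref{weakmixing}, where the same sum was shown to be $\le Ce^{2|t|QnR}$ with $|t|<\tfrac12$; comparing growth rates gives the contradiction.

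Thus the proof reduces to the implication ``weak mixing (in the dual-system sense of the paper) $\Rightarrow$ no finite dimensional subrepresentation'', which I would phrase and prove directly: if $W\neq 0$ is finite dimensional and invariant, pick any unit vector $v\in W$; since $\pi_t(\gamma)v\in W$ for all $\gamma$ and $W$ is finite dimensional, the set $\{\pi_t(\gamma)v\}$ lies in a finite dimensional space, and by compactness of the unit sphere of $W$ together with non-degeneracy of the $L^2$-pairing on $W\times \overline W$ one finds unit vectors $w\in W$, $v',w'$ in the conjugate space with $\inf_\gamma |\langle\pi_t(\gamma)v,w\rangle\langle\pi_{-t}(\gamma)v',w'\rangle|>0$; this contradicts the definition of weak mixing. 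The only genuinely delicate point — and the step I expect to be the main obstacle — is handling the \emph{non-unitarity} of $\pi_t$ when setting up the dual vectors $v',w'$: one cannot simply conjugate, and must instead use the $L^2$-pairing between $W$ and its image under the finite dimensional contragredient, checking that the relevant bilinear form on the finite dimensional pieces is non-degenerate so that the lower bound on products of matrix coefficients is legitimate. Once that bookkeeping is done, the growth comparison $e^{QnR}\lesssim e^{2|t|QnR}$ with $|t|<\tfrac12$ closes the argument, exactly as in the proof of Proposition \ref{weakmixing}.
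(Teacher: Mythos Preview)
Your overall strategy — deduce the absence of finite dimensional subrepresentations from the weak mixing of the dual system established in Proposition \ref{weakmixing} — is exactly the right one, and it is the paper's. But your execution has a real gap at the key step.

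You claim that from finite dimensionality and compactness of the unit sphere of $W$ one can find \emph{fixed} unit vectors $v,w,v',w'$ with $\inf_{\gamma}|\langle\pi_t(\gamma)v,w\rangle\,\langle\pi_{-t}(\gamma)v',w'\rangle|>0$. This is false in general: even for unitary representations on finite dimensional spaces (say $\mathbb{Z}$ acting by rotations on $\mathbb{C}^2$), a single matrix coefficient may vanish for some $\gamma$, so no compactness argument can produce a uniform lower bound for a fixed quadruple. The vaguer assertion in your second paragraph — that ``inside a finite dimensional invariant piece the product of matrix coefficients cannot be made uniformly small'' — is the right target, but you have not given a mechanism that proves it; the growth comparison you mention is already the content of Proposition \ref{weakmixing}, and what is missing here is the lower bound, not the upper bound.

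The paper supplies this mechanism via a one-line identity that completely bypasses the compactness/non-unitarity difficulties you flag. Using the adjoint relation $\pi_t^{*}(\gamma)=\pi_{-t}(\gamma^{-1})$, one has $\langle\pi_t(\gamma)v,\pi_{-t}(\gamma)v\rangle=\|v\|^2$ for every $v$ and every $\gamma$. If $W$ is $\pi_t$-invariant with orthonormal basis $(v_1,\dots,v_m)$ and $v\in W$ is a unit vector, then expanding $\pi_t(\gamma)v=\sum_i\langle\pi_t(\gamma)v,v_i\rangle v_i$ gives
\[
1=\sum_{i=1}^{m}\langle\pi_t(\gamma)v,v_i\rangle\,\langle v_i,\pi_{-t}(\gamma)v\rangle\qquad\text{for all }\gamma.
\]
Hence for every $\gamma$ at least one of the $m$ products has modulus $\ge 1/m$; applying Proposition \ref{weakmixing} to the finite set $\{v,v_1,\dots,v_m\}$ with $\epsilon<1/m$ yields the contradiction. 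So the ``non-unitary bookkeeping'' you anticipated is handled not by compactness but by this exact duality identity, which is the missing idea in your proposal.
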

 \begin{proof} By contradiction : 
  let $t\in ]-\frac{1}{2},\frac{1}{2}[$ and
  $\mathcal{V}\subset L^{2}(\partial X,\nu_{o})$ be $m$-dimensional, $\pi_{t}$-invariant and let $(v_{1},\dots,v_{m})$ an orthonormal basis of $\mathcal{V}$. Let $v\in \mathcal{V}$ a unit vector. Write for all $\gamma \in \Gamma$:
 \begin{align*}
\pi_{t}(\gamma) v&=\sum^{m}_{i=1}\langle \pi_{t}(\gamma)v,v_{i}\rangle v_{i}.
 \end{align*}
 Observe that $$ \langle\pi_{t}(\gamma) v, \pi_{-t}(\gamma) v\rangle=\|v\|^{2}=1.$$
 Now write for all $\gamma \in \Gamma$ and for all unit vectors $v\in \mathcal{V}$:
 
 \begin{align}\label{equality}
1=\langle\pi_{t}(\gamma) v, \pi_{-t}(\gamma) v\rangle&=\sum^{m}_{i=1}\langle \pi_{t}(\gamma)v,v_{i}\rangle \langle v_{i},\pi_{-t}(\gamma)v\rangle.
 \end{align}

 Now pick $0<\epsilon<\frac{1}{m}$. Therefore, Proposition \ref{weakmixing} provides the existence of $\gamma $ such that 
 $\bigg| \langle \pi_{t}(\gamma)v,v_{i}\rangle \langle v_{i},\pi_{-t}(\gamma)v\rangle \bigg|<\epsilon$. Hence, 
 $$\bigg|\sum^{m}_{i=1}\langle \pi_{t}(\gamma)v,v_{i}\rangle \langle v_{i},\pi_{-t}(\gamma)v\rangle \bigg|<1, $$
  and thus equality (\ref{equality}) is absurd.
  \end{proof}
 As a corollary of this mixing property, we have

      \begin{coro}\label{infinitedim}
 If $0<t<\frac{1}{2}$, then the spaces $\mathcal{F}_{-t,\sigma_{t}}$, $\mathcal{F}_{t,\textbf{1}_{\partial X} }/N_{t}$ and $Im(\mathcal{I}_{t})$ are infinite dimensional vector spaces. In particular the compact operator $\mathcal{I}_{t}$ has not finite rank.
 \end{coro}
\begin{proof}
 Proposition \ref{intertwin} implies that   $(\pi_{-t},\mathcal{F}_{-t,\sigma_{t}})$ is a subrepresentation of $(\pi_{-t},L^{2}(\partial X,\nu_{o}))$. Proposition \ref{infinitedim} implies that $\mathcal{F}_{-t,\sigma_{t}}$  and thus $Im(\mathcal{I}_{t})$ are infinite dimensional representations. Therefore $\mathcal{I}_{t}$ is not a finite rank operator. The isomorphism in  (\ref{isoalg}) ensures that $\mathcal{F}_{t,\textbf{1}_{\partial X} }/N_{t}$ in infinite dimensional. 
 \end{proof}

\subsection{The Poisson transform and the Riesz transform}

\subsubsection{Decay of matrix coefficients}

Let $A>0$. Define a subset of the boundary for $x,y\in X$ as

\begin{equation}\label{defsha}
 \mathcal{O}_{A}(x,y):= \{ \xi\in \partial X | (\xi,y) _{x}\geq A\}.
\end{equation} 
and set \textbf{$\hat{\gamma}$ for $\hat{\gamma.o_o}$}. Here  is the fundamental lemma concerning the matrix coefficients :
 
\begin{lemma}\label{decay2}
Let $t>0$ ; there exists $C>0$ such that for all $w\in Lip(\partial X)$ and for all $A>0$ and for all $\gamma \in \Gamma$:
 $$\frac{\langle \pi_{t}(\gamma)\textbf{1}_{\partial X} , |w-w(\hat{\gamma})\textbf{1}_{\partial X} |\rangle}{\phi_{t}(\gamma)}\leq C\|w\|_{Lip}\big((\varepsilon_{t}(|\gamma|)e^{2tQ A}+e^{-\epsilon A} \big),$$
 where $\varepsilon_{t}(x)=O_{+\infty}(e^{-\alpha x})$ for some $\alpha>0$.
\end{lemma}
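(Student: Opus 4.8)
The plan is to estimate the matrix coefficient $\langle \pi_{t}(\gamma)\textbf{1}_{\partial X}, |w - w(\hat{\gamma})\textbf{1}_{\partial X}|\rangle$ by splitting the boundary $\partial X$ into the ``shadow'' $\mathcal{O}_{A}(o,\gamma o)$ of the point $\gamma o$ seen from $o$ and its complement. On the complement $\partial X \setminus \mathcal{O}_{A}(o,\gamma o)$, one has $(\xi,\gamma o)_{o} < A$, so the Radon--Nikodym cocycle $\big(\tfrac{d\gamma_{*}\nu_{o}}{d\nu_{o}}(\xi)\big)^{\frac12 + t} = e^{(\frac12+t)Q\beta_{\xi}(o,\gamma o)}$ is controlled; using $\beta_{\xi}(o,\gamma o) = 2(\xi,\gamma o)_{o} - d(o,\gamma o)$ one gets a factor of order $e^{(\frac12+t)Q(2A - |\gamma|)} = e^{-\frac12 Q|\gamma|} e^{-t Q|\gamma|} e^{(1+2t)QA}$. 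Since $\phi_{t}(\gamma) \geq C^{-1} e^{-\frac12 Q|\gamma|}(1 + \omega_{|t|}(|\gamma|))$ by Proposition \ref{estimatesspherical}, and $\|w\|_{\infty}$ bounds $|w - w(\hat\gamma)\textbf{1}_{\partial X}|$, this part contributes at most $C\|w\|_{\infty} e^{-tQ|\gamma|} e^{(1+2t)QA}/(1+\omega_{|t|}(|\gamma|))$, which is of the claimed form $\epsilon_{t}(|\gamma|)e^{2tQA}$ times a bounded factor (absorbing the $e^{QA}$ harmlessly, or more carefully tracking that the exponent of $A$ can be arranged to be $2tQ$ by a sharper estimate of the measure of the shadow complement via Lemma \ref{Ahlfors}).

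\textbf{The shadow term.} On $\mathcal{O}_{A}(o,\gamma o)$ we cannot control the cocycle pointwise, but here we use the Lipschitz hypothesis: for $\xi$ deep in the shadow of $\gamma o$, the point $\xi$ is close (in the visual metric $d_{o,\epsilon}$) to the forward endpoint $\hat\gamma_{o}$, so $|w(\xi) - w(\hat\gamma_{o})| \leq \|w\|_{Lip}\, d_{o,\epsilon}(\xi,\hat\gamma_{o}) \leq \|w\|_{Lip}\, e^{-\epsilon(\xi,\hat\gamma)_{o}}$, and $(\xi,\hat\gamma)_{o}$ is bounded below by roughly $\min\{(\xi,\gamma o)_{o},(\gamma o,\hat\gamma)_{o}\} - 2\delta \geq A - M_{X} - 2\delta$ using \eqref{hat} and $\delta$-hyperbolicity (Proposition \ref{propGromov}(2)). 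Hence on the shadow $|w - w(\hat\gamma)\textbf{1}_{\partial X}| \leq C\|w\|_{Lip} e^{-\epsilon A}$ (up to adjusting the meaning of $A$), while $\langle \pi_{t}(\gamma)\textbf{1}_{\partial X}, \textbf{1}_{\mathcal{O}_{A}}\rangle \leq \langle \pi_{t}(\gamma)\textbf{1}_{\partial X}, \textbf{1}_{\partial X}\rangle = \phi_{t}(\gamma)$, so after dividing by $\phi_{t}(\gamma)$ this term is $\leq C\|w\|_{Lip} e^{-\epsilon A}$; rescaling $\epsilon A \rightsquigarrow A$ gives the $e^{-A}$ in the statement (and one replaces $\|w\|_{Lip}$-appropriately, noting $\|w\|_\infty \le \|w\|_{Lip}\mathrm{Diam}(\partial X) + |w(\hat\gamma)|$).

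\textbf{Main obstacle.} The delicate point is pinning down the exponent $2tQ$ (rather than a cruder $(1+2t)Q$ or similar) on the $e^{2tQA}$ factor in the first term: this requires integrating the cocycle $e^{(\frac12+t)Q\beta_{\xi}(o,\gamma o)}$ against $\nu_{o}$ over the annular pieces $A_{k,R}(\widehat{\gamma o})$ for $k < $ (the level corresponding to $A$), using the Ahlfors-type volume estimate $\nu_{o}(A_{k,R}) \asymp e^{-kQR}$ from Lemma \ref{Ahlfors} together with $\beta_{\xi}(o,\gamma o) \asymp 2kR - |\gamma|$ on $A_{k,R}$, and summing the resulting geometric series in $k$ — exactly the computation pattern of Lemmas \ref{Cauchy} and \ref{limsup}. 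This produces a main term $\asymp e^{-\frac12 Q|\gamma|}\omega_{t}(|\gamma|)$ matching the lower bound of $\phi_{t}$ up to a factor $e^{2tQA}$ coming from the truncation level, plus the exponentially small tail $\epsilon_{t}(|\gamma|) = O(e^{-\alpha|\gamma|})$. The bookkeeping of how the truncation parameter $A$ enters both the geometric sum (as $e^{2tQA}$) and the Lipschitz estimate (as $e^{-\epsilon A}$) — so that one may later optimize over $A$ — is where care is needed, but no new idea beyond the estimates already established in Section \ref{section3} is required.
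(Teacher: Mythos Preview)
Your approach is essentially the same as the paper's: split $\partial X$ into $\mathcal{O}_A(o,\gamma o)$ and its complement, use the Lipschitz bound $|w(\xi)-w(\hat\gamma)|\le \|w\|_{Lip}\,d_{o,\epsilon}(\xi,\hat\gamma)$ together with $\delta$-hyperbolicity on the shadow, and on the complement rewrite $\beta_\xi(o,\gamma o)=2(\xi,\gamma o)_o-|\gamma|$ and sum over the annuli $A_{k,R}(\gamma o)$ via Lemma~\ref{Ahlfors} to extract the correct exponent $2tQA$ (your ``Main obstacle'' paragraph is exactly the computation the paper performs). One small point you glossed over: on the shadow the hyperbolic inequality gives $(\xi,\hat\gamma)_o\ge \min\{(\xi,\gamma o)_o,(\gamma o,\hat\gamma)_o\}-\delta$, and this minimum can be either $\ge A$ \emph{or} $\ge |\gamma|-M_X$ depending on which is smaller, so the shadow term actually contributes $C\|w\|_{Lip}(e^{-A}+e^{-|\gamma|})\phi_t(\gamma)$; the extra $e^{-|\gamma|}$ is then absorbed into $\epsilon_t(|\gamma|)e^{2tQA}$.
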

\begin{proof}
 Decompose the integral in two terms as follows:
\begin{align*}
\langle \pi_{t}(\gamma)\textbf{1}_{\partial X} , |w-w(\hat{\gamma})\textbf{1}_{\partial X} |\rangle
&=\int _{\mathcal{O}_{A}(o,\gamma o)} e^{(\frac{1}{2}+t)Q\beta_{\xi}(o,\gamma o))}|w(\xi)-w(\hat{\gamma})|d\nu_{o}(\xi)\\&+\int _{\partial X \backslash \mathcal{O}_{A}(o,\gamma o)}e^{(\frac{1}{2}+t)Q\beta_{\xi}(o,\gamma o))}|w(\xi)-w(\hat{\gamma})| d\nu_{o}(\xi).
\end{align*}
First recall by (\ref{buseman}) that $\beta_{\xi}(o,\gamma o)=-|\gamma|+2(\xi,\gamma o)_{o}$.\\ We study the first integral: $$\int _{\mathcal{O}_{A}(o,\gamma o)} e^{(\frac{1}{2}+t)Q\beta_{\xi}(o,\gamma o))}|w(\xi)-w(\hat{\gamma})|d\nu_{o}(\xi)\leq \|w\|_{Lip} \int _{\mathcal{O}_{A}(o,\gamma o)} e^{(\frac{1}{2}+t)Q\beta_{\xi}(o,\gamma o))}d_{o,\epsilon}(\xi,\hat{\gamma})d\nu_{o}(\xi).$$

Recall that $d_{o,\epsilon}(\xi,\hat{\gamma})=e^{-\epsilon(\xi,\hat{\gamma})_{o}}.$
 We have $$(\hat{\gamma},\xi)_{o}\geq \min \{ (\hat{\gamma},\gamma o)_{o},(\gamma o,\xi)_{o})\}-\delta.$$ On one hand, if $(\hat{\gamma},\gamma o)_{o}\geq (\gamma o,\xi)_{o}$, then $(\hat{\gamma},\xi)_{o}\geq (\gamma o,\xi )_{o}-\delta\geq A-\delta$ by (\ref{defsha}) and if $(\hat{\gamma},\gamma o)_{o}\leq (\gamma o,\xi)_{o}$, then $(\hat{\gamma},\xi)_{o}\geq (\hat{\gamma} ,\gamma o)_{o}-\delta \geq |\gamma|- M_{X}-\delta$ using $(\ref{hat}).$  
 
 Hence for the first integral we have:
 $$ \int _{\mathcal{O}_{A}(o,\gamma o)} e^{(\frac{1}{2}+t)Q\beta_{\xi}(o,\gamma o))}|w(\xi)-w(\hat{\gamma})|d\nu_{o}(\xi) \leq C \|w\|_{Lip}(e^{-\epsilon |\gamma|+\epsilon \delta+\epsilon M_{X}}+ e^{-\epsilon A+\epsilon \delta}) \phi_{t}(\gamma).$$
 
 Concerning the second term we follow the method using the discretization process as in Lemma \ref{ineqsigma}. We have:
 \begin{align*}
 \int _{\partial X \backslash \mathcal{O}_{A}(o,\gamma o)}e^{(\frac{1}{2}+t)Q\beta_{\xi}(o,\gamma o))}|w(\xi)-w(\hat{\gamma})| d\nu_{o}(\xi) 
 &\leq \|w\|_{Lip}\int_{\partial X \backslash \mathcal{O}_{A}(o,\gamma o))}e^{(\frac{1}{2}+t)Q\beta_{\xi}(o,\gamma o)}d\nu_{o}(\xi) \\
 &= \|w\|_{Lip}e^{-(\frac{1}{2}+t)Q|\gamma |}\int_{\partial X \backslash U_{A} }e^{(1+2t)Q (\xi,\gamma o)_{o}} d\nu_{o}(\xi)\\
 & \leq C\|w\|_{Lip}e^{-(\frac{1}{2}+t)Q|\gamma |} \frac{1-e^{2tQ A}}{1-e^{2tQ}}\\
 &\leq C \|w\|_{Lip}  \frac{e^{-tQ|\gamma |}}{\sinh(tQ)} e^{2tQ A} \phi_{t}(\gamma),
  \end{align*}
  
 where the last inequality follows from the left-hand side of the lower bound (\ref{HCHestimates}). 
 Therefore, gathering both terms, using an absorbing constant $C>0$ we obtain:
\begin{align*}
 \langle \pi_{t}(\gamma)\textbf{1}_{\partial X} , |w-w(\hat{\gamma})\textbf{1}_{\partial X} |\rangle &\leq C \|w\|_{Lip} \bigg( \frac{e^{-tQ|\gamma |}}{\sinh(tQ)} e^{2tQ A}+ e^{-\epsilon |\gamma|+\epsilon \delta+ \epsilon M_{X}}+ e^{-\epsilon A+\epsilon\delta}\bigg)\phi_{t}(\gamma)\\
 & \leq C \|w\|_{Lip} \bigg( \big(\frac{e^{-tQ|\gamma |}}{\sinh(tQ)} + e^{-\epsilon |\gamma|}\big)e^{2tQ A}+ e^{- \epsilon A}\bigg)\phi_{t}(\gamma)\\
 &=C \|w\|_{Lip} \bigg( \varepsilon_{t}(|\gamma|)e^{2tQ A}+ e^{-\epsilon A}\bigg)\phi_{t}(\gamma).
 \end{align*}
where for $x\in \mathbb{R}_{+}$ we set $\varepsilon_{t}(x):= \frac{e^{-tQx}}{\sinh(tQ)}+e^{-\epsilon x
 },$ and the proof is done.
\end{proof}
The following lemma extracts the property of the decay of the matrix coefficient corresponding to the second term in the above proof.
\begin{lemma}\label{decay3}
Let $t, A >0$. There exists $C$ such that for all $w\in C(\partial X)$ and for all $\gamma \in \Gamma$:
 $$\frac{\langle \pi_{t}(\gamma)\textbf{1}_{\partial X} ,\textbf{1}_{\partial X\backslash{\mathcal{O}_{A}(o,\gamma o)}} \rangle}{\phi_{t}(\gamma)}\leq C \frac{e^{-tQ|\gamma |}}{\sinh(tQ)} e^{2tQ A} .$$
 \end{lemma}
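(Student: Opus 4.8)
The statement is essentially the ``easy half'' of Lemma~\ref{decay2}: it isolates the contribution of the second term appearing in that proof, namely the integral over $\partial X\setminus \mathcal{O}_{A}(o,\gamma o)$. The first thing I would do is write out the matrix coefficient explicitly using the definition of $\pi_{t}$ and the Busemann cocycle~(\ref{buseman}):
\begin{equation*}
\langle \pi_{t}(\gamma)\textbf{1}_{\partial X},\textbf{1}_{\partial X\setminus \mathcal{O}_{A}(o,\gamma o)}\rangle
=\int_{\partial X\setminus \mathcal{O}_{A}(o,\gamma o)} e^{(\frac{1}{2}+t)Q\beta_{\xi}(o,\gamma o)}\,d\nu_{o}(\xi).
\end{equation*}
Next I would rewrite $\beta_{\xi}(o,\gamma o)=2(\xi,\gamma o)_{o}-|\gamma|$, so that the integrand becomes $e^{-(\frac12+t)Q|\gamma|}e^{(1+2t)Q(\xi,\gamma o)_{o}}$, and pull the factor $e^{-(\frac12+t)Q|\gamma|}$ outside. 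On the set $\partial X\setminus \mathcal{O}_{A}(o,\gamma o)$ we have $(\xi,\gamma o)_{o}<A$, so the remaining integral is controlled by the truncated function $\sigma^{A}_{s}$ (for the value of $s$ making $(1-2s)=(1+2t)$, i.e.\ $s=-t$), which was estimated in Lemma~\ref{ineqsigma}(1): up to a uniform constant it is bounded by $\dfrac{1-e^{2tQA}}{1-e^{2tQ}}$. This is exactly the same computation carried out for the second term in the proof of Lemma~\ref{decay2}.

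Putting these together gives
\begin{equation*}
\langle \pi_{t}(\gamma)\textbf{1}_{\partial X},\textbf{1}_{\partial X\setminus \mathcal{O}_{A}(o,\gamma o)}\rangle
\leq C\, e^{-(\frac12+t)Q|\gamma|}\,\frac{1-e^{2tQA}}{1-e^{2tQ}}
\leq C\, e^{-(\frac12+t)Q|\gamma|}\,\frac{e^{2tQA}}{\sinh(tQ)},
\end{equation*}
where in the last step I absorb the denominator $1-e^{2tQ}$ into the constant and into $\sinh(tQ)$ (note $t,A>0$ are fixed here, so there is no uniformity issue in $t$). Finally I would divide by $\phi_{t}(\gamma)$ and invoke the Harish-Chandra--Anker lower bound: the left-hand inequality in~(\ref{HCHestimates}), extended to all $\pi_t$ via Proposition~\ref{estimatesspherical}, gives $\phi_{t}(\gamma)\geq C^{-1}e^{-\frac12 Q|\gamma|}(1+\omega_{|t|}(|\gamma|))\geq C^{-1}e^{-\frac12 Q|\gamma|}$. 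Dividing the displayed bound by this yields
\begin{equation*}
\frac{\langle \pi_{t}(\gamma)\textbf{1}_{\partial X},\textbf{1}_{\partial X\setminus \mathcal{O}_{A}(o,\gamma o)}\rangle}{\phi_{t}(\gamma)}
\leq C\,\frac{e^{-tQ|\gamma|}}{\sinh(tQ)}\,e^{2tQA},
\end{equation*}
which is the claimed estimate.

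\textbf{Main obstacle.} There is no serious obstacle here, since every ingredient is already available: the only care needed is bookkeeping with the sign of the exponent — the relevant truncated integral is $\sigma^{A}_{-t}$ rather than $\sigma^{A}_{t}$ — and checking that Lemma~\ref{ineqsigma}(1) is stated for $t\in\mathbb{R}^{*}$ (it is), so that it legitimately applies with parameter $-t<0$. One should also make sure $R$ is taken large enough for Lemma~\ref{ineqsigma} to hold, which is harmless since the constant $C$ in the final statement is allowed to depend on the global geometry. The rest is the same discretization-over-annuli computation used repeatedly in Section~\ref{section3} (e.g.\ in Lemmas~\ref{Cauchy} and~\ref{limsup}), so I would simply refer back to the proof of Lemma~\ref{decay2} for the details rather than redo it.
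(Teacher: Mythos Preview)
Your proposal is correct and follows exactly the paper's approach: the paper does not give a separate proof of Lemma~\ref{decay3} but simply remarks that it ``extracts the decay of the matrix coefficient corresponding to the second term'' in the proof of Lemma~\ref{decay2}, which is precisely the computation you have written out (rewrite $\beta_\xi(o,\gamma o)=2(\xi,\gamma o)_o-|\gamma|$, bound the truncated integral by discretization as in Lemma~\ref{ineqsigma}, and divide by the Harish-Chandra--Anker lower bound for $\phi_t$). The only cosmetic point is that Lemma~\ref{ineqsigma} is stated for $\eta\in\partial X$ while here the base point is $\gamma o\in X$, but as you note the discretization goes through verbatim via Lemma~\ref{Ahlfors} (which holds for all $y\in\overline{X}$), and this is exactly how the paper handles it in the proof of Lemma~\ref{decay2}.
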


\subsection{The generalized Poisson transform}

Inspired by \cite[Section 6.1]{Boy}, we recall first the notion of kernel we shall consider and the notion of Dirac-Weierstrass family.

Let  $\nu$ be a probability measure on $\partial X$.
Consider a kernel $K:(x,\xi)\in X\times \partial X \mapsto K(x,\xi)\in \mathbb{R}$. We say that the family $K(x,\cdot)_{x\in X}$ is a Dirac-Weierstrass family if it satisfies 
\begin{enumerate}
\item $K(x,\xi)\geq 0$ for all $(x,\xi)\in X\times \partial X$.
\item  $\int_{\partial X}K(x,\xi)d\nu(\xi)=1.$
\item For any $\eta \in \partial X$ and for any $r>0$, we have $$\int_{\partial X \backslash B(\eta,r)}K(x,\xi)d\nu(\xi) \to 0\mbox{ as }x\to \eta.$$
\end{enumerate} 
The useful property of a Dirac-Weierstrass family is the following property.\\
Define for $f\in L^{1}(\partial X,\nu)$ 
\begin{equation}
K(f)(x):=\int_{\partial X}K(x,\xi)f(\xi)d\nu(\xi),
\end{equation}
and define also
\begin{equation}
\bar{K}(f):x\mapsto \left\{
\begin{array}{l}
  K(f)(x) \mbox{ if $x\in X$}\\
 f(x) \mbox{ if $x\in \partial X$}\end{array}
\right.
\end{equation}
\begin{prop}\label{conti} Assume that 
 $K(x,\cdot)_{x\in X}$ is a Dirac-Weierstrass family. Let $f\in C(\partial X)$. Then $\bar{K}(f)$ is continuous on $\overline{X}$.
\end{prop}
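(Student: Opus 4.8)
The statement to prove is that if $K(x,\cdot)_{x\in X}$ is a Dirac-Weierstrass family and $f\in C(\partial X)$, then the function $\bar K(f)$ defined piecewise (equal to $K(f)$ on $X$ and to $f$ on $\partial X$) is continuous on the whole bordification $\overline X$. The plan is to verify continuity separately at interior points and at boundary points, the latter being the substantive case.

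First I would check continuity on $X$: fix $x_0\in X$ and a sequence $x_n\to x_0$ in $X$. Since $(X,d)$ is $\epsilon$-good, the map $x\mapsto K(x,\xi)$ is continuous for each fixed $\xi$ — here I would use that $K$ is built from the Gromov product/visual metric, which extends continuously to the bordification, so in particular $K(x_n,\xi)\to K(x_0,\xi)$ pointwise in $\xi$. Combined with a local uniform domination of $K(x,\cdot)$ near $x_0$ (closed balls in $X$ being compact, and using positivity plus $\int K(x,\cdot)\,d\nu=1$ to control the integrand — more precisely one writes $\int_{\partial X}|K(x_n,\xi)-K(x_0,\xi)|\,d\nu(\xi)\to 0$, which follows from pointwise convergence, Fatou, and the fact that all the integrals of $K(x_n,\cdot)$ equal $1$, i.e. Scheffé's lemma), dominated convergence (or Scheffé) gives $K(f)(x_n)\to K(f)(x_0)$ since $f$ is bounded. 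So $\bar K(f)$ is continuous at interior points.

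The main work is continuity at a boundary point $\eta\in\partial X$. Let $x_n\to\eta$ with $x_n\in\overline X$; I must show $\bar K(f)(x_n)\to f(\eta)$. Split into the subsequence of $x_n$ lying in $\partial X$, where $\bar K(f)(x_n)=f(x_n)\to f(\eta)$ by continuity of $f$ on $\partial X$, and the subsequence in $X$, where I estimate
$$
|K(f)(x_n)-f(\eta)| = \Bigl|\int_{\partial X} K(x_n,\xi)\bigl(f(\xi)-f(\eta)\bigr)\,d\nu(\xi)\Bigr|,
$$
using $\int K(x_n,\cdot)\,d\nu=1$. Given $\varepsilon>0$, pick $r>0$ so that $|f(\xi)-f(\eta)|<\varepsilon$ for $\xi\in B(\eta,r)$ (continuity of $f$). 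Split the integral over $B(\eta,r)$ and its complement: the first piece is bounded by $\varepsilon\int K(x_n,\cdot)\,d\nu=\varepsilon$, and the second piece is bounded by $2\|f\|_\infty\int_{\partial X\setminus B(\eta,r)}K(x_n,\xi)\,d\nu(\xi)$, which tends to $0$ as $x_n\to\eta$ by property (3) of a Dirac-Weierstrass family. Hence $\limsup_n|K(f)(x_n)-f(\eta)|\le\varepsilon$ for every $\varepsilon>0$, so the limit is $f(\eta)$. Combining the two subsequences (and noting the general sequence is an interleaving) gives $\bar K(f)(x_n)\to\bar K(f)(\eta)$.

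The only genuine obstacle is making the interior-continuity argument airtight: one needs that the convergence $x_n\to\eta$ along interior points still gives property (3) uniformly enough, but in fact property (3) is exactly phrased as a limit as $x\to\eta$ in $\overline X$, so no extra work is needed there; the subtlety is rather the domination/Scheffé step for continuity at interior points, which is standard once one invokes that $\int K(x,\cdot)\,d\nu\equiv 1$ turns pointwise convergence of nonnegative densities into $L^1$-convergence. I would present the boundary case in detail and dispatch the interior case in a sentence or two.
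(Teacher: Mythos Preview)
The paper states this proposition without proof; it is treated as a standard fact about approximation-of-identity kernels (the authors refer to \cite[Section 6.1]{Boy} for the notion). Your boundary-point argument is exactly the classical one and is correct: write $K(f)(x)-f(\eta)=\int K(x,\xi)(f(\xi)-f(\eta))\,d\nu(\xi)$ using property~(2), split over $B(\eta,r)$ and its complement, and use continuity of $f$ on the ball and property~(3) on the complement.

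One remark on the interior part: the three axioms of a Dirac--Weierstrass family say nothing about continuity of $x\mapsto K(x,\xi)$ for fixed $\xi$, so your Scheff\'e argument imports an extra hypothesis not present in the abstract statement. You acknowledge this yourself (``$K$ is built from the Gromov product/visual metric''), and indeed in the paper's applications---the generalized Poisson kernel $P_t$ and the Riesz kernel $R_t$---this pointwise continuity holds because the space is $\epsilon$-good. So your proof is correct for the cases the paper cares about, but if one reads Proposition~\ref{conti} literally as a statement about an arbitrary Dirac--Weierstrass family, the interior continuity of $K(f)$ would need to be added as a hypothesis (or proved separately for the specific kernel at hand, as the paper effectively does via Proposition~\ref{contP} and Proposition~\ref{contR}).
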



After this general discussion, consider the kernel for any $t\in \mathbb{R}$ defined as :
\begin{equation}
P_{t}:(x,\xi)\in X \times \partial X \mapsto \frac{e^{(\frac{1}{2}+t)Q \beta_{\xi}(o,x)}}{\phi_{t}(x)}\in \mathbb{R}^{+}.
\end{equation}

Thus, the corresponding ``generalized Poisson transform" is defined as :

\begin{equation}
\mathcal{P}_{t}:f\in L^{1}(\partial X, \nu_{o}) \mapsto \mathcal{P}_{t}(f)(x):= \int_{\partial X}f(\xi)P_{t}(x,\xi)d\nu_{o}(\xi)= \int_{\partial X}f(\xi)\frac{e^{(\frac{1}{2}+t)Q \beta_{\xi}(o,x)}}{\phi_{t}(x)}d\nu_{o}(\xi),
\end{equation}
and $\bar{\mathcal{P}_{t}}$ satisfies 
 
\begin{equation}
\bar{\mathcal{P}_{t}}(f) = \left\{
    \begin{array}{ll}
        \mathcal{P}_{t}(f)(x)& \mbox{if } x \in X \\
       f(x) & \mbox{if } x \in \partial X.
    \end{array}
\right.
\end{equation}
 The key point is the following observation.
 \begin{lemma}\label{DW}
 Let $t>0$. The kernel $\big(P_{t}(x,\cdot)\big)_{x\in X}$ is a Dirac-Weierstrass family.
 \end{lemma}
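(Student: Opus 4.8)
The plan is to verify the three defining properties of a Dirac–Weierstrass family directly from the definition of $P_t(x,\xi)=e^{(\frac12+t)Q\beta_\xi(o,x)}/\phi_t(x)$.  Property (1), positivity, is immediate since the Busemann exponential is positive and $\phi_t(x)=\langle \pi_t(x)\mathbf{1}_{\partial X},\mathbf{1}_{\partial X}\rangle=\int_{\partial X}e^{(\frac12+t)Q\beta_\xi(o,x)}\,d\nu_o(\xi)>0$ for every $x\in X$ (here I use that $\phi_t$ extends the matrix coefficient to $X$ via the representation $\pi_t$ evaluated on Dirac masses, exactly as in (\ref{repre}) and the definition of $\phi_t$; one checks $\phi_t(x)$ makes sense for $x=\gamma o$ and more generally for $x\in X$ because $\xi\mapsto\beta_\xi(o,x)$ is bounded on the compact $\partial X$).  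Property (2), normalization $\int_{\partial X}P_t(x,\xi)\,d\nu_o(\xi)=1$, is then just the definition of $\phi_t(x)$: the numerator integrates to $\phi_t(x)$, which cancels the denominator.

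The substance is Property (3): for every $\eta\in\partial X$ and $r>0$,
\[
\int_{\partial X\setminus B(\eta,r)}P_t(x,\xi)\,d\nu_o(\xi)\longrightarrow 0\quad\text{as }x\to\eta .
\]
Here I would exploit the decay lemmas just proved.  Writing $x=\gamma o$ is too restrictive, so instead I use the geometry directly: fix $\eta$ and $r$, and pick $A>0$ so that the ``shadow'' $\mathcal{O}_A(o,x)$ of $x$ seen from $o$ is contained in $B(\eta,r)$ once $x$ is close enough to $\eta$ (this is a standard fact about shadows: as $x\to\eta$, $(\xi,x)_o\to+\infty$ forces $\xi$ close to $\eta$ in the visual metric; more precisely $\xi\in\mathcal O_A(o,x)$ and $x$ near $\eta$ give $(\xi,\eta)_o$ large by the hyperbolic inequality of Proposition~\ref{propGromov}(2), hence $d_{o,\epsilon}(\xi,\eta)$ small).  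Therefore $\partial X\setminus B(\eta,r)\subset \partial X\setminus\mathcal O_A(o,x)$ for $x$ close to $\eta$, and it suffices to bound
\[
\int_{\partial X\setminus\mathcal O_A(o,x)}P_t(x,\xi)\,d\nu_o(\xi)
=\frac{\langle\pi_t(x)\mathbf 1_{\partial X},\mathbf 1_{\partial X\setminus\mathcal O_A(o,x)}\rangle}{\phi_t(x)} .
\]
For this I invoke the computation underlying Lemma~\ref{decay3} (the ``second term'' estimate in the proof of Lemma~\ref{decay2}): using Lemma~\ref{Ahlfors} on the partition $A_{k,R}(\hat x_o)$ of $\partial X$ and the Harish–Chandra–Anker lower bound (\ref{HCHestimates})–Proposition~\ref{estimatesspherical} for $\phi_t(x)$, one gets
\[
\int_{\partial X\setminus\mathcal O_A(o,x)}P_t(x,\xi)\,d\nu_o(\xi)\;\le\; C\,\frac{e^{-tQ d(o,x)}}{\sinh(tQ)}\,e^{2tQ A}.
\]
Now let $x\to\eta$: then $d(o,x)\to+\infty$, so for a fixed $A$ the right-hand side $\to 0$.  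Hence the whole quantity tends to $0$, which is exactly Property (3).

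The main obstacle is the geometric shadow statement: making precise that for $x$ sufficiently close to $\eta$ one has $\partial X\setminus B(\eta,r)\subset \partial X\setminus\mathcal O_A(o,x)$ for a \emph{fixed} $A=A(r,\delta,\epsilon)$, while simultaneously $d(o,x)\to\infty$ so that the bound $Ce^{-tQ d(o,x)}e^{2tQA}/\sinh(tQ)\to0$.  Concretely: if $\xi\notin B(\eta,r)$ then $(\xi,\eta)_o\le \frac1\epsilon\log(1/r)=:A_0$, whereas if also $\xi\in\mathcal O_A(o,x)$, i.e.\ $(\xi,x)_o\ge A$, then by Proposition~\ref{propGromov}(2) applied to $\xi,\eta,x$ together with the visual estimate (\ref{buseman'}) one obtains $(\xi,\eta)_o\ge\min\{A,(\eta,x)_o\}-2\delta$; choosing $A=A_0+2\delta+1$ and noting $(\eta,x)_o\to+\infty$ as $x\to\eta$ forces $\min\{A,(\eta,x)_o\}=A$ for $x$ close to $\eta$, giving $(\xi,\eta)_o\ge A_0+1>A_0$, a contradiction.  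So no such $\xi$ exists, i.e.\ $\mathcal O_A(o,x)\cap(\partial X\setminus B(\eta,r))=\emptyset$.  This pins $A$ once and for all in terms of $r$, after which the decay estimate finishes the proof.  The remainder is routine bookkeeping with the estimates already established in Lemmas~\ref{Ahlfors}, \ref{ineqsigma}, \ref{decay2}, \ref{decay3} and Proposition~\ref{estimatesspherical}.
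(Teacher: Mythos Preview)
Your proof is correct and follows the same approach as the paper: the paper's proof simply cites an earlier reference and invokes Lemma~\ref{decay3}, whose estimate is exactly the bound $C\,\frac{e^{-tQ d(o,x)}}{\sinh(tQ)}\,e^{2tQA}$ you derive, while you additionally spell out the shadow-inclusion argument that pins $A$ in terms of $r$. The only cosmetic issue is the notation $\pi_t(x)$ for general $x\in X$, but you correctly note that the underlying computation of Lemma~\ref{decay3} (partition of $\partial X$ plus Ahlfors regularity plus the lower bound on $\phi_t$) works for any $x\in X$, not just orbit points.
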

 \begin{proof}
 The proof is the same as \cite[Proposition 6.2]{Boy2}, using here Lemma \ref{decay3}.
 \end{proof}
 Note that Proposition \ref{conti} together with Lemma \ref{DW} implies :
 
 \begin{prop}\label{contP} Let $t>0$.
 If $f$ is in $C(\partial X)$, then $\bar{\mathcal{P}_{t}}(f)$ is a continuous function on the whole space $\overline{X}$.
 \end{prop}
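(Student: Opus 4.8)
\textbf{Proof of Proposition \ref{contP}.}
The plan is to apply Proposition \ref{conti} to the kernel $P_t$, which reduces the statement to checking that $\big(P_t(x,\cdot)\big)_{x\in X}$ is a Dirac–Weierstrass family. This is exactly the content of Lemma \ref{DW}. Thus the whole argument is a matter of assembling two already-established facts: first, that $P_t$ is a Dirac–Weierstrass family (Lemma \ref{DW}), and second, the abstract continuity statement for the extension $\bar K(f)$ of a transform associated with any such family (Proposition \ref{conti}).

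More concretely, first I would recall that $P_t(x,\xi)=e^{(\frac12+t)Q\beta_\xi(o,x)}/\phi_t(x)$ is nonnegative, that by definition of $\phi_t$ we have $\int_{\partial X}P_t(x,\xi)\,d\nu_o(\xi)=1$, and that the third Dirac–Weierstrass condition — the concentration near a boundary point — follows from Lemma \ref{decay3} applied with $w=\textbf{1}_{\partial X}$: for $\eta\in\partial X$ and $r>0$, the complement $\partial X\setminus B(\eta,r)$ is contained in $\partial X\setminus\mathcal O_A(o,x)$ for a suitable $A=A(r)\to\infty$ as $x\to\eta$ (using that $(\xi,x)_o$ controls $d_{o,\epsilon}(\xi,\hat x_o)$ via the upper Gromov bound), so that $\int_{\partial X\setminus B(\eta,r)}P_t(x,\xi)\,d\nu_o(\xi)\leq C\,e^{-tQ|x|}e^{2tQA}/\sinh(tQ)\to 0$. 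This is precisely the reasoning behind Lemma \ref{DW}, which we invoke directly.

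With Lemma \ref{DW} in hand, Proposition \ref{conti} applies verbatim to $K=P_t$ and $\mathcal P_t=K$, giving that for $f\in C(\partial X)$ the function $\bar{\mathcal P}_t(f)$ is continuous on $\overline X$. Since there is nothing further to prove, the argument is complete.

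The only genuine subtlety — and hence the point deserving care rather than the "main obstacle" in a deep sense — is the verification that the concentration estimate of Lemma \ref{decay3} really does translate into the third Dirac–Weierstrass condition phrased in terms of visual balls $B(\eta,r)$ rather than the shadow-type sets $\mathcal O_A(o,x)$; this is a routine comparison using the defining inequalities \eqref{eq:visual-metric-def} of the visual metric together with hyperbolicity (Proposition \ref{propGromov}(2)) to relate $(\xi,x)_o$ and $(\xi,\hat x_o)_o$, but since this is already absorbed in the statement of Lemma \ref{DW}, no new work is needed here.
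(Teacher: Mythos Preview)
Your proof is correct and follows exactly the paper's approach: the paper obtains Proposition \ref{contP} as an immediate consequence of Proposition \ref{conti} together with Lemma \ref{DW}, just as you do. The additional discussion you give about verifying the three Dirac--Weierstrass conditions and translating shadows into visual balls is essentially a recapitulation of the proof of Lemma \ref{DW} (which the paper treats separately), so no new content is needed here.
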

Moreover, observe that for all $f\in C(\partial X)$ and for all $\gamma \in \Gamma$:

\begin{equation}\label{dirac}
D_{\gamma o}(  \mathcal{P}_{t}(f))=\dfrac{\langle \pi_{t}(\gamma)\textbf{1}_{\partial X},f\rangle}{\phi_{t}(\gamma)}. 
\end{equation}

We deduce the following proposition:
\begin{prop}\label{density}

For any  $R>0$ large enough, there exists a sequence of  measures $\beta_{n,R}:\Gamma \rightarrow \mathbb{R}^{+}$,    supported on $S^{\Gamma}_{n,R}$, satisfying $\beta_{n,R}(\gamma)\leq C /|S^{\Gamma}_{n,R}|$ for some $C>0$ independent of $n$  such that
for all $t> 0  $,   for all $f,g\in C(\overline{X})$, for all $w\in L^{2}(\partial X,\nu_{o})$:
$$\sum_{\gamma \in S^{\Gamma}_{n,R}}\beta_{n,R}(\gamma) f(\gamma   o)  \frac{\langle \pi_{t}(\gamma)\textbf{1}_{\partial X},w\rangle }{\phi_{t}(\gamma)}\to  \langle f_{|_{\partial X}}  , w\rangle, $$
as $n\to +\infty$. Thus $\mathcal{F}_{t,\textbf{1}_{\partial X}}$ is dense in $L^{2}(\partial X,\nu_{o})$ for all $t>0.$
\end{prop}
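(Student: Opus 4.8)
The idea is to upgrade the weak$^*$ equidistribution statement of Theorem~\ref{equi} (and Remark~\ref{equi'}) to the desired convergence by testing it against the continuous function on $\overline{X}$ produced by the generalized Poisson transform. Fix $t>0$, $f,g\in C(\overline{X})$ and $w\in L^{2}(\partial X,\nu_{o})$. First I would reduce to the case $w\in C(\partial X)$: indeed the left-hand side, for fixed $n$, is controlled by $\|w\|_{2}$ because of the spectral estimate~\eqref{RDgeneral} together with the Harish-Chandra--Anker estimates of Proposition~\ref{estimatesspherical} (which give $\|\pi_{t}(f_{n})\|_{L^{2}\to L^{2}}\leq C\phi_{t}(nR)$, see Proposition~\ref{radialestimates}), so both the approximating expressions and the limiting functional $w\mapsto \langle f_{|_{\partial X}},w\rangle$ are $\|\cdot\|_{2}$-continuous in $w$, and $C(\partial X)$ is dense in $L^{2}(\partial X,\nu_{o})$. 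Hence it suffices to prove the convergence for $w\in C(\partial X)$.

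For $w\in C(\partial X)$, the key identity is~\eqref{dirac}: for every $\gamma\in\Gamma$,
$$D_{\gamma o}\big(\mathcal{P}_{t}(w)\big)=\frac{\langle \pi_{t}(\gamma)\textbf{1}_{\partial X},w\rangle}{\phi_{t}(\gamma)}.$$
Therefore the sum in the statement equals
$$\sum_{\gamma\in S^{\Gamma}_{n,R}}\beta_{n,R}(\gamma)\, f(\gamma o)\, \overline{\mathcal{P}_{t}(w)}(\gamma o)
=\sum_{\gamma\in S^{\Gamma}_{n,R}}\beta_{n,R}(\gamma)\,\big(f\cdot \overline{\bar{\mathcal{P}_{t}}(w)}\big)(\gamma o),$$
where I used that $f$ is continuous on $\overline{X}$ and that $\bar{\mathcal{P}_{t}}(w)$ agrees with $\mathcal{P}_{t}(w)$ on $X$. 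Now Proposition~\ref{contP} (which rests on Lemma~\ref{DW} and Proposition~\ref{conti}) asserts that $\bar{\mathcal{P}_{t}}(w)\in C(\overline{X})$, hence $f\cdot\overline{\bar{\mathcal{P}_{t}}(w)}\in C(\overline{X})$. Applying the equidistribution statement of Remark~\ref{equi'}, the sum converges, as $n\to+\infty$, to
$$\int_{\partial X}\big(f\cdot\overline{\bar{\mathcal{P}_{t}}(w)}\big)\,d\nu_{o}
=\int_{\partial X} f_{|_{\partial X}}\,\overline{w}\,d\nu_{o}=\langle f_{|_{\partial X}},w\rangle,$$
since $\bar{\mathcal{P}_{t}}(w)$ restricted to $\partial X$ is just $w$.

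Finally, density of $\mathcal{F}_{t,\textbf{1}_{\partial X}}$ in $L^{2}(\partial X,\nu_{o})$ follows: if $w\in L^{2}(\partial X,\nu_{o})$ is orthogonal to $\mathcal{F}_{t,\textbf{1}_{\partial X}}=\pi_{t}(\mathbb{C}[\Gamma])\textbf{1}_{\partial X}$, then $\langle\pi_{t}(\gamma)\textbf{1}_{\partial X},w\rangle=0$ for every $\gamma\in\Gamma$, so the left-hand side above vanishes identically in $n$; taking $f\equiv 1$ we get $\langle\textbf{1}_{\partial X},w\rangle_{L^{2}}=0$ — more usefully, taking $f$ to range over $C(\overline{X})$ we get $\langle f_{|_{\partial X}},w\rangle=0$ for all such $f$, and since restrictions of $C(\overline{X})$ functions include all of $C(\partial X)$ (which is dense in $L^{2}$), $w=0$. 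The main obstacle here is not any single estimate but making sure the reduction to $w\in C(\partial X)$ is clean — i.e. that the uniform-in-$n$ bound on the approximating sums is genuinely available from~\eqref{RDgeneral} and the growth estimate $\beta_{n,R}(\gamma)\leq C/|S^{\Gamma}_{n,R}|$ together with $|S^{\Gamma}_{n,R}|\asymp e^{QnR}$ — and in checking that the hypotheses of Proposition~\ref{conti}/Lemma~\ref{DW} really do apply to $P_{t}$ for all $t>0$, which is where Lemma~\ref{decay3} is used.
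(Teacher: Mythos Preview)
Your proof is correct and follows essentially the same route as the paper: identify the sum via~\eqref{dirac} as a weighted average of the continuous function $f\cdot\bar{\mathcal P}_t(w)$ over the orbit, apply the equidistribution of Remark~\ref{equi'} together with Proposition~\ref{contP} for $w\in C(\partial X)$, and then extend to $w\in L^{2}$ using the uniform-in-$n$ bound coming from~\eqref{RDgeneral}. The only cosmetic difference is that the paper phrases the density conclusion by noting that the vectors $f_n\in\mathcal F_{t,\mathbf 1_{\partial X}}$ converge weakly to $f_{|_{\partial X}}$, whereas you argue by orthogonal complements; the two are equivalent.
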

\begin{proof}
Let $R>0$ large enough so that Theorem \ref{equi} holds and let $t>0$ ensuring that Proposition \ref{contP} holds. Let $f\in C(\overline{X})$ and consider the sequence of vectors $$f_{n}:=\sum_{\gamma \in S^{\Gamma}_{n,R}}\beta_{n,R}(\gamma) f(\gamma   o)  \frac{\pi_{t}(\gamma)\textbf{1}_{\partial X} }{\phi_{t}(\gamma)} \in \mathcal{F}_{t,\textbf{1}_{\partial X}}.$$
Observe by (\ref{dirac}) that for all $w\in L^{2}(\partial X,\nu_{o})$

$$ \langle f_{n},w\rangle =\sum_{\gamma \in S^{\Gamma}_{n,R}}\beta_{n,R}(\gamma) D_{\gamma o}\big(  f \cdot \bar{ \mathcal{P}_{t}}(w)\big),$$
where for all $x\in \overline{X}$, $\big(f \cdot \bar{ \mathcal{P}_{t}}(w)\big)(x)=f (x) \bar{ \mathcal{P}_{t}}(w)(x)$.\\
Therefore, for all $w\in C(\partial X)$, Theorem \ref{equi} combined with Proposition \ref{contP} imply that 
$$ \langle f_{n},w\rangle \to \langle f,w\rangle= \langle f_{|_{\partial X}}  , w\rangle$$ as $n\to +\infty$.
Since Inequality \ref{RDgeneral} implies that there exists $C>0$ such that for all $n\in \mathbb{N}$: $$|\langle f_{n},w \rangle |\leq C \|f\|_{\infty}\|w\|_{2},$$ the above convergence holds for any $w\in L^{2}(\partial X,\nu_{o})$, and the proof is done.

\end{proof}

 \subsection{The Riesz kernel}
 
 Define for $0<t<\frac{1}{2}$:
\begin{equation}
R_{t}:(\xi,x)\in \partial X\times X\mapsto \dfrac{e ^{(1-2t)Q (\xi,x)_{o}}}{\int_{\partial X}e ^{(1-2t)Q (\xi',x)_{o}}d\nu_{o}(\xi')}.
\end{equation}
Assuming that the Gromov product extends continuously to the bordification one can define:

\begin{equation}
R_{t}(\xi,\eta):= \dfrac{e ^{(1-2t)Q (\xi,\eta)_{o}}}{\int_{\partial X}e ^{(1-2t)Q (\xi',\eta)_{o}}d\nu_{o}(\xi')}=\dfrac{1}{d_{o,\epsilon} ^{(1-2t)D} (\xi,\eta)}\times\dfrac{1}{\sigma_{t}(\eta)}.
\end{equation}
Note that $R_{t}$ is continuous on $\partial X\times \partial X\backslash \Delta$ and $R_{t}\in L^{1}(\partial X \times \partial X,\nu_{o}\times \nu_{o})$ if and only if  $t>0$. Besides :
\begin{enumerate}
\item $\int_{\partial X}R_{t}(\xi,\eta) d\nu_{o}(\xi)=1.$
\item $R_{t}\geq 0.$
\item $R_{t}(\xi,\eta)=R_{t}(\eta,\xi).$
 \end{enumerate}
 
Now consider for $f\in L^{1}(\partial X,\nu_{o})$ the following kernel, as an extension of the Riesz operator to the whole space $X\cup \partial X$:
\begin{equation}
\mathcal{R}_{t}(f)(x):=\int_{\partial X} f(\xi)R_{t}(\xi,x)d\nu_{o}(\xi),
\end{equation}
with $x\in X\cup \partial X$.
Indeed, the assumption of working with $\epsilon$-good strongly hyperbolic spaces is crucial for the following proposition :
\begin{prop}\label{contR}
Let $t>0$.
Assume that $f\in C(\partial X)$. Then $\mathcal{R}_{t}(f)\in C(\overline {X})$.
\end{prop}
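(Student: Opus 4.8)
\textbf{Proof plan for Proposition \ref{contR}.}

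The plan is to recognize that $\mathcal{R}_t(f)$ is exactly the map $K(f)$ attached to the kernel $K(x,\xi)=R_t(\xi,x)$, and to check that the family $\bigl(R_t(\cdot,x)\bigr)_{x\in X}$ is a Dirac--Weierstrass family in the sense of the definition preceding Proposition \ref{conti}. Once this is done, Proposition \ref{conti} applies verbatim and gives continuity of $\overline{\mathcal{R}_t}(f)$ on $\overline{X}$ whenever $f\in C(\partial X)$, which is the claim (the extension of $\mathcal R_t(f)$ to $\partial X$ by $f$ itself being the relevant $\overline K$).

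First I would verify the three defining properties. Positivity $R_t(\xi,x)\ge 0$ is immediate from \eqref{kernel} and the fact that $\sigma_t>0$ on $\overline X$ (Proposition \ref{cont2}). The normalization $\int_{\partial X} R_t(\xi,x)\,d\nu_o(\xi)=1$ is by definition of $\sigma_t$ in \eqref{sigma}, valid for every $x\in X$ since the $\sigma_t$-integral is finite there; here one uses that for $x\in X$ the kernel $k_t(\cdot,x)$ is bounded, as $(\xi,x)_o\le d(o,x)$ by \eqref{buseman'}. The only nontrivial point is the concentration property: for fixed $\eta\in\partial X$ and $r>0$, $\int_{\partial X\setminus B(\eta,r)} R_t(\xi,x)\,d\nu_o(\xi)\to 0$ as $x\to\eta$.

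To establish the concentration property I would argue as follows. Since $\sigma_t(x)$ is continuous and bounded below by a positive constant on a neighborhood of $\eta$ in $\overline X$ (Proposition \ref{cont2}, using that $\sigma_t$ extends continuously to $\partial X$ for $t>0$), it suffices to show $\int_{\partial X\setminus B(\eta,r)} k_t(\xi,x)\,d\nu_o(\xi)\to 0$ as $x\to\eta$. On the complement of $B(\eta,r)$ one has a uniform lower bound $d_{o,\epsilon}(\xi,\eta)\ge r$, i.e.\ $(\xi,\eta)_o\le \frac1\epsilon\log(1/r)$; combined with the hyperbolic inequality $(\xi,x)_o\ge\min\{(\xi,\eta)_o,(\eta,x)_o\}-2\delta$ from Proposition \ref{propGromov}(2) and the fact that $(\eta,x)_o\to+\infty$ as $x\to\eta$, one gets that for $x$ close enough to $\eta$ the value $(\xi,x)_o$ is bounded above by $(\xi,\eta)_o+2\delta\le \frac1\epsilon\log(1/r)+2\delta$ uniformly over $\xi\notin B(\eta,r)$. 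Hence $k_t(\xi,x)=e^{(1-2t)Q(\xi,x)_o}$ is uniformly bounded there (recall $0<t$, so $(1-2t)Q$ may be positive, but the exponent is controlled). This alone is not quite enough; to get the integral to $0$ I would instead split $\partial X\setminus B(\eta,r)$ into the annuli $A_{k,R}(x)$ of the partition and use Lemma \ref{Ahlfors} together with the fact that, as $x\to\eta$, the Ahlfors-regular set where $(\xi,x)_o$ is large is forced into smaller and smaller $\nu_o$-measure (it shrinks toward the single point $\eta$); the tail estimate is then the geometric series bound $\sum_{k\ge N}e^{-2tQkR}$ appearing throughout Section \ref{section3}, with $N\to\infty$ as $r$ is fixed and $x\to\eta$. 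The main obstacle is precisely this last uniform tail control: one must make rigorous that concentrating $x$ near $\eta$ pushes essentially all the $k_t$-mass into $B(\eta,r)$, which is where the Dirac--Weierstrass heuristic and the $\epsilon$-good hypothesis (continuity of the Gromov product to the bordification) do the real work. With the three properties in hand, Proposition \ref{conti} closes the argument.
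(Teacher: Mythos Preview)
Your approach contains a fundamental error: the Riesz kernel $R_t(\cdot,x)$ is \emph{not} a Dirac--Weierstrass family, and the boundary extension of $\mathcal{R}_t(f)$ is \emph{not} $f$. Indeed, for $\eta\in\partial X$ the function $\mathcal{R}_t(f)(\eta)=\sigma_t(\eta)^{-1}\int_{\partial X} k_t(\xi,\eta)f(\xi)\,d\nu_o(\xi)$ is the genuine Riesz potential of $f$, a smoothing of $f$ and typically different from $f(\eta)$. Concretely, the concentration property (3) fails: for fixed $r>0$ and $\xi\notin B(\eta,r)$, the $\epsilon$-good hypothesis gives $k_t(\xi,x)\to k_t(\xi,\eta)$ as $x\to\eta$, and since $\sigma_t(x)\to\sigma_t(\eta)>0$ one obtains
\[
\int_{\partial X\setminus B(\eta,r)} R_t(\xi,x)\,d\nu_o(\xi)\ \longrightarrow\ \frac{1}{\sigma_t(\eta)}\int_{\partial X\setminus B(\eta,r)} k_t(\xi,\eta)\,d\nu_o(\xi)>0,
\]
not $0$. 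Your annulus argument cannot rescue this: the complement of $B(\eta,r)$ has fixed positive $\nu_o$-measure and the kernel stays bounded away from zero there. This is precisely where the Riesz kernel differs from the Poisson kernel $P_t$ of Lemma~\ref{DW}, whose exponential weight $e^{(\frac12+t)Q\beta_\xi(o,x)}$ genuinely collapses off a neighborhood of $\eta$.

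The paper's route is instead to write $\mathcal{R}_t(f)=\sigma_t^{-1}\cdot\mathcal{I}_t(f)$ on all of $\overline{X}$ (with $\mathcal{I}_t(f)(x)=\int_{\partial X} k_t(\xi,x)f(\xi)\,d\nu_o(\xi)$), and then invoke Proposition~\ref{cont2} for the continuity and positivity of $\sigma_t$ on $\overline{X}$, together with the truncation/Cauchy argument of Proposition~\ref{compactope} (itself modeled on Lemmas~\ref{cont1}--\ref{Cauchy}) to see that $\mathcal{I}_t(f)$ is continuous on $\overline{X}$. The statement you want is $\mathcal{R}_t(f)(x)\to\mathcal{R}_t(f)(\eta)$ as $x\to\eta$, not $\mathcal{R}_t(f)(x)\to f(\eta)$.
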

\begin{proof}
Pick $f\in C(\partial X)$ and consider the function $x\in X\mapsto \mathcal{R}_{t}(f)(x)=M_{\sigma^{-1}_{t} } \mathcal{I}_{t}(f).$ Proposition \ref{cont2}  and Proposition \ref{compactope} implies that the function $x\in X\mapsto \mathcal{R}_{t}(f)(x)$ is a continuous function. 
\end{proof}

Observe the fundamental correspondence between harmonic analysis on $\overline{X}$ and representation theory of $\Gamma$ given for $\gamma \in \Gamma$ for all $f\in C(\overline{X})$ via:
\begin{equation}\label{dirac}
D_{\gamma^{-1}o}(\mathcal{R}_{t}(f))=\dfrac{\langle \pi_{t}(\gamma)f,\textbf{1}_{\partial X}\rangle}{\phi_{t}(\gamma)}. 
\end{equation}

\section{Proofs }\label{section7}

\subsection{Proof of the main convergence theorem}

We give a single proof for Theorem \ref{theo1} and \ref{theo1'}.
\begin{proof}
For $t>0$, we set $\mathcal{E}_{t,0}=L^{2}(\partial X ,\nu_{o})$ and $\mathcal{E}_{t,2}=\mathcal{K}_{t}$. \\
If there exists $0<t\leq {1\over 2}$ such that $\mathcal{I}_{t}$ is positive, we set $\mathcal{E}_{t,1}=\mathcal{H}_{t}$.\\ 
\textbf{Step 1:} Uniform boundedness.\\
Let $t,t'>0$ and $i,j\in\{0,2\}.$
Consider for $R>0$ the quadrilinear form defined on $\mathcal{E}_{t,i}\times \overline{\mathcal{E}_{t,i}} \times \overline{\mathcal{E}_{t,j}} \times \mathcal{E}_{t,j}$  as:
$$B_{n,R}(v,w,v',w'):=\sum_{\gamma \in S^{\Gamma}_{n,R}} \beta_{n,R}(\gamma)\frac{\langle \pi_{t}(\gamma)v,w\rangle_{ \mathcal{E}_{t,i} }  }{\phi_{t}(\gamma)}\frac{\overline{\langle \pi_{t'}(\gamma)v',w'\rangle}_{ \mathcal{E}_{t',j} } }{\phi_{t'}(\gamma)}.$$
The  Property RD used in \cite{BG} to prove Schur's orthogonality relations is replaced by Inequality (\ref{RDgeneral}), Theorem \ref{Hspectral} and \ref{Uspectral}. Let $R>0$ large enough.
Pick two vectors $v,w\in \mathcal{E}_{t,i}$ of norm one, and consider for each $n$ the function $f_{n}$ supported on $S^{\Gamma}_{n,R}$ defined as: $$f_{n}=\sum_{\gamma \in S^{\Gamma}_{n,R}}\overline{\langle \pi_{t}(\gamma)v,w\rangle_{\mathcal{E}_{t,i}}} D_{\gamma}.$$ The spectral inequality (\ref{RDgeneral}) for $i=0$ and Theorem \ref{Hspectral} imply for $i=2$:
\begin{align}
\langle \pi_{t}(f_{n})v,w\rangle_{\mathcal{E}_{t,i}} \leq C \|f_{n}\|_{\ell^{2}} \omega_{t}(nR).
\end{align}
Therefore: $$ \bigg(\sum_{\gamma \in S^{\Gamma}_{n,R}} |\langle \pi_{t}(\gamma)v,w\rangle_{\mathcal{E}_{t,i}}|^{2}\bigg)^{\frac{1}{2}}\leq C  \omega_{t}(nR).$$

For all $n$, for all $v,w\in\mathcal{E}_{t,i}$ and $v,',w'\in\mathcal{E}_{t',j}$ of norm one for $t,t'> 0$ and for $i,j=0,2$ Cauchy-Schwarz inequality implies:
$$\bigg|\sum_{\gamma \in S^{\Gamma}_{n,R}}  \langle \pi_{t}(\gamma)v,w\rangle_{\mathcal{E}_{t,i}} \overline{\langle \pi_{t'}(\gamma)v',w'\rangle}_{\mathcal{E}_{t',j}}\bigg| \leq C  \omega_{t}(nR)  \omega_{t'}(nR).$$
 Since there exists $C>0$ such that for $\gamma \in S^{\Gamma}_{n,R}$ and for any $t>0$  we have $$ C^{-1}\frac{ \omega_{t}(nR)}{|S^{\Gamma}_{n,R}|} \leq \phi_{t}(\gamma) \leq C\frac{ \omega_{t}(nR)}{|S^{\Gamma}_{n,R}|} ,$$  and since there exists $C>0$ such that for any $\gamma \in S^{\Gamma}_{n,R}$ we have: $$\beta_{n,R}(\gamma)\leq \frac{C}{|S^{\Gamma}_{n,R}|},$$ we obtain for all non negative integers $n$ and for all positive functions $v,w\in \mathcal{E}_{t,i}$ and $ v',w'\in \mathcal{E}_{t',j}$ of norm one:
$$\bigg|\sum_{\gamma \in S^{\Gamma}_{n,R}} \beta_{n,R}(\gamma)\frac{\langle \pi_{t}(\gamma)v,w\rangle_{\mathcal{E}_{t,i}} }{\phi_{t}(\gamma)}\frac{\overline{\langle \pi_{t'}(\gamma)v',w'\rangle}_{\mathcal{E}_{t',j}}}{\phi_{t'}(\gamma)}\bigg|\leq C.$$

In an other words $$\sup_{n}\|B_{n,R}\|<+\infty,$$ where $\|\cdot\|$ denotes the standard operator norm on the space of complex quadrilinear form on $\mathcal{E}_{t,i}\times \overline{\mathcal{E}_{t,i}} \times \overline{\mathcal{E}_{t,j}} \times \mathcal{E}_{t,j}.$
\\ 
We study now the case $i=1$. Assume there exists $t,s>0$ such that $\mathcal{I}_{t},\mathcal{I}_{s}$ are positive. By the same arguments at the beginning of the proof of \textbf{Step 1} dealing with $\mathcal{E}_{t,i}$ for $i=0,2$,    Theorem \ref{Uspectral} implies that exists $C>0$ such that for all non negative integers $n$ and for all $(v,w) \in \mathcal{H}_{t}\times \mathcal{H}'_{t}$ and $(v',w' )\in \mathcal{H}_{s}\times \mathcal{H}'_{s}$ unit vectors
$$\bigg|\sum_{\gamma \in S^{\Gamma}_{n,R}} \beta_{n,R}(\gamma)\frac{\langle \mathcal{I}_{t}\pi_{t}(\gamma)v,w\rangle }{\phi_{t}(\gamma)}\frac{\overline{\langle \mathcal{I}_{s} \pi_{s}(\gamma)v',w'\rangle}}{\phi_{s}(\gamma)}\bigg|\leq C,$$ equivalently
$$\sup_{n}\bigg|\sum_{\gamma \in S^{\Gamma}_{n,R}} \beta_{n,R}(\gamma)\frac{\langle \pi_{t}(\gamma)v,w\rangle_{\mathcal{E}_{t,1}} }{\phi_{t}(\gamma)}\frac{\overline{\langle  \pi_{s}(\gamma)v',w'\rangle_{\mathcal{E}_{s,1}}}}{\phi_{s}(\gamma)}\bigg|\leq C.$$

\textbf{Step 2:} Computation of the limit.\\
Let $f,g\in C(\overline{ X})$ and let $v,w,v',w' \in Lip(\partial X)$ be four real positive functions. 

Write: 
$$\langle \pi_{t}(\gamma)v,w\rangle=\langle \pi_{t}(\gamma)v,w-\mathcal{P}_{t}(w)(\gamma o)\textbf{1}_{\partial X}\rangle + \mathcal{P}_{t}(w)(\gamma o)\langle \pi_{t}(\gamma)v,\textbf{1}_{\partial X}\rangle,$$
Then:
\begin{align*}
\frac{\langle\pi_{t}(\gamma)v,w\rangle}{\phi_{t}(\gamma)}&=\frac{\langle \pi_{t}(\gamma)v,w-\mathcal{P}_{t}(w)(\gamma o)\textbf{1}_{\partial X}\rangle }{\phi_{t}(\gamma)}+\mathcal{P}_{t}(w)(\gamma o)\mathcal{R}_{t}(v)(\gamma^{-1} o).
\end{align*}
Since $\pi_{t}$ preserves the cone of positive functions, we obtain: 
$$\bigg|\frac{\langle\pi_{t}(\gamma)v,w\rangle}{\phi_{t}(\gamma)}-\mathcal{P}_{t}(w)(\gamma o)\mathcal{R}_{t}(v)(\gamma^{-1} o) \bigg| \leq\|v\|_{\infty} \frac{\langle\pi_{t}(\gamma)\textbf{1}_{\partial X},|w-\mathcal{P}_{t}(w)(\gamma o)\cdot \textbf{1}_{\partial X}|\rangle}{\phi_{t}(\gamma)}.$$
Now concerning the term of the right-hand side of the above inequality: \begin{align*} \frac{\langle\pi_{t}(\gamma)\textbf{1}_{\partial X},|w-\mathcal{P}_{t}(w)(\gamma o)\cdot \textbf{1}_{\partial X}|\rangle}{\phi_{t}(\gamma)}&\leq  \frac{\langle\pi_{t}(\gamma)\textbf{1}_{\partial X},|w-w(\hat{\gamma} )\textbf{1}_{\partial X}|\rangle}{\phi_{t}(\gamma)}+ \frac{\langle\pi_{t}(\gamma)\textbf{1}_{\partial X},|\big(w(\hat{\gamma} )-\mathcal{P}_{t}(w)(\gamma o)\big)\textbf{1}_{\partial X} |\rangle}{\phi_{t}(\gamma)}\\
&=\frac{\langle\pi_{t}(\gamma)\textbf{1}_{\partial X},|w-w(\hat{\gamma} )\textbf{1}_{\partial X}|\rangle}{\phi_{t}(\gamma)}+|w(\hat{\gamma} )-\mathcal{P}_{t}(w)(\gamma o) |.
\end{align*}

Apply Lemma \ref{decay2} to get a constant $C>0$ such that for all $\gamma \in \Gamma$ and for all $A>0$:

$$\frac{\langle\pi_{t}(\gamma)\textbf{1}_{\partial X},|w-w(\hat{\gamma} )\textbf{1}_{\partial X}|\rangle}{\phi_{t}(\gamma)}\leq C\|w\|_{Lip}\big((\varepsilon_{t}(\gamma)e^{2tQ A}+e^{-\epsilon A} \big).$$
 Taking the average over $S^{\Gamma}_{n,R}$ we obtain, using Theorem \ref{equi} and \ref{equib}:
 $$\limsup_{n\to +\infty}\sum_{\gamma \in S^{\Gamma}_{n,R}}\beta_{n,R}(\gamma)|w(\hat{\gamma} )-\mathcal{P}_{t}(w)(\gamma o) |=0.$$
 Moreover, for all $A>0$:
 $$\limsup_{n\to +\infty}\sum_{\gamma \in S^{\Gamma}_{n,R}}\beta_{n,R}(\gamma)\frac{\langle\pi_{t}(\gamma)\textbf{1}_{\partial X},|w-w(\hat{\gamma} )\textbf{1}_{\partial X}|\rangle}{\phi_{t}(\gamma)}\leq C\|w\|_{Lip}e^{- \epsilon A},$$ and thus let $A\to +\infty$.
 
Hence, the error term disappears and we obtain:
\begin{align*}
\lim_{n\to +\infty}&\sum_{\gamma \in S^{\Gamma}_{n,R}}\beta_{n,R}(\gamma)f(\gamma  o)g(\gamma^{-1} o)\frac{\langle\pi_{t}(\gamma)v,w\rangle}{\phi_{t}(\gamma)}\frac{\langle\pi_{t'}(\gamma)v',w'\rangle}{\phi_{t'}(\gamma)}\\ & =\lim_{n\to +\infty}\sum_{\gamma \in S^{\Gamma}_{n,R}}\beta_{n,R}(\gamma) f(\gamma  o)g(\gamma^{-1} o)\mathcal{P}_{t}(w)(\gamma o)\mathcal{R}_{t}(v)(\gamma^{-1} o)\mathcal{P}_{t'}(w')(\gamma o)\mathcal{R}_{t'}(v')(\gamma^{-1} o).
\end{align*}
Using Proposition \ref{contP} and \ref{contR} with Theorem \ref{equi} we have for $f,g\in C(\overline{ X})$ and for $v,w,v',w' \in Lip(\partial X)$:

\begin{equation}\label{convergenceschur}
\lim_{n\to +\infty}\sum_{\gamma \in S^{\Gamma}_{n,R}}\beta_{n,R}(\gamma)f(\gamma  o)g(\gamma^{-1} o)\frac{\langle\pi_{t}(\gamma)v,w\rangle}{\phi_{t}(\gamma)}\frac{\overline{\langle\pi_{t'}(\gamma)v',w'\rangle}}{\phi_{t'}(\gamma)}=\langle g_{|_{\partial X}} \mathcal{R}_{t}(v), \mathcal{R}_{t'}(v')\rangle \overline{\langle  w,f_{|_{\partial X}} w' \rangle}.
\end{equation}
\textbf{Step 3:} Conclusion.\\

For $i,j=0,2$, the limit proved above in (\ref{convergenceschur}) together with the uniform bound proved in \textbf{Step 1} for $\mathcal{E}_{t,i},\mathcal{E}_{t',j}$ imply eventually that  for $f,g\in C(\overline{ X})$ and for 
 $v,w\in \mathcal{E}_{t,i}$ and $v',w' \in \mathcal{E}_{t',j}$: 
\begin{align*}
 \lim_{n\to +\infty}\sum_{\gamma \in S^{\Gamma}_{n,R}}\beta_{n,R}(\gamma)f(\gamma  o)g(\gamma^{-1} o)\frac{\langle\pi_{t}(\gamma)v,w\rangle_{\mathcal{E}_{t,i}}}{\phi_{t}(\gamma)}&\frac{\overline{\langle\pi_{t'}(\gamma)v',w'\rangle_{\mathcal{E}_{t',j}}}}{\phi_{t'}(\gamma)}\\
 &=\langle g_{|_{\partial X}} \mathcal{R}_{t}(v), \mathcal{R}_{t'}(v')\rangle \overline{ \langle  \mathcal{I}^{i}_{t}(w),f_{|_{\partial X}} \mathcal{I}^{j}_{t'}(w') \rangle}.
 \end{align*}
For the case $i=1$, we obtain for $f,g\in C(\overline{ X})$ and for 
 $(v,w)\in \mathcal{E}_{t,1} \times \mathcal{E}'_{t,1}$ and $(v',w') \in \mathcal{E}_{s,1}\times \mathcal{E}'_{s,1}$: 
\begin{align*}
 \lim_{n\to +\infty}\sum_{\gamma \in S^{\Gamma}_{n,R}}\beta_{n,R}(\gamma)f(\gamma  o)g(\gamma^{-1} o)\frac{\langle\pi_{t}(\gamma)v,w\rangle_{\mathcal{E}_{t,1}}}{\phi_{t}(\gamma)}&\frac{\overline{\langle\pi_{s}(\gamma)v',w'\rangle_{\mathcal{E}_{s,1}}}}{\phi_{s}(\gamma)}\\
 &=\langle g_{|_{\partial X}} \mathcal{R}_{t}(v), \mathcal{R}_{s}(v')\rangle \overline{ \langle  \mathcal{I}_{t}(w),f_{|_{\partial X}} \mathcal{I}_{s}(w') \rangle}.
 \end{align*}

\end{proof}

\subsection{Theorems \`a la Bader-Muchnik}

To prove irreducibility of representations our main tool will be results {\it \`a la Bader Muchnik} (Corollary \ref{BML2}). They are all consequences of Theorem \ref{theo1} and \ref{theo1'}. 
 \begin{proof}[Proof of Corollary \ref{BML2}.]
 Apply directly Theorem \ref{theo1} with $\mathcal{E}_{t',j}=L^{2}(\partial X,\nu_{o})$ and $v'=w'=\textbf{1}_{\partial X}$ to obtain for all $f,g\in C(\overline{X})$  and for all $v,w\in \mathcal{E}_{t,i}$ with $i\in \{ 0,2\}$ and for all $(v,w)\in \mathcal{E}_{t,1}\times \mathcal{E}'_{t,i}$:
 
$$
 \lim_{n\to +\infty}\sum_{\gamma \in S^{\Gamma}_{n,R}}\beta_{n,R}(\gamma)f(\gamma  o)g(\gamma^{-1} o)\frac{\langle\pi_{t}(\gamma)v,w\rangle_{\mathcal{E}_{t,i}}}{\phi_{t}(\gamma)}=\langle g_{|_{\partial X}} \mathcal{R}_{t}(v), \textbf{1}_{\partial X}\rangle \overline{ \langle  \mathcal{I}^{i}_{t}(w),f_{|_{\partial X}} \rangle}.$$

Since $ \mathcal{I}^{i}_{t}$ is self adjoint for $i\in \{ 0,1,2\}$, we obtain: $$ \langle  \mathcal{I}^{i}_{t}(w),f_{|_{\partial X}} \rangle= \langle  w,\mathcal{I}^{i}_{t}(f_{|_{\partial X}}) \rangle=\langle w,f_{|_{\partial X}}  \rangle_{\mathcal{E}_{t,i}},$$
and the proof is done.

 \end{proof}
 
\subsection{A first family of Irreducible Hilbertian representations}
Fix $t>0$.
\begin{proof}[Proof of Corollary \ref{coro1}.]
We will prove that $(\pi_{-t},\mathcal{W}_{t})$ is irreducible. Recall that 
 $ \pi_{-t}(f). \sigma_{t}=\mathcal{I}_{t}(\pi_{t}(f)\textbf{1}_{\partial X} )$, for all $f\in \mathbb{C}[\Gamma]$. \\
 Thus, since $\textbf{1}_{\partial X}$ is cyclic in $L^{2}(\partial X,\nu_{o})$ for $\pi_{t}$, the function $\sigma_{t}$ is cyclic in $\mathcal{W}_{t}$ for $\pi_{-t}$.\\
 Now, let $0\neq K\subset \mathcal{W}_{t}$ a $L^{2}$-closed subspace stable par $\pi_{-t}$. Let $R>0$ large enough. For all $w\in \mathcal{W}_{t}$ and $v\in L^{2}(\partial X,\nu_{o})$ define for all $n$
  $$w_{n}:=\sum_{\gamma \in S^{\Gamma}_{n,R}}\beta_{n,R}(\gamma)  \sigma_{t}(\gamma^{-1} o)  \frac{\pi_{-t}(\gamma^{-1}) w }{\phi_{t}(\gamma)} \in K.$$
  Theorem \ref{BML2} implies that as $n\to +\infty$:

$$ \sum_{\gamma \in S^{\Gamma}_{n,R}}\beta_{n,R}(\gamma)  \sigma_{t}(\gamma^{-1} o) \frac{\langle  v, \pi_{-t}(\gamma^{-1}) w\rangle }{\phi_{t}(\gamma)}\to \langle \mathcal{I}_{t}(v),\textbf{1}_{\partial X}\rangle \langle  \textbf{1}_{\partial X} ,w \rangle=\langle v,\mathcal{I}_{t}(\textbf{1}_{\partial X})\rangle \langle  \textbf{1}_{\partial X} ,w \rangle,$$

 writing 

$$\langle v,\mathcal{I}_{t}(\textbf{1}_{\partial X})\rangle \langle  \textbf{1}_{\partial X} ,w \rangle= \langle v,T_{t}(w)\rangle,$$ we have:
$$T_{t}:v \in \mathcal{W}_{t}\mapsto \langle \textbf{1}_{\partial X},v \rangle \sigma_{t}\in \mathcal{W}_{t}.$$
In other words, $w_{n}\to T_{t}(w)$ for the weak convergence.
Thus, for all $w\in K$ we have $T_{t}(w)=\langle \textbf{1}_{\partial X},w\rangle \sigma_{t}\in K$. If for all $w\in K$, $\langle \textbf{1}_{\partial X},w\rangle =0$, we would have that   for all $\gamma \in \Gamma$ that $\langle  \textbf{1}_{\partial X},\pi_{-t}(\gamma)w \rangle=0= \langle \pi_{t}(\gamma) \textbf{1}_{\partial X},w \rangle.$ And therefore, since $\textbf{1}_{\partial X}$ is cyclic for $(\pi_{t},L^{2}(\partial X,\nu_{o}))$, the vector $w$ has to be $0$. Hence, there exists $0\neq w\in K$ such that $\langle \textbf{1}_{\partial X},w\rangle \sigma_{t}\in K$ and so $\sigma_{t}\in K$. Since $\sigma_{t}$ is cyclic for $\pi_{-t}$ in $\mathcal{W}_{t}$, the proof is done.
\end{proof}
Since the dual representation of $(\pi_{t},L^{2}(\partial X,\nu_{o})/\mathcal{V}_{t})$ is $(\overline{\pi_{-t}},\overline{\mathcal{W}_{t}})$, we deduce immediately from the above theorem the irreducibility of $(\pi_{t},L^{2}(\partial X,\nu_{o})/\mathcal{V}_{t}))$.

\subsection{The Hilbertian  complementary series}

Fix $t>0$.
\begin{proof}[Proof of Corollary \ref{coro2}]

Apply Theorem \ref{BML2} with $\mathcal{E}_{t,i}=\mathcal{K}_{t}$, with $v=\textbf{1}_{\partial X}$ and $g=\textbf{1}_{\partial X}$ to obtain 
$$\sum_{\gamma \in S^{\Gamma}_{n,R}}\beta_{n,R}(\gamma) f(\gamma   o) \frac{\langle \pi_{t}(\gamma)\textbf{1}_{\partial X} ,w\rangle_{\mathcal{K}_{t}} }{\phi_{t}(\gamma)}\to  \overline{\langle  f_{|_{\partial X}} ,w \rangle_{\mathcal{K}_{t}}}. $$

In other words, $\textbf{1}_{\partial X}$ is cyclic for $\mathcal{K}_{t}$.

 Consider now  a proper subspace $F\subset \mathcal{K}_{t}$ invariant by $\pi_{t}$. Theorem \ref{BML2} implies, for $f=\textbf{1}_{\overline{X}}$ and $g=\sigma_{t}$ that for all $v\in F$ we have $\langle \textbf{1}_{\partial X} ,\mathcal{I}_{t}(v)\rangle \textbf{1}_{\partial X}\in F.$ If for all $v$ in $F$, $\langle \textbf{1}_{\partial X},\mathcal{I}_{t}(v)\rangle=0$, it would follow by Proposition \ref{intertwin} that for all $\gamma \in \Gamma$ that $\langle\pi_{t}(\gamma) \textbf{1}_{\partial X},\mathcal{I}_{t}(v)\rangle=0$. Then using the fact that $\textbf{1}_{\partial X}$ is cyclic for $(\pi_{t},L^{2}(\partial X,\nu_{o}))$, we would have $\mathcal{I}_{t} (v)=0$, that is $v=0$ in $\mathcal{K}_{t}$. Hence there exists $0\neq v\in F$ such that $\langle \textbf{1}_{\partial X} ,\mathcal{I}_{t}(v)\rangle\neq 0$ and thus $ \textbf{1}_{\partial X}\in F$. Since $\textbf{1}_{\partial X}$ is cyclic for $(\pi_{t},\mathcal{K}_{t})$ we obtain the irreducibility result.\\
 Let $t'\neq t$ ; we may assume $t'<t$. Suppose that $(\pi_{t'},\mathcal{K}_{t'})$ is equivalent to $(\pi_{t},\mathcal{K}_{t})$ via an invertible transformation $U:\mathcal{K}_{t'} \rightarrow \mathcal{K}_{t}$, namely $U^{-1}\pi_{t'}(\gamma)U=\pi_{t}(\gamma)$ for all $\gamma$.\\
On one hand, we have (for $R$ large enough)  for all $n:$  $$0<\sum_{\gamma \in S^{\Gamma}_{n,R}}\beta_{n,R}(\gamma)  \frac{\langle U^{-1} \pi_{t'}(\gamma)U \textbf{1}_{\partial X} ,\textbf{1}_{\partial X} \rangle_{\mathcal{K}_{t'}} }{\phi_{t}(\gamma)}=\sum_{\gamma \in S^{\Gamma}_{n,R}}\beta_{n,R}(\gamma)  \frac{\langle \pi_{t'}(\gamma)U(\textbf{1}_{\partial X}) ,U^{-1 *}(\textbf{1}_{\partial X}) \rangle_{\mathcal{K}_{t'}} }{\phi_{t}(\gamma)}.$$
And on the other hand:
\begin{align*}
\bigg|\sum_{\gamma \in S^{\Gamma}_{n,R}}\beta_{n,R}(\gamma)  \frac{\langle \pi_{t'}(\gamma)U(\textbf{1}_{\partial X}) ,U^{-1 *}(\textbf{1}_{\partial X}) \rangle_{\mathcal{K}_{t}} }{\phi_{t}(\gamma)}\bigg|&=\bigg|\sum_{\gamma \in S^{\Gamma}_{n,R}}\beta_{n,R}(\gamma)  \frac{\langle \pi_{t'}(\gamma)U(\textbf{1}_{\partial X}) ,U^{-1 *}(\textbf{1}_{\partial X}) \rangle_{\mathcal{K}_{t}} }{\phi_{t'}(\gamma)}\frac{\phi_{t'}(\gamma)}{\phi_{t}(\gamma)}\bigg|\\
&\leq C\frac{\phi_{t'}(nR)}{\phi_{t}(nR)}\to 0,
\end{align*}
 as $n \to +\infty$, where the last inequality follows from estimates of Theorem \ref{Hspectral}, and the limit follows from Proposition \ref{estimatesspherical} together with $t'<t$. Whence, the representations $(\pi_{t},\mathcal{K}_{t})$  and $(\pi_{t'},\mathcal{K}_{t'})$ are not equivalent for $t\neq t'$.

\end{proof}

\subsection{The unitary complementary series for $0<t\leq \frac{1}{2}$}
\begin{proof}[Proof of Corollary \ref{coro3}]
We only have to prove (\ref{3ofcoro}) of Corollary \ref{coro3}. 
 Theorem \ref{theo1'} implies for all $(v,w)\in \mathcal{H}_{t}\times \mathcal{H}'_{t}$  that
 $$\sum_{\gamma \in S^{\Gamma}_{n,R}}\beta_{n,R}(\gamma) \sigma_{t}(\gamma^{-1} o)    \frac{ \langle \mathcal{I}_{t}\pi_{t}(\gamma)v ,w\rangle }{\phi_{t}(\gamma)} \to \langle \mathcal{I}_{t}(v),\textbf{1}_{\partial X} \rangle  \langle \textbf{1}_{\partial X},\mathcal{I}_{t}(w) \rangle. $$
 Let $0\neq V\subset \mathcal{H}_{t}$ invariant by $\pi_{t}$ and closed. Then for all $v\in V, \mathcal{I}_{t}(v_{n}) \to \langle  \mathcal{I}_{t}(v),\textbf{1}_{\partial X} \rangle \sigma_{t}$ with respect to the weak* topology of $\mathcal{H}_{t}$ with $v_{n}=\sum_{\gamma \in S^{\Gamma}_{n,R}}\beta_{n,R}(\gamma) \sigma_{t}(\gamma^{-1} o)    \frac{ \mathcal{I}_{t}\pi_{t}(\gamma)v  }{\phi_{t}(\gamma)}\in V\subset \mathcal{H}_{t}.$ Therefore we have 
 $\mathcal{J}_{t}\mathcal{I}_{t}(v_{n})=v_{n}\to \mathcal{J}_{t}( \mathcal{I}_{t}(v),\textbf{1}_{\partial X} \rangle \sigma_{t})= \langle \mathcal{I}_{t}(v),\textbf{1}_{\partial X} \rangle \textbf{1}_{\partial X}\in V.$ In other words, $\langle v,\textbf{1}\rangle_{\mathcal{H}_{t}}\textbf{1}_{\partial X} \in V.$ Since $\textbf{1}_{\partial X}$ is cyclic by construction and since $\pi_{t}$ is unitary then $V$ has to be $\mathcal{H}_{t}$ and the proof is done.
\end{proof}

\appendix
\section{A unitary representation}
Let $\Gamma$ be a non elementary Gromov hyperbolic group acting geometrically  on $(X,d_\Sigma)$ its Cayley graph endowed with $d_\Sigma$ a word metric associated with some generating set $\Sigma$. Let $\partial X$ be its Gromov boundary endowed with some visual metric $d_{\partial X}$. Following the notation of \cite{Car}, denote by $J_{AR}(\partial X)$ its \emph{Ahlfors regular conformal gauge}, see also \cite{BP}. We recall briefly its definition: $$J_{AR}(\partial X):=\{ \mbox{$d'$ Ahlfors regular metric on $\partial X $ such that  $d' \sim_{q.s.} d_{\partial X}$}\}, $$ 
where two metrics $d_{\partial X}$ and $d'$ in $\partial X$ are quasisymmetrically equivalent (not. $d' \sim_{q.s.} d_{\partial X}$) if the
identity map $id: (\partial X, d_{\partial X}) \rightarrow (\partial X, d')$ is a quasisymmetric homeomorphism. Recall that a
homeomorphism $  h: ( Y, d) \rightarrow (Z, d') $ between two metric spaces is quasisymmetric if there
is an increasing homeomorphism $\eta:\mathbb{R}^{+}\rightarrow \mathbb{R}^{+}$ - called a distortion function - such that :

$$ \frac{d'(h(x),h(z))}{d'(h(y),h(z))}\leq \eta \bigg(\frac{d(x,z)}{d(y,z)}\bigg),$$ for all $x,y,z \in Y$ with $y\neq z$. Moreover, a metric $d'$ is Ahlfors regular of dimension $Q$ on $\partial X$ if there exists  $Q>0$ and a Radon measure on $\partial X$ such that there exists $C>0$  satisfying for every balls on the boundary $B_{\partial X}(\xi,r)$ the inequalities $C^{-1}r^{Q}\leq \mu(B_{\partial X}(\xi,r))\leq C r^{Q}$.

\subsection{Besov spaces }\label{besov}

For any $d_{o}\in J_{AR}(\partial X)$ of dimension $Q$, let $\nu_{o}$ be the corresponding Patterson-Sullivan measure. Define the Sobolev-Besov spaces \`a la Bourdon-Pajot \cite{BP}, given by
\begin{equation}
\mathcal{B}_{t}(d_{o}):= \lbrace v \mbox{ measurable } :\partial X \to \mathbb{C} | \|v\|^{2}_{\mathcal{B}_{t}}=\int_{\partial X \times \partial X} \frac{|v(\xi)-v(\eta)|^{2}}{d^{(1-2t)Q}_{o}(\xi,\eta)} d\nu_{o}(\xi)d\nu_{o}(\eta)<+\infty\rbrace / \sim,
\end{equation} 
where $v\sim w$ if and only if $v-w$ equals to a constant a.e. with respect to $dm_{o,t}(\xi,\eta)=d\nu_{o}(\xi)d\nu_{o}(\eta)/d^{(1-2t)Q}_{o}(\xi,\eta) $. Note that for any couple of measurable functions $(v,w)$  the inner product is given by

\begin{equation}
\langle v,w \rangle_{\mathcal{B}_{t}}= \int_{\partial X \times \partial X} \frac{(v(\xi)-v(\eta))\overline{(w(\xi)-w(\eta))}}{d^{(1-2t)Q}_{o}(\xi,\eta)}d\nu_{o}(\xi)d\nu_{o}(\eta).
\end{equation}

 For $t=-\frac{1}{2}$, we recover the space denoted by $B_{2}(d)$ in \cite[Section 0.1]{BP}.

 \begin{prop}\label{sobo} 
\mbox{  }
\begin{enumerate}
\item Let $t<t'$. We have the following embeddings:
$$\mathcal{B}_{t}\hookrightarrow \mathcal{B}_{t'}.$$
\item The pair $(\mathcal{B}_{t},\overline{\mathcal{B}_{-t}})$ is a $\mathbb{C}$-pairing with respect to $\langle \cdot,\cdot \rangle_{\mathcal{B}_{0}}$.
\item $\mathcal{B}_{\frac{1}{2}}$ is isomorphic  to the Hilbert space $L^{2}(\partial X)\ominus \mathbb{C}$.

\end{enumerate}
\end{prop}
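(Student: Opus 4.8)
The plan is to prove Proposition \ref{sobo} in three independent pieces, each of which is essentially formal once the basic estimates of Lemma \ref{Ahlfors} and the Ahlfors regularity of $\nu_o$ are in hand.

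\textbf{Item (1): the embeddings $\mathcal{B}_t \hookrightarrow \mathcal{B}_{t'}$ for $t<t'$.} The point is to show that for $t<t'$ there is a constant $C>0$ with $\|v\|_{\mathcal{B}_{t'}} \le C \|v\|_{\mathcal{B}_t}$ for every measurable $v$. Write
\[
\|v\|_{\mathcal{B}_{t'}}^2 = \int_{\partial X\times\partial X} \frac{|v(\xi)-v(\eta)|^2}{d_o^{(1-2t')Q}(\xi,\eta)}\,d\nu_o(\xi)d\nu_o(\eta),
\]
and compare the two kernels: since $\operatorname{diam}(\partial X,d_o)$ is finite and $t<t'$ gives $(1-2t')Q < (1-2t)Q$, we have $d_o(\xi,\eta)^{-(1-2t')Q} \le (\operatorname{diam}\partial X)^{2Q(t'-t)} d_o(\xi,\eta)^{-(1-2t)Q}$ pointwise. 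Hence $\|v\|_{\mathcal{B}_{t'}}^2 \le C \|v\|_{\mathcal{B}_t}^2$, which gives a continuous (norm-decreasing up to the constant) inclusion; one should also note that the equivalence relation defining $\mathcal{B}_{t}$ (equality modulo constants a.e. with respect to $dm_{o,t}$) is compatible with that defining $\mathcal{B}_{t'}$, since both identify precisely the a.e.-constant functions (here one invokes Lemma \ref{Ahlfors} / connectedness of $\partial X$ to see that $dm_{o,t}$ and $dm_{o,t'}$ have the same null sets off the diagonal, so ``constant a.e.'' is the same notion). This makes the map well defined and injective.

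\textbf{Item (2): $(\mathcal{B}_t,\overline{\mathcal{B}_{-t}})$ is a $\mathbb{C}$-pairing under $\langle\cdot,\cdot\rangle_{\mathcal{B}_0}$.} First one checks the bilinear form is well defined: by Cauchy--Schwarz on $\partial X\times\partial X$ with the splitting $d_o^{-Q} = d_o^{-(1-2t)Q/2}\cdot d_o^{-(1+2t)Q/2} = d_o^{-(1-2t)Q/2}\cdot d_o^{-(1-2(-t))Q/2}$, one bounds $|\langle v,w\rangle_{\mathcal{B}_0}| \le \|v\|_{\mathcal{B}_t}\|w\|_{\mathcal{B}_{-t}}$, so the form is finite and separately continuous. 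Non-degeneracy is the substantive part: if $v\in\mathcal{B}_t$ satisfies $\langle v,w\rangle_{\mathcal{B}_0}=0$ for all $w\in\mathcal{B}_{-t}$, one wants $v$ to be (equivalent to) a constant, i.e.\ $0$ in $\mathcal{B}_t$. The natural approach is to test against a rich enough family: Lipschitz functions lie in every $\mathcal{B}_s$ (same computation as in the proof that $\delta_t$ is bounded, Proposition \ref{gdelta}, combined with the visual metric inequality $d_o(\xi,\eta)\ge e^{-\epsilon c_v}$-type bounds and Lemma \ref{Ahlfors}), and Lipschitz functions separate points of $\partial X$; plugging in $w = $ suitable bump-type Lipschitz functions and using that $\langle v, \cdot\rangle_{\mathcal{B}_0}$ pairs the ``jump'' $\delta_0 v$ against $\delta_0 w$ in $L^2(\partial X\times\partial X, dm_{o,0})$, one concludes $\delta_0 v = 0$ in that $L^2$ space, hence $v$ is constant. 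The symmetric statement for $w\in\overline{\mathcal{B}_{-t}}$ is identical with the roles of $t$ and $-t$ swapped. I expect this non-degeneracy argument — making precise that the Lipschitz jumps are dense enough in the relevant $L^2$ to force $\delta_0 v=0$ — to be the main obstacle; the rest is routine.

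\textbf{Item (3): $\mathcal{B}_{1/2}\cong L^2(\partial X)\ominus\mathbb{C}$.} When $t=\tfrac12$ the kernel $d_o^{(1-2t)Q}= d_o^0 \equiv 1$, so
\[
\|v\|_{\mathcal{B}_{1/2}}^2 = \int_{\partial X\times\partial X} |v(\xi)-v(\eta)|^2\,d\nu_o(\xi)d\nu_o(\eta) = 2\|v\|_{L^2}^2 - 2\Big|\int_{\partial X} v\,d\nu_o\Big|^2,
\]
using that $\nu_o$ is a probability measure (expand the square and integrate). Thus $\|v\|_{\mathcal{B}_{1/2}}^2 = 2\|v - (\int v\,d\nu_o)\mathbf{1}_{\partial X}\|_{L^2}^2$, which vanishes exactly when $v$ is a.e.\ constant — consistent with the quotient in the definition — and shows that $v\mapsto \sqrt2\,\big(v - (\int v\,d\nu_o)\mathbf{1}_{\partial X}\big)$ induces an isometric isomorphism from $\mathcal{B}_{1/2}$ onto $L^2(\partial X,\nu_o)\ominus\mathbb{C}\mathbf{1}_{\partial X}$, with inverse the inclusion of the mean-zero subspace. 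One should remark this matches the identification $\delta_{1/2}v = v\otimes\mathbf{1}-\mathbf{1}\otimes v$ recorded after \eqref{laplaciand}. This item is a short direct computation and presents no difficulty.
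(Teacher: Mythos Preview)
The paper states Proposition~\ref{sobo} in the appendix without proof, so there is no argument in the text to compare against; I can only assess your attempt on its own terms.

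Items (1) and (3) are fine. The pointwise kernel comparison in (1) is exactly the right computation, and your remark about the equivalence relations being compatible is a good catch. Item (3) is the two–line expansion you wrote; nothing more is needed.

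Item (2) is where there is a real problem, and you correctly flagged it. Your Cauchy--Schwarz bound
\[
|\langle v,w\rangle_{\mathcal{B}_0}|\le \|v\|_{\mathcal{B}_t}\|w\|_{\mathcal{B}_{-t}}
\]
is correct and shows the form is well defined and separately continuous. But the non-degeneracy argument via Lipschitz test functions does not go through in general: a Lipschitz function $w$ satisfies $|w(\xi)-w(\eta)|^2\le C\,d_o(\xi,\eta)^2$, and the resulting integral $\int d_o^{\,2-(1+2t)Q}\,d\nu_o\,d\nu_o$ converges only when $t<1/Q$. So for $t\ge 1/Q$ you have no guarantee that Lipschitz functions lie in $\mathcal{B}_{-t}$, and you lose your test class. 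Passing to H\"older functions does not help, since the exponent is capped at $1$.

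More seriously, the statement as written cannot hold without some restriction on $t$: the paper itself observes (Remark after Theorem~\ref{ThBP}) that for a torsion-free hyperbolic group with property~(T) the space $\mathcal{B}_{-1/2}(d)$ is trivial for every $d\in J_{AR}(\partial X)$, while by your Item~(3) the space $\mathcal{B}_{1/2}$ is $L^2(\partial X)\ominus\mathbb{C}$, which is infinite dimensional. In that situation $(\mathcal{B}_{1/2},\overline{\mathcal{B}_{-1/2}})$ is certainly \emph{not} a $\mathbb{C}$-pairing. So either (2) is meant to carry an implicit hypothesis (e.g.\ that $\mathcal{B}_{-t}$ is non-trivial, or that $|t|$ is small), or ``pairing'' is being used here in a weaker sense than the non-degenerate one defined in Section~\ref{section4}. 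Your write-up should make this caveat explicit rather than attempt a general proof that cannot exist.
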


Choose $d_{o,\epsilon}$ an $\epsilon$-good metric for some $\epsilon>0$. The fundamental observation is that for all $\gamma \in \Gamma$ and for all $v\in\mathcal{B}_{-\frac{1}{2}}(d_{o,\epsilon}):$
\begin{equation}
\|\pi_{-\frac{1}{2}}(\gamma)v\|_{\mathcal{B}_{-1/2}}=\|v\|_{\mathcal{B}_{-1/2}}.
\end{equation}
\\ 
In other words when $\mathcal{B}_{-1/2}$ is not trivial,  $(\pi_{-\frac{1}{2}},\mathcal{B}_{-1/2} )$ is  a unitary representation.

 \subsection{ $\ell^{2}$-cohomology of degree one}
We follow \cite{BouMV}. Let $\Gamma$ be a finitely generated group.
  Denote by $F(\Gamma)$ the space of all complex valued functions on $\Gamma$ and by $\lambda$ the left regular linear representation of $\Gamma$ on $F(\Gamma)$. Define 
then the space of $2$-Dirichlet finite functions on $\Gamma$:

$$D_{2}(\Gamma)=\{ f\in F(\Gamma)| \lambda(\gamma)f-f\in \ell^{2}(\Gamma),\forall \gamma \in \Gamma\} $$

The first $L^{2}$-cohomology of $\Gamma$ is defined as

$$\bar H^{1}_{(2)}(\Gamma):=D_{2}(\Gamma)/ i(\ell^{2}(\Gamma))+\mathbb{C},$$
where $i$ denotes the inclusion of $\ell^{2}(\Gamma)$ into $D_{2}(\Gamma)$. The space $\bar H^{1}_{(2)}(\Gamma)$ turns to be a Hilbert module over  the von Neumann algebra associated with $\Gamma$. It has a von Neumann dimension called  \emph{the first $L^{2}$-betti number} of $\Gamma$ denoted by $\beta^{1}_{(2)}(\Gamma).$ Eventually,  recall that 
$\beta^{1}_{(2)}(\Gamma)=0$ if and only if $\bar H^{1}_{(2)}(\Gamma)=0.$

The connection with Besov space is the following theorem, that is a particular case of a more general nice theorem due to Bourdon and Pajot in \cite[Théorème 0.1]{BP}:

\begin{theorem}\label{ThBP}
For any $d\in J_{AR}(\partial X)$, there exists canonical $\Gamma$-equivariant isomorphism of Hilbert spaces between $\bar H^{1}_{(2)}(\Gamma)$  and  $\mathcal{B}_{-1/2}(d)$. 
\end{theorem}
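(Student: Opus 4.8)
The statement is a specialization of \cite[Théorème 0.1]{BP}; the plan is to reprove it directly via the boundary trace map, following Bourdon--Pajot. Fix $d\in J_{AR}(\partial X)$ of Ahlfors dimension $Q$ with associated Patterson--Sullivan probability $\nu_o$ (so $\nu_o(B(\xi,r))\asymp r^Q$, Lemma \ref{Ahlfors}), and realize $\Gamma$ on its Cayley graph $X=(\mathcal{G}(\Gamma,\Sigma),d_\Sigma)$, a proper geodesic $\delta$-hyperbolic space with $\partial X=\Lambda_\Gamma$. Recall that the Dirichlet energy $\mathcal{E}(f):=\sum_{s\in\Sigma}\|\lambda(s)f-f\|_{\ell^2(\Gamma)}^2$ is a Hilbert seminorm on $D_2(\Gamma)$ with kernel exactly $\mathbb{C}$, that $i(\ell^2(\Gamma))\subset D_2(\Gamma)$ is $\mathcal{E}$-closed, and that $\bar H^1_{(2)}(\Gamma)$ carries the Hilbert norm induced by $\mathcal{E}$ on the $\mathcal{E}$-orthogonal complement of $i(\ell^2(\Gamma))+\mathbb{C}$; on this complement each class has an $\mathcal{E}$-minimizing, ``harmonic'' representative $f_{\mathrm{harm}}$.

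First I would construct the trace map. For $f\in D_2(\Gamma)$ and $\nu_o$-a.e.\ $\xi\in\partial X$, set $T(f)(\xi):=\lim_{n\to\infty} f(r(n))$ along any geodesic ray $r$ from $o$ to $\xi$: the point is a Fatou-type theorem asserting that finite Dirichlet energy controls the oscillation of $f$ along geodesics outside a set of rays of $\nu_o$-measure zero (a Borel--Cantelli / maximal-function argument built on Ahlfors regularity and Lemma \ref{Ahlfors}). One checks $T(f)$ is well defined independently of $r$, measurable, and $T(\mathbf{1}_\Gamma)$ constant. The crucial quantitative step is the two-sided comparison, for $f\in D_2(\Gamma)$,
\begin{equation}\label{planBPcomp}
c^{-1}\,\mathcal{E}(f_{\mathrm{harm}})\ \le\ \|T(f)\|_{\mathcal{B}_{-1/2}(d)}^2\ \le\ c\,\mathcal{E}(f_{\mathrm{harm}}),
\end{equation}
proved as follows: since $(\xi,\eta)_o$ is the length over which the rays to $\xi$ and to $\eta$ fellow-travel, $|T(f)(\xi)-T(f)(\eta)|$ is controlled by the energy of $f$ accumulated beyond the branch point at distance $\approx(\xi,\eta)_o$; inserting this into $\|T(f)\|_{\mathcal{B}_{-1/2}(d)}^2=\int_{\partial X\times\partial X}\frac{|T(f)(\xi)-T(f)(\eta)|^2}{d(\xi,\eta)^{2Q}}\,d\nu_o(\xi)\,d\nu_o(\eta)$ and reorganizing the double boundary integral into a sum over the edges of $X$, using that a shadow at distance $k$ from $o$ has $\nu_o$-mass $\asymp e^{-Qk}$, reproduces $\mathcal{E}(f_{\mathrm{harm}})$ up to multiplicative constants. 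This combinatorial/geometric comparison is the main obstacle.

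Next I would deduce that $T$ induces the claimed isomorphism. Since $T$ kills constants (trivially) and kills $i(\ell^2(\Gamma))$ (an $\ell^2$ function tends to $0$ along $\nu_o$-a.e.\ ray, again by Borel--Cantelli), $T$ factors through a map $\bar T\colon \bar H^1_{(2)}(\Gamma)\to\mathcal{B}_{-1/2}(d)$, which by \eqref{planBPcomp} is isometric up to a fixed constant; in particular it is injective --- equivalently, if $T(f)=0$ then $f_{\mathrm{harm}}$ is a bounded harmonic function on $X$ with zero boundary trace, hence constant by the maximum principle, so $f\in i(\ell^2(\Gamma))+\mathbb{C}$. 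For surjectivity I would exhibit a right inverse: for $v\in\mathcal{B}_{-1/2}(d)$ put $E(v)(x):=\nu_o(\mathcal{O}(x))^{-1}\int_{\mathcal{O}(x)}v\,d\nu_o$, the average over the shadow $\mathcal{O}(x)$ of $x$ seen from $o$; a Cauchy--Schwarz estimate on $E(v)(x)-E(v)(y)$ for neighbouring $x,y$ (averages of $v(\xi)-v(\eta)$ over pairs in comparable shadows) together with Ahlfors regularity gives $\mathcal{E}(E(v))\le c\,\|v\|_{\mathcal{B}_{-1/2}(d)}^2$, so $E(v)\in D_2(\Gamma)$, while $T(E(v))=v$ $\nu_o$-a.e.\ by Lebesgue differentiation on $(\partial X,d,\nu_o)$; hence $\bar T$ is onto. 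Finally, $\Gamma$-equivariance: at $t=-\tfrac12$ the cocycle $(d\gamma_*\nu_o/d\nu_o)^{\frac12+t}$ is $\equiv 1$, so $\pi_{-1/2}(\gamma)v=v\circ\gamma^{-1}$; since $\lambda(\gamma)f=f\circ\gamma^{-1}$ and $\gamma$ carries a ray $o\to\xi$ to one equivalent to a ray $o\to\gamma\xi$, one gets $T(\lambda(\gamma)f)=T(f)\circ\gamma^{-1}=\pi_{-1/2}(\gamma)T(f)$, and $E$ is equivariant in the same way. Thus $\bar T$ is a $\Gamma$-equivariant isometric isomorphism $\bar H^1_{(2)}(\Gamma)\xrightarrow{\ \sim\ }\mathcal{B}_{-1/2}(d)$, depending only on $d$, which is the assertion; alternatively, one simply invokes \cite[Théorème 0.1]{BP} with $p=2$.
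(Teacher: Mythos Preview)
The paper does not prove this statement; it merely records it as a particular case of \cite[Th\'eor\`eme 0.1]{BP}. Your final sentence---``alternatively, one simply invokes \cite[Th\'eor\`eme 0.1]{BP} with $p=2$''---is thus exactly the paper's treatment.

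Beyond that, you have supplied a faithful outline of Bourdon--Pajot's own argument (trace map along rays, energy/Besov comparison via shadow combinatorics, Poisson-type extension for surjectivity, equivariance from the vanishing of the cocycle exponent at $t=-\tfrac12$). Two minor corrections to the sketch. First, your two-sided comparison only yields an \emph{equivalence} of norms, not equality, so $\bar T$ is a $\Gamma$-equivariant topological isomorphism of Hilbert spaces rather than an isometric one; this is all the statement claims, so drop ``isometric'' in the last sentence. Second, your appeal to Lemma~\ref{Ahlfors} for the shadow-mass estimate is slightly misplaced: that lemma is formulated in the paper for $\epsilon$-good visual metrics, whereas here you need the analogous estimate for an arbitrary $d\in J_{AR}(\partial X)$, which follows directly from Ahlfors regularity rather than from the paper's specific setup.
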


 \begin{prop}
 If $\beta_{(2)}^{1}(\Gamma)> 0$ then there exists $d\in J_{AR}(\partial X)$ such that $(\pi_{-\frac{1}{2}},\mathcal{B}_{-\frac{1}{2}}(d))$ is a non trivial, infinite dimensional, unitary representation of $\Gamma$.
 \end{prop}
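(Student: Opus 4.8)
The plan is to deduce the statement by transporting, through the Bourdon--Pajot isomorphism of Theorem \ref{ThBP}, the structural features of the reduced first $\ell^{2}$-cohomology $\bar H^{1}_{(2)}(\Gamma)$ onto the Besov space $\mathcal{B}_{-\frac{1}{2}}(d)$.

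First I would pick any $d\in J_{AR}(\partial X)$; the gauge is non-empty, since an $\epsilon$-good visual metric $d_{o,\epsilon}$ with $\epsilon$ small enough is Ahlfors regular by the Patterson--Sullivan estimates recalled in Section \ref{section2} and hence belongs to $J_{AR}(\partial X)$. For such a $d$, observe that the exponent in the definition of $\pi_{-\frac{1}{2}}$ vanishes, so that $\pi_{-\frac{1}{2}}(\gamma)v=v\circ\gamma^{-1}$ is the Koopman action of $\Gamma$ on functions on $\partial X$; as recalled in Subsection \ref{besov} it preserves $\|\cdot\|_{\mathcal{B}_{-\frac{1}{2}}}$, so $(\pi_{-\frac{1}{2}},\mathcal{B}_{-\frac{1}{2}}(d))$ is a unitary representation of $\Gamma$ provided only that $\mathcal{B}_{-\frac{1}{2}}(d)\neq\{0\}$.

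Next I would invoke the hypothesis: since $\beta^{1}_{(2)}(\Gamma)>0$, the recalled equivalence ``$\beta^{1}_{(2)}(\Gamma)=0$ if and only if $\bar H^{1}_{(2)}(\Gamma)=0$'' forces $\bar H^{1}_{(2)}(\Gamma)\neq\{0\}$. Being a nonzero normal Hilbert module over the von Neumann algebra associated with $\Gamma$, its restriction to $\Gamma$ is a subrepresentation of an infinite multiple of the left regular representation; as $\Gamma$ is infinite (being non-elementary hyperbolic), such a representation is infinite-dimensional and has no nonzero $\Gamma$-invariant vector — in particular it is neither zero nor (a multiple of) the trivial representation. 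Theorem \ref{ThBP} provides a $\Gamma$-equivariant isomorphism $\bar H^{1}_{(2)}(\Gamma)\cong\mathcal{B}_{-\frac{1}{2}}(d)$, so $\mathcal{B}_{-\frac{1}{2}}(d)\neq\{0\}$, whence $(\pi_{-\frac{1}{2}},\mathcal{B}_{-\frac{1}{2}}(d))$ is a unitary representation by the previous step, and it is infinite-dimensional and non-trivial.

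The genuine difficulty here is conceptual, not computational: one has to be sure that the $\Gamma$-action for which Theorem \ref{ThBP} is equivariant matches $\pi_{-\frac{1}{2}}$ on the Besov side (and the right-translation, Hilbert-module, action on the cohomology side), and is not twisted; this is exactly what the Bourdon--Pajot theorem asserts, so I would quote it as a black box. As a safeguard for non-triviality one can also argue directly: a $\Gamma$-invariant element of $\mathcal{B}_{-\frac{1}{2}}(d)$ is a $\Gamma$-invariant function on $\partial X$ modulo constants, hence $\nu_{o}$-almost everywhere constant by ergodicity of the Patterson--Sullivan class under $\Gamma$, hence zero in $\mathcal{B}_{-\frac{1}{2}}(d)$.
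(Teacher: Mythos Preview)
Your proof is correct and follows the same line as the paper's: deduce $\bar H^{1}_{(2)}(\Gamma)\neq 0$ from $\beta^{1}_{(2)}(\Gamma)>0$, transport this through Theorem \ref{ThBP} to $\mathcal{B}_{-\frac{1}{2}}(d)$, and use the $\epsilon$-good visual metric to secure unitarity of $\pi_{-\frac{1}{2}}$. One minor wording caution: your ``pick any $d\in J_{AR}(\partial X)$'' is slightly loose, since the unitarity identity $\|\pi_{-\frac{1}{2}}(\gamma)v\|_{\mathcal{B}_{-1/2}}=\|v\|_{\mathcal{B}_{-1/2}}$ is only asserted in Subsection \ref{besov} for an $\epsilon$-good metric (it uses the exact conformal relation (\ref{visualrough}); for an arbitrary $d$ in the gauge one only gets uniform boundedness). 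Since the statement only asks for existence and you do single out $d_{o,\epsilon}$, this does not affect your argument.
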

 
 \begin{proof}
Since $\beta_{(2)}^{1}(\Gamma)> 0$ the space $\bar H^{1}_{(2)}(\Gamma)$ is not trivial.
Choose now $d_{o,\epsilon}$  an $\epsilon$-good visual metric on $\partial X$. Therefore, $\mathcal{B} _{-\frac{1}{2} } (d_{o,\epsilon})$ is a non trivial Hilbert space of infinite dimension, since its von Neumann dimension is non zero, and the choice of the Ahlfors regular metric turns $(\pi_{-\frac{1}{2}},\mathcal{B}_{-\frac{1}{2}}(d_{o,\epsilon}) )$ into a unitary representation of $\Gamma$.
\end{proof}
\begin{remark}
     We know that for $t>0$ the space $\mathcal{B}_{t}$ exists and is non empty: it contains $C(\partial X)$ for instance by the integrability property of $k_{t}$. But for $t<0$, the space $\mathcal{B}_{t}$ might be trivial: Assume that $\Gamma$ is hyperbolic without torsion with property (T). Then, using again Theorem \ref{ThBP}  together with the celebrated theorem of Delorme-Guichardet ensuring that property (T) groups have a trivial first group of cohomology \cite[Theorem 2.12.14]{BDV}, we obtain that $\mathcal{B}_{-\frac{1}{2}}(d)$ is trivial for any $d\in J_{AR}(\partial X)$. \end{remark}


\begin{thebibliography}{99} 


\bibitem{Ank}
 J.-P. Anker, \emph{La forme exacte de l'estimation fondamentale de Harish-Chandra. (French) [The exact
form of Harish-Chandra's fundamental estimate]} C. R. Acad. Sci. Paris Sr. I Math. 305 (1987), no.
9, 371-374. 




\bibitem{BM}
U. Bader, R. Muchnik, \emph{Boundary unitary representations, irreducibility and rigidity.} J. Mod. Dyn.
5 (2011), no. 1, 49-69.

\bibitem{BM2}
U. Bader, J. Dymara, \emph{Boundary unitary representations right-angled hyperbolic buildings.} J. Mod. Dyn. 10 (2016), 413-437.

\bibitem{BD}
B. Bekka, P. de la Harpe, \emph{Unitary representations of groups, duals, and characters.} Mathematical Surveys and Monographs, 250. American Mathematical Society, Providence, 2020. xi+474 pp.


\bibitem{BDV}
B. Bekka, P. de la Harpe, A. Valette, \emph{Kazhdan's property (T).} New Mathematical Monographs, 11. Cambridge University Press, Cambridge, 2008.




\bibitem{BHM}
S. Blach\`ere,  P. Ha\"issinsky, P. Mathieu, \emph{Harmonic measures versus quasiconformal measures for hyperbolic groups.} Ann. Sci. Ec. Norm. Sup\'er. 44, no. 4 (2011), 683 -- 721.


\bibitem{Bou}
K. Boucher, \emph{Analogs of complementary series for CAT(-1) groups},
	arXiv:2007.15369, 2020.



\bibitem{BouMV}
 M. Bourdon, F. Martin, and A.Valette,\emph{Vanishing and non-vanishing for the first $L^p$-cohomology of groups.} Comment. Math. Helv. 80 (2005), 377–389.



 \bibitem{BP}
 M. Bourdon and H. Pajot, \emph{Cohomologie $\ell ^p$ et espaces de Besov}, J. Reine Angew. Math. 558 (2003), 85-108.

\bibitem{Boy}
A. Boyer, \emph{Some spherical functions on hyperbolic groups}, Journal of topology and analysis, 2019.
\bibitem{Boy2}
 
A. Boyer, \emph{Equidistribution, ergodicity, and irreducibility in CAT(-1) spaces.} Groups Geom. Dyn. 11 (2017), no. 3, 777-818.
\bibitem{BoyMa}
A. Boyer, D. Mayeda \emph{Equidistribution, ergodicity and irreducibility associated with Gibbs measures.} Comment. Math. Helv. 92 (2017), no. 2, 349-387.

\bibitem{BoyP}
 A. Boyer and A. Pinochet-Lobos, \emph{An ergodic theorem for the quasi-regular representation of the free
group.} Bull. Belg. Math. Soc. Simon Stevin 24 (2017), no. 2, 243-255.
\bibitem{BLP}
A. Boyer, G. Link, Ch. Pittet, \emph{Ergodic boundary representations.} Ergodic Theory Dynam. Systems 39 (2019), no. 8, 2017-2047.

\bibitem{BG}
A. Boyer and L. Garncarek, \emph{Asymptotic Schur orthogonality in hyperbolic groups with application to monotony},  Trans. Amer. Math. Soc. 371 (2019), 6815-6841.
 
 
 \bibitem{BH}	
 M.R. Bridson; A. Haefliger, \emph{Metric spaces of non-positive curvature.} Grundlehren der Mathematischen Wissenschaften [Fundamental Principles of Mathematical Sciences], 319. Springer-Verlag, Berlin, 1999. 

 
 \bibitem{BD}
 M. Burger,  P. de la Harpe \emph{Constructing irreducible representations of discrete groups.} Proc. Indian Acad. Sci. Math. Sci. 107 (1997), no. 3, 223-235.
 
 \bibitem{BM}
 M. Burger and S. Mozes, \emph{CAT(-1)-spaces, divergence groups and their commensurators,} J. Amer. Math. Soc. 9 (1996), no. 1, 57-93,
 
 \bibitem{Ca}
Pierre-Emmanuel Caprace, Mehrdad Kalantar, Nicolas Monod, \emph{ A type I conjecture and boundary representations of hyperbolic groups}, arXiv:2110.00190, 2021.


\bibitem{Car}
M. Carrasco Piaggio, \emph{On the conformal gauge of a compact metric space}
Annales scientifiques de l'École Normale Supérieure, Série 4, Tome 46 (2013) no. 3, pp. 495-548.
 \bibitem{CJV}
 Cherix, Pierre-Alain; Cowling, Michael; Jolissaint, Paul; Julg, Pierre; Valette, Alain. Groups with the Haagerup property. \emph{Gromov's a-T-menability.} Progress in Mathematics, 197. Birkh\"auser Verlag, Basel, 2001.
\bibitem{Co}
M. Coornaert, \emph{Mesures de Patterson-Sullivan sur le bord d'un espace hyperbolique au sens de Gromov (French, with French summary).} Pacific J. Math. 159 (1993), no. 2, 241-270.

\bibitem{CM}
C. Connell, R. Muchnik,\emph{Harmonicity of quasiconformal measures and Poisson boundaries of hyperbolic spaces.} Geom. Funct. Anal. 17 (2007), no. 3,

\bibitem{Du}

A, Dudko, R. Grigorchuk, \emph{On spectra of Koopman, groupoid and quasi-regular representations.} J.
Mod. Dyn. 11 (2017), 99-123.

\bibitem{Du2}
A. Dudko, R. Grigorchuk, \emph{ On irreducibility and disjointness of Koopman and quasi-regular representations of weakly branch groups.} Modern theory of dynamical systems, 51-66, Contemp. Math.,
692, Amer. Math. Soc., Providence, RI, 2017.





\bibitem{EKY}
P. Etingof, D. Kazhdan, A. Yom Din,
\emph{On tempered representations,}
arXiv:2111.11970, 2021. 




\bibitem{Fe}
J. M. G. Fell,  \emph{Non-unitary dual spaces of groups.} Acta Math. 114 (1965), 267-310. 


\bibitem{FiPi}
A. Fig\`a-Talamanca, M.A. Picardello \emph{Harmonic analysis on free groups} Lecture notes in pure and applied mathematics vol. 87, 1983.




\bibitem{Fink}
V. Finkelshtein, \emph{Diophantine Properties of Groups of Toral Automorphisms.}
Thesis (Ph.D.) University of Illinois at Chicago. 2017. 77 pp.
\bibitem{GV}
R. Gangolli, V.-S. Varadarajan, \emph{Harmonic Analysis of Spherical Functions on Real Reductive
Groups,} Springer-Verlag, New-York, 1988.

\bibitem{Ga}
  L. Garncarek, \emph{Boundary representations of hyperbolic groups},  arXiv:1404.0903, 2014.
  \bibitem{G}
E. Ghys, P. de la Harpe. \emph{Panorama. (French) Sur les groupes hyperboliques d'apr\`es Mikhael Gromov }(Bern, 1988), 1-25, Progr. Math., 83, Birkh\"auser Boston, Boston, MA, 1990.

\bibitem{Gl}
J. Glimm, \emph{ Type I $C^*$-algebras}, Ann. Math. 73 (1961) 572-612
  \bibitem{Gr}
  M. Gromov,\emph{Hyperbolic groups}, Essays in group theory, Springer, coll.``MSRI Publ.'' (no 8), 1987, p. 75-263.
  
   \bibitem{H}
	U. Haagerup, \emph{An example of a nonnuclear $C^*$-algebra which has the metric approximation property}, Invent. Math. 50 (1978/79), no. 3, 279-293.
	
	\bibitem{Hei}
	J. Heinonen, \emph{Lectures on analysis on metric spaces}, Universitext, Springer-Verlag,
New York, 2001.


\bibitem{Jar}
H. Jarchow, \emph{Locally convex spaces.} Mathematische Leitf\"aden. B. G. Teubner, Stuttgart, 1981.

 \bibitem{KSt}
A. W. Knapp, E. M. Stein,  \emph{Intertwining operators for semisimple groups.} Ann. of Math. (2) 93 (1971), 489-578.

\bibitem{Ko}
B. Kostant, \emph{On the existence and irreducibility of certain series of representations}, Bull. Amer. Math. Soc. 75 (1969), 627-642.

\bibitem{Ju}
 P. Julg. \emph{How to prove the Baum-Connes conjecture for the groups Sp(n,1)?}, Journal of Geometry and Physics,
141:105 -119, (2019).

\bibitem{JN}
P. Julg, S. Nishikaw,
\emph{Slow eponential growth representations of $Sp(n,1)$
at the edge of the Cowling's strip}, arXiv:2106.10536, 2021.


\bibitem{KS}
M-G. Kuhn, T. Steger, \emph{More irreducible boundary representations of free groups.} Duke Math. J. 82
(1996), no. 2, 381-436.

\bibitem{KS2}
M-G. Kuhn, T, Steger, \emph{Monotony of certain free group representations.} J. Funct. Anal. 179 (2001),
no. 1, 1-17.
\bibitem{Ma}
G. Margulis, \emph{On some aspects of the theory of anosov systems.} Springer Monographs in Mathematics, Springer-Verlag, Berlin, With a survey by Richard Sharp: Periodic orbits of hyperbolic flows,
Translated from the Russian by Valentina Vladimirovna Szulikowska, 2004.

 \bibitem{Min}
I. Mineyev,  \emph{ Metric conformal structures and hyperbolic dimension.} Conform. Geom. Dyn. 11 (2007), 137-163.

	\bibitem{Ro}
	T. Roblin, \emph{Ergodicit\'e et unique ergodicit\'e du feuilletage horosph\'erique, m\'elange du flot g\'eod\'esique et \'equidistributions diverses dans les groupes discrets en courbure n\'egative}, M\'emoires de la Soci\'et\'e Math\'ematique de France, Nouvelle S\'erie 95 (2003).
			
	\bibitem{SU}
		 S. Mitsuo, \emph{Unitary Representations and Harmonic Analysis: An Introduction}, North-Holland Mathematical Library, 44 (2nd ed.) (1990).

	
	\bibitem{Ni}
	B. Nica, \emph{Proper isometric actions of hyperbolic groups on $L^{p}$-spaces.} Compositio Mathematica, 149 (5) (2013), 773-792. 
 \bibitem{NS}
     B. Nica and J.  \v{S}pakula, \emph{Strong hyperbolicity.} Groups, Geometry and Dynamics Volume 10, Issue 3, 2016, pp. 951-964. 
\bibitem{Pa}
S. J. Patterson, \emph{The limit set of a Fuchsian group.} Acta Math. 136 (1976), no. 3-4, 241-273,

\bibitem{Sam}
		S.G. Samko, \emph{Hypersingular Integrals and Their Applications.} Taylor and Francis (2002).
		\bibitem{Su}
		D. Sullivan, \emph{The density at infinity of a discrete group of hyperbolic motions}, Int. Hautes Etudes Sci. Publ. Math. 50 (1979), 171-202.
		
				 
		 \bibitem{Ta}
M. Tadi\'c, \emph{Classification of unitary representations in irreducible representations of
general linear group (non-archimedean case),} Ann. Sci. Ec. Norm. Sup\'er. , vol. 19, 1986, 335-382.


\bibitem{Th}
 E. Thoma, \emph{Über unitäre Darstellungen abzählbarer, diskreter Gruppen.}, Math. Ann. 153 (1964), 111-138. 








   \end{thebibliography}
\end{document}